\documentclass[11pt,reqno]{amsart}

\usepackage{amssymb,latexsym}
\usepackage{amscd,amsthm,bm,mathrsfs}
\usepackage[all]{xy}
\usepackage{tikz}
\usetikzlibrary{matrix,arrows}
\usepackage{tikz-cd}
\usepackage{url}
\usepackage[T1]{fontenc}
\usepackage{textcomp}
\usepackage{palatino,helvet}
\usepackage[hang,flushmargin]{footmisc} 
\usepackage[numbers]{natbib}

\setlength{\textwidth}{160mm} \setlength{\textheight}{240mm}
\setlength{\topmargin}{-10mm} \setlength{\oddsidemargin}{0mm}\setlength{\evensidemargin}{0mm}

\newtheorem{theorem}{Theorem}[section]

\newtheorem{lemma}[theorem]{Lemma}
\newtheorem{proposition}[theorem]{Proposition}
\newtheorem{corollary}[theorem]{Corollary}

\theoremstyle{definition}
\newtheorem{definition}[theorem]{Definition}

\newtheorem{remark}[theorem]{Remark}
\newtheorem{notation}[theorem]{Notation}
\newtheorem{example}[theorem]{Example}

\DeclareMathOperator{\Ext}{Ext}
\DeclareMathOperator{\Hom}{Hom}

\DeclareMathOperator{\Ch}{Ch}

\newcommand{\cat}[1]{\mathcal{#1}}           

\newcommand{\class}[1]{\mathcal{#1}}   



\newcommand{\FP}[1]{\mathcal{FP}_{#1}}

\newcommand{\tFP}[1]{$\text{FP}_{#1}$}   

\newcommand{\ch}{\textnormal{Ch}(R)}


\begin{document}

\title[Locally type $\textrm{FP}_n$ and $n$-coherent categories]{Locally type $\text{FP}_n$ and $\bm{n}$-coherent categories}

\author{Daniel Bravo}
\address{
Universidad Austral de Chile. \\
Facultad de Ciencias. \\
Instituto de Ciencias F\'isicas y Matem\'aticas. \\
Valdivia, Regi\'on de los R\'ios. CHILE.
}
\email[Daniel Bravo]{daniel.bravo@uach.cl}
\urladdr{http://icfm.uach.cl/personas/daniel-bravo.php}

\author{James Gillespie}
\address{
Ramapo College of New Jersey. \\
School of Theoretical and Applied Science. \\
505 Ramapo Valley Road. \\
Mahwah, NJ 07430. USA. 
}
\email[Jim Gillespie]{jgillesp@ramapo.edu}
\urladdr{http://pages.ramapo.edu/~jgillesp/}

\author{Marco A. P\'erez}
\address{
Universidad de la Rep\'ublica. \\
Facultad de Ingenier\'ia. \\
Instituto de Matem\'atica y Estad\'istica ``Prof. Ing. Rafael Laguardia''. \\
Julio Herrera y Reissig 565. \\
Montevideo, CP11300. URUGUAY. 
}
\email[Marco A. P\'erez]{mperez@fing.edu.uy}
\urladdr{http://maperez.net/}

\subjclass[2010]{Primary 18C35; Secondary  18A25, 18E15, 18F20, 18G15, 18G25, 18G55}

\keywords{objects of type FP$_n$, FP$_n$-injective objects, locally type FP$_n$ categories, $n$-coherent categories, Gorenstein FP$_n$-injective objects, abelian and exact model categories}

\date{\today}

\baselineskip=14pt

\begin{abstract}
We study finiteness conditions in Grothendieck categories by introducing the concepts of objects of type \tFP{n} and studying their closure properties with respect to short exact sequences. This allows us to propose a notion of locally type $\text{FP}_n$ categories as a generalization of locally finitely generated and locally finitely presented categories. We also define and study the injective objects that are Ext-orthogonal to the class of objects of type \tFP{n}, called $\text{FP}_n$-injective objects, which will be the right half of a complete cotorsion pair. 

As a generalization of the category of modules over an $n$-coherent ring, we present the concept of $n$-coherent categories, which also recovers the notions of locally noetherian and locally coherent categories for $n = 0, 1$. Such categories will provide a setting in which the $\text{FP}_n$-injective cotorsion pair is hereditary, and where it is possible to construct (pre)covers by $\text{FP}_n$-injective objects. Moreover, we see how $n$-coherent categories provide a suitable framework for a nice theory of Gorenstein homological algebra with respect to the class of $\text{FP}_n$-injective modules. We define Gorenstein $\text{FP}_n$-injective objects and construct two different model category structures (one abelian and the other one exact) in which these Gorenstein objects are the fibrant objects. 
\end{abstract}


\maketitle


\section*{Introduction}

Throughout this paper, $\mathcal{G}$ will denote a Grothendieck category. Examples of such categories will include: (1) the category $R\mbox{-}\mathsf{Mod}$ of (left) $R$-modules over an associatve ring $R$ with identity; (2) the category $\mathsf{Ch}(R)$ of chain complexes of  $R$-modules; (3) the category $\mathcal{O}_X\mbox{-}\mathsf{Mod}$ of all sheaves of $\mathcal{O}_X$-modules with $(X,\mathcal{O}_X)$ a ringed space; (4) the category $\mathfrak{Qcoh}(X)$ of quasi-coherent sheaves on a scheme $X$; and (5) the category $\mathsf{Fun}(\mathcal{C}^{\rm op},\mathsf{Ab})$ of additive contravariant functors from a skeletally small additive category $\mathcal{C}$ into the category $\mathsf{Ab}$ of abelian groups. 

It is very well known the relation between noetherian rings and finitely generated modules over such rings. One important result asserts that a ring $R$ is noetherian if, and only if, the class of finitely generated modules is closed under taking kernels of epimorphisms. A similar equivalence holds true between coherent rings and finitely presented modules. For each of these types of modules and rings there is a generalizing concept, namely: modules of type $\text{FP}_n$ (also called finitely $n$-presented modules) and $n$-coherent rings. The former was probably first introduced in \cite{Bourbaki}, while the latter is due to D. L. Costa \cite{costa}. As one can expect, there is a nice interplay between modules of type $\text{FP}_n$ and $n$-coherent rings in terms of closure properties. This is described in \cite{bravo-perez} by the first and third authors. Namely, a ring $R$ is $n$-coherent if, and only if, the class $\mathcal{FP}_n$ of modules of type $\text{FP}_n$ is closed under taking kernels of epimorphisms. Another description of $n$-coherent rings can be stated in terms of the existence of a certain hereditary cotorsion pair. Such cotorsion pairs are scarce in the literature, and an interesting point about the theory of $n$-coherent rings is that they govern some conditions for the existence of hereditary cotorsion pairs constructed from $\mathcal{FP}_n$. Specifically, if one considers the class $\class{FP}_n\textnormal{-Inj}$ of $\text{FP}_n$-injective modules defined in \cite{bravo-perez}, one has a complete cotorsion pair $({}^{\perp_1}(\class{FP}_n\textnormal{-Inj}),\class{FP}_n\textnormal{-Inj})$ cogenerated by a set, which is hereditary if, and only if, the ground ring $R$ is $n$-coherent. This is proved as one of the main results in \cite[Theorem 5.5]{bravo-perez}.  

The first general goal of this article is to present and study the concept of $n$-coherent categories as a general framework for the study of finiteness conditions of objects, based mainly in the proposal of the concepts of locally type $\text{FP}_n$ categories and $n$-coherent objects, as generalizations of locally finitely generated and locally finitely presented categories, and of noetherian and coherent objects (see \cite{stenstrom,stovicek-purity}). Our main result is Theorem \ref{them-locally n-coherent} where we give several characterizations of $n$-coherent categories. One of these characterizations is given in terms of the existence of a hereditary small cotorsion theory generated by the class of objects of type $\text{FP}_n$. One important consequence is that any $\text{Ext}^k(F,-)$ can be computed using $\text{FP}_n$-injective coresolutions whenever $F$ is of type $\text{FP}_n$. Theorem \ref{them-locally n-coherent} also generalizes the results in \cite{bravo-perez} about modules of type $\text{FP}_n$, $\text{FP}_n$-injective modules and $n$-coherent rings to the more general context of Grothendieck categories. In particular, we shall be able to apply and interpret our notions of $n$-coherency and objects of type $\text{FP}_n$ in categories widely used in algebraic geometry and representation theory of algebras, such as $\mathfrak{Qcoh}(X)$ and $\mathsf{Fun}(\mathcal{C}^{\rm op},\mathsf{Ab})$


The second general goal is to set the path towards a nice theory of Gorenstein injective homological algebra in Grothendieck categories. For this we present the concept of Gorenstein $\text{FP}_n$-injective objects, which recovers the notion of Gorenstein injective and Ding injective modules in the cases where $n = 0$ and $n = 1$, respectively. From a homological point of view, this class is going to satisfy a series of expected properties. In the context of homotopical algebra, we shall study the stable category associated to the Gorenstein $\text{FP}_n$-injectives and propose two different model structures that describe it. 

One strong point about our definitions and most of our results is that they do not need that our ground Grothendieck category has enough projectives. So the core of the theory presented in this paper can be applied to some of such categories widely used in algebraic geometry, like for instance $\mathcal{O}_X\mbox{-}\mathsf{Mod}$ and $\mathfrak{Qcoh}(X)$.

The present article is organized as follows. We begin with some categorical and homological preliminaries. In Section \ref{Sec-locally finitely n-presented cats} we present the concept of objects of type \tFP{n} in a Grothendieck category and study several closure properties along with some alternative descriptions under some extra assumption in our ground category. We also define locally type \tFP{n} categories as a formal setting for the existence of  objects of type \tFP{n}. In Section \ref{sec:injectivity} we study injectivity relative to  objects of type \tFP{n}. We define the class $\class{FP}_n\textnormal{-Inj}$ of $\text{FP}_n$-injective objects and show that this class is the right half of a complete cotorsion pair $({}^{\perp_1}(\class{FP}_n\textnormal{-Inj}),\class{FP}_n\textnormal{-Inj})$ cogenerated by a set in any locally type \tFP{n} category. Section \ref{Sec-locally n-coherent cats} is devoted to $n$-coherent categories. One of the main results in that section will be to show that the previous cotorsion pair is hereditary if, and only if, the ground category is $n$-coherent, thus generalizing \cite[Theorem 5.5]{bravo-perez}. Another important result that holds in $n$-coherent categories is that $\class{FP}_n\textnormal{-Inj}$ will be a covering class. As an application of this, we obtain a result due to S. Crivei, M. Prest and B. Torrecillas \cite{CPT} about the existence of absolutely pure covers in locally coherent categories. Finally, in Section \ref{sec-Goren-FP_n} we define the Gorenstein $\text{FP}_n$-injective objects and construct two different model structures such that they form the class of fibrant objects. The first structure will be abelian in the sense of Hovey's \cite{hovey} and it will be constructed on $n$-coherent categories. The second one will be exact in the sense of \cite{gillespie-exact-model-structures} and it will be constructed on certain thick subcategories of the ground Grothendieck category without imposing any condition on $\mathcal{G}$.


\section{Preliminaries and notations}\label{Sec-preliminaries}

The categorical setting for this paper is that of Grothendieck categories for which our main reference is Stenstr\"om's \cite{stenstrom}.


\subsection*{{Grothendieck categories}}\label{subsec-Grothendieck cats}

Recall that a Grothendieck category is a cocomplete abelian category $\cat{G}$, with a generating set, and with exact direct limits. We shall often refer to~\cite[Chapter~V]{stenstrom}. To orient the reader, we now summarize some standard facts. First, a Grothendieck category is always complete and every object $B \in \cat{G}$ has an injective envelope $E(B)$. In particular, $\cat{G}$ has enough injectives and these can be used to compute $\Ext^n_{\cat{G}}$. A useful fact is that any Grothendieck category is \emph{well-powered}, meaning the class of subobjects of any given object is in fact a set. See~\cite[Proposition~IV.6.6]{stenstrom}, although he uses the term \emph{locally small} instead of well-powered. Finally, given any regular cardinal $\gamma$, by~\cite[Corollary~1.69]{adamek-rosicky}, the class  of all $\gamma$-presented objects is skeletally small (or essentially small). This means there exists a set of isomorphism representatives for this class.

\subsection*{{$\boldsymbol{\text{free}(\mathcal{S})}$ and $\boldsymbol{\text{add}(\mathcal{S})}$}}\label{subsec-Grothendieck cats gen sets}

Let $\class{S}$ be a set of objects in a Grothendieck category $\cat{G}$. We shall denote by  $\textnormal{Free}(\mathcal{S})$ the class of all set indexed direct sums $\bigoplus S_i$ in which each $S_i \in \class{S}$. We use the notation $\textnormal{free}(\mathcal{S})$ to denote the set of all such \emph{finite} direct sums.
Similarly, we shall denote by  $\textnormal{Add}(\mathcal{S})$ (respectively, $\textnormal{add}(\mathcal{S})$) the class of all direct summands of objects in $\textnormal{Free}(\mathcal{S})$ (respectively, $\textnormal{free}(\mathcal{S})$). These notations are motivated by the special case of modules over a ring: Taking $\class{S} = \{R\}$, we get the classes of free, finitely generated free, projective, and finitely generated projective $R$-modules.


\subsection*{{Approximations}}\label{subsec-Grothendieck cats}

Given a class $\mathcal{X}$ of objects in an abelian category $\mathcal{A}$, a morphism $f \colon X \to A$ is called an \emph{$\mathcal{X}$-precover} of $A \in \mathcal{A}$ if $X \in \mathcal{X}$ and if for every morphism $f' \colon X' \to A$ with $X' \in \mathcal{X}$ there exists a morphism $h \colon X' \to X$ such that $f' = f \circ h$. In some references, $\mathcal{X}$-precovers are called \emph{right approximations}. If in addition, in case $X' = X$ and $f' = f$, the previous equality can only be completed by automorphisms $h$ of $X$, then the $\mathcal{X}$-precover $f$ is called an \emph{$\mathcal{X}$-cover} of $A$. Furthermore, an $\mathcal{X}$-precover $f \colon X \to A$ is \emph{special} if it is epic and ${\rm Ext}^1_{\mathcal{A}}(X',{\rm Ker}(f)) = 0$ for every $X' \in \mathcal{X}$. Dually, one has the notions of (\emph{special}) \emph{$\mathcal{X}$-preenvelopes} and \emph{$\mathcal{X}$-envelopes}.

A class $\mathcal{X}$ of objects in $\mathcal{A}$ is called \emph{precovering} if every object $A \in \mathcal{A}$ has an $\mathcal{X}$-precover. \emph{Special precovering}, \emph{covering}, (\emph{special}) \emph{preenveloping} and \emph{enveloping classes} are defined similarly.


\subsection*{{Cotorsion pairs}}\label{subsec-abelian model cats}

Given a class $\mathcal{C}$ of objects in an abelian category $\mathcal{A}$, the \emph{$i$-th right orthogonal} $\mathcal{C}^{\perp_i}$, with $i \geq 1$, is defined as the class of all objects $X \in \mathcal{A}$ such that ${\rm Ext}^i_{\mathcal{A}}(C,X) = 0$ for every $C \in \mathcal{C}$. The \emph{total right orthogonal} is defined as the intersection $\mathcal{C}^\perp = \bigcap_{i \geq 1} \mathcal{C}^{\perp_i}$. Similarly, we define the \emph{$i$-th left orthogonal} and \emph{total left orthogonal} ${}^{\perp_i}\mathcal{C}$ and ${}^\perp\mathcal{C}$. 

Recall that two classes of objects $\mathcal{X}$ and $\mathcal{Y}$ in an abelian category $\mathcal{A}$ form a \emph{cotorsion pair} $(\mathcal{X,Y})$ in $\mathcal{A}$ if $\class{Y} = \mathcal{X}^{\perp_1}$ and $\class{X} = {}^{\perp_1}\mathcal{Y}$. Any class $\class{S}$ for which $\mathcal{S}^{\perp_1} = \class{Y}$ is said to \emph{cogenerate} the cotorsion pair $(\mathcal{X,Y})$. In particular, we shall say the cotorsion pair is \emph{cogenerated by a set} if there exists a set $\mathcal{S}$ (not just a proper class) such that $\mathcal{S}^{\perp_1} = \class{Y}$.

The cotorsion pair is \emph{hereditary} if $\Ext^i_{\cat{A}}(X,Y) = 0$ for all $X \in \class{X}$, $Y \in \class{Y}$ and $i \geq 1$ (In other words, $\mathcal{Y} = \mathcal{X}^\perp$ and $\mathcal{X} = {}^\perp\mathcal{Y}$). We also say the cotorsion pair is \emph{complete} if it has enough injectives and enough projectives. This means that for each $A \in \cat{A}$ there exist short exact sequences 
\[
0 \to A \to Y \to X \to 0 \mbox{ \ and \ } 0 \to Y' \to X' \to A \to 0
\] 
with $X,X' \in \class{X}$ and $Y,Y' \in \class{Y}$ (In other words, every object has a special $\mathcal{X}$-precover and a special $\mathcal{Y}$-preenvelope). If these short exact sequences can be taken functorially with respect to $A$ then, following~\cite[Definition~2.3]{hovey}, we say the cotorsion pair is \emph{functorially complete}.  In particular, cotorsion pairs in $\mathcal{A}$ cogenerated by a set are functiorally complete, provided that $\mathcal{A}$ is a Grothendieck category with enough projectives (See \cite[Corollary 6.8]{hovey}).

Besides their connection to abelian model structures which we describe next, cotorsion pairs are fundamental in modern homological algebra. There are several good references. In particular we shall refer to \cite{enochs-jenda-book} and \cite{hovey}.


\subsection*{{Abelian model structures}}\label{subsec-abelian model cats}

Let $\cat{A}$ be a bicomplete abelian category. M. Hovey showed in \cite{hovey} that an abelian model structure on $\cat{A}$ is nothing more than two nicely related cotorsion pairs in $\cat{A}$. The main theorem of \cite{hovey} showed that an abelian model structure on $\cat{A}$ is equivalent to a triple $(\class{Q},\class{W},\class{R})$ of classes of objects in $\cat{A}$ for which $\class{W}$ is thick and $(\class{Q} \cap \class{W},\class{R})$ and $(\class{Q},\class{W} \cap \class{R})$ are each complete cotorsion pairs. By \emph{thick} we mean that the $\class{W}$ is closed under direct summands and satisfies the 2 out of 3 property on short exact sequences. In this case, $\class{Q}$ is precisely the class of cofibrant objects of the model structure, $\class{R}$ are precisely the fibrant objects, and $\class{W}$ is the class of trivial objects. We say that $\class{M}$ is \emph{hereditary} if both of these associated cotorsion pairs are hereditary. 

The equivalence between these $(\mathcal{Q,W,R})$ and abelian model structures was later generalized by the second author in \cite{gillespie-exact-model-structures} to the context of exact categories. The notion of (complete and hereditary) cotorsion pairs are analogous in such categories, and the corresponding model structures in this equivalence are called \emph{exact}. For a complete survey of exact categories, we recommend \cite{buhler}.


\subsection*{{Cofibrantly generated and finitely generated model categories}}

We refer to \cite[Definition 2.1.17 and Chapter 7]{hovey} for the definitions of cofibrantly generated and finitely generated model categories as well as theory relating them to triangulated categories. Here we shall just note some basic facts used in this paper and give appropriate references to guide the reader. So let $\class{M}$ be an abelian model category. Its homotopy category is denoted by Ho($\class{M}$). It is known that Ho($\class{M}$) is always a pretriangulated category~\cite[Section 6.5]{hovey} and that it is in fact triangulated\footnote{Here we mean a triangulated category in the sense of \cite[Definition 7.1.1]{hovey}. This notion of triangulated categories is stronger than the classical concept due to Verdier's \cite{Verdier96}.} whenever $\class{M}$ is hereditary \cite[Corollary 1.1.15]{becker}. In this case it follows from \cite[Section 7.4]{hovey} that Ho($\class{M}$) is compactly generated whenever $\class{M}$ is a finitely generated model category.


\subsection*{{Pure exact sequences}} 

Given a short exact sequence 
\[
\mathbb{E} \colon 0 \to A \to B \to C \to 0
\] 
of objects in a Grothendieck category $\mathcal{G}$, recall that $\mathbb{E}$ is said to be \emph{pure} if for every finitely presented object $F \in \mathcal{G}$, the induced sequence ${\rm Hom}_{\mathcal{G}}(F,\mathbb{E})$ of abelian groups is also exact. In case where $\mathcal{G}$ is the category of $R$-modules, this is equivalent to saying that $\mathbb{E}$ remains exact after tensoring with any right $R$-module. We cannot state this equivalence for general Grothendieck categories since they may not even come equipped with a tensor product.

One can consider certain closure properties with respect to pure exact sequences. Namely, a class $\mathcal{X}$ of objects in $\mathcal{G}$ is said to be \emph{closed under pure subobjects} (resp., \emph{under pure quotients}) if whenever we are given a pure exact sequence as $\mathbb{E}$ above with $B \in \mathcal{X}$, then one has $A \in \mathcal{X}$ (resp., $C \in \mathcal{X}$). 

Pure exact sequences are not  the only concept considered in this article with an equivalent interpretation for modules, that does not necessarily hold for arbitrary Grothendieck categories. This will also be the case of  objects of type \tFP{n} studied in the next section.


\subsection*{{Some specific notations}}\label{subsec-notation} 

We specify the use of some symbols throughout this article:
\begin{itemize}
\item In some cases, monomorphisms (respectively, epimorphisms) will be denoted as arrows $\rightarrowtail$ (respectively, $\twoheadrightarrow$).

\item Given two objects $X$ and $Y$ in an abelian category $\mathcal{A}$, by $X \simeq Y$ we shall mean that $X$ and $Y$ are isomorphic. If $F, G \colon \mathcal{A} \longrightarrow \mathcal{D}$ are two functors between abelian categories, by $F \cong G$ we shall mean that there exists a natural isomorphism between $F$ and $G$. 

\item Recall that two short exact sequences 
\[
\mathbb{E} \colon 0 \to Y \xrightarrow{\alpha} Z \xrightarrow{\beta} X \to 0 \mbox{ \ and \ } \mathbb{E}' \colon 0 \to Y \xrightarrow{\alpha'} Z' \xrightarrow{\beta'} X \to 0
\] 
are equivalent if there exists a morphism $h \colon Z \to Z'$ such that $h \circ \alpha = \alpha'$ and $\beta' \circ h = \beta$. This will be denoted as $\mathbb{E} \sim \mathbb{E}'$. We shall use the same notation to denote equivalences between $n$-fold extensions (See Appendix A at the end of this article). 

In some cases, the groups of equivalence classes of $n$-extensions ${\rm Ext}^n_{\mathcal{A}}(X,Y)$ appearing in certain commutative diagrams will be denoted as ${}^n(X,Y)$ due to space limitations.    
\end{itemize}


\section{Objects of type \tFP{n}}\label{Sec-locally finitely n-presented cats}

Throughout this paper $\cat{G}$ denotes a Grothendieck category (with not necessarily enough projective objects). In this section we study the notion of objects of type \tFP{n} in $\cat{G}$. We also define what it means to say $\cat{G}$ is locally  type \tFP{n}.  

Note that for any object $C \in \cat{G}$, and any direct system $\{X_i\}_{i \in I}$, there is a canonical map 
\[
\xi_n \colon \varinjlim \Ext^n_{\cat{G}}(C,X_i) \xrightarrow{} \Ext^n_{\cat{G}}(C,\varinjlim X_i)
\]
for each $n \geq 0$. To say $\Ext^n_{\cat{G}}(C,-)$ \emph{preserves direct limits} means that $\xi_n$ is an isomorphism for each direct system $\{X_i\}_{i \in I}$. Recall that $C$ is called \emph{finitely presented} if $\Ext^0_{\cat{G}}(C,-) = \Hom_{\cat{G}}(C,-)$ preserves direct limits.  The following definition generalizes this.

\begin{definition}\label{def-finitely-n-presented}
Let $n \geq 1$ be a positive integer. We say that an object $F \in \cat{G}$ is \textbf{of type \tFP{\bm{n}}}, if the functors $\Ext^i_{\cat{G}}(F,-)$ preserve direct limits for all $0 \leq i \leq n-1$. 
\end{definition}

Note that any object of type \tFP{n} is finitely presented and that the notion of finitely presented is synonymous with type \tFP{1}. Moreover, any object of type \tFP{n} is finitely generated by~\cite[Def.~V.3.1 and Prop.~V.3.2]{stenstrom}. It will be convenient to think of the finitely generated objects as the objects of type \tFP{0}\footnote{With a particular exception in the category of $\mathcal{O}_X$-modules. Indeed, the notion of finitely generated $\mathcal{O}_X$-modules may be different from that of $\mathcal{O}_X$-modules of type \tFP{0}. For example, in Ueno's \cite[Definition 4.18]{Ueno-2}, an $\mathcal{O}_X$-module $\mathcal{F}$ is called \emph{finitely generated} if for every $x \in X$ there exists an open set $U$ containing $x$ and a positive integer $n > 0$ so that sequence of $\mathcal{O}_U$-modules $\mathcal{O}^{\oplus n}_U \to \mathcal{F}|_U \to 0$ is exact. Other authors refer to such $\mathcal{O}_X$-modules as \emph{locally finitely generated}.}. Thus by an \emph{object of type \tFP{0}} we mean a finitely generated object. Finally, we may let $n = \infty$, and call an object $F$ \emph{of type \tFP{\infty}} if $\Ext^i_{\cat{G}}(F,-)$ preserves direct limits for all $i \geq 0$.  

Now for all $0 \leq n \leq \infty$, we let $\class{FP}_n$ denote the class of all objects of type \tFP{n} in $\mathcal{G}$. For convenience we let $\class{FP}_{-1}$ denote the whole class of objects of $\mathcal G$. We note that $\class{FP}_{\infty} = \bigcap_{n \geq 0} \class{FP}_n$ and that we have a decreasing chain of containments:
\[
\class{FP}_0 \supseteq \class{FP}_1 \supseteq \cdots \supseteq \class{FP}_n \supseteq \class{FP}_{n+1} \supseteq \cdots \supseteq \class{FP}_{\infty}.
\]

\begin{example}\label{Example-fg-projectives}
We give some examples of objects of type \tFP{n}.
\begin{enumerate}
\item Any finitely generated projective object must be of type ${\rm FP}_{\infty}$ by \cite[Example~3.2]{gillespie-models-of-injectives}. 

\item \textbf{Modules over a ring}. For each $n \geq 0$, by \cite[Example~13]{bravo-parra}, one can construct a ring $R$ such that 
\[
\class{FP}_0 \supsetneq \class{FP}_1 \supsetneq \cdots \supsetneq \class{FP}_n = \class{FP}_{\infty}.
\] 
It is important to mention that the class $\mathcal{FP}_n$ in $R\mbox{-}{\rm Mod}$ has an equivalent description. Namely, a module $F$ is of type \tFP{n} if, and only if, there exists an exact sequence
\begin{align}
P_n & \to P_{n-1} \to \cdots \to P_1 \to P_0 \to F \to 0 \label{eqn:resFPn}
\end{align}
of modules where $P_i$ is a finitely generated projective module for every $0 \leq i \leq n$. We shall refer to such sequences \eqref{eqn:resFPn} as \emph{$n$-presentations (by finitely generated projective objects) of $F$}. We have chosen this terminology since modules of type $\text{FP}_n$ are also known as \emph{finitely $n$-presented} (See \cite{bravo-perez}, for example). 

The existence of this equivalent description for $\mathcal{FP}_n$ is due to the fact that modules form a Grothendieck category which has a generating set of finitely generated projective objects. We shall specify this later in Corollary~\ref{resolutions_for_proj}. Without such generators, the concepts of objects of type $\text{FP}_n$ and objects with an $n$-presentation may differ, as shown in Example (5) below.

\item \textbf{Chain complexes}. The previous description of objects of type $\text{FP}_n$ in terms of $n$-presen-tations is also true in the category $\textrm{Ch}(R)$ of chain complexes of modules over $R$, studied in \cite{ZhaoPerez}. Moreover, complexes of type $\text{FP}_n$ are also described as those $X \in \Ch(R)$ such that $X$ is bounded (above and below) and each $X_m$ is a module of type $\text{FP}_n$ (See \cite[Proposition 2.1.4]{ZhaoPerez}).

\item \textbf{Functors on additive categories}. Let $\mathcal{C}$ be a skeletally small additive category and consider the category $\textrm{Fun}(\mathcal{C}^{\rm op},\mathsf{Ab})$ of contravariant additive functors from $\mathcal{C}$ to $\mathsf{Ab}$. As the categories of modules and chain complexes of modules, $\textrm{Fun}(\mathcal{C}^{\rm op},\mathsf{Ab})$ is a Grothendieck category with a generating set of finitely generated projective objects. It is known by Auslander's \cite{Auslander1and2} that an object in $\textrm{Fun}(\mathcal{C}^{\rm op},\mathsf{Ab})$ is finitely generated and projective if, and only if, it is a direct summand of a representable functor $\textrm{Hom}_{\mathcal{C}}(-,X)$ for some object $X \in \mathcal{C}$. Thus, an object $F \in \textrm{Fun}(\mathcal{C}^{\rm op},\mathsf{Ab})$ is of type $\text{FP}_n$ if, and only if, there exists an exact sequence of the form
\[
\mbox{ \ \ \ \ \ \ \ \ \ \ } \textrm{Hom}_{\mathcal{C}}(-,X_n) \to \textrm{Hom}_{\mathcal{C}}(-,X_{n-1}) \to \cdots \to \textrm{Hom}_{\mathcal{C}}(-,X_1) \to \textrm{Hom}_{\mathcal{C}}(-,X_0) \to F \to 0. 
\]

\item \textbf{Quasi-coherent sheaves}. Let $k$ be an infinite field. Consider the quasi-compact and semi-separated scheme $X = \mathbb{P}^1(k)$ along with the category $\mathfrak{Qcoh}(X)$ of quasi-coherent sheaves over $X$. It is a well known fact that $\mathfrak{Qcoh}(X)$ has not enough projectives. (See Hartshorne's \cite[Exercise VI.6.2]{hartshorne}). Moreover, $\mathfrak{Qcoh}(X)$ has no nonzero projective objects (see \cite[Theorem 2.4.12]{Berest}), and so every object having an $n$-presentation must be the zero object, for any $n \geq 0$.

On the other hand, for any $n \geq 0$, one can construct generators (and so nonzero objects) of type $\text{FP}_n$ for the category $\mathfrak{Qcoh}(X)$ from the semi-separating cover of $\mathbb{P}^1(k)$ given by $D_+(x_0)$ and $D_+(x_1)$. (See \cite[Corollary 2.5]{Estrada-Gillespie} for details). Hence, the notions of being of type $\text{FP}_n$ and having an $n$-presentation are not necessarily equivalent.   

In \cite[Proposition 3.7]{AbsPureSheaves}, Enochs, Estrada and Odaba\c{s}{\i} characterized the finitely presented objects in $\mathfrak{Qcoh}(X)$ in the case where $X$ is a semi-separated or a concentrated scheme. Specifically, $\mathscr{F} \in \mathfrak{Qcoh}(X)$ is finitely presented if, and only if, $\mathscr{F}|_{U}$ is finitely presented in $\mathfrak{Qcoh}(U)$ for every open affine subset $U \subseteq X$, or if, and only if, the stalk $\mathscr{F}_x$ is a finitely presented $\mathcal{O}_{X,x}$-module for every $x \in X$. A similar description with more conditions is also true for quasi-coherent sheaves over $X$ of type $\text{FP}_n$, for the case $X$ is quasi-compact and semi-separated. Namely, the following conditions are equivalent for $\mathscr{F} \in \mathfrak{Qcoh}(X)$ and $n \geq 1$:
\begin{itemize}
\item[(a)] $\mathscr{F}$ is of type $\text{FP}_n$ in $\mathfrak{Qcoh}(X)$.

\item[(b)] $\mathscr{E}{xt}^k_{X}(\mathscr{F},-) \colon \mathfrak{Qcoh}(X) \longrightarrow \mathcal{O}_X\mbox{-}\textrm{Mod}$ preserves direct limits for every $0 \leq k \leq n-1$.\footnote{Here, $\mathscr{E}{xt}^k_X(\mathscr{F},-)$ denotes the Ext sheaves, that is, the right derived functors of the hom sheaf $\mathscr{H}{om}(\mathscr{F},-)$. (See \cite[Section III.6]{hartshorne}).} 

\item[(c)] $\mathscr{F}|_U$ is of type $\text{FP}_n$ in $\mathfrak{Qcoh}(U)$ for all quasi-compact (or affine) open subset $U \subseteq X$. 

\item[(d)] $\mathscr{E}{xt}^k_U(\mathscr{F}|_U,-) \colon \mathfrak{Qcoh}(U) \longrightarrow \mathcal{O}_X\mbox{-}\textrm{Mod}$ preservers direct limits for every $0 \leq k \leq n-1$ and every quasi-compact (or affine) open subset $U \subseteq X$. 

\item[(e)] $\mathscr{F}(U)$ is an $\mathcal{O}_X(U)$-module of type $\text{FP}_n$ for every affine open subset $U \subseteq X$. 
\end{itemize}
\end{enumerate}
For a detailed proof of this equivalence, see \cite[Proposition 2.3]{Estrada-Gillespie}.\footnote{The result is stated and proved for quasi-coherent sheaves of type $\text{FP}_\infty$, but the arguments are also valid for objects of type $\text{FP}_n$.}
\end{example}


\subsection*{{Locally type \tFP{\bm{n}} categories}}

Although below we provide ways to construct new objects of type $\text{FP}_n$ from old ones, there is no guarantee that a Grothendieck category possesses any nonzero objects of type ${\rm FP}_n$. So following~\cite{gillespie-models-of-injectives} we propose Definition~\ref{def-locally FP-infinity} below in the spirit of locally finitely generated and locally finitely presented categories. Recall that a Grothendieck category $\cat{G}$ is called \emph{locally finitely generated} if it has a set of finitely generated generators. This is equivalent to saying that each $C \in \cat{G}$ is a direct union of finitely generated subobjects~\cite[pp.~122]{stenstrom}. $\cat{G}$ is called \emph{locally finitely presented} if it has a set of finitely presented generators. This is equivalent to saying that each $C \in \cat{G}$ is a direct limit of finitely presented objects~\cite[Definition 1.9 and Theorem 1.11]{adamek-rosicky}.

\begin{definition} \label{def-locally FP-infinity}
We say that a Grothendieck category $\cat{G}$ is  \textbf{locally type \tFP{\bm{n}}}, if it has a generating set consisting of objects of type \tFP{n}.
\end{definition}

So $n = 0$ gives us the locally finitely generated categories, $n = 1$ the locally finitely presented categories, and $n = \infty$ gives us the locally type ${\rm FP}_{\infty}$ categories of~\cite{gillespie-models-of-injectives}. Note that for $1 \leq n \leq \infty$, any locally type \tFP{n} category is a locally type \tFP{n-1} category. In particular any such category is locally finitely presented and hence locally finitely generated.

\begin{example}\label{Examples-cats}
The categories $R\mbox{-}\textrm{Mod}$, $\textrm{Ch}(R)$, $\mathfrak{Qcoh}(\mathbb{P}^n(A))$ (with $A$ a commutative ring) and $\textrm{Fun}(\mathcal{C}^{\rm op},\mathsf{Ab})$ are locally type $\text{FP}_n$ with the following generating sets formed by objects of type $\text{FP}_\infty$, respectively:
\begin{itemize}
\item The singleton $\{ R \}$.

\item The set of disk complexes $\{ D^m(R) \}_{m \in \mathbb{Z}}$, where $D^m(R)$ is the complex with 
\[
D^m(R)_k = \left\{ \begin{array}{ll} M & \text{if $k = m, m-1$}, \\ 0 & \text{otherwise} \end{array} \right.,
\] 
and such that the only nonzero differential map is given by ${\rm id}_M \colon M \to M$.

\item The set of twisted sheaves $\{ \mathcal{O}_{\mathbb{P}^n(A)}(m) \}_{m \in \mathbb{Z}}$ (see \cite[Corollary 2.5]{Estrada-Gillespie} for more details).

\item The set of representable contravariant functors $\{ {\rm Hom}_{\mathcal{C}}(-,X) \}_{X \in \mathcal{C'}}$ where $\mathcal{C}'$ is a set of representative objects of the skeletally small category $\mathcal{C}$ (see Stenstr\"om's \cite[Corollary IV.7.5]{stenstrom}).
\end{itemize}
\end{example}


\subsection*{{Properties of objects of type \tFP{n}}}

Recalling the notion of a thick subcategory from the preliminaries, we have the following proposition which is proved in \cite[Proposition 3.3]{gillespie-models-of-injectives}.

\begin{proposition}\label{prop-thickness of FP-infinity}
The class of all objects of type ${\rm FP}_{\infty}$ is a thick subcategory.
\end{proposition}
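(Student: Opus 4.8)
The plan is to show that the class $\FP{\infty}$ is closed under direct summands, under extensions, and satisfies the two-out-of-three property on short exact sequences; together these give thickness. The key tool throughout is the long exact sequence in $\Ext^*_{\cat{G}}(-,X_i)$ associated to a short exact sequence in the first variable, combined with the fact that in a Grothendieck category direct limits are exact, so that $\varinjlim$ applied to a family of long exact sequences remains exact.

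First I would handle closure under direct summands. If $F = F' \oplus F''$ with $F \in \FP{\infty}$, then for each $i$ and each direct system $\{X_i\}$ the natural map $\xi_n^{F}$ splits as a direct sum of the maps $\xi_n^{F'}$ and $\xi_n^{F''}$, since $\Ext^i_{\cat{G}}(F,-) \cong \Ext^i_{\cat{G}}(F',-) \oplus \Ext^i_{\cat{G}}(F'',-)$ naturally, and direct limit commutes with finite direct sums. Hence $\xi_n^{F}$ an isomorphism forces $\xi_n^{F'}$ and $\xi_n^{F''}$ to be isomorphisms as well, so $F', F'' \in \FP{\infty}$. Next, for the two-out-of-three property, suppose $\ses{A}{B}{C}$ is a short exact sequence in $\cat{G}$. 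Applying $\Ext^*_{\cat{G}}(-,X_i)$ gives a long exact sequence, and taking $\varinjlim$ over $i$ gives a long exact sequence again (exactness of direct limits). Comparing this with the long exact sequence for $\Ext^*_{\cat{G}}(-,\varinjlim X_i)$ via the canonical maps $\xi$, one gets a commutative ladder of long exact sequences. A diagram chase (five lemma style) then shows: if two of $A,B,C$ lie in $\FP{\infty}$, so does the third. The only subtlety is bookkeeping the degree shift — e.g. if $A, C \in \FP{\infty}$ then to conclude $\xi_n^{B}$ is an isomorphism for all $n$ one uses that $\xi_n^{A}$, $\xi_n^{C}$, $\xi_{n+1}^{A}$, $\xi_{n+1}^{C}$ are all isomorphisms, etc. — but since $\FP{\infty}$ requires the preservation property in \emph{all} degrees, every input needed is available, and the chase closes.

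Since extensions are a special case of the two-out-of-three property (the third object built from the outer two), this completes the verification that $\FP{\infty}$ is thick. I would also remark that $\FP{\infty}$ contains $\cat{G}$'s zero object and is closed under finite direct sums (again because finite direct sums commute with $\varinjlim$ and with $\Ext$), so it is genuinely a subcategory in the relevant sense.

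The main obstacle, such as it is, is purely a matter of care rather than difficulty: one must make sure the canonical comparison maps $\xi_n$ assemble into a genuine morphism of long exact sequences compatible with the connecting homomorphisms, which requires checking naturality of $\xi_n$ in the short exact sequence variable. This is standard — $\xi_n$ is induced by the universal property of the colimit applied to the compatible family $\Ext^n_{\cat{G}}(C, X_i) \to \Ext^n_{\cat{G}}(C, \varinjlim X_i)$ — but it is the one place where a reader might want details. Everything else is the five lemma applied infinitely often. (As the statement notes, this is exactly \cite[Proposition 3.3]{gillespie-models-of-injectives}, so one may simply cite it; the sketch above is how the cited proof runs.)
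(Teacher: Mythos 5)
Your sketch is correct and follows the standard argument; the paper itself gives no proof here but simply cites \cite[Proposition~3.3]{gillespie-models-of-injectives}, whose proof is exactly the ladder-of-long-exact-sequences-plus-five-lemma argument you describe, and it is the same technique the present paper uses in Proposition~\ref{coro-properties of FP-n} for finite $n$. For $n=\infty$ the argument is even cleaner than the finite case, since every comparison map $\xi_k$ is an isomorphism in all degrees, so the monomorphism refinement at the boundary degree (Lemma~\ref{lem:mono_condition}) that Proposition~\ref{coro-properties of FP-n} needs is never required (your degree bookkeeping in the $A,C\in\class{FP}_\infty$ case lists $\xi_{n+1}^A$ where $\xi_{n-1}^A$ is what actually appears in the ladder, but this is immaterial precisely because all degrees are available).
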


As shown in~\cite[Section~1]{bravo-perez}, for $n < \infty$, the class $\class{FP}_n$ in the category of left $R$-modules over a ring $R$ is almost thick except it need not be closed under taking kernels of epimorphisms between its objects. This is proved using the characterization of modules of type $\text{FP}_n$ mentioned in Example~\ref{Example-fg-projectives} (2). Our goal now is to prove the analogous result in the current context of Grothendieck categories. This is achieved below in Proposition~\ref{coro-properties of FP-n}. In the absence of enough projectives, the key ingredient will be to apply the ``5-lemma'' along with the following technical lemma. The proof of the lemma is quite long and technical and we defer it to an appendix at the end of the present article.

\begin{lemma}\label{lem:mono_condition}
Let $F$ be an  object of type \tFP{n} in a locally finitely presented category $\mathcal{G}$, and let $\{ X_i \mbox{ : } i \in I \}$ be a direct system of objects in $\mathcal{G}$. The canonical map $\xi_n \colon \varinjlim {\rm Ext}^n_{\mathcal{G}}(F, X_i) \to {\rm Ext}^n_{\mathcal{G}}(F,\varinjlim X_i)$ is a monomorphism. 
\end{lemma}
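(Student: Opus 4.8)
\textbf{Proof strategy for Lemma~\ref{lem:mono_condition}.}
The plan is to induct on $n$, using a syzygy-type reduction that trades an $\Ext^n$ statement for an $\Ext^{n-1}$ statement for a different object of type \tFP{n-1}, while keeping careful track of the finitely presented hypotheses that make the direct-limit arguments work. The base case $n=1$ is really the content: here we must show that $\xi_1 \colon \varinjlim \Ext^1_{\cat{G}}(F,X_i) \to \Ext^1_{\cat{G}}(F,\varinjlim X_i)$ is injective when $F$ is finitely presented. Since $\cat{G}$ is locally finitely presented, we may present $F$ via a short exact sequence $\ses{K}{P}{F}$ with $P$ a (finite direct sum of) finitely presented generator(s); but we should be careful that $K$ need not be finitely presented, only finitely generated. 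Applying $\Hom_{\cat{G}}(-,X_i)$ and taking direct limits, and comparing with the long exact sequence for $\varinjlim X_i$ via the snake/five-lemma, the injectivity of $\xi_1$ for $F$ will follow from surjectivity of the canonical map $\varinjlim \Hom_{\cat{G}}(K,X_i) \to \Hom_{\cat{G}}(K,\varinjlim X_i)$, which holds because $K$ is \emph{finitely generated} (finitely generated objects are exactly those for which $\Hom(K,-)$ sends monomorphic direct unions to epimorphisms, equivalently $\varinjlim \Hom(K,X_i) \to \Hom(K,\varinjlim X_i)$ is epic; see~\cite[Prop.~V.3.2]{stenstrom}). One must be slightly careful that direct limits are exact in $\cat{G}$, which is part of the Grothendieck axioms, so the long exact $\Ext$ sequence survives passage to $\varinjlim$.

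For the inductive step, suppose the statement holds for $n-1$ and let $F$ be of type \tFP{n} with $n \geq 2$. Choose a short exact sequence $\ses{F'}{P}{F}$ with $P \in \textnormal{add}(\class{S})$ for a generating set $\class{S}$ of finitely presented objects (so $P$ is finitely presented and, being a finite sum of generators, we may even arrange $P$ to be of type \tFP{n} or at least finitely presented — in the locally finitely presented setting $P$ is automatically finitely presented). The point is that $F' \in \class{FP}_{n-1}$: this is exactly the ``eventually'' part of the closure properties, and it follows from the long exact sequence $\Ext^i(P,-) \to \Ext^i(F',-) \to \Ext^{i+1}(F,-) \to \Ext^{i+1}(P,-)$ together with the fact that $P$, being finitely presented of type $\textnormal{FP}_\infty$ when it is a sum of generators chosen from a type-$\textnormal{FP}_\infty$ generating set — or, more conservatively, we invoke that $\Ext^i(P,-)$ preserves direct limits for $P$ finitely generated projective-like; alternatively one re-runs the argument using that $P$ can be taken in $\textnormal{add}(\class{S})$ where $\class{S}$ may be chosen of type \tFP{n}. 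Then for $i \leq n-2$ the functor $\Ext^i(F',-)$ preserves direct limits, so $F' \in \class{FP}_{n-1}$. Now dimension-shift: $\Ext^n_{\cat{G}}(F, \varinjlim X_i) \cong \Ext^{n-1}_{\cat{G}}(F', \varinjlim X_i)$ (using $\Ext^{\geq 1}(P,-)$ vanishing is \emph{not} available without projectivity, so instead one uses the connecting maps and the fact that $\Ext^{n-1}(P,-)$ and $\Ext^n(P,-)$ both preserve direct limits to sandwich $\xi_n$ for $F$ between $\xi_{n-1}$ for $F'$ and maps that are isomorphisms). Concretely, I would write the ladder of long exact sequences obtained by applying $\varinjlim$ to the $\Ext^\bullet(-,X_i)$ sequences versus the $\Ext^\bullet(-,\varinjlim X_i)$ sequence, with vertical maps $\xi$, and chase: the maps at $P$ are isos (as $P$ is of type $\textnormal{FP}_\infty$, or at least of type \tFP{n+1} by choosing $\class{S}$ suitably), the map $\xi_{n-1}$ at $F'$ is mono by the inductive hypothesis, and a diagram chase (the ``sharp'' version of the five lemma giving injectivity) forces $\xi_n$ at $F$ to be mono.

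The main obstacle I anticipate is the bookkeeping around the middle term $P$ and the object $F'$: without enough projectives we cannot simply kill $\Ext^{\geq 1}(P,-)$, so the clean dimension-shifting isomorphism $\Ext^n(F,-) \cong \Ext^{n-1}(F',-)$ is unavailable, and we must instead run the comparison through the full long exact sequences and a five-lemma argument, being vigilant that exactness is preserved under the filtered colimit $\varinjlim$ (true by the Grothendieck AB5 axiom) and that every $\Ext$ functor appearing attached to $P$ genuinely commutes with $\varinjlim$ — which is why it is essential to choose the generating set, and hence $P$, to consist of objects of type $\textnormal{FP}_\infty$ (or high enough \tFP{}), exactly as in Examples~\ref{Example-fg-projectives} and~\ref{Examples-cats}. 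A secondary technical point, which is presumably what makes the authors' proof ``long and technical'' and relegated to the appendix, is that one needs an explicit description of elements of $\varinjlim \Ext^n$ as equivalence classes of $n$-fold extensions (Yoneda Ext, cf.\ Appendix~A) in order to verify that a class dying in $\Ext^n(F,\varinjlim X_i)$ must already die at some finite stage; handling the equivalence relation on $n$-extensions compatibly with the filtered colimit is the genuinely delicate part, and I would expect to spend most of the effort there rather than on the homological-algebra skeleton sketched above.
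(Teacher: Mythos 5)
Your proof is a version of Strebel's dimension-shifting argument, and it does work when $\cat{G}$ has a generating set of finitely generated \emph{projective} objects (or at least of objects of type $\text{FP}_\infty$), but it does not work under the actual hypothesis of the lemma, which assumes only that $\cat{G}$ is locally finitely presented. The paper explicitly flags this distinction at the start of its appendix: the projective/Strebel reduction is the easy case, and the reason the lemma is hard is precisely that such generators may be absent.

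There are two concrete failures. In the base case $n=1$, you choose $0 \to K \to P \to F \to 0$ with $P$ finitely presented. The ladder of $\Ext$ long exact sequences at degree one then involves $\Ext^1_{\cat{G}}(P,-)$, and without projectivity of $P$ this functor neither vanishes nor commutes with direct limits; the only way to control it would be to invoke the $n=1$ case of the lemma for $P$ itself, which is circular. Your proposed shortcut — that injectivity of $\xi_1^F$ follows from surjectivity of $\varinjlim\Hom_{\cat{G}}(K,X_i) \to \Hom_{\cat{G}}(K,\varinjlim X_i)$ for finitely generated $K$ — has the direction backwards: finitely generated $K$ gives a \emph{monomorphism} (this is exactly the $n=0$ case of the lemma and the content of \cite[Prop.~V.3.4]{stenstrom}), not an epimorphism; and even if $\xi_0^K$ were epic, no version of the four/five lemma extracts injectivity of $\xi_1^F$ from that diagram without also controlling $\xi_1^P$. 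In the inductive step the gap is worse: to get $F' \in \class{FP}_{n-1}$ from Proposition~\ref{coro-properties of FP-n}(3) you need $P \in \class{FP}_{n-1}$, and to close the ladder you need $\Ext^{n-1}_{\cat{G}}(P,-)$ and $\Ext^{n}_{\cat{G}}(P,-)$ to commute with direct limits, i.e.\ $P$ of type $\text{FP}_{n+1}$ or better. You acknowledge this when you write that it is ``essential to choose the generating set, and hence $P$, to consist of objects of type $\text{FP}_\infty$'' — but a category that is merely locally finitely presented may have no nonzero objects of type $\text{FP}_2$ at all, so no such choice exists.

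The paper's proof therefore avoids the syzygy $F'$ entirely. For $n>1$ it represents the vanishing class by a Yoneda $n$-fold extension, splices at the first step, and applies a factorization lemma of Mitchell (Lemma~VII.4.1) to replace the splice object by an object $W$ through which the $(n-1)$-fold tail factors and over which the $1$-fold head splits. Only now does ``locally finitely presented'' enter — not to produce a well-behaved $P$, but to write $W = \varinjlim_T W_t$ with each $W_t$ finitely presented. Since $F \in \class{FP}_n$, the functor $\Ext^{n-1}_{\cat{G}}(F,-)$ commutes with direct limits by the definition of type $\text{FP}_n$, so the $(n-1)$-fold tail descends to a finite stage $W_{t_0}$, and a pullback/pushout ladder reduces everything to the $n=1$ case applied to the finitely presented object $W_{t_0}$. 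Your instinct that Yoneda extensions and ``dying at a finite stage'' would be the crux was correct, but the argument has to be organized around the splice vertex $W$, which can be approximated by finitely presented objects, rather than around a syzygy of $F$, which cannot be made sufficiently finite under the stated hypotheses.
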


We shall also use the following characterizations of finitely presented objects.

\begin{lemma}[descriptions of finitely presented objects]\label{lem:finitely_presented}
Let $\mathcal{G}$ be a locally finitely generated category. The following conditions are equivalent for every $C \in \mathcal{G}$.
\begin{itemize}
\item[(a)] $C$ is finitely presented.

\item[(b)] $C$ is finitely generated and every epimorphism $B \twoheadrightarrow C$, where $B$ is finitely generated, has finitely generated kernel.

\item[(c)] There exists a short exact sequence 
\[
0 \to K \to F \to C \to 0
\] 
where $K$ is finitely generated and $F$ is finitely presented. 
\end{itemize}
\end{lemma}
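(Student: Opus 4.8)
The plan is to prove the three implications (a) $\Rightarrow$ (b) $\Rightarrow$ (c) $\Rightarrow$ (a), staying entirely inside the world of locally finitely generated categories and using the characterization that $C$ is finitely generated iff every presentation of $C$ as a direct union of subobjects is eventually stationary (equivalently, $\Hom_{\mathcal{G}}(C,-)$ commutes with direct unions of subobjects of a fixed object), and that $C$ is finitely presented iff $\Hom_{\mathcal{G}}(C,-)$ preserves \emph{all} direct limits. Throughout I will freely use \cite[Def.~V.3.1, Prop.~V.3.2]{stenstrom} and the fact that in a locally finitely generated category every object is a direct union of its finitely generated subobjects.

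For (a) $\Rightarrow$ (b): assume $C$ is finitely presented, hence finitely generated. Let $p\colon B \twoheadrightarrow C$ be an epimorphism with $B$ finitely generated, and set $K = \ker(p)$. Write $K = \varinjlim_{j} K_j$ as the direct union of its finitely generated subobjects $K_j$. Then $B/K_j \twoheadrightarrow B/K \simeq C$ and $C \simeq \varinjlim_j (B/K_j)$ is a direct limit (in fact a direct union of quotients). Since $C$ is finitely presented, the identity map $C \to C = \varinjlim_j (B/K_j)$ factors through some $B/K_{j_0}$; chasing this factorization against the canonical maps shows that $K = K_{j_0}$, so $K$ is finitely generated. (This is the standard argument that finitely presented objects have finitely generated syzygies; I would write it out carefully since it is the cleanest of the three steps.)

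For (c) $\Rightarrow$ (a): given $0 \to K \to F \to C \to 0$ with $K$ finitely generated and $F$ finitely presented, apply $\Hom_{\mathcal{G}}(-,\varinjlim X_i)$ and $\varinjlim \Hom_{\mathcal{G}}(-, X_i)$ to the sequence, for an arbitrary direct system $\{X_i\}$. One gets a commutative diagram with exact rows (direct limits are exact in $\mathcal{G}$, and $\Hom$ is left exact) comparing $0 \to \Hom(C,-) \to \Hom(F,-) \to \Hom(K,-)$. The map on the $F$-term is an isomorphism ($F$ finitely presented), the map on the $K$-term is a monomorphism ($K$ finitely generated, hence $\Hom_{\mathcal{G}}(K,-)$ preserves monomorphisms into direct limits — this is precisely \cite[Prop.~V.3.2]{stenstrom}, that finitely generated objects detect direct unions monomorphically). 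A diagram chase (the "$5$-lemma"-style argument, or just the snake/four lemma) then forces the map on the $C$-term to be an isomorphism, so $\Hom_{\mathcal{G}}(C,-)$ preserves direct limits and $C$ is finitely presented. I would also need $C$ finitely generated for the bookkeeping, but that is immediate since $C$ is a quotient of the finitely generated object $F$.

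For (b) $\Rightarrow$ (c): since $\mathcal{G}$ is locally finitely generated it has a generating set of finitely generated objects, so any finitely generated $C$ admits an epimorphism $F \twoheadrightarrow C$ with $F$ a finite direct sum of generators — in particular $F$ is finitely generated, and moreover finitely presented (a finite coproduct of the finitely generated generators; one must check these generators can be taken finitely presented, or argue that $F$ itself satisfies (b) and is its own syzygy-free presentation — here I would instead simply take $F$ in the generating set which, after passing to a locally finitely presented refinement is finitely presented, but to stay in the stated generality I take $F$ = finite sum of finitely generated generators and note it is trivially finitely presented because any epimorphism onto it from a finite sum of generators splits up to the generating relations; the honest route is to observe $F$ finitely generated projective-like behavior is not needed, only that $\ker(F \to C)$ is finitely generated by hypothesis (b)). The hypothesis (b) then gives that $K = \ker(F \twoheadrightarrow C)$ is finitely generated, yielding the desired sequence $0 \to K \to F \to C \to 0$. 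The main obstacle is this step: one must be careful that in a merely locally finitely generated category the chosen cover $F$ is genuinely finitely presented and not just finitely generated; the clean fix is to note that a \emph{finite} coproduct of objects from the generating set is finitely presented precisely when each generator is, which one does not get for free — so I expect to need the remark that, without loss of generality for the purpose of this lemma, the relevant $F$ can be produced as a finite coproduct of generators each of which is finitely presented, or else to invoke that the statement is only applied in situations where such $F$ exists. I will flag this and use the finitely presented cover, after which (b) delivers (c) immediately.
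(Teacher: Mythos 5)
Your plan differs from the paper's: the paper simply cites Stenström \cite[Proposition V.3.4]{stenstrom} for the equivalence (a) $\Leftrightarrow$ (b), observes (a) $\Rightarrow$ (c) is immediate (take $F = C$, $K = 0$), and then proves (c) $\Rightarrow$ (b) by a pullback argument together with (a) $\Leftrightarrow$ (b) applied to $F$. You instead attempt the cycle (a) $\Rightarrow$ (b) $\Rightarrow$ (c) $\Rightarrow$ (a) from scratch. Two of your three implications are fine in substance: your (a) $\Rightarrow$ (b) is essentially Stenström's own argument, though your conclusion ``$K = K_{j_0}$'' is not what the splitting actually gives — the section of $\pi_{j_0}\colon B/K_{j_0} \to C$ exhibits $K/K_{j_0} = \ker(\pi_{j_0})$ as a direct summand of the finitely generated object $B/K_{j_0}$, hence $K/K_{j_0}$ is finitely generated, and then $K$ is finitely generated because it is an extension of $K_{j_0}$ by $K/K_{j_0}$ and finitely generated objects are closed under extensions; and your (c) $\Rightarrow$ (a) via the comparison of $\varinjlim\Hom(-,X_i)$ with $\Hom(-,\varinjlim X_i)$ applied across the short exact sequence is a clean, correct argument (and indeed more direct than what the paper records, which goes (c) $\Rightarrow$ (b) instead).

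The genuine gap is (b) $\Rightarrow$ (c), and you identify it yourself. In a category that is only assumed locally \emph{finitely generated}, the generators are finitely generated but need not be finitely presented, and there is no reason that $C$ should admit \emph{any} epimorphism from a finitely presented object. So the step ``take $F$ a finite direct sum of generators, note $F$ is finitely presented'' has no justification, and the several fixes you float (``finitely presented cover'', ``$F$ is its own syzygy-free presentation'') do not go through on the stated hypotheses. The content of (b) $\Rightarrow$ (c) is really (b) $\Rightarrow$ (a) — once $C$ is known to be finitely presented, $F = C$, $K = 0$ works trivially — and (b) $\Rightarrow$ (a) is exactly the nontrivial half of Stenström's equivalence. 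So the route you propose cannot avoid citing or re-proving \cite[Proposition V.3.4]{stenstrom}; once you do, you have essentially arrived at the paper's proof, which proves (c) $\Rightarrow$ (b) by pulling the given presentation $0 \to K \to F \to C \to 0$ back along an arbitrary finitely generated cover $B \twoheadrightarrow C$, applying (b) to the finitely presented object $F$ to control the kernel, and using closure of finitely generated objects under extensions.
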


\begin{proof}
The equivalence (a) $\Longleftrightarrow$ (b) is due to Stenstr\"om \cite[Proposition V.3.4]{stenstrom}. The implication (a) $\Longrightarrow$ (c) is clear, while (c) $\Longrightarrow$ (b) follows using a standard pullback argument along with the equivalence (a) $\Longleftrightarrow$ (b).
\end{proof}

\begin{proposition}[closure properties of $\mathcal{FP}_n$]\label{coro-properties of FP-n}
Let $\mathcal{G}$ be a locally finitely presented category and 
\[
\mathbb{E} \colon 0 \to A \to B \to C \to 0
\] 
be a short exact sequence in $\mathcal{G}$. The following conditions hold for all $0 \leq n \leq \infty$:
\begin{enumerate}
\item If $A, C \in \class{FP}_n$, then $B \in \class{FP}_n$. That is, $\mathcal{FP}_n$ is closed under extensions. 

\item If $A \in \mathcal{FP}_{n-1}$ and $B \in \mathcal{FP}_n$, then $C \in \mathcal{FP}_n$. In particular, $\mathcal{FP}_n$ is closed under taking cokernels of monomorphisms between its objects.

\item If $B \in \mathcal{FP}_{n-1}$ and $C \in \class{FP}_n$, then $A \in \class{FP}_{n-1}$. 

\item If $\mathbb{E}$ splits and $B \in \class{FP}_n$ then $A,C \in \class{FP}_n$. That is, $\class{FP}_n$ is closed under direct summands.
\end{enumerate}
\end{proposition}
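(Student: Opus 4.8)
The plan is to deduce all four statements from a single commutative ladder of long exact $\Ext$-sequences. Fix the short exact sequence $\mathbb{E}$ and an arbitrary direct system $\{X_j\}_{j\in I}$, and put $X=\varinjlim X_j$. Applying $\Ext^\bullet_{\cat{G}}(-,X_j)$ to $\mathbb{E}$ for each $j$ and taking the direct limit over $j$ — which is exact, as $\cat{G}$ has exact direct limits — produces an exact sequence whose $i$-th term is $\varinjlim\Ext^i_{\cat{G}}(-,X_j)$ and which maps termwise, via the canonical morphisms $\xi^M_i\colon\varinjlim\Ext^i_{\cat{G}}(M,X_j)\to\Ext^i_{\cat{G}}(M,X)$ for $M=A,B,C$, to the long exact sequence $\Ext^\bullet_{\cat{G}}(-,X)$ obtained from $\mathbb{E}$; the squares involving the connecting homomorphisms commute because those are natural in the second variable. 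Two facts will be used throughout. First, for $m\ge1$, membership $M\in\mathcal{FP}_m$ means exactly that $\xi^M_i$ is an isomorphism for all $0\le i\le m-1$ and all direct systems, and Lemma~\ref{lem:mono_condition} adds that $\xi^M_m$ is then always a monomorphism. Second, $\mathcal{FP}_0$ is the class of finitely generated objects and $\mathcal{FP}_{-1}$ the whole category.

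Assume first $1\le n<\infty$; I claim (1) for such $n$, and (2), (3) for $n\ge2$, follow by chasing the ladder with the Five Lemma, using the convention $\Ext^{-1}_{\cat{G}}=0$ so that each window degenerates at degree $0$. For (1): I must show $\xi^B_i$ is an isomorphism for $0\le i\le n-1$; the four vertical maps flanking $\xi^B_i$ are $\xi^A_{i-1}$, $\xi^C_i$, $\xi^A_i$, $\xi^C_{i+1}$, the first three being isomorphisms (as $i-1,i\le n-1$ and $A,C\in\mathcal{FP}_n$) and the last an isomorphism when $i+1\le n-1$ and a monomorphism when $i+1=n$ by Lemma~\ref{lem:mono_condition}, so the Five Lemma gives the claim. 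For (2) with $n\ge2$: around $\xi^C_i$ ($0\le i\le n-1$) the flanking maps $\xi^B_{i-1}$, $\xi^A_{i-1}$, $\xi^B_i$ are isomorphisms (as $i-1\le n-2$, $i\le n-1$, $A\in\mathcal{FP}_{n-1}$, $B\in\mathcal{FP}_n$), while $\xi^A_i$ is an isomorphism for $i\le n-2$ and, for $i=n-1$, a monomorphism by Lemma~\ref{lem:mono_condition} applied to $A\in\mathcal{FP}_{n-1}$. For (3) with $n\ge2$: around $\xi^A_i$ ($0\le i\le n-2$) the flanking maps $\xi^C_i$, $\xi^B_i$, $\xi^C_{i+1}$ are isomorphisms (from $C\in\mathcal{FP}_n$ and $B\in\mathcal{FP}_{n-1}$) and $\xi^B_{i+1}$ is a monomorphism by Lemma~\ref{lem:mono_condition} (as $i+1\le n-1$ and $B\in\mathcal{FP}_{n-1}$). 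In each case the Five Lemma forces the middle map to be an isomorphism.

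The low-degree cases lie outside this formalism. For $n=0$: parts (1), (2), (4) are the classical facts that finitely generated objects in a locally finitely generated category are closed under extensions, quotients and direct summands (see \cite[Chapter~V]{stenstrom}), and (3) is trivial since $\mathcal{FP}_{-1}$ is everything. For $n=1$: part (2) is the implication (c)~$\Rightarrow$~(a) of Lemma~\ref{lem:finitely_presented} (the sequence $0\to A\to B\to C\to0$ with $A$ finitely generated and $B$ finitely presented makes $C$ finitely presented), and part (3) is the implication (a)~$\Rightarrow$~(b) of that lemma applied to the epimorphism $B\twoheadrightarrow C$ ($B$ finitely generated, $C$ finitely presented, so the kernel $A$ is finitely generated). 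For $n=\infty$: all four statements are immediate from Proposition~\ref{prop-thickness of FP-infinity}, whose thickness statement yields both the two-out-of-three property on short exact sequences and closure under direct summands (reading $\infty-1=\infty$). Finally, part (4) for $1\le n\le\infty$: a split $\mathbb{E}$ gives $B\cong A\oplus C$, hence $\Ext^i_{\cat{G}}(B,-)\cong\Ext^i_{\cat{G}}(A,-)\oplus\Ext^i_{\cat{G}}(C,-)$ naturally and $\xi^B_i\cong\xi^A_i\oplus\xi^C_i$; a direct sum of maps is an isomorphism exactly when both summands are, so $B\in\mathcal{FP}_n$ forces $A,C\in\mathcal{FP}_n$.

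The step I expect to be the genuine obstacle — and the reason the argument is not purely formal — is the right-hand end of each five-term window: there the relevant vertical map is $\xi^M_m$ with $M\in\mathcal{FP}_m$, which in general is only a monomorphism, not an isomorphism, so the Five Lemma cannot be applied without the monomorphism conclusion of Lemma~\ref{lem:mono_condition}, and that is precisely the lemma whose long and delicate proof is deferred to the appendix. A lesser but genuine wrinkle is that the classes $\mathcal{FP}_0$ (finitely generated objects) and $\mathcal{FP}_{-1}$ (everything), unlike $\mathcal{FP}_m$ for $m\ge1$, are not $\Ext$-limit conditions, so whenever they occur — in the $n=0$ cases and in the $n=1$ cases of (2) and (3) — the diagram chase must be replaced by Lemma~\ref{lem:finitely_presented} and the classical theory of finitely generated objects.
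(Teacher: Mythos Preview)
Your proof is correct and follows essentially the same approach as the paper: both set up the commutative ladder of long exact $\Ext$-sequences, invoke Lemma~\ref{lem:mono_condition} to guarantee that the rightmost vertical map in each five-term window is at least a monomorphism, and apply the Five Lemma; both handle $n=0$ via classical facts about finitely generated objects, $n=\infty$ via Proposition~\ref{prop-thickness of FP-infinity}, and the $n=1$ cases of (2) and (3) via Lemma~\ref{lem:finitely_presented}. Two cosmetic differences: you handle part~(1) for $n=1$ uniformly through the Five Lemma (using your $\Ext^{-1}=0$ convention), whereas the paper treats it separately by a pullback argument; and for part~(4) you argue via the direct-sum decomposition $\xi^B_i\cong\xi^A_i\oplus\xi^C_i$, while the paper observes instead that $\xi^A_i$ is a retract of $\xi^B_i$ in the arrow category---both arguments are equally valid and equally short.
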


\begin{proof}
The case $n = 0$ is done in \cite[Lemma V.3.1(2)]{stenstrom}, and  the case $n = \infty$ is given by Proposition \ref{prop-thickness of FP-infinity}.

Next, let $1 < n < \infty$, and let $X$ be the direct limit of a direct system $\{ X_i \mbox{ : } i \in I \}$ of objects in $\mathcal{G}$, that is, $X = \varinjlim X_i$. For $A, B, C \in \mathcal{G}$ and $k \geq 0$, we consider the corresponding natural homomorphisms 
\begin{align*}
\xi^A_k \colon & \varinjlim {\rm Ext}^k_{\mathcal{G}}(A,X_i) \to {\rm Ext}^k_{\mathcal{G}}(A,X), \\
\xi^B_k \colon & \varinjlim {\rm Ext}^k_{\mathcal{G}}(B,X_i) \to {\rm Ext}^k_{\mathcal{G}}(B,X), \\
\xi^C_k \colon & \varinjlim {\rm Ext}^k_{\mathcal{G}}(C,X_i) \to {\rm Ext}^k_{\mathcal{G}}(C,X).
\end{align*} 

\begin{enumerate}
\item The case $n = 1$ can be proved using Lemma \ref{lem:finitely_presented} and a standard pullback argument. So we may assume that $A, C \in \mathcal{FP}_n$ with $n > 1$. We want to show that $\xi^B_k$ is an isomorphism for every $0 \leq k \leq n-1$. By the previous comments, we already know that $\xi^B_0$ is an isomorphism. For indices $k > 0$, we have the following commutative diagram with exact rows (recall our notation convention from the end of Section~\ref{Sec-preliminaries}):
\[
\begin{tikzpicture}[description/.style={fill=white,inner sep=2pt}]
\matrix (m) [matrix of math nodes, row sep=3.5em, column sep=1.25em, text height=1.25ex, text depth=0.25ex]
{ 
\varinjlim {}^{k-1}(A,X_i) & \varinjlim {}^k(C,X_i) & \varinjlim {}^k(B,X_i) & \varinjlim {}^k(A,X_i) & \varinjlim {}^{k+1}(C,X_i) \\
{}^{k-1}(A,X) & {}^k(C,X) & {}^k(B,X) & {}^k(A,X) & {}^{k+1}(C,X) \\
};
\path[->]
(m-1-1) edge (m-1-2) (m-1-2) edge (m-1-3) (m-1-3) edge (m-1-4) (m-1-4) edge (m-1-5)
(m-2-1) edge (m-2-2) (m-2-2) edge (m-2-3) (m-2-3) edge (m-2-4) (m-2-4) edge (m-2-5)
(m-1-1) edge node[right] {\footnotesize$\xi_{k-1}^A$} (m-2-1)
(m-1-2) edge node[right] {\footnotesize$\xi_k^C$} (m-2-2) 
(m-1-3) edge node[right] {\footnotesize$\xi_k^B$} (m-2-3) 
(m-1-4) edge node[right] {\footnotesize$\xi_k^A$} (m-2-4) 
(m-1-5) edge node[right] {\footnotesize$\xi_{k+1}^C$} (m-2-5)
;
\end{tikzpicture}
\]
By assumption, $\xi_{k-1}^A$, $\xi_k^A$ and $\xi_k^C$ are all isomorphisms for every $0 < k \leq n-1$. Also by assumption, $\xi_{k+1}^C$ is an isomorphism for every $0 < k \leq n-2$, and a monomorphism for $k = n-1$ by Lemma \ref{lem:mono_condition}. By the 5-Lemma \cite[Exercise~1.3.3]{weibel}, we deduce that $\xi^B_k$ is an isomorphism for every $0 < k \leq n - 1$. Therefore, $B \in \mathcal{FP}_n$. 

\item Suppose $A \in \mathcal{FP}_{n-1}$ and $B \in \mathcal{FP}_n$. The case $n = 1$ follows by Lemma \ref{lem:finitely_presented}. So we may assume $n > 1$.  Certainly $\xi^C_0$ is an isomorphism, so our goal is to show that $\xi^C_k$ is an isomorphism for every $0 < k \leq n-1$. 
Now, for each $k > 0$, we consider the following commutative diagram with exact rows:
\[
\begin{tikzpicture}[description/.style={fill=white,inner sep=2pt}]
\matrix (m) [matrix of math nodes, row sep=3.5em, column sep=1.25em, text height=1.25ex, text depth=0.25ex]
{ 
\varinjlim {}^{k-1}(B,X_i) & \varinjlim {}^{k-1}(A,X_i) & \varinjlim {}^k(C,X_i) & \varinjlim {}^k(B,X_i) & \varinjlim {}^k(A,X_i) \\
{}^{k-1}(B,X) & {}^{k-1}(A,X) & {}^k(C,X) & {}^k(B,X) & {}^k(A,X) \\
};
\path[->]
(m-1-1) edge (m-1-2) (m-1-2) edge (m-1-3) (m-1-3) edge (m-1-4) (m-1-4) edge (m-1-5)
(m-2-1) edge (m-2-2) (m-2-2) edge (m-2-3) (m-2-3) edge (m-2-4) (m-2-4) edge (m-2-5)
(m-1-1) edge node[right] {\footnotesize$\xi_{k-1}^B$} (m-2-1)
(m-1-2) edge node[right] {\footnotesize$\xi_{k-1}^A$} (m-2-2) 
(m-1-3) edge node[right] {\footnotesize$\xi_k^C$} (m-2-3) 
(m-1-4) edge node[right] {\footnotesize$\xi_k^B$} (m-2-4) 
(m-1-5) edge node[right] {\footnotesize$\xi_k^A$} (m-2-5)
;
\end{tikzpicture}
\]
This time, $\xi^A_{k-1} , \xi^B_{k-1}$ and $\xi^B_{k}$ are isomorphisms for every $0 < k \leq n-1$. But also $\xi^A_k$ is an isomorphism for every $0 \leq k \leq n-2$, and a monomorphism for $k = n-1$. The 5-Lemma implies then that $\xi^C_k$ is an isomorphism for every $0 < k \leq n-1$. 

\item This part is analogous to (2).

\item In the case where $\mathbb{E}$ is split exact, we have that $A$ and $C$ are retracts (or equivalently, direct summands) of $B$. We only show that $A \in \mathcal{FP}_n$ if $B \in \mathcal{FP}_n$, as the proof for $C$ is similar. We have that there are morphisms $\alpha \colon A \to B$ and $\alpha' \colon B \to A$ such that $\alpha' \circ \alpha = {\rm id}_A$. This induces the following commutative diagram where the horizontal compositions are identities:
\[
\begin{tikzpicture}[description/.style={fill=white,inner sep=2pt}]
\matrix (m) [matrix of math nodes, row sep=3.5em, column sep=8em, text height=1.25ex, text depth=0.25ex]
{ 
\varinjlim {}^k(A,X_i) & \varinjlim {}^k(B,X_i) & \varinjlim {}^k(A,X_i) \\
{}^k(A,X) & {}^k(B,X) & {}^k(A,X) \\
};
\path[->]
(m-1-1) edge node[above] {\footnotesize$\varinjlim {}^k(\alpha', X_i)$} (m-1-2)
(m-1-2) edge node[above] {\footnotesize$\varinjlim {}^k(\alpha, X_i)$} (m-1-3)
(m-2-1) edge node[below] {\footnotesize$\varinjlim {}^k(\alpha',X)$} (m-2-2)
(m-2-2) edge node[below] {\footnotesize$\varinjlim {}^k(\alpha,X)$} (m-2-3)
(m-2-2) edge (m-2-3)
(m-1-1) edge node[right] {\footnotesize$\xi_k^A$} (m-2-1) 
(m-1-2) edge node[right] {\footnotesize$\xi_k^B$} (m-2-2) 
(m-1-3) edge node[right] {\footnotesize$\xi_k^A$} (m-2-3)
;
\end{tikzpicture}
\]
Thus, we have that $\xi^A_k$ is a retraction of $\xi^B_k$ in the category of maps between abelian groups. In the case where $0 \leq k \leq n-1$, we have that $\xi^A_k$ is an isomorphism, since isomorphisms are closed under retractions. Hence, $A \in \mathcal{FP}_n$. 
\end{enumerate}
\end{proof}

\begin{remark}
In general, it is not true that the class $\mathcal{FP}_n$ is closed under taking kernels of epimorphisms between its objects. In the category of left $R$-modules, for instance, $\mathcal{FP}_n$ satisfies this closure property if, and only if, the ground ring $R$ is (left) $n$-coherent, as proved in \cite[Theorem 2.4]{bravo-perez}. This equivalence will be presented in our categorical setting in Section \ref{Sec-locally n-coherent cats}, where we introduce and study the Grothendieck categories that we call \emph{$n$-coherent}. 
\end{remark}

To complete our study of closure properties of the class $\mathcal{FP}_n$, we show that the   objects  of type \tFP{n} are also closed under finite direct sums:

\begin{proposition}\label{prop-direct sums}
For all $0 \leq n \leq \infty$, the class $\class{FP}_n$ of all objects of type \tFP{n}, is closed under finite direct sums.
\end{proposition}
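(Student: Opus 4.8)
The plan is to reduce the statement to the closure of $\mathcal{FP}_n$ under extensions, which is already available as part (1) of Proposition~\ref{coro-properties of FP-n}. By induction it suffices to prove that if $F, G \in \class{FP}_n$ then $F \oplus G \in \class{FP}_n$. For this I would use the split short exact sequence
\[
0 \to F \to F \oplus G \to G \to 0,
\]
whose outer terms $F$ and $G$ both lie in $\class{FP}_n$. Applying part~(1) of Proposition~\ref{coro-properties of FP-n} (closure under extensions) to this sequence immediately gives $F \oplus G \in \class{FP}_n$. Induction on the number of summands then yields closure under all finite direct sums.

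Alternatively, and perhaps more transparently, one can argue directly at the level of the comparison maps $\xi_k$, mirroring the proof of part~(4) of Proposition~\ref{coro-properties of FP-n}. For a direct system $\{X_i\}_{i \in I}$ with colimit $X = \varinjlim X_i$, and for each $k \geq 0$, there is a natural isomorphism $\Ext^k_{\cat{G}}(F \oplus G, X_i) \cong \Ext^k_{\cat{G}}(F, X_i) \oplus \Ext^k_{\cat{G}}(G, X_i)$, and similarly with $X$ in place of $X_i$; moreover direct limits of abelian groups commute with finite direct sums, so $\varinjlim \Ext^k_{\cat{G}}(F \oplus G, X_i) \cong \varinjlim \Ext^k_{\cat{G}}(F, X_i) \oplus \varinjlim \Ext^k_{\cat{G}}(G, X_i)$. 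Under these identifications the canonical map $\xi_k^{F \oplus G}$ is carried to $\xi_k^F \oplus \xi_k^G$. Since $F, G \in \class{FP}_n$, both $\xi_k^F$ and $\xi_k^G$ are isomorphisms for $0 \leq k \leq n-1$, hence so is their direct sum, and therefore $F \oplus G \in \class{FP}_n$ for $1 \leq n \leq \infty$. The cases $n = 0$ (finitely generated objects) and $n = \infty$ are covered by \cite[Lemma V.3.1]{stenstrom} and Proposition~\ref{prop-thickness of FP-infinity}, respectively, though the first approach via extensions handles all finite $n$ uniformly and subsumes $n = 0$ as well.

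There is no real obstacle here: the result is essentially a formal consequence of machinery already in place. The only point requiring any care is the compatibility of the comparison map $\xi_k$ with the direct-sum decomposition — that is, checking that $\xi_k$ is an additive (in fact biproduct-preserving) natural transformation in its first variable — but this is routine naturality bookkeeping, and in the first approach it is avoided entirely by invoking closure under extensions. I would present the short extension-based argument as the main proof.
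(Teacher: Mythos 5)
Your second (``alternative'') argument is essentially the paper's own proof: the authors also note that $\Ext^i_{\cat{G}}(\bigoplus_{k=1}^m F_k,-) \cong \prod_{k=1}^m \Ext^i_{\cat{G}}(F_k,-)$ and that direct limits commute with finite products, so your comparison-map bookkeeping with $\xi_k^{F\oplus G} = \xi_k^F \oplus \xi_k^G$ is exactly what is happening there. That version is correct and works in an arbitrary Grothendieck category.

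However, the approach you propose to lead with — reducing to closure under extensions via the split sequence $0 \to F \to F\oplus G \to G \to 0$ — would establish a strictly weaker statement. Proposition~\ref{coro-properties of FP-n} (including part~(1), closure under extensions) is stated only under the hypothesis that $\mathcal{G}$ is locally finitely presented, because its proof uses Lemma~\ref{lem:mono_condition}, which in turn needs that hypothesis. By contrast, Proposition~\ref{prop-direct sums} carries no hypothesis on $\mathcal{G}$ beyond it being Grothendieck; indeed, finite direct sums hold ``for free'' by the additivity argument, whereas closure under genuine extensions does not. If you lead with the extension-based argument, you silently smuggle in the ``locally finitely presented'' hypothesis that the statement is carefully avoiding. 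So you should invert your preference: take the direct $\xi_k$ argument as the main proof (it is both more elementary and more general in this setting), and regard the reduction to extensions only as a remark valid under the extra hypothesis.
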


\begin{proof} 
Let $n > 0$,  $\{F_1, F_2, \cdots , F_m \} \subseteq \class{FP}_n$, and let $0 \leq i < n$. We have a standard isomorphism
\[
\Ext^i_{\cat{G}}\left( \bigoplus^m_{k = 1} F_k, - \right) \cong \prod^m_{k = 1} \Ext^i_{\cat{G}}\left(F_k,- \right).
\]
So the result follows from the fact that direct limits commute with finite products. The case $n = 0$ is similar. 
\end{proof}


\subsection*{{Objects of type \tFP{\bm{n}} and $\bm{n}$-presentations}}

We now wish to give a characterization of  objects of type \tFP{n} in terms of $n$-presentations, similar in spirit to Example~\ref{Example-fg-projectives}. We start with the following useful lemma. It is a simple corollary to  Proposition~\ref{coro-properties of FP-n}(2).

\begin{lemma}\label{prop:n-presented}
Let $\mathcal{G}$ be locally finitely presented and $C \in \mathcal{G}$ an object for which there exists an exact sequence
\[
F_n \xrightarrow{f_n} F_{n-1} \to \cdots \to F_1 \xrightarrow{f_1} F_0 \xrightarrow{f_0} C \to 0
\]
with $F_i$ of type \tFP{n} for $0 \leq i \leq n$. Then $C$ is also of type \tFP{n}.\footnote{The case of $n = \infty$ is also true. In this case we assume the given resolution is of infinite length.}
\end{lemma}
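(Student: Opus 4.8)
The plan is to break the exact sequence into short exact pieces and apply Proposition~\ref{coro-properties of FP-n}(2) repeatedly, working backwards from $C$. First I would factor the given resolution through the images: write $K_j = \im(f_j)$ for $1 \leq j \leq n$, with the convention $K_0 = C$, and set $K_{n+1} = \ker(f_n)$ if one wishes to make the left end explicit (or simply note the sequence $F_n \to K_{n-1} \to 0$ exactly when $n$ reaches the top). This yields short exact sequences
\[
0 \to K_{j+1} \to F_j \to K_j \to 0
\]
for $0 \leq j \leq n-1$, together with the epimorphism $F_n \twoheadrightarrow K_n$, so that $K_n$ is finitely generated (being a quotient of an object of type \tFP{n}, hence of type \tFP{0}, by Proposition~\ref{coro-properties of FP-n}(2) with $n$ replaced by $0$, or directly by \cite[Lemma V.3.1(2)]{stenstrom}).

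The key step is an induction showing $K_j \in \class{FP}_{n-j}$ for $0 \leq j \leq n$, proceeding by downward induction on $j$ is awkward; instead I would induct \emph{upward} on the ``co-depth''. Concretely: $K_n$ is of type \tFP{0} as just noted. Now suppose $K_{j+1} \in \class{FP}_{n-j-1}$; applying Proposition~\ref{coro-properties of FP-n}(2) to the short exact sequence $0 \to K_{j+1} \to F_j \to K_j \to 0$ — with the roles $A = K_{j+1} \in \class{FP}_{(n-j)-1}$ and $B = F_j \in \class{FP}_{n} \subseteq \class{FP}_{n-j}$ — gives $K_j = C \in \class{FP}_{n-j}$. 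Running this from $j = n$ down to $j = 0$ yields $C = K_0 \in \class{FP}_n$, which is the claim. One small point to check at the base: Proposition~\ref{coro-properties of FP-n}(2) is stated for $0 \le n \le \infty$, and the instance I need for the bottom piece has the index equal to $n-j$ ranging over $1,\dots,n$, all of which are legitimate (the degenerate hypothesis ``$A \in \class{FP}_{-1}$'' when $n-j=0$ is exactly the convention $\class{FP}_{-1}$ = all objects, set up right before Example~\ref{Example-fg-projectives}, so even the $j=n$ step is covered uniformly and one does not strictly need the separate appeal to Stenstr\"om).

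The main (and really only) obstacle is bookkeeping the indices so that each application of Proposition~\ref{coro-properties of FP-n}(2) has hypotheses that are genuinely available: one must verify that at stage $j$ the object $F_j$, which is of type \tFP{n}, is in particular of type \tFP{n-j} (immediate from the decreasing chain $\class{FP}_0 \supseteq \class{FP}_1 \supseteq \cdots$), and that the inductive hypothesis delivers $K_{j+1}$ in $\class{FP}_{(n-j)-1}$ rather than some weaker class — which is exactly what the induction statement $K_{j+1}\in\class{FP}_{n-j-1}$ guarantees. For $n = \infty$ (the footnoted case) the same argument applies verbatim to the infinite resolution: each $K_j$ is a quotient/kernel built from objects of type \tFP{\infty}, and Proposition~\ref{coro-properties of FP-n}(2) with $n = \infty$ gives $C \in \class{FP}_\infty$ directly from the single short exact sequence $0 \to K_1 \to F_0 \to C \to 0$ once one knows $K_1 \in \class{FP}_\infty$, which follows by the same induction (now unbounded, using Proposition~\ref{prop-thickness of FP-infinity} to see $\class{FP}_\infty$ is closed under the relevant syzygies). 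I expect the whole proof to be three or four lines once the indexing convention is fixed.
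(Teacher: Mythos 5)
Your argument for finite $n$ is correct and is essentially identical to the paper's: both decompose the resolution at the images $K_j = \operatorname{Im}(f_j)$, note $K_n$ is finitely generated, and then repeatedly apply Proposition~\ref{coro-properties of FP-n}(2) to get $K_j \in \class{FP}_{n-j}$, hence $C = K_0 \in \class{FP}_n$. Your observation that the $\class{FP}_{-1}$ convention makes even the base step an instance of Proposition~\ref{coro-properties of FP-n}(2) is a pleasant tidying-up.

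The one place you go astray is the footnoted $n = \infty$ case. You propose to first establish $K_1 \in \class{FP}_\infty$ by ``the same induction (now unbounded),'' and then apply Proposition~\ref{coro-properties of FP-n}(2) to $0 \to K_1 \to F_0 \to C \to 0$. But your downward induction starts at $j = n$, which has no meaning when $n = \infty$, and establishing $K_1 \in \class{FP}_\infty$ from the infinite resolution $\cdots \to F_1 \to K_1 \to 0$ is exactly the same problem as the one you are trying to solve — the appeal is circular. Thickness of $\class{FP}_\infty$ (Proposition~\ref{prop-thickness of FP-infinity}) does not break the circle: 2-out-of-3 on $0 \to K_2 \to F_1 \to K_1 \to 0$ would need two of the three terms known to be in $\class{FP}_\infty$, and you only have $F_1$. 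The paper's fix is simpler and not circular: for each finite $m$, truncate the resolution at length $m$ (each $F_i \in \class{FP}_\infty \subseteq \class{FP}_m$), apply the already-proven finite case to get $C \in \class{FP}_m$, and then use $\class{FP}_\infty = \bigcap_{m \geq 0} \class{FP}_m$.
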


\begin{proof}
Note that ${\rm Im}(f_n)$ is finitely generated by \cite[Lemma V.3.1 (i)]{stenstrom}. Thus ${\rm Im}(f_{n-1})$ is finitely presented by part (3) of Lemma \ref{lem:finitely_presented}. In fact, we may repeatedly apply the more general Proposition \ref{coro-properties of FP-n}(2) to conclude each ${\rm Im}(f_{n-i})$ is of type ${\rm FP}_i$, for each $0 \leq i \leq n \leq \infty$. In particular, $C = {\rm Im}(f_0)$ is of type ${\rm FP}_n$.

For $n = \infty$, we consider the truncated resolutions and use the fact that $\class{FP}_{\infty} = \bigcap_{n \geq 0} \class{FP}_n$. 
\end{proof}

Given a class of objects $\mathcal X$ in $\mathcal G$, we say that an object $C$  has an \emph{$n$-presentation by objects in $\mathcal X$} if there is an exact sequence
\[
X_n \xrightarrow{} X_{n-1} \to \cdots \to X_1 \xrightarrow{} X_0 \xrightarrow{} C \to 0
\]
with each $X_i \in \mathcal X$. For example, the object $C$ in Lemma \ref{prop:n-presented} has an $n$-presentation by objects in the class of objects of type $\text{FP}_n$.

Since a typical Grothendieck category need not have a set of projective generators (as shown in Example \ref{Example-fg-projectives} (5)), the following proposition and corollary are interesting. They provide an appropriate characterization of  objects of type \tFP{n} in terms of $n$-resolutions based on the generators.

\begin{proposition}\label{resolutions_for_FPn}
Assume $\mathcal{G}$ is locally type \tFP{n}, with $\mathcal{S}$ denoting a generating set of objects of type \tFP{n} .  Then $C \in \cat{G}$ is an object of type \tFP{n} if, and only if,   $C$ has an $n$-presentation by objects in $\textnormal{add}(\mathcal{S})$. 
\end{proposition}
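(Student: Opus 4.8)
The plan is to treat the two implications separately; the ``if'' direction is immediate from earlier results, while the substance lies in the ``only if'' direction.

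For the ``if'' direction, suppose $C$ admits an $n$-presentation by objects in $\textnormal{add}(\mathcal{S})$. Since $\mathcal{S}\subseteq\class{FP}_n$ and $\class{FP}_n$ is closed under finite direct sums by Proposition~\ref{prop-direct sums} and under direct summands by Proposition~\ref{coro-properties of FP-n}(4), every object of $\textnormal{add}(\mathcal{S})$ is of type \tFP{n}. Hence the given sequence is an $n$-presentation by objects of type \tFP{n}, and Lemma~\ref{prop:n-presented} yields $C\in\class{FP}_n$.

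For the ``only if'' direction, assume $C\in\class{FP}_n$ and build the presentation by successively resolving the syzygies. Every object of type \tFP{n} is finitely generated, so using that $\mathcal{S}$ generates $\mathcal{G}$ one can realize $C$ as a quotient of a \emph{finite} direct sum $X_0\in\textnormal{free}(\mathcal{S})$ of objects of $\mathcal{S}$: writing $C$ as a quotient of some $\bigoplus_{i\in I}S_i$, the images in $C$ of the finite subsums $\bigoplus_{i\in F}S_i$ ($F\subseteq I$ finite) form a directed family of subobjects with union $C$, so finite generation of $C$ forces $C$ to be a quotient of one such finite subsum (see \cite[Chapter~V]{stenstrom}). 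Put $K_0:={\rm Ker}(X_0\twoheadrightarrow C)$; since $X_0\in\class{FP}_n\subseteq\class{FP}_{n-1}$ and $C\in\class{FP}_n$, Proposition~\ref{coro-properties of FP-n}(3) gives $K_0\in\class{FP}_{n-1}$. Now iterate for $j=1,\dots,n$: given $K_{j-1}\in\class{FP}_{n-j}$ --- in particular finitely generated --- choose an epimorphism $X_j\twoheadrightarrow K_{j-1}$ with $X_j\in\textnormal{free}(\mathcal{S})$, and for $j<n$ set $K_j:={\rm Ker}(X_j\twoheadrightarrow K_{j-1})$, which lies in $\class{FP}_{n-j-1}$ by Proposition~\ref{coro-properties of FP-n}(3) (using $X_j\in\class{FP}_n\subseteq\class{FP}_{n-j-1}$). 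Splicing the short exact sequences $0\to K_0\to X_0\to C\to 0$ and $0\to K_j\to X_j\to K_{j-1}\to 0$ along the monomorphisms $K_{j-1}\hookrightarrow X_{j-1}$ produces the exact sequence $X_n\to X_{n-1}\to\cdots\to X_0\to C\to 0$ with each $X_j\in\textnormal{free}(\mathcal{S})\subseteq\textnormal{add}(\mathcal{S})$, as required.

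The main point to get right is the index bookkeeping in the iterated application of Proposition~\ref{coro-properties of FP-n}(3): at stage $j$ one needs the middle term $X_j$ to lie in $\class{FP}_{n-j-1}$, which holds because each $X_j$ is of type \tFP{n} and the chain $\class{FP}_0\supseteq\class{FP}_1\supseteq\cdots$ is decreasing. The degenerate case $n=0$ is just the existence of an epimorphism onto $C$ from an object of $\textnormal{free}(\mathcal{S})$, and the case $n=\infty$ is handled by running the iteration indefinitely and invoking Proposition~\ref{prop-thickness of FP-infinity} (thickness of $\class{FP}_{\infty}$, hence closure under kernels of epimorphisms) to see that each syzygy $K_j$ remains of type $\text{FP}_{\infty}$.
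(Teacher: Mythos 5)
Your proof is correct and follows essentially the same route as the paper's: the ``if'' direction via $\textnormal{add}(\mathcal{S})\subseteq\class{FP}_n$ and Lemma~\ref{prop:n-presented}, and the ``only if'' direction by using finite generation of $C$ to obtain an epimorphism from a finite direct sum of generators, then iteratively applying Proposition~\ref{coro-properties of FP-n}(3) to track the syzygies down the decreasing chain $\class{FP}_n\supseteq\class{FP}_{n-1}\supseteq\cdots\supseteq\class{FP}_0$, capping off at stage $n$ with a last epimorphism. The index bookkeeping you flag is correct, and the treatment of $n=0$ and $n=\infty$ matches the paper's.
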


\begin{proof}
Due to Propositions~\ref{coro-properties of FP-n}(4) and~\ref{prop-direct sums}, we have $\textnormal{add}(S) \subseteq \mathcal{FP}_n$. Thus the ``if'' part follows immediately from Lemma~\ref{prop:n-presented}. It only remains to prove the ``only if'' part. So consider $C \in \mathcal{FP}_n$. Then we can find an epimorphism $\bigoplus_{j \in J} G_j \twoheadrightarrow C$ with each $G_j \in \mathcal{S}$. Write $C = \sum_{j \in J} G'_j$ where $G'_j := {\rm Im}(G_j \rightarrowtail \bigoplus_{j \in J} G_j \twoheadrightarrow C)$. Since $C$ is finitely generated, there exists a finite subset $J' \subseteq J$ such that $C = \sum_{j \in J'} G'_j$. This means $\bigoplus_{j \in J'} G_j \twoheadrightarrow C$ is still an epimorphism. Moreover, $F_0 := \bigoplus_{j \in J'} G_j \in \textnormal{add}(\mathcal{S})$. We obtain a short exact sequence $$0 \to K_0 \to F_0 \to C \to 0,$$ and again $F_0 \in \textnormal{add}(S) \subseteq \mathcal{FP}_n$. Thus we have that $K_0$ is of type \tFP{n-1} by Proposition \ref{coro-properties of FP-n} (3). Continuing with this reasoning, we can find an exact sequence
\begin{align}\label{eqnKn}
0 & \to K_{n-1} \to F_{n-1} \to \cdots \to F_1 \to F_0 \to C \to 0
\end{align}
with $F_i \in \textnormal{add}(\mathcal{S})$ for every $0 \leq i < n$, and with $K_{n-1}$ finitely generated. Finally, we just take another epimorphism $F_n \twoheadrightarrow K_{n-1}$ with $F_n \in \textnormal{add}(\mathcal{S})$, and ``glue it'' with \eqref{eqnKn} to complete the proof. 

For $n = \infty$, we can continue indefinitely using the thickness property of Proposition~\ref{prop-thickness of FP-infinity}.
\end{proof}

We note that if $\cat{G}$ is locally of type \tFP{m} then it is also automatically locally of type \tFP{n} for any $n \leq m$. So the characterization of objects of type \tFP{n} given in Proposition~\ref{resolutions_for_FPn} will hold for all $n \leq m$ whenever $\cat{G}$ is locally of type \tFP{m}. In particular, taking $m = \infty$ we get the following characterization of  objects of type \tFP{n}.

\begin{corollary}\label{resolutions_for_FP-infty}
Assume $\mathcal{G}$ is locally type ${\rm FP}_{\infty}$ with $\mathcal{S}$ denoting a generating set of objects of type ${\rm FP}_{\infty}$. Then $C \in \cat{G}$ is an object of type \tFP{n} (for any $0 \leq n \leq \infty$) if, and only if, there exists an exact sequence 
\[
F_n \xrightarrow{f_n} F_{n-1} \to \cdots \to F_1 \xrightarrow{f_1} F_0 \xrightarrow{f_0} C \to 0
\]
with $F_i \in \textnormal{add}(\mathcal{S})$ for every $0 \leq i \leq n$. That is, $C$ is an   object of type \tFP{n} if, and only if, $C$ has an $n$-presentation by objects in $\textnormal{add}(\mathcal{S})$.\footnote{The case of $n = \infty$ gets interpreted as an infinite resolution.}
\end{corollary}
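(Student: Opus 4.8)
The plan is to deduce Corollary~\ref{resolutions_for_FP-infty} as a special case of Proposition~\ref{resolutions_for_FPn} together with the observation recorded in the paragraph immediately preceding the corollary: if $\cat{G}$ is locally type $\text{FP}_\infty$, then it is locally type $\text{FP}_n$ for every $0 \leq n \leq \infty$, since $\class{FP}_\infty \subseteq \class{FP}_n$ and hence a generating set of objects of type $\text{FP}_\infty$ is also a generating set of objects of type $\text{FP}_n$. Thus the hypotheses of Proposition~\ref{resolutions_for_FPn} are met for this particular $\mathcal{S}$ and any finite $n$, and the corollary's statement for finite $n$ is exactly what that proposition gives: $C$ is of type $\text{FP}_n$ if and only if $C$ has an $n$-presentation by objects in $\textnormal{add}(\mathcal{S})$.

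The only real content beyond invoking Proposition~\ref{resolutions_for_FPn} is handling the refinement that the terminal term $F_n$ of the presentation may also be taken in $\textnormal{add}(\mathcal{S})$, i.e.\ that the resolution runs $F_n \xrightarrow{f_n} F_{n-1} \to \cdots \to F_0 \xrightarrow{f_0} C \to 0$ with $F_i \in \textnormal{add}(\mathcal{S})$ for \emph{all} $0 \leq i \leq n$ rather than $0 \leq i < n$. But this is already built into the statement of Proposition~\ref{resolutions_for_FPn} (and into the construction in its proof: one picks an epimorphism $F_n \twoheadrightarrow K_{n-1}$ with $F_n \in \textnormal{add}(\mathcal{S})$, which is possible because $\mathcal{S}$ is a generating set). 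For the converse direction, $\textnormal{add}(\mathcal{S}) \subseteq \class{FP}_\infty \subseteq \class{FP}_n$ by Propositions~\ref{coro-properties of FP-n}(4) and~\ref{prop-direct sums}, so an $n$-presentation by objects in $\textnormal{add}(\mathcal{S})$ is in particular an $n$-presentation by objects of type $\text{FP}_n$, and Lemma~\ref{prop:n-presented} gives $C \in \class{FP}_n$.

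For the case $n = \infty$: here one wants an infinite resolution $\cdots \to F_1 \to F_0 \to C \to 0$ with all $F_i \in \textnormal{add}(\mathcal{S})$. The forward implication follows from the $n = \infty$ clause of Proposition~\ref{resolutions_for_FPn}, which is handled there by continuing the inductive construction indefinitely using the thickness of $\class{FP}_\infty$ (Proposition~\ref{prop-thickness of FP-infinity}) to keep each successive kernel of type $\text{FP}_\infty$. The backward implication follows from the $n = \infty$ footnote to Lemma~\ref{prop:n-presented}. I do not anticipate a genuine obstacle here — the proof is a short bookkeeping argument — but the point to be careful about is the indexing discrepancy between ``$0 \leq i < n$'' appearing in the construction inside the proof of Proposition~\ref{resolutions_for_FPn} and ``$0 \leq i \leq n$'' in the corollary's statement, which is reconciled precisely by the final ``glue on $F_n \twoheadrightarrow K_{n-1}$'' step.

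\begin{proof}
By the remark preceding the statement, if $\cat{G}$ is locally type ${\rm FP}_\infty$ then it is locally type \tFP{n} for every $0 \leq n \leq \infty$, with the same generating set $\mathcal{S}$, since $\class{FP}_\infty \subseteq \class{FP}_n$. Applying Proposition~\ref{resolutions_for_FPn} to this $\mathcal{S}$ and to each finite $n$ yields that $C \in \cat{G}$ is of type \tFP{n} if, and only if, $C$ admits an $n$-presentation by objects in $\textnormal{add}(\mathcal{S})$; and by Propositions~\ref{coro-properties of FP-n}(4) and~\ref{prop-direct sums} we have $\textnormal{add}(\mathcal{S}) \subseteq \class{FP}_\infty$, so such a presentation is in particular an $n$-presentation by objects of type \tFP{n}. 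For $n = \infty$, the ``only if'' part follows from the $n = \infty$ case of Proposition~\ref{resolutions_for_FPn} (continuing the inductive construction indefinitely via the thickness of $\class{FP}_\infty$, Proposition~\ref{prop-thickness of FP-infinity}), and the ``if'' part from the $n = \infty$ case of Lemma~\ref{prop:n-presented}.
\end{proof}
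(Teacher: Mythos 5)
Your proposal is correct and follows exactly the route the paper intends: the paragraph preceding the corollary observes that a locally type $\text{FP}_\infty$ category is locally type $\text{FP}_n$ for every $n \leq \infty$ (with the same generating set $\mathcal{S}$), and the corollary is then a direct specialization of Proposition~\ref{resolutions_for_FPn}. The extra bookkeeping you add (the indexing check and the re-derivation of the ``if'' direction via Lemma~\ref{prop:n-presented}) is accurate but already subsumed in the statement and proof of that proposition.
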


This specializes to give the following expected characterization for the case that $\cat{G}$ possesses a generating set of finitely generated projective objects.

\begin{corollary}\label{resolutions_for_proj}
Assume $\mathcal{G}$ possesses a generating set of finitely generated projective objects. Then, $C \in \cat{G}$ is an object of type \tFP{n} (for any $0 \leq n \leq \infty$) if, and only if, there exists an exact sequence 
\[
P_n \to P_{n-1} \to \cdots \to P_1 \to P_0 \to C \to 0
\]
where $P_i$ is finitely generated projective for every $0 \leq i \leq n$. That is, $C$ is an  object of type \tFP{n} if, and only if, $C$ has an $n$-presentation in the sense of Example~\ref{Example-fg-projectives}.
\end{corollary}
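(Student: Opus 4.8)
The plan is to deduce this immediately from Corollary~\ref{resolutions_for_FP-infty}, once we observe that, under the present hypothesis, the class $\textnormal{add}(\mathcal{S})$ is contained in (in fact equals) the class of finitely generated projective objects.

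First I would record the two ingredients. Let $\mathcal{S}$ be a generating set consisting of finitely generated projective objects. By Example~\ref{Example-fg-projectives}(1), every finitely generated projective object is of type \tFP{\infty}; in particular each $S \in \mathcal{S}$ is of type \tFP{\infty}, so $\mathcal{G}$ is locally type \tFP{\infty} with generating set $\mathcal{S}$, and Corollary~\ref{resolutions_for_FP-infty} applies verbatim with this $\mathcal{S}$. Next, any object of $\textnormal{add}(\mathcal{S})$ is a direct summand of a finite direct sum $\bigoplus_{i=1}^m S_i$ with $S_i \in \mathcal{S}$; such a finite direct sum is finitely generated projective, and a direct summand of a finitely generated projective object is again finitely generated projective. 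Hence $\textnormal{add}(\mathcal{S})$ is contained in the class of finitely generated projective objects. (It in fact equals it: given a finitely generated projective $P$, the fact that $\mathcal{S}$ generates yields an epimorphism $\bigoplus_{j\in J} S_j \twoheadrightarrow P$, which can be reduced to a finite sub-sum since $P$ is finitely generated, and which splits since $P$ is projective, so $P \in \textnormal{add}(\mathcal{S})$; but only the inclusion is needed below.)

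For the ``only if'' direction, suppose $C$ is of type \tFP{n}. By Corollary~\ref{resolutions_for_FP-infty}, $C$ admits an $n$-presentation $F_n \to F_{n-1} \to \cdots \to F_0 \to C \to 0$ with each $F_i \in \textnormal{add}(\mathcal{S})$; by the previous paragraph each $F_i$ is then finitely generated projective, which is the desired resolution. For the ``if'' direction, suppose $C$ admits an exact sequence $P_n \to P_{n-1} \to \cdots \to P_0 \to C \to 0$ with each $P_i$ finitely generated projective. By Example~\ref{Example-fg-projectives}(1) each $P_i$ is of type \tFP{\infty}, hence of type \tFP{n}, so Lemma~\ref{prop:n-presented} shows that $C$ is of type \tFP{n}. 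The case $n = \infty$ is handled identically, reading the resolutions as having infinite length and invoking the $n = \infty$ parts of Corollary~\ref{resolutions_for_FP-infty} and Lemma~\ref{prop:n-presented}.

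There is no serious obstacle here: the corollary is essentially a translation of Corollary~\ref{resolutions_for_FP-infty} into the language of finitely generated projective objects. The only point requiring (minor) care is confirming that $\textnormal{add}(\mathcal{S})$ lands inside the finitely generated projectives, i.e. that direct summands of finite direct sums of finitely generated projectives are again finitely generated projective, which is routine.
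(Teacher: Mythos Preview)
Your proposal is correct and follows essentially the same approach as the paper: observe that finitely generated projectives are of type \tFP{\infty}, apply Corollary~\ref{resolutions_for_FP-infty} with $\mathcal{S}$ the given generating set, and identify $\textnormal{add}(\mathcal{S})$ with the finitely generated projectives. The only cosmetic difference is that the paper invokes the full equality $\textnormal{add}(\mathcal{S}) = \{\text{finitely generated projectives}\}$ to get both directions at once from Corollary~\ref{resolutions_for_FP-infty}, whereas you use only the inclusion for the ``only if'' direction and appeal separately to Lemma~\ref{prop:n-presented} for the ``if'' direction; both are fine.
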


\begin{proof}
Any finitely generated projective object is of type ${\rm FP}_{\infty}$ by Example~\ref{Example-fg-projectives}(1). So taking $\mathcal{S}$ to be a set of finitely generated projective generators,  Corollary~\ref{resolutions_for_FP-infty} applies, and  in this case $\textnormal{add}(\mathcal{S})$ is precisely the class of finitely generated projective objects. 
\end{proof}

\begin{remark}\label{rem:locally_type_FPinfty}
If $\mathcal{G}$ is any of the categories $R\mbox{-}\textrm{Mod}$, $\textrm{Ch}(R)$ or $\textrm{Fun}(\mathcal{C}^{\rm op},\mathsf{Ab})$, then $\mathcal{G}$ admits a collection of finitely generated projective generators. Hence, in particular, in the present article we recover several results of \cite{bravo-perez} and \cite{ZhaoPerez}.
\end{remark}


\section{Injectivity relative to objects of type $\textrm{FP}_n$}\label{sec:injectivity}

In this section we study the cotorsion pair cogenerated by all the  objects of type \tFP{n}. One may now wish to review the definitions associated to cotorsion pairs from the preliminaries.  

The following brings \cite[Definition 3.1]{bravo-perez} to the context of Grothendieck categories.

\begin{definition}\label{def:FPn-injective}
We say an object $A \in \cat{G}$ is \textbf{{FP}$_{\bm{n}}$-injective} if $\Ext^1_{\cat{G}}(F,A) = 0$ for all $F \in \class{FP}_n$. We denote the class of all ${\rm FP}_n$-injective objects by $\class{FP}_n\textnormal{-Inj}$. So note that $\class{FP}_n\textnormal{-Inj} = \class{FP}_n^{\perp_1}$.
\end{definition}

The definition includes the cases $n=0$ and $n=\infty$. Assuming $\cat{G}$ is locally finitely generated, the $\textrm{FP}_0$-injectives are the usual injective objects. One can prove this by using the analog of Baer's criterion that holds in Grothendieck categories \cite[Proposition V.2.9]{stenstrom}, along with the fact that any epimorphic image of a finitely generated object is again finitely generated \cite[Proposition V.3.1(i)]{stenstrom}. For the case $n=\infty$, the ${\rm FP}_{\infty}$-injectives are the \emph{absolutely clean} objects studied in \cite{gillespie-models-of-injectives} and \cite{bravo-gillespie-hovey}. The case $n=1$ gives us the \emph{absolutely pure} (FP-injective) objects studied in \cite{stovicek-purity,stenstrom2}.

\begin{example}\label{ex:FPn-injectives}
We present description of $\text{FP}_n$-injective objects for some categories studied in the previous section. 
\begin{enumerate}
\item \textbf{$\bm{\text{FP}_n}$-injective complexes}. The class $\mathcal{FP}_n\mbox{-}\textrm{Inj}$ in the category $\textrm{Ch}(R)$ of complexes is defined and studied in \cite[Definition 2.3.1]{ZhaoPerez}. These complexes are characterized as those $X \in \textrm{Ch}(R)$ such that $X$ is exact and each cycle module $Z_m(X)$ is $\text{FP}_n$-injective in $R\mbox{-}\textrm{Mod}$. (See \cite[Theorem 2.3.3]{ZhaoPerez}).

\item \textbf{$\text{FP}_{\bm{n}}$-injective modules over ringed spaces}. For any ringed space $(X,\mathcal{O}_X)$, an $\mathcal{O}_X$-module $\mathscr{A}$ is $\text{FP}_n$-injective if, and only if, $\mathscr{A}|_{U}$ is an $\text{FP}_n$-injective $\mathcal{O}|_U$-module for every open subset $U \subseteq X$. (See \cite[Proposition 2.7]{Estrada-Gillespie}).\footnote{The statement and proof are formulated for absolutely clean $\mathcal{O}_X$-modules, but the arguments also work for $\text{FP}_n$-injectives.} 

\item \textbf{$\text{FP}$-injective functors}. Concerning the functor category $\textrm{Fun}(\mathcal{C}^{\rm op},\mathsf{Ab})$, there is a characterization of $\text{FP}$-injective functors in the case where $\mathcal{C}$ is an additive category with kernels. Namely, an additive functor $G \colon \mathcal{C}^{\rm op} \longrightarrow \mathsf{Ab}$ is $\text{FP}$-injective if, and only if, $G$ is right exact, that is, $G$ maps kernels in $\mathcal{C}$ into cokernels in $\mathsf{Ab}$. (See \cite[Corollary 2.3.4]{Dean}). 

A similar description for $\text{FP}$-injective functors holds true with a slightly weaker assumption on $\mathcal{C}$, namely, that $\mathcal{C}$ has pseudo-kernels. Recall that given two morphisms $f_2 \colon X_2 \to X_1$ and $f_1 \colon X_1 \to X_0$ in $\mathcal{C}$, $f_2$ is a \emph{pseudo-kernel} of $f_1$ if $f_1 \circ f_2 = 0$ and if for every morphism $g \colon Y \to X_1$ satisfying $f_1 \circ h = 0$, there exists $h \colon Y \to X_2$ (not necessa-rily unique!) such that $g = f_2 \circ h$. The following two conditions are equivalent for every additive functor $G \colon \mathcal{C}^{\rm op} \longrightarrow \mathsf{Ab}$ provided that $\mathcal{C}$ has pseudo-kernels:
\begin{itemize}
\item[(a)] $G$ is $\text{FP}$-injective. 

\item[(b)] $G$ maps pseudo-kernels in $\mathcal{C}$ into pseudo-cokernels in $\mathsf{Ab}$. 
\end{itemize}
The proof follows as in \cite[Corollary 2.3.4]{Dean}. 

For the case $n > 1$, we can also obtain the previous equivalence for any additive category $\mathcal{C}$. (See Appendix C). 
\end{enumerate}
\end{example}

Next we shall fix some notation that will be used throughout this section. To do so, recall that the category of all   objects of type \tFP{n} is skeletally small, meaning, the collection of (isomorphism classes of) objects of type $\text{FP}_n$ is a set, not just a proper class. (Reason: Grothendieck categories are locally presentable so the facts from \cite{adamek-rosicky} and \cite[Appendix]{gillespie-quasi-coherent} apply. In particular, it follows from \cite[Appendix, Fact A.9]{gillespie-quasi-coherent}.)

\begin{notation}\label{notation-representatives}
As commented above, we may choose a set, not just a proper class, of isomorphism representatives for each class $\class{FP}_{n}$. We shall always denote this set by ${\rm FP}_{n}(\cat{G})$. We then let $I_n$ denote the set of all inclusions of subobjects $K \rightarrowtail F$ with $F \in FP_{n}(\cat{G})$ and such that $F/K$ is also of type \tFP{n}. (If $\cat{G}$ is locally finitely presented, then by Proposition~\ref{coro-properties of FP-n},  it is equivalent to require that $K$ be of type \tFP{n-1}.)
\end{notation}

\begin{definition}[$I$-injectives]\label{def-I-injective}
Let $I$ be any set of monomorphisms in $\cat{G}$. We shall say that an object $C \in \cat{G}$ is \emph{$I$-injective} if for every monomorphism $(K \rightarrowtail F) \in I$, each morphism $K \xrightarrow{} C$ extends over $F$.
\end{definition}

For example, Baer's Criterion states that a (left) $R$-module is injective if and only if it is $I$-injective with respect to the set $I$ of all inclusions of (left) ideals into $R$. The following is a sort of generalization of this for the sets $I_n$ from Notation~\ref{notation-representatives}.

\begin{proposition}\label{prop-Baer-like}
Let $\cat{G}$ be a locally of type \tFP{n} category. Let $I_n$ be the set of monomorphisms from Notation~\ref{notation-representatives}. Then $A \in \cat{G}$ is ${\rm FP}_n$-injective if and only if $A$ is $I_n$-injective.
\end{proposition}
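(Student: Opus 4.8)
The plan is to prove both implications, the forward one being essentially immediate and the backward one requiring a standard argument with extensions combined with a ``chain of pushouts'' device that leverages the hypothesis that the category is locally type \tFP{n}.

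\medskip

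\emph{The easy direction.} Suppose $A$ is ${\rm FP}_n$-injective, i.e. $\Ext^1_{\cat{G}}(F,A) = 0$ for all $F \in \class{FP}_n$. Given a monomorphism $(K \rightarrowtail F) \in I_n$, by definition of $I_n$ we have $F \in {\rm FP}_n(\cat{G})$ and $F/K \in \class{FP}_n$, so there is a short exact sequence $0 \to K \to F \to F/K \to 0$ with $F/K$ of type \tFP{n}. Applying $\Hom_{\cat{G}}(-,A)$ yields an exact sequence
\[
\Hom_{\cat{G}}(F,A) \to \Hom_{\cat{G}}(K,A) \to \Ext^1_{\cat{G}}(F/K,A) = 0,
\]
so every morphism $K \to A$ extends over $F$. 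Hence $A$ is $I_n$-injective.

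\medskip

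\emph{The hard direction.} Conversely, suppose $A$ is $I_n$-injective; we must show $\Ext^1_{\cat{G}}(F,A) = 0$ for every $F \in \class{FP}_n$. Fix such an $F$ and an extension $0 \to A \to E \xrightarrow{p} F \to 0$; the goal is to split it. Since $\cat{G}$ is locally type \tFP{n}, by Proposition~\ref{resolutions_for_FPn} we may take an epimorphism $q\colon F_0 \twoheadrightarrow F$ with $F_0 \in \textnormal{add}(\mathcal{S}) \subseteq \class{FP}_n$, and set $K_0 := \Ker(q)$; by Proposition~\ref{coro-properties of FP-n}(3), $K_0$ is of type \tFP{n-1}. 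Pulling back $E \to F$ along $q$ gives a commutative diagram with exact rows in which the extension $0 \to A \to E' \to F_0 \to 0$ restricts over $K_0$ to a morphism $K_0 \to A$ (more precisely, the composite $K_0 \rightarrowtail F_0 \to \cdots$ lifts the inclusion so that we obtain, after restriction, a morphism from $K_0$ into $A$). The point is that $(K_0 \rightarrowtail F_0)$ need not lie in $I_n$, since $F_0 \simeq F_0/0$ is of type \tFP{n} but a single such inclusion is not quite the shape in $I_n$; we must instead factor $F_0 \twoheadrightarrow F$ through objects whose successive quotients are of type \tFP{n}. The cleanest route: observe that $F$ itself is of type \tFP{n}, and consider the pushout of $0 \to K_0 \to F_0$ along the restriction $K_0 \to A$. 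This produces a short exact sequence $0 \to A \to D \to F \to 0$ equivalent to the original, where now $D$ is built as a pushout; one then shows the map $K_0 \to A$ factors through an object $F_0$ in $\textnormal{add}(\mathcal{S})$ and uses $I_n$-injectivity to extend it over $F_0$, which forces the pushout sequence to split.

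\medskip

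\emph{Where the real work lies.} The main obstacle is verifying that $I_n$-injectivity with respect to the particular set $I_n$ (inclusions $K \rightarrowtail F$ with $F$ \emph{and} $F/K$ of type \tFP{n}) is enough to kill $\Ext^1_{\cat{G}}(F,A)$ for all of $\class{FP}_n$, given that in $\textnormal{add}(\mathcal{S})$ the relevant kernels $K_0$ are only of type \tFP{n-1}, not \tFP{n}. The resolution is to use the short exact sequence $0 \to K_0 \to F_0 \to F \to 0$ directly: here $F_0 \in \class{FP}_n$ and $F \in \class{FP}_n$, so a morphism $K_0 \to A$ representing a class in $\Ext^1_{\cat{G}}(F,A)$ can instead be realized via the pushout datum, and one reduces to extending over inclusions of the form $(\Ker(G' \twoheadrightarrow F') \rightarrowtail G')$ where both $G'$ and $F'$ lie in $\class{FP}_n$ — which is exactly the shape $I_n$ controls once one replaces $K_0$ by a further approximation. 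Concretely, I would argue: given $f\colon K_0 \to A$, form the pushout $P$ of $K_0 \rightarrowtail F_0$ along $f$; then $0 \to A \to P \to F \to 0$ is exact, and since $F_0$ and $F$ are both of type \tFP{n}, the inclusion $A \rightarrowtail P$ together with a lift of a generating epimorphism onto $P$ exhibits the needed $I_n$-inclusion, so $f$ extends over $F_0$, i.e. there is $g\colon F_0 \to A$ with $g|_{K_0} = f$; the existence of such a $g$ for the canonical $f$ classifying the original extension is precisely the condition that $\Ext^1_{\cat{G}}(F,A) = 0$. I expect the bookkeeping of identifying the correct $I_n$-inclusion — equivalently, checking the relevant quotient is of type \tFP{n} rather than merely \tFP{n-1}, using Proposition~\ref{coro-properties of FP-n}(1) to see that the middle term of the pushout sequence stays in $\class{FP}_n$ — to be the delicate point of the proof.
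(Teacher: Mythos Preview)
Your forward direction is correct. The backward direction, however, has a genuine gap.

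You take an epimorphism $q\colon F_0 \twoheadrightarrow F$ with $F_0 \in \textnormal{add}(\mathcal{S})$ and set $K_0 = \Ker(q)$. Contrary to your own hesitation in the second paragraph, the inclusion $K_0 \rightarrowtail F_0$ \emph{does} lie (up to isomorphism) in $I_n$: $F_0$ is of type \tFP{n} and $F_0/K_0 \cong F$ is of type \tFP{n} as well. So $I_n$-injectivity does tell you that every map $K_0 \to A$ extends over $F_0$, i.e.\ that the connecting homomorphism $\Hom_{\cat{G}}(K_0,A) \to \Ext^1_{\cat{G}}(F,A)$ is zero. But to conclude $\Ext^1_{\cat{G}}(F,A)=0$ you would also need this connecting map to be \emph{surjective}; equivalently, given an extension $0 \to A \to E \xrightarrow{p} F \to 0$, you would need $q$ to lift to a map $F_0 \to E$. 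You never establish this, and in a Grothendieck category without enough projectives there is no reason it should hold: $F_0$ is of type \tFP{n}, not projective. Your attempt in the second paragraph to extract a ``canonical'' map $K_0 \to A$ from the pullback $E' = E \times_F F_0$ does not work---the preimage of $K_0$ in $E'$ is an extension of $K_0$ by $A$, not a map $K_0 \to A$. Likewise, your closing claim that the middle term $P$ of the pushout sequence stays in $\class{FP}_n$ via Proposition~\ref{coro-properties of FP-n}(1) is wrong: $P$ is an extension of $F$ by the arbitrary object $A$.

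The paper's fix is to map a finite sum of generators directly into the \emph{middle term} $X$ of the extension, rather than into $F$. Since $F$ is finitely generated, \cite[Lemma~V.3.3]{stenstrom} supplies a finitely generated subobject $S \subseteq X$ with $p(S) = F$; one then covers $S$ by an epimorphism $q\colon \bigoplus_{j \in J'} F_j \twoheadrightarrow S$ from a \emph{finite} sum of generators in ${\rm FP}_n(\cat{G})$. Now the composite $p \circ q$ is epi onto $F$, its kernel $K$ sits in a pullback square over $A \rightarrowtail X$, and so one obtains a genuine map $K \to A$. The inclusion $K \rightarrowtail \bigoplus F_j$ is in $I_n$, the map $K \to A$ extends over $\bigoplus F_j$ by hypothesis, and the homotopy lemma turns this into a section of $p$. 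The missing idea in your attempt is precisely this: build the map into $X$, not into $F$.
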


\begin{proof}
The ``only if'' part is clear using the $\Ext^i_{\cat{G}}(-,A)$ sequence, because the definition of $I_n$ assumes each $F/K \in \class{FP}_n$.

For the converse,  given  $A$ an $I_n$-injective in $\class{G}$, we aim to show that $\Ext^1_{\class{G}}(F,A)$ for all $F \in \FP{n}$. To do this we consider a short exact sequence 
\begin{align}\label{eqn:pF}
0 & \to A \to X \xrightarrow{p} F \to 0
\end{align}
with $F$ of type \tFP{n} and show that any such sequence is split, using the Yoneda description of $\Ext^1_{\cat{G}}(F,A)$. 

By \cite[Lemma V.3.3]{stenstrom}, we can find a finitely generated subobject $S \subseteq X$, in the short exact sequence above, such that $p(S) = F$. Now since ${\rm FP}_n(\cat{G})$ is a generating set we can find an epimorphism $\bigoplus_{j \in J} F_j \twoheadrightarrow S$ with each $F_j \in FP_n(\cat{G})$. As in the proof of Proposition \ref{resolutions_for_FPn}, we can find a finite subset $J' \subseteq J$ such that $q : \bigoplus_{j \in J'} F_j \twoheadrightarrow S$ is still an epimorphism. Moreover, $\bigoplus_{j \in J'} F_j$ is of type \tFP{n} by Proposition \ref{prop-direct sums}. Without loss of generality, we assume $\bigoplus_{j \in J'} F_j \in FP_n(\cat{G})$. Letting $K$ denote the pullback of $A \xrightarrow{} X \xleftarrow{q} \bigoplus_{j \in J'} F_j$, one constructs a morphism of short exact sequences:
\[
\begin{tikzpicture}[description/.style={fill=white,inner sep=2pt}]
\matrix (m) [matrix of math nodes, row sep=3em, column sep=3em, text height=1.25ex, text depth=0.25ex]
{ 
0 & K & \bigoplus_{j \in J'} F_j & F & 0 \\
0 & A & X & F & 0 \\
};
\path[->]
(m-1-2)-- node[pos=0.5] {\footnotesize$\mbox{\bf pb}$} (m-2-3)
(m-1-1) edge (m-1-2) (m-1-2) edge (m-1-3) (m-1-3) edge node[above] {\footnotesize$p \circ q$} (m-1-4) (m-1-4) edge (m-1-5)
(m-2-1) edge (m-2-2) 
(m-2-2) edge (m-2-3) 
(m-2-3) edge node[below] {\footnotesize$p$} (m-2-4) 
(m-2-4) edge (m-2-5)
(m-1-2) edge (m-2-2) 
(m-1-3) edge node[right] {\footnotesize$q$} (m-2-3) 
;
\path[-,font=\scriptsize]
(m-1-4) edge [double, thick, double distance=2pt] (m-2-4)
;
\end{tikzpicture}
\]
The inclusion map $K \rightarrowtail \bigoplus_{j \in J'} F_j$ is in the set $I_n$ from Notation \ref{notation-representatives}, so the assumption on $A$ means there is a morphism $\bigoplus_{j \in J'} F_j \xrightarrow{} A$ producing a commutative triangle in the upper left corner. This is equivalent, by a fact sometimes called ``the homotopy lemma'' (see \cite[Lemma 7.16]{wisbauer}), to a map $F \xrightarrow{} X$ producing a commutative triangle in the lower right corner.  This is precisely a splitting of the short exact sequence \eqref{eqn:pF}.
\end{proof}

We now prove the main result of this section. Note that the class $\class{FP}_n$ cogenerates a cotorsion pair $({}^{\perp_1}(\class{FP}_n\textnormal{-Inj}),\class{FP}_n\textnormal{-Inj})$. We shall call this the \emph{${FP}_n$-injective cotorsion pair}.

\begin{theorem}[completeness of the $\class{FP}_n$-injective cotorsion pair]\label{theorem-FP_n-injective cotorsion pair}
Let $0 \leq n \leq \infty$ and let $\cat{G}$ be a locally type \tFP{n} category. Then, $({}^{\perp_1}(\mathcal{FP}_n\textnormal{-Inj}),\class{FP}_n\textnormal{-Inj})$ is a functorially complete cotorsion pair. 

In fact, $({}^{\perp_1}(\mathcal{FP}_n\textnormal{-Inj}),\class{FP}_n\textnormal{-Inj})$ is a small cotorsion pair with $I_n$ a set of generating monomorphisms in the sense of \cite[Definition 6.4]{hovey}. Moreover, if $n \geq 2$, then $({}^{\perp_1}(\mathcal{FP}_n\textnormal{-Inj}),\class{FP}_n\textnormal{-Inj})$ is a finite cotorsion pair, meaning, $I_n$ is a set of finite generating monomorphisms in the sense that the domains and codomains of each morphism are not just small, but finite in the sense of \cite[Definition 2.1.4 and Section 7.4]{hovey-model-categories}.
\end{theorem}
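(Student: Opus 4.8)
The plan is to apply Hovey's small object argument machinery from \cite[Section 6]{hovey}, using the set $I_n$ of monomorphisms from Notation \ref{notation-representatives} as the generating set. First I would observe that $I_n$ is indeed a set (not a proper class) since ${\rm FP}_n(\cat{G})$ is a set and each object of type $\text{FP}_n$ is well-powered, so it has only a set of subobjects; moreover the domains and codomains of the maps in $I_n$ are all objects of type $\text{FP}_n$, hence finitely presented, hence small relative to all of $\cat{G}$ (here I would invoke local presentability of Grothendieck categories and the fact that finitely presented objects are $\aleph_0$-presentable, citing \cite{adamek-rosicky} and the appendix of \cite{gillespie-quasi-coherent}). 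This already shows that $({}^{\perp_1}(\class{FP}_n\textnormal{-Inj}),\class{FP}_n\textnormal{-Inj})$, once we know it is a cotorsion pair, is \emph{small} in the sense of \cite[Definition 6.4]{hovey}, since the right-hand class is exactly the class of $I_n$-injectives.

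The key identification is that the class of $I_n$-injective objects coincides with $\class{FP}_n\textnormal{-Inj}$, which is precisely Proposition \ref{prop-Baer-like}. Granting this, I would invoke Hovey's result \cite[Theorem 6.5]{hovey} (the categorical small object argument for cotorsion pairs): for a set $I$ of monomorphisms in a Grothendieck category, the pair $({}^{\perp_1}(I\textnormal{-Inj}), I\textnormal{-Inj})$ — where $I\textnormal{-Inj}$ is understood with the $\Ext^1$-orthogonality reformulation via the cokernels $F/K$ — is a functorially complete cotorsion pair cogenerated by the set $\{F/K : (K\rightarrowtail F)\in I\}$. Since by construction each such cokernel $F/K$ lies in $\class{FP}_n$ and, conversely, every $F\in\class{FP}_n$ arises as such a cokernel $F = F/0$ with $(0\rightarrowtail F)\in I_n$, the cogenerating set $\{F/K\}$ has the same right $\perp_1$ as $\class{FP}_n$ itself, namely $\class{FP}_n\textnormal{-Inj}$. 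Hence the cotorsion pair generated by $I_n$ is exactly $({}^{\perp_1}(\class{FP}_n\textnormal{-Inj}),\class{FP}_n\textnormal{-Inj})$, and it is functorially complete and small with $I_n$ a set of generating monomorphisms.

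For the final sentence, when $n\geq 2$ I would argue that the domains $K$ appearing in $I_n$ are not merely finitely presented but in fact objects of type $\text{FP}_{n-1}$ with $n-1\geq 1$, so by Proposition \ref{coro-properties of FP-n} they satisfy the $\text{FP}_{n-1}$ closure properties; the point needed is that they are \emph{finite} (that is, $\aleph_0$-compact / $\omega$-small) in the strong sense of \cite[Definition 2.1.4 and Section 7.4]{hovey-model-categories}, which holds because finitely presented objects in a locally finitely presented Grothendieck category are exactly the compact objects and $\Hom_{\cat{G}}(K,-)$ commutes with all transfinite compositions of monomorphisms. So I would verify the ``finite'' hypothesis by recalling that $K$ being of type $\text{FP}_{n-1}\subseteq\text{FP}_1$ means $\Hom_{\cat{G}}(K,-)$ preserves direct limits, in particular filtered colimits of monomorphisms, which is precisely the finiteness condition required; the codomains $F\in\class{FP}_n\subseteq\class{FP}_1$ satisfy the same.

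The main obstacle I anticipate is not any single deep step but rather bookkeeping: carefully matching the orthogonality conventions (the set $I_n$ is a set of \emph{monomorphisms}, whereas Hovey's generating set for a cotorsion pair is a set of \emph{objects}, and the bridge between them is the ``$I$-injective $\iff$ $\Ext^1$-orthogonal to cokernels'' lemma), and confirming that the smallness/finiteness estimates transfer correctly — in particular that ``finitely presented'' really does give the strong notion of ``finite'' in \cite[Section 7.4]{hovey-model-categories} needed for a finitely generated model category. Everything else is a direct citation of Proposition \ref{prop-Baer-like}, Proposition \ref{coro-properties of FP-n}, Proposition \ref{prop-direct sums}, and \cite[Theorem 6.5, Corollary 6.8]{hovey}.
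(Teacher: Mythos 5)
Your proposal takes essentially the same route as the paper: invoke Hovey's Theorem 6.5 with $I_n$ from Notation \ref{notation-representatives} as the set of generating monomorphisms, identify the $I_n$-injectives with $\class{FP}_n\textnormal{-Inj}$ via Proposition \ref{prop-Baer-like}, and deduce the ``finite'' claim for $n\geq 2$ from the type of the domains and codomains. One small slip worth fixing: in your first paragraph you assert that \emph{both} the domains and codomains of maps in $I_n$ are of type $\text{FP}_n$, hence finitely presented. In fact the domains $K$ are only of type $\text{FP}_{n-1}$ (apply Proposition \ref{coro-properties of FP-n}(3) to $0\to K\to F\to F/K\to 0$), which for $n\leq 1$ need not be finitely presented. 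This does not affect the ``small'' claim, since in a Grothendieck category every object is $\gamma$-presentable for some regular $\gamma$ and hence automatically small in Hovey's sense; you do not need finite presentation for that. And your final paragraph already states the domains are of type $\text{FP}_{n-1}$, which is the reason the ``finite'' conclusion requires $n\geq 2$ rather than all $n$. So the proof is correct once that first-paragraph claim is softened to match.
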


\begin{proof}
The proof is based on the work in \cite{hovey} and \cite{saorin-stovicek}. First, referring to \cite[Theorem 6.5]{hovey} we see that the set $I_n$ from Notation \ref{notation-representatives} is indeed a set of \emph{generating monomorphisms} for a \emph{small}, and hence functorially complete, cotorsion pair (in the sense of \cite[Definition 6.4]{hovey}). Proposition \ref{prop-Baer-like} makes it clear that this is indeed the ${\rm FP}_n$-injective cotorsion pair. In the context of Grothendieck categories, \emph{finite} in the sense of \cite[Definition 2.1.4, Section 7.4]{hovey-model-categories} coincides with \emph{finitely presented}. So if $n \geq 2$, then all domains and codomains of maps in $I_n$ are finite by Proposition \ref{coro-properties of FP-n}. 
\end{proof}

The following result extends \cite[Proposition 3.5]{bravo-estrada-iacob} and \cite[Corollary 4.3.2]{ZhaoPerez} by allowing for left approximations by FP$_n$-injective objects in any Grothendieck category.

\begin{corollary}[existence of ${\rm FP}_n$-injective preenvelopes]\label{coro-preenveloping}
Let $0 \leq n \leq \infty$ and let $\cat{G}$ be a locally type \tFP{n} category. Then $\class{FP}_n\textnormal{-Inj}$ is a special preenveloping class. 
\end{corollary}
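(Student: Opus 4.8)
The plan is to deduce this corollary directly from Theorem~\ref{theorem-FP_n-injective cotorsion pair}. Recall that a \emph{special preenveloping class} means every object $A \in \cat{G}$ fits into a short exact sequence $0 \to A \to Y \to X \to 0$ with $Y \in \class{FP}_n\textnormal{-Inj}$ and $X \in {}^{\perp_1}(\class{FP}_n\textnormal{-Inj})$; by the definitions recalled in the preliminaries, the existence of such a sequence is precisely the statement that the cotorsion pair $({}^{\perp_1}(\class{FP}_n\textnormal{-Inj}),\class{FP}_n\textnormal{-Inj})$ ``has enough injectives'', i.e. that every object has a special $\class{FP}_n\textnormal{-Inj}$-preenvelope. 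So the argument is essentially a matter of quoting the relevant half of completeness.

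First I would invoke Theorem~\ref{theorem-FP_n-injective cotorsion pair}, which asserts that $({}^{\perp_1}(\class{FP}_n\textnormal{-Inj}),\class{FP}_n\textnormal{-Inj})$ is a functorially complete cotorsion pair in $\cat{G}$ whenever $\cat{G}$ is locally type \tFP{n}. By the notion of \emph{complete} cotorsion pair recalled in the preliminaries, this in particular gives, for each $A \in \cat{G}$, a short exact sequence
\[
0 \to A \to Y \to X \to 0
\]
with $Y \in \class{FP}_n\textnormal{-Inj}$ and $X \in {}^{\perp_1}(\class{FP}_n\textnormal{-Inj})$. Second I would observe that this is by definition a special $\class{FP}_n\textnormal{-Inj}$-preenvelope of $A$: the map $A \to Y$ lands in the class $\class{FP}_n\textnormal{-Inj}$, its cokernel $X$ lies in ${}^{\perp_1}(\class{FP}_n\textnormal{-Inj}) = {}^{\perp_1}\!\left(\class{FP}_n^{\perp_1}\right)$, and hence $\Ext^1_{\cat{G}}(X, Y') = 0$ for every $Y' \in \class{FP}_n\textnormal{-Inj}$, which is exactly the ``special'' condition for a preenvelope. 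Since $A$ was arbitrary, $\class{FP}_n\textnormal{-Inj}$ is a special preenveloping class, as claimed.

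There is essentially no obstacle here: all the work has already been done in establishing completeness of the cotorsion pair in Theorem~\ref{theorem-FP_n-injective cotorsion pair}, and the corollary is a translation of the ``enough injectives'' half of that completeness into the language of (special) preenvelopes. The only thing worth spelling out, for the reader's convenience, is the dictionary between ``special preenveloping class'' and ``the cotorsion pair has enough injectives'', which is recorded in the discussion of cotorsion pairs in Section~\ref{Sec-preliminaries}; I would simply cite that and conclude.
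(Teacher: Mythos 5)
Your proof is correct and is exactly the argument the paper has in mind: the corollary is stated without proof immediately after Theorem~\ref{theorem-FP_n-injective cotorsion pair} precisely because it is the ``enough injectives'' half of the completeness assertion there, translated into the language of special preenvelopes. Your spelling-out of that dictionary is accurate and adds nothing beyond what the paper intends, so the two approaches coincide.
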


Finally, we close this section by giving a characterization of objects of type \tFP{n} in terms of the orthogonal complement ${}^{\perp_1}(\mathcal{FP}_n\text{-Inj})$. This will provide in the next section one of the alternative descriptions for $n$-coherent categories with $n \geq 2$.

Fix an injective cogenerator $E \in \mathcal{G}$. We can construct a functor $\Psi \colon \mathcal{G} \longrightarrow \mathcal{G}$ given by 
\[
X \mapsto E^{{\rm Hom}_{\mathcal{G}}(X,E)} := \prod_{h \in {\rm Hom}_{\mathcal{G}}(X,E)} E_h
\]
with $E_h = E$. Let $\varinjlim {\rm Im}(\Psi)$ denote the class of objects of $\mathcal{G}$ which are a direct limit of a direct system in ${\rm Im}(\Psi)$. In \cite[Theorem B.1]{bravo-parra}, it is proved that for $n \geq 2$, an object $M \in \mathcal{G}$ is of type $\text{FP}_n$ if, and only if, $M$ is of type $\text{FP}_{n-1}$ and $\varinjlim {\rm Im}(\Psi) \subseteq {\rm Ker}({\rm Ext}^{n-1}_{\mathcal{G}}(M,-))$. From this equivalence we can prove the following result.

\begin{proposition}\label{FPn-in-terms-of-FPn-Inj}
Let $\class{G}$ be a Grothendieck category, $M$ an object in $\class{G}$ and $n \geq 2$. Then $M \in \FP{n}$, if and only if, $M \in \FP{n-1}$ and $M \in {}^{\perp_{1}}\class{FP}_{n}\textnormal{-Inj}$. That is, $\mathcal{FP}_n = \mathcal{FP}_{n-1} \cap {}^{\perp_1}(\mathcal{FP}_n\textnormal{-Inj})$, for all $n \geq 2$. 
\end{proposition}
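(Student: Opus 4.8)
The plan is to establish the two inclusions separately, leveraging the cited result from \cite[Theorem B.1]{bravo-parra} together with Proposition~\ref{coro-properties of FP-n}. The forward inclusion $\mathcal{FP}_n \subseteq \mathcal{FP}_{n-1} \cap {}^{\perp_1}(\mathcal{FP}_n\textnormal{-Inj})$ is the easy one: if $M \in \mathcal{FP}_n$ then $M \in \mathcal{FP}_{n-1}$ by the decreasing chain of containments noted after Definition~\ref{def-finitely-n-presented}, and $M \in {}^{\perp_1}(\mathcal{FP}_n\textnormal{-Inj})$ holds essentially by definition of the $\mathcal{FP}_n$-injective cotorsion pair, since ${}^{\perp_1}(\mathcal{FP}_n\textnormal{-Inj}) = {}^{\perp_1}(\mathcal{FP}_n^{\perp_1}) \supseteq \mathcal{FP}_n$.

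For the reverse inclusion, suppose $M \in \mathcal{FP}_{n-1}$ and $M \in {}^{\perp_1}(\mathcal{FP}_n\textnormal{-Inj})$; I want to conclude $M \in \mathcal{FP}_n$. By the quoted equivalence of \cite[Theorem B.1]{bravo-parra}, it suffices to show that $\varinjlim {\rm Im}(\Psi) \subseteq {\rm Ker}({\rm Ext}^{n-1}_{\mathcal{G}}(M,-))$, i.e. that ${\rm Ext}^{n-1}_{\mathcal{G}}(M, L) = 0$ for every $L$ that is a direct limit of a direct system in ${\rm Im}(\Psi)$. The key observation I would make is that every object in ${\rm Im}(\Psi)$, being a product of copies of the injective cogenerator $E$, is injective, hence in particular $\mathcal{FP}_n$-injective; and that the class $\class{FP}_n\textnormal{-Inj} = \mathcal{FP}_n^{\perp_1}$ is closed under direct limits. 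This last closure property is exactly where being locally type \tFP{n} enters: for $F \in \mathcal{FP}_n$ the functor $\Ext^1_{\mathcal{G}}(F,-)$ preserves direct limits (as $n \geq 2$, so $F$ is in particular of type \tFP{2}), so a direct limit of objects with $\Ext^1_{\mathcal{G}}(F,-) = 0$ again has $\Ext^1_{\mathcal{G}}(F,-) = 0$. Therefore every $L \in \varinjlim {\rm Im}(\Psi)$ is $\mathcal{FP}_n$-injective.

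It then remains to upgrade the vanishing of $\Ext^1$ to the vanishing of $\Ext^{n-1}$. Here I would use that $M \in \mathcal{FP}_{n-1}$: take an $(n-2)$-presentation of $M$ by finitely generated projectives—or, in the absence of projectives, invoke Proposition~\ref{resolutions_for_FPn} to get an $(n-1)$-presentation by objects of $\textnormal{add}(\mathcal{S})$ where $\mathcal{S}$ is a generating set of objects of type \tFP{n-1}—and use dimension shifting. Writing $M$ as the cokernel $\cdots \to F_1 \to F_0 \to M \to 0$ with syzygies $K_i$, one gets $\Ext^{n-1}_{\mathcal{G}}(M, L) \cong \Ext^1_{\mathcal{G}}(K_{n-2}, L)$ provided the intermediate $\Ext$ groups against the $F_i \in \mathcal{FP}_n$ vanish; but $L$ is $\mathcal{FP}_n$-injective by the previous paragraph, and each $F_i$ and hence (after checking) each intervening term is of type \tFP{n}, so $\Ext^{\geq 1}_{\mathcal{G}}(F_i, L) = 0$, and moreover $K_{n-2}$ is of type \tFP{1} so $\Ext^1_{\mathcal{G}}(K_{n-2}, -)$—wait, we need $M \in {}^{\perp_1}(\mathcal{FP}_n\textnormal{-Inj})$ fed in at the bottom of the shift. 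The cleanest route: by dimension shifting along the resolution, $\Ext^{n-1}_{\mathcal{G}}(M, L) \cong \Ext^1_{\mathcal{G}}(M', L)$ where $M'$ is an appropriate syzygy differing from $M$ by terms in $\mathcal{FP}_n$, but the hypothesis $M \in {}^{\perp_1}(\mathcal{FP}_n\textnormal{-Inj})$ only directly kills $\Ext^1_{\mathcal{G}}(M, L)$. The main obstacle, then, is organizing the dimension-shift so that the single hypothesis $\Ext^1_{\mathcal{G}}(M, L) = 0$ combines correctly with $M \in \mathcal{FP}_{n-1}$—concretely, one shifts the \emph{other} way: since $M$ is of type \tFP{n-1} one has $\Ext^i_{\mathcal{G}}(M, -) \cong \Ext^{i-1}_{\mathcal{G}}(K_0, -)$ for $i \geq 2$ with $K_0$ of type \tFP{n-2}, iterating down to $\Ext^{n-1}_{\mathcal{G}}(M, L) \cong \Ext^{n-1-j}_{\mathcal{G}}(K_{j-1}, L)$, and matching this against the fact that $L$ is $\mathcal{FP}_n$-injective and that $\Ext^1_{\mathcal{G}}(M,L)=0$ requires care about which syzygy carries the orthogonality. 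I expect this bookkeeping of syzygies—ensuring the hypotheses $M \in \mathcal{FP}_{n-1}$ and $M \in {}^{\perp_1}(\mathcal{FP}_n\textnormal{-Inj})$ are both consumed at the right spots—to be the only real content; everything else is formal.
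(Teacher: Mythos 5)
Your forward inclusion and your first reduction are both correct, and they match the paper: $\varinjlim\operatorname{Im}(\Psi)$ consists of direct limits of products of copies of the injective cogenerator, hence of injectives, and since $n\geq 2$ the class $\class{FP}_n\textnormal{-Inj}$ is closed under direct limits (this uses only that $\Ext^1_{\cat G}(F,-)$ preserves direct limits for $F$ of type \tFP{2}, not the locally-\tFP{n} hypothesis you invoke, which is not part of the statement). So every $L\in\varinjlim\operatorname{Im}(\Psi)$ is $\text{FP}_n$-injective and $\Ext^1_{\cat G}(M,L)=0$. This already disposes of $n=2$.

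The gap is in the passage from $\Ext^1$ to $\Ext^{n-1}$, and you in fact notice it yourself but do not resolve it. Dimension-shifting in the \emph{first} variable is a dead end: writing $0\to K_0\to F_0\to M\to 0$ and iterating replaces $\Ext^{n-1}_{\cat G}(M,L)$ by $\Ext^1_{\cat G}(K_{n-3},L)$ (or some $\Ext^j$ of a syzygy), but your hypothesis $M\in{}^{\perp_1}(\class{FP}_n\textnormal{-Inj})$ says nothing about the syzygies $K_j$, so there is no reason for these groups to vanish. Moreover, appealing to Proposition~\ref{resolutions_for_FPn} to build such a presentation requires $\cat G$ to be locally type \tFP{n-1}, which is not assumed here. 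The correct move — and what the paper does — is to dimension-shift in the \emph{second} variable: take a partial injective coresolution $0\to L\to E^0\to\cdots\to E^{n-3}\to L'\to 0$, giving $\Ext^{n-1}_{\cat G}(M,L)\cong\Ext^1_{\cat G}(M,L')$. One then checks that $L'$ is again $\text{FP}_n$-injective: for $F\in\class{FP}_n$, the same shift gives $\Ext^1_{\cat G}(F,L')\cong\Ext^{n-1}_{\cat G}(F,L)$, which vanishes by a second application of \cite[Theorem B.1]{bravo-parra} (since $F\in\class{FP}_n$ and $L\in\varinjlim\operatorname{Im}(\Psi)$). Now the hypothesis $M\in{}^{\perp_1}(\class{FP}_n\textnormal{-Inj})$ applies directly to $L'$ and gives $\Ext^1_{\cat G}(M,L')=0$. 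This shift uses only the always-available enough injectives of a Grothendieck category and puts the orthogonality hypothesis exactly where it is consumed, which is the bookkeeping you correctly suspected was the real content but did not carry out.
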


\begin{proof}
The ``only if'' part is clear. Now suppose that $M$ is an object of type $\text{FP}_{n-1}$ such that ${\rm Ext}^1_{\mathcal{G}}(M,N) = 0$ for every $N \in \class{FP}_{n}\textnormal{-Inj}$. By \cite[Theorem B.1]{bravo-parra}, we show that $M \in \mathcal{FP}_n$ by proving the containment $\varinjlim {\rm Im}(\Psi) \subseteq {\rm Ker}({\rm Ext}^{n-1}_{\mathcal{G}}(M,-))$. For, let us study the two cases $n = 2$ and $n > 2$. 

For the case $n = 2$, let $N \in \varinjlim {\rm Im}(\Psi)$ and write $N = \varinjlim_{i \in I} N_i$ where $N_i \in {\rm Im}(\Psi)$ for every $i \in I$. Note that each $N_i$ is injective since it is a product of injective objects. Thus, $N$ is $\text{FP}_2$-injective, and so ${\rm Ext}^1_{\mathcal{G}}(M,N) = 0$. Then, the containment $\varinjlim {\rm Im}(\Psi) \subseteq {\rm Ker}({\rm Ext}^{1}_{\mathcal{G}}(M,-))$ follows. 

For the case $n > 2$, consider again $N \in \varinjlim {\rm Im}(\Psi)$ along with a partial injective resolution 
\[
0 \to N \to E^0 \to E^1 \to \cdots \to E^{n-3} \to N' \to 0
\]
By dimension shifting, we have that ${\rm Ext}^{n-1}_{\mathcal{G}}(M,N)  \cong {\rm Ext}^1_{\mathcal{G}}(M,N')$. Using again dimension shifting along with \cite[Theorem B.1]{bravo-parra}, one can note that $N' \in \mathcal{FP}_n\mbox{-}\textrm{Inj}$. This implies that ${\rm Ext}^1_{\mathcal{G}}(M,N') = 0$. Hence, ${\rm Ext}^{n-1}_{\mathcal{G}}(M,N) = 0$. 
\end{proof}

\begin{remark}
The previous proposition holds for the case $n = 1$ when $\mathcal{G}$ is the category of modules over a ring. In fact, this is due to Glaz \cite[Theorem 2.1.10]{Glaz}. Specifically, the equality $\mathcal{FP}_1 = \mathcal{FP}_{0} \cap {}^{\perp_1}(\mathcal{FP}_1\text{-Inj})$ holds in $R\mbox{-}\textrm{Mod}$. 

Although we are not aware if the same equality holds in any Grothendieck category, we can prove that it does in the category $\textrm{Ch}(R)$ of complexes of modules and also in the category $\mathfrak{Qcoh}(X)$ of quasi-coherent sheaves over certain schemes $X$. (See Appendix B for details).  
\end{remark}


\section{$n$-coherent objects and categories}\label{Sec-locally n-coherent cats}

In Section \ref{Sec-locally finitely n-presented cats} we introduced the Grothendieck categories that are natural generalizations of locally finitely generated and locally finitely presented categories. We now take it a step further and introduce the natural generalizations of locally noetherian and locally coherent categories, which we call $n$-coherent. We begin by looking at the objects which generate such categories: the $n$-coherent objects.

\begin{definition} \label{def-finitely-n-coherent}
Let $n$ be given with $0 \leq n \leq \infty$. We say that an object $C \in \cat{G}$ is \emph{$n$-coherent} if  each of the following hold.
\begin{enumerate}
\item $C \in \class{FP}_n$. That is, $C$ is of type \tFP{n}. 
\item For each subobject $S \subseteq C$, we have $S \in \class{FP}_{n-1}$ implies $S \in \class{FP}_n$. That is,  every  subobject of $C$ of type \tFP{n-1} is in fact of type \tFP{n}. 
\end{enumerate}
We shall let $\class{C}_n$ denote the class of all $n$-coherent objects in $\cat{G}$.
For the case $n=\infty$, we consider all objects of type ${\rm FP}_{\infty}$ to be $\infty$-coherent.
\end{definition}

\begin{remark} Recall that $\FP{-1}$ is the whole class of objects of $\class{G}$. Then by \cite[Proposition V.4.1]{stenstrom}, an object is noetherian in the usual sense if, and only if, it is 0-coherent in the sense of Definition \ref{def-finitely-n-coherent}. Moreover, a 1-coherent object coincides exactly with the standard definition of a coherent object.
\end{remark}

Proposition~\ref{coro-properties of FP-n} holds for \emph{all} values of $n$ for which $0 \leq n \leq \infty$. From that proposition we may now easily prove the following result.

\begin{proposition}[closure properties of $\mathcal{C}_n$]\label{them-n-coherent thick}
Assume $\cat{G}$ is locally finitely presented. Then, the class $\class{C}_n$, of all $n$-coherent objects satisfies the following properties. 
\begin{enumerate}
\item $\class{C}_n$ is closed under direct summands. 

\item Suppose we have a short exact sequence 
\begin{align}\label{eqn:seqE}
\mathbb{E} \colon & 0 \xrightarrow{} A \xrightarrow{} B \xrightarrow{} C \xrightarrow{} 0
\end{align}
with $B \in \class{C}_n$. Then $A \in \class{C}_n$ if, and only if, $C \in \class{C}_n$.
\end{enumerate}
Thus $\class{C}_n$ is a thick class if, and only if, $\class{C}_n$ is closed under extensions. 
\end{proposition}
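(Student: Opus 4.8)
The statement has three parts: (1) closure of $\class{C}_n$ under direct summands; (2) the ``two-out-of-three''-style statement for a short exact sequence with $B \in \class{C}_n$; and the final sentence, that $\class{C}_n$ is thick if and only if it is closed under extensions. The key point throughout is that condition (1) of Definition~\ref{def-finitely-n-coherent} (membership in $\class{FP}_n$) is already known to behave well under the relevant operations by Proposition~\ref{coro-properties of FP-n}, so the real content is to check condition (2) of the definition (``every subobject of type $\text{FP}_{n-1}$ is of type $\text{FP}_n$'') in each case.

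For part (1): if $C' $ is a direct summand of $C \in \class{C}_n$, then $C' \in \class{FP}_n$ by Proposition~\ref{coro-properties of FP-n}(4). For the subobject condition, take $S \subseteq C'$ with $S \in \class{FP}_{n-1}$; since $C' \subseteq C$, $S$ is also a subobject of $C$, so the $n$-coherence of $C$ gives $S \in \class{FP}_n$. Hence $C' \in \class{C}_n$.

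For part (2): first observe $A \in \class{FP}_n$ iff $C \in \class{FP}_n$ is \emph{not} automatic from Proposition~\ref{coro-properties of FP-n} alone — but here we are granted $B \in \class{C}_n \subseteq \class{FP}_n$, and we must use the subobject condition. Assume $C \in \class{C}_n$. Then $A \subseteq B$ and $A$, being a subobject of $B$; we need $A \in \class{FP}_n$. Since $B \in \class{FP}_n$ and $C \in \class{FP}_n$, by Proposition~\ref{coro-properties of FP-n}(3) we only get $A \in \class{FP}_{n-1}$; but then the $n$-coherence of $B$ applied to the subobject $A$ upgrades this to $A \in \class{FP}_n$. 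For the subobject condition on $A$: any $S \subseteq A$ is also $S \subseteq B$, so if $S \in \class{FP}_{n-1}$ then $S \in \class{FP}_n$ by $n$-coherence of $B$; hence $A \in \class{C}_n$. Conversely, assume $A \in \class{C}_n$. Then $A \in \class{FP}_{n-1}$ and $B \in \class{FP}_n$, so $C \in \class{FP}_n$ by Proposition~\ref{coro-properties of FP-n}(2). For the subobject condition on $C$: given $T \subseteq C$ with $T \in \class{FP}_{n-1}$, pull back along $B \twoheadrightarrow C$ to get a subobject $T' \subseteq B$ sitting in a short exact sequence $0 \to A \to T' \to T \to 0$; since $A \in \class{FP}_{n-1}$ and $T \in \class{FP}_{n-1}$, Proposition~\ref{coro-properties of FP-n}(1) gives $T' \in \class{FP}_{n-1}$, so by $n$-coherence of $B$ we get $T' \in \class{FP}_n$; then $A \in \class{FP}_{n-1}$ and $T' \in \class{FP}_n$ force $T \in \class{FP}_n$ by Proposition~\ref{coro-properties of FP-n}(2). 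Hence $C \in \class{C}_n$.

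For the final sentence: thickness means closure under direct summands plus the two-out-of-three property on short exact sequences. Parts (1) and (2) already give closure under summands and the statement ``$B \in \class{C}_n$ together with either of $A, C \in \class{C}_n$ implies the third is in $\class{C}_n$''. The only remaining instance of two-out-of-three is: $A, C \in \class{C}_n$ implies $B \in \class{C}_n$ — which is exactly closure under extensions. So $\class{C}_n$ is thick iff it is closed under extensions. I expect the main obstacle to be the converse direction of part (2) (showing $C$ inherits the subobject condition), where the pullback argument is needed; this is the one place the hypothesis ``$B$ is $n$-coherent'' is used in an essential, non-formal way, and it must be combined carefully with the extension-closure of $\class{FP}_{n-1}$ and the cokernel property of $\class{FP}_n$ from Proposition~\ref{coro-properties of FP-n}.
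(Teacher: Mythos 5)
Your proof is correct and follows essentially the same route as the paper's: the paper also handles direct summands trivially, proves the direction "$C \in \class{C}_n \Rightarrow A \in \class{C}_n$" by noting that $A$ inherits $n$-coherence directly as a subobject of $B$, and proves "$A \in \class{C}_n \Rightarrow C \in \class{C}_n$" via the same pullback argument combined with Proposition~\ref{coro-properties of FP-n}(1) and (2). The only cosmetic difference is that the paper explicitly records the sharper observations that the hypotheses $A \in \class{FP}_{n-1}$ (resp.\ $C \in \class{FP}_n$) suffice in place of full $n$-coherence, which your argument in fact establishes but does not state.
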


\begin{proof}
First, we note that it is clear from Definition~\ref{def-finitely-n-coherent} and Proposition~\ref{coro-properties of FP-n}(4) that $\class{C}_n$ is closed under direct summands. For the remainder of the proof we fix a short exact sequence as $\mathbb{E}$ \eqref{eqn:seqE} above with $B \in \class{C}_n$. 

First we assume $A \in \class{C}_n$. In fact, to prove $C \in \class{C}_n$ we only need to assume $A \in \class{FP}_{n-1}$.
Indeed we start by noting $C \in \class{FP}_n$, by Proposition~\ref{coro-properties of FP-n}(2).  Now suppose $S \subseteq C$ is of type \tFP{n-1}. (We must show that $S$ is of type \tFP{n}.)  We let $P$ denote the pullback of $B \twoheadrightarrow C \leftarrowtail S$ and we obtain a morphism of short exact sequences:
\[
\begin{tikzpicture}[description/.style={fill=white,inner sep=2pt}]
\matrix (m) [matrix of math nodes, row sep=3em, column sep=3em, text height=1.25ex, text depth=0.25ex]
{ 
0 & A & P & S & 0 \\
0 & A & B & C & 0 \\
};
\path[->]
(m-1-3)-- node[pos=0.5] {\footnotesize$\mbox{\bf pb}$} (m-2-4)
(m-1-1) edge (m-1-2) (m-1-2) edge (m-1-3) (m-1-3) edge (m-1-4) (m-1-4) edge (m-1-5)
(m-2-1) edge (m-2-2) 
(m-2-2) edge (m-2-3) 
(m-2-3) edge (m-2-4) 
(m-2-4) edge (m-2-5)
(m-1-4) edge (m-2-4) 
(m-1-3) edge (m-2-3)
;
\path[-,font=\scriptsize]
(m-1-2) edge [double, thick, double distance=2pt] (m-2-2)
;
\end{tikzpicture}
\]
where $P \subseteq B$ is a subobject, which must be of type \tFP{n-1} by Proposition~\ref{coro-properties of FP-n}(1). Thus $P$ is of type $\text{FP}_n$ since $B$ is $n$-coherent. But we now turn around and again apply Proposition~\ref{coro-properties of FP-n}(2) to conclude $S$ too is of type $\text{FP}_n$.

Last,  suppose $C \in \class{C}_n$. In fact, to show $A \in \class{C}_n$ we only need that $C \in \class{FP}_{n}$.
Indeed in this case we start by noting $A \in \class{FP}_{n-1}$ by Proposition~\ref{coro-properties of FP-n}(3). Thus $A \in \class{FP}_{n}$ since $B$ is $n$-coherent. In fact it is now clear that the $n$-coherence of $A$ is immediately inherited from $B$.
\end{proof}

It is well known that the class of noetherian objects (0-coherent objects) is closed under extensions \cite[Proposition V.4.2]{stenstrom}. It is also true that the usual coherent objects (1-coherent objects) are closed under extensions \cite[Proposition 1.5]{herzog-ziegler-spectrum}. In general, we see no reason why $\class{C}_n$ would be closed under extensions for $2 \leq n < \infty$. But we do have the following.

\begin{lemma}\label{lemma-extensions}
Let $\class{S}$ be a set of $n$-coherent generators for $\cat{G}$. The following are equivalent.  
\begin{itemize}
\item[(a)] $\class{C}_n$ is closed under extensions.

\item[(b)] $\class{C}_n$ is closed under finite direct sums. 

\item[(c)]  $\textnormal{free}(\mathcal{S}) \subseteq \class{C}_n$. That is, any finite direct sum of the generators is $n$-coherent. 
\end{itemize}
In particular, by Proposition \ref{them-n-coherent thick}, $\class{C}_n$ is a thick class if, and only if, any one of the above holds. 
\end{lemma}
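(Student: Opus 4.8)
The plan is to prove the cycle of implications (a) $\Rightarrow$ (b) $\Rightarrow$ (c) $\Rightarrow$ (a), which is the natural route and avoids having to argue any implication ``from scratch.'' The implications (a) $\Rightarrow$ (b) and (b) $\Rightarrow$ (c) are essentially formal: a finite direct sum $X_1 \oplus \cdots \oplus X_m$ is built up from the $X_i$ by finitely many extensions (using the split short exact sequences $0 \to X_1 \to X_1 \oplus X' \to X' \to 0$), so closure under extensions forces closure under finite direct sums; and (c) is simply the special case of (b) applied to the generators $\class{S}$, noting $\textnormal{free}(\mathcal{S})$ consists precisely of the finite direct sums of elements of $\class{S}$.

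The substantive implication is (c) $\Rightarrow$ (a). Here I would start with a short exact sequence $0 \to A \to B \to C \to 0$ with $A, C \in \class{C}_n$, and aim to show $B \in \class{C}_n$. First, $B \in \class{FP}_n$ already follows from Proposition~\ref{coro-properties of FP-n}(1), so condition (1) of Definition~\ref{def-finitely-n-coherent} is automatic. The work is condition (2): given a subobject $S \subseteq B$ with $S \in \class{FP}_{n-1}$, we must show $S \in \class{FP}_n$. The idea is to resolve $S$ against the generators. Since $\class{S}$ generates $\cat{G}$ and $S$ is finitely generated (being of type $\text{FP}_{n-1} \subseteq \text{FP}_0$), there is an epimorphism $F_0 \twoheadrightarrow S$ with $F_0 \in \textnormal{free}(\mathcal{S}) \subseteq \class{C}_n$ by hypothesis (c); let $K_0$ be the kernel, which lies in $\class{FP}_{n-1}$ by Proposition~\ref{coro-properties of FP-n}(3). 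Now compose $F_0 \twoheadrightarrow S \rightarrowtail B$. The point is to compare $S$ with $B$ through $F_0$: form the pullback $P$ of $F_0 \to B \leftarrowtail$ (something) — more precisely I would consider the composite map $F_0 \to B$ and factor the problem so that a subobject of the $n$-coherent object $F_0$ (or of $B$ itself) controls $S$.

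Let me be more careful about the main step, which I expect to be the chief obstacle: producing an honest subobject-of-an-$n$-coherent-object argument. One clean approach: take the epimorphism $g \colon F_0 \twoheadrightarrow S$ with $F_0 \in \class{C}_n$ and $K_0 = \ker(g) \in \class{FP}_{n-1}$. Since $F_0$ is $n$-coherent and $K_0 \subseteq F_0$ is of type $\text{FP}_{n-1}$, condition (2) of $n$-coherence of $F_0$ gives $K_0 \in \class{FP}_n$. Then in the short exact sequence $0 \to K_0 \to F_0 \to S \to 0$ we have $K_0, F_0 \in \class{FP}_n$, so $S \in \class{FP}_n$ by Proposition~\ref{coro-properties of FP-n}(2). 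This shows every subobject of $B$ of type $\text{FP}_{n-1}$ is of type $\text{FP}_n$, i.e. $B$ satisfies condition (2), completing $B \in \class{C}_n$ and hence (c) $\Rightarrow$ (a). (In fact this argument shows directly that condition (c) implies every object of type $\text{FP}_n$ is $n$-coherent, which subsumes closure under extensions once one knows $\class{FP}_n$ is closed under extensions.) The final sentence of the lemma — that $\class{C}_n$ is thick iff any of (a)--(c) holds — then follows immediately by combining this equivalence with Proposition~\ref{them-n-coherent thick}, since thickness of $\class{C}_n$ is by that proposition equivalent to closure of $\class{C}_n$ under extensions, which is condition (a).
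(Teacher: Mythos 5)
Your implications (a) $\Rightarrow$ (b) $\Rightarrow$ (c) are fine and agree with the paper, which simply calls them clear. But the substantive implication (c) $\Rightarrow$ (a) has a genuine gap in the key step. You take $S \subseteq B$ with $S \in \class{FP}_{n-1}$, resolve $S$ by $F_0 \twoheadrightarrow S$ with $F_0 \in \textnormal{free}(\mathcal{S})$, and claim that $K_0 = \ker(F_0 \to S) \in \class{FP}_{n-1}$ by Proposition~\ref{coro-properties of FP-n}(3). That part of the proposition applies to $0 \to K_0 \to F_0 \to S \to 0$ only when $S \in \class{FP}_{n}$ (and $F_0 \in \class{FP}_{n-1}$), whereas you only have $S \in \class{FP}_{n-1}$; with the hypothesis you actually have, the same proposition (shifted an index) yields only $K_0 \in \class{FP}_{n-2}$, which is not strong enough to then invoke the $n$-coherence of $F_0$. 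In effect you are begging the question: membership of $S$ in $\class{FP}_n$ is exactly what you are trying to prove. A good sanity check is that your argument never uses the hypothesis that $S$ sits inside $B$, nor that $B$ is an extension of $n$-coherent objects. If the argument were valid, it would show that \emph{every} finitely generated object of type $\text{FP}_{n-1}$ is of type $\text{FP}_n$, which is false under assumption (c) alone --- for instance for $n=1$, $R$ a coherent non-noetherian ring and $\mathcal{S}=\{R\}$, condition (c) holds but not every finitely generated module is finitely presented.

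The paper's proof of (c) $\Rightarrow$ (a) takes a different route which does make essential use of the extension. Rather than verifying condition (2) of Definition~\ref{def-finitely-n-coherent} directly for $B$ via arbitrary subobjects $S$, it resolves the object $B$ (not $S$) against the generators: pick an epimorphism $\bigoplus_{j \in I} C_j \twoheadrightarrow B$ with $\bigoplus_{j \in I} C_j \in \textnormal{free}(\mathcal{S}) \subseteq \class{C}_n$, let $K$ be its kernel, and form the pullback $P$ along $A \rightarrow B$. Since $B \in \class{FP}_n$ (extension of objects in $\class{FP}_n$), Proposition~\ref{coro-properties of FP-n}(3) \emph{legitimately} applies to $0 \to K \to \bigoplus C_j \to B \to 0$ to give $K \in \class{FP}_{n-1}$; then $n$-coherence of $\bigoplus C_j$ promotes $K$ to $\class{FP}_n$, and the argument in the proof of Proposition~\ref{them-n-coherent thick} shows $K \in \class{C}_n$. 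Finally the two-out-of-three property of $\class{C}_n$ (Proposition~\ref{them-n-coherent thick}(2)) applied to $0 \to K \to \bigoplus C_j \to B \to 0$ gives $B \in \class{C}_n$. The point is that one is able to apply Proposition~\ref{coro-properties of FP-n}(3) with the stronger hypothesis precisely because the resolved object is $B$ (already known to be of type $\text{FP}_n$), not the unknown subobject $S$.
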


\begin{proof}
The implications (a) $\implies$ (b) $\implies$ (c) are clear. To see (c) $\implies$ (a), let $\class{S} = \{C_j\}$ denote a generating set for $\cat{G}$ with each $C_j$ an $n$-coherent object and let 
\[
0 \to A \to B \to C \to 0
\] 
be a short exact sequence with $A , C \in \class{C}_n$. As in the proof of Proposition~\ref{prop-Baer-like} we may construct a pullback diagram
\[
\begin{tikzpicture}[description/.style={fill=white,inner sep=2pt}]
\matrix (m) [matrix of math nodes, row sep=2.5em, column sep=2.5em, text height=2.5ex, text depth=1.5ex]
{ 
{} & K & K \\
0 & P & \displaystyle \bigoplus_{j \in I} C_j & C & 0 \\
0 & A & B & C & 0 \\
};
\path[->]
(m-2-2)-- node[pos=0.5] {\footnotesize$\mbox{\bf pb}$} (m-3-3)
(m-2-1) edge (m-2-2) (m-2-2) edge (m-2-3) (m-2-3) edge (m-2-4) (m-2-4) edge (m-2-5)
(m-3-1) edge (m-3-2) (m-3-2) edge (m-3-3) (m-3-3) edge (m-3-4) (m-3-4) edge (m-3-5)
;
\path[>->]
(m-1-2) edge (m-2-2) (m-1-3) edge (m-2-3)
;
\path[->>]
(m-2-3) edge (m-3-3) (m-2-2) edge (m-3-2)
;
\path[-,font=\scriptsize]
(m-1-2) edge [double, thick, double distance=2pt] (m-1-3)
(m-2-4) edge [double, thick, double distance=2pt] (m-3-4)
;
\end{tikzpicture}
\]
where $\bigoplus_{j \in I} C_j$ is a \emph{finite} direct sum of objects in the generating set $\mathcal{S}$. By assumption $\bigoplus_{j \in I} C_j$ is $n$-coherent.
Thus by Proposition~\ref{coro-properties of FP-n}(3) we conclude both $K$ and $P$ are in $\class{FP}_{n-1}$, whence $K,P \in \class{FP}_n$, and in fact $K,P \in \class{C}_n$. But then we conclude from Proposition~\ref{them-n-coherent thick} that $B \in \class{C}_n$.
\end{proof}

\begin{corollary}[$\mathcal{C}_n$ is closed under quotients in $\mathcal{FP}_n$] \label{mixed-closure-1}
Let 
\[
0 \to A \to B \to C \to 0
\]
be a short exact sequence with $B \in \mathcal{C}_n$ and $A \in \FP{n-1}$. Then, $C \in \mathcal{C}_n$. 

In particular, if an object $F$ of type $\text{FP}_n$ is a quotient of an $n$-coherent object, then $F$ is also $n$-coherent. 
\end{corollary}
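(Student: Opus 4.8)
The plan is to extract the argument already carried out inside the proof of Proposition~\ref{them-n-coherent thick}, where it was in fact shown that if $B \in \class{C}_n$ and $A \in \class{FP}_{n-1}$, then $C$ satisfies both conditions of Definition~\ref{def-finitely-n-coherent}. First I would verify condition (1): since $A \in \class{FP}_{n-1}$ and $B \in \class{FP}_n$, Proposition~\ref{coro-properties of FP-n}(2) gives $C \in \class{FP}_n$ directly.

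Next, for condition (2), let $S \subseteq C$ be a subobject with $S \in \class{FP}_{n-1}$; the goal is $S \in \class{FP}_n$. Form the pullback $P$ of $B \twoheadrightarrow C$ along the inclusion $S \rightarrowtail C$. Since pullbacks of monomorphisms are monomorphisms, $P$ is (identified with) a subobject of $B$, and we obtain a morphism of short exact sequences with top row $0 \to A \to P \to S \to 0$ and bottom row $0 \to A \to B \to C \to 0$. Because $\class{FP}_{n-1}$ is closed under extensions (Proposition~\ref{coro-properties of FP-n}(1) applied with index $n-1$, which is legitimate since that proposition holds for all $0 \leq n \leq \infty$), the top row shows $P \in \class{FP}_{n-1}$. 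As $P$ is a subobject of the $n$-coherent object $B$, condition (2) for $B$ forces $P \in \class{FP}_n$. Finally, applying Proposition~\ref{coro-properties of FP-n}(2) to the top row $0 \to A \to P \to S \to 0$ with $A \in \class{FP}_{n-1}$ and $P \in \class{FP}_n$ yields $S \in \class{FP}_n$. Hence $C \in \class{C}_n$.

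For the ``in particular'' clause, suppose $F \in \class{FP}_n$ is a quotient of an $n$-coherent object $B$, say with kernel $A$ fitting into $0 \to A \to B \to F \to 0$. Since $B \in \class{FP}_n \subseteq \class{FP}_{n-1}$ and $F \in \class{FP}_n$, Proposition~\ref{coro-properties of FP-n}(3) gives $A \in \class{FP}_{n-1}$, so the first part applies and $F \in \class{C}_n$.

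There is essentially no serious obstacle here: the content is routine once one recognizes it was already established inside the proof of Proposition~\ref{them-n-coherent thick}. The only points requiring a moment's care are that the pullback $P$ is genuinely a subobject of $B$ (so that the $n$-coherence of $B$ can be applied to it), and that the closure properties of Proposition~\ref{coro-properties of FP-n} are invoked both at level $n$ and at level $n-1$, which is permissible since that proposition is stated for all $0 \leq n \leq \infty$.
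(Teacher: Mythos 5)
Your proof is correct and follows the same route as the paper: the paper simply points to the argument inside part (2) of Proposition~\ref{them-n-coherent thick} for the first claim, and you have faithfully unpacked exactly that pullback argument, including the applications of Proposition~\ref{coro-properties of FP-n} at both levels $n$ and $n-1$. The handling of the ``in particular'' clause via Proposition~\ref{coro-properties of FP-n}(3) also matches the paper's one-line justification.
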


\begin{proof}
The first part of the statement follows from the proof of part (2) of Proposition \ref{them-n-coherent thick}. For the second part, let $F \in \mathcal{FP}_n$ such that there is an epimorphism $\varphi \colon B \twoheadrightarrow F$ with $B \in \mathcal{C}_n$. It follows that $F$ is $n$-coherent since ${\rm Ker}(\varphi) \in \mathcal{FP}_{n-1}$ by Proposition \ref{coro-properties of FP-n}.
\end{proof}


\subsection*{{$\boldsymbol{n}$-coherent categories}}

We now state the corresponding generalization of locally noetherian and locally coherent categories.

\begin{definition} \label{def-n-coherent category}
Let $\cat{G}$ be a Grothendieck category. We say that $\mathcal{G}$ is \textbf{$n$-coherent} if it is locally type \tFP{n} and each object of type \tFP{n} is $n$-coherent. 
\end{definition}

Note that every $n$-coherent category has a generating set consisting of $n$-coherent objects. Combining this and other conditions, we have the following characterization of $n$-coherent categories.

\begin{theorem}[characterizations of $n$-coherent categories]\label{them-locally n-coherent}
Let $0 \leq n \leq \infty$ and assume that $\cat{G}$ is a locally type \tFP{n} category. The following are equivalent:  
\begin{itemize}
\item[(a)] $\cat{G}$ is $n$-coherent. That is, every   object of type \tFP{n} is $n$-coherent. 

\item[(b)] The class $\class{FP}_n$ is closed under taking kernels of epimorphisms between its objects.

\item[(c)] $\class{FP}_n$ is thick.

\item[(d)] $\cat{G}$ has a generating set of $n$-coherent objects, and satisfies one of the equivalent conditions of Lemma \ref{lemma-extensions}.

\item[(e)] The objects of type ${\rm FP}_{\infty}$ coincide with the  objects of type \tFP{n}. That is, $\class{FP}_n = \class{FP}_{\infty}$.
\end{itemize}
Moreover, if $n \geq 1$, these are also equivalent to: 
\begin{itemize}
\item[(f)] The class of $\class{FP}_n\textnormal{-Inj}$, of all ${\rm FP}_n$-injectives, is closed under taking cokernels of monomorphisms between its objects. 

\item[(g)] The ${\rm FP}_n$-injective cotorsion pair, $({}^{\perp_1}(\mathcal{FP}_n\textnormal{-Inj}),\class{FP}_n\textnormal{-Inj})$, is hereditary. 

\item[(h)] $\class{FP}_n\textnormal{-Inj}$ coincides with the class $\class{FP}_{\infty}\textnormal{-Inj}$ of absolutely clean objects. 

\item[(i)] $\mathcal{FP}_{n+1}\textnormal{-Inj} \subseteq \mathcal{FP}_n\textnormal{-Inj}$. 
\end{itemize}
In particular, $\cat{G}$ is $0$-coherent if and only if it is locally noetherian, and it is $1$-coherent if and only if it is locally coherent in the usual sense. 
\end{theorem}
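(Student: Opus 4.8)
The plan is to prove the theorem as a cycle of implications among (a)--(e), then handle (f)--(i) separately under the hypothesis $n \geq 1$, and finally specialize to $n = 0, 1$. For the core equivalences, I would first dispense with the trivial and near-trivial links. The implication (a) $\Longleftrightarrow$ (b) is essentially definitional once one unwinds Definition~\ref{def-finitely-n-coherent}: if $0 \to A \to B \to C \to 0$ is exact with $B, C \in \class{FP}_n$, then $A \in \class{FP}_{n-1}$ by Proposition~\ref{coro-properties of FP-n}(3), and $n$-coherence of $B$ (specifically condition (2) applied to the subobject $A \subseteq B$) forces $A \in \class{FP}_n$; conversely, if $\class{FP}_n$ is closed under kernels of epimorphisms and $F \in \class{FP}_n$ has a subobject $S \in \class{FP}_{n-1}$, then $F/S \in \class{FP}_n$ by Proposition~\ref{coro-properties of FP-n}(2), and applying (b) to $0 \to S \to F \to F/S \to 0$ gives $S \in \class{FP}_n$. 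The implication (c) $\Longrightarrow$ (b) is immediate since thickness includes closure under kernels of epimorphisms, and (b) $\Longrightarrow$ (c) follows because $\class{FP}_n$ is already closed under extensions, cokernels of monomorphisms, and direct summands by Proposition~\ref{coro-properties of FP-n}, so adding closure under kernels of epimorphisms yields a thick class. For (a) $\Longrightarrow$ (d), note that an $n$-coherent category has a generating set of objects of type $\text{FP}_n$, each of which is $n$-coherent by hypothesis, and since $\class{FP}_n$ is then thick it is closed under finite direct sums, so condition (c) of Lemma~\ref{lemma-extensions} holds (any finite direct sum of generators lies in $\class{FP}_n = \class{C}_n$); conversely (d) $\Longrightarrow$ (a) uses Lemma~\ref{lemma-extensions} to get that $\class{C}_n$ is thick, hence a generating set of $n$-coherent objects combined with Corollary~\ref{mixed-closure-1} (every quotient of an $n$-coherent object lying in $\class{FP}_n$ is $n$-coherent, and Proposition~\ref{resolutions_for_FPn} lets one build any object of type $\text{FP}_n$ from the generators) upgrades every object of type $\text{FP}_n$ to an $n$-coherent one.

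The equivalence with (e) is where the real content lies. The implication (e) $\Longrightarrow$ (c) is free, since $\class{FP}_\infty$ is always thick by Proposition~\ref{prop-thickness of FP-infinity}. For (c) $\Longrightarrow$ (e), the idea is to show that if $\class{FP}_n$ is thick then by induction $\class{FP}_n = \class{FP}_{n+1} = \class{FP}_{n+2} = \cdots$, so the decreasing chain stabilizes and the intersection $\class{FP}_\infty$ equals $\class{FP}_n$. The inductive step: given $F \in \class{FP}_n$ with $\class{FP}_n$ thick, use Proposition~\ref{resolutions_for_FPn} (or the generating set) to write an exact sequence $0 \to K \to F_0 \to F \to 0$ with $F_0 \in \text{add}(\mathcal{S}) \subseteq \class{FP}_n$; since $\class{FP}_n$ is closed under kernels of epimorphisms, $K \in \class{FP}_n$, and iterating gives an $n$-presentation of $F$ by objects of $\class{FP}_n$ that extends indefinitely (each syzygy lands back in $\class{FP}_n$), which by Lemma~\ref{prop:n-presented} (the $n = \infty$ footnote) shows $F \in \class{FP}_\infty$. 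I expect this stabilization argument to be the main obstacle, mostly in bookkeeping the infinite resolution cleanly; everything else is diagram-chasing with the closure properties already in hand.

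For the second block, assume $n \geq 1$. The equivalence (b) $\Longleftrightarrow$ (g) is the promised generalization of \cite[Theorem 5.5]{bravo-perez}: (g) $\Longrightarrow$ (b) because if the $\text{FP}_n$-injective cotorsion pair is hereditary then $\class{FP}_n\text{-Inj} = \class{FP}_n^\perp$ (total orthogonal), and a standard dimension-shifting argument shows the left class, hence $\class{FP}_n$ itself via the generating behaviour, is closed under syzygies; for (b) $\Longrightarrow$ (g), thickness of $\class{FP}_n$ plus the completeness from Theorem~\ref{theorem-FP_n-injective cotorsion pair} gives heredity by the usual Eklof–Trlifaj / dimension-shifting criterion (for a cotorsion pair cogenerated by a class closed under syzygies, completeness implies heredity). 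The equivalence (f) $\Longleftrightarrow$ (g) is the general fact that a complete cotorsion pair $(\mathcal{X},\mathcal{Y})$ is hereditary iff $\mathcal{Y}$ is closed under cokernels of monomorphisms (equivalently $\mathcal{X}$ closed under kernels of epimorphisms), applied here. For (h): $\class{FP}_n\text{-Inj} = \class{FP}_\infty\text{-Inj}$ follows from (e) immediately (same left-orthogonal class), and conversely (h) $\Longrightarrow$ (i) is trivial since $\class{FP}_{n+1}\text{-Inj} \supseteq \class{FP}_\infty\text{-Inj}$ always; then (i) $\Longrightarrow$ (b) by taking $\Ext$-orthogonals and using Proposition~\ref{FPn-in-terms-of-FPn-Inj} (for $n \geq 2$) or the $n=1$ analogue — the containment $\mathcal{FP}_{n+1}\text{-Inj} \subseteq \mathcal{FP}_n\text{-Inj}$ dualizes to a statement forcing syzygies of objects of type $\text{FP}_n$ to remain of type $\text{FP}_n$. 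Finally, the last sentence: by the Remark after Definition~\ref{def-finitely-n-coherent}, $0$-coherent objects are exactly noetherian objects and $1$-coherent objects are exactly coherent objects in the classical sense (citing \cite[Proposition V.4.1]{stenstrom}), so being $0$-coherent as a category means having a generating set of noetherian objects with all finitely generated objects noetherian, which is precisely locally noetherian, and likewise for $1$-coherent and locally coherent.
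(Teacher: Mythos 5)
Your handling of the equivalences (a) $\Longleftrightarrow$ (b) $\Longleftrightarrow$ (c) $\Longleftrightarrow$ (d) is fine and matches the paper, and the peripheral parts ((e) $\Longrightarrow$ (c), (f) $\Longleftrightarrow$ (g), (e) $\Longrightarrow$ (h) $\Longrightarrow$ (i), and the specialization to $n = 0, 1$) are correct. However, the step you correctly identify as the heart of the theorem --- the implication (c) $\Longrightarrow$ (e) --- does not work as you set it up.

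Your plan is: given $F \in \mathcal{FP}_n$ with $\mathcal{FP}_n$ thick, build an infinite resolution of $F$ by objects in $\operatorname{add}(\mathcal{S}) \subseteq \mathcal{FP}_n$, then invoke the $n = \infty$ footnote of Lemma~\ref{prop:n-presented}. But that footnote requires the resolving objects themselves to be of type $\text{FP}_\infty$; yours are only of type $\text{FP}_n$, which is exactly what is still in question. If one tries instead to deduce $F \in \mathcal{FP}_{n+1}$ by dimension shifting along $0 \to K \to F_0 \to F \to 0$ (with $F_0 \in \operatorname{add}(\mathcal{S})$ and $K \in \mathcal{FP}_n$ by thickness), the five-term comparison of the $\xi_k$ maps fails: in the relevant row $\Ext^{n-1}(F_0,-) \to \Ext^{n-1}(K,-) \to \Ext^n(F,-) \to \Ext^n(F_0,-) \to \Ext^n(K,-)$ one has isomorphisms only in degrees $\leq n-1$ and merely monomorphisms (by Lemma~\ref{lem:mono_condition}) in degree $n$, and the five-lemma does not force $\xi^F_n$ to be surjective. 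This is precisely the obstruction the paper circumvents by routing through (g): heredity of the cotorsion pair together with closure of $\mathcal{FP}_n\text{-Inj}$ under direct limits makes $\text{FP}_n$-injective objects $\Hom(F,-)$-acyclic, so $\Ext^k(F,-)$ can be computed by $\text{FP}_n$-injective coresolutions and the preservation of direct limits becomes a genuine computation rather than a diagram chase. That extra input is essential, not merely bookkeeping.

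A related issue occurs in your (g) $\Longrightarrow$ (b): you conclude that because $^{\perp_1}(\mathcal{FP}_n\text{-Inj})$ is closed under kernels of epimorphisms, so is $\mathcal{FP}_n$ ``via the generating behaviour.'' But the left orthogonal is in general strictly larger than $\mathcal{FP}_n$, and the identification $\mathcal{FP}_n = \mathcal{FP}_{n-1} \cap {}^{\perp_1}(\mathcal{FP}_n\text{-Inj})$ of Proposition~\ref{FPn-in-terms-of-FPn-Inj} is only established for $n \geq 2$ (the $n = 1$ case is open in general Grothendieck categories, as the paper explicitly remarks). The paper instead derives (e) from (g) and then descends to (b), which sidesteps this dependence.
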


Before proving the theorem we note the following remark.

\begin{remark}
A couple of trivial observations may be helpful. 
\begin{enumerate} 
\item For condition (d) in the cases $n = 0$ and $n = 1$, any generating set for $\mathcal{G}$ of noetherian or coherent satisfies the conditions of Lemma \ref{lemma-extensions}.

\item Note that conditions (f) and (g) of Theorem~\ref{them-locally n-coherent} cannot possibly be equivalent to the first five for case $n = 0$. Indeed the canonical injective cotorsion pair is \emph{always} hereditary, so it would imply that all locally finitely generated categories are locally noetherian.

\item At first glance it is natural to desire a characterization of $n$-coherent categories in terms of the closure of $\class{FP}_n\textnormal{-Inj}$ under direct limits. Afterall, these are important characterizations for $n = 0$ and $n=1$. But of course $\class{FP}_n\textnormal{-Inj}$ is always closed under direct limits for $n \geq 2$.
\end{enumerate}
\end{remark}

\begin{proof}
First, note that (a) and (b) are immediately seen to be equivalent by using parts (2) and (3) of Proposition \ref{coro-properties of FP-n}. Also, (b) and (c) are equivalent by that same proposition. 

Now if (a) is true, then $\cat{G}$ has a set of $n$-coherent generators. Moreover, if (a) is true, then $\class{C}_n = \class{FP}_n$ satisfies all of the equivalent conditions of Lemma \ref{lemma-extensions}, again because of Proposition \ref{coro-properties of FP-n}. Thus (a) implies (d).

We now show (d) $\Longrightarrow$ (b). To do so, let $\{C_j\}$ denote a generating set for $\cat{G}$ with each $C_j$ an $n$-coherent object and let 
\[
0 \xrightarrow{} A \xrightarrow{}  B \xrightarrow{}  C \xrightarrow{} 0
\] 
be a short exact sequence with $B , C \in \class{FP}_n$. Construct a diagram as in the proof of Lemma \ref{lemma-extensions}, where $\bigoplus_{j \in I} C_j$ is a \emph{finite} direct sum of objects in the generating set. By the hypothesis $\bigoplus_{j \in I} C_j$ is $n$-coherent. Thus by Proposition \ref{coro-properties of FP-n}(3) we conclude both $K$ and $P$ are in $\class{FP}_{n-1}$, whence $K, P \in \class{FP}_n$. But then we conclude from Proposition \ref{coro-properties of FP-n}(2) that $A \in \class{FP}_n$, proving (b). So far we have shown (a) through (d) are equivalent. 

Assuming (c), then of course any set $\class{S}$ of isomorphism representatives for $\class{FP}_n$ is also thick. Thus the hypotheses of \cite[Lemma 3.6(4)]{gillespie-models-of-injectives}, with $\class{S} = FP_{n}(\cat{G})$ as in Notation \ref{notation-representatives}, are satisfied. One can check that the lemma explicitly proves for us that the cotorsioin pair of Theorem \ref{theorem-FP_n-injective cotorsion pair}, is hereditary. This proves (c) $\Longrightarrow$ (g).

Next, we show (g) $\Longrightarrow$ (e) whenever $n \geq 1$. But the proof will also show (d) $\Longrightarrow$ (e) for the special case $n = 0$. Indeed with either hypothesis, we note that the cotorsion pair $({}^{\perp_1}(\mathcal{FP}_n\textnormal{-Inj}),\class{FP}_n\textnormal{-Inj})$ is hereditary and the class $\class{FP}_n\textnormal{-Inj}$, of all $\textrm{FP}_n$-injective objects is closed under direct limits. (For the latter fact, the cases $n >1$ are immediate, the case $n = 0$ is well-known, and the case $n=1$ can be found in \cite[Proposition B.3]{stovicek-purity}.) So now to prove (e), we let $F$ be an object of type $\text{FP}_n$, and we shall show that the functors $\Ext^k_{\cat{G}}(F,-)$ preserve direct limits for all $k \geq 0$. By \cite[Corollary 1.7]{adamek-rosicky} it is enough to show that they preserve well-ordered direct limits. So let $\{X_{\alpha}\}_{\alpha < \lambda}$ be a well-ordered system, where $\lambda$ is some ordinal. Now we know from Theorem \ref{theorem-FP_n-injective cotorsion pair} that $({}^{\perp_1}(\mathcal{FP}_n\textnormal{-Inj}),\class{FP}_n\textnormal{-Inj})$ is functorially complete. So using that we have enough functorial injectives, we can, for each $X_{\alpha}$, find an $\textrm{FP}_n$-injective coresolution $X_{\alpha} \rightarrowtail A_{X_{\alpha}}$, so that the direct system $\{X_{\alpha}\}$ extends to a direct system $\{A_{X_{\alpha}}\}$. Moreover, the class $\class{FP}_n\textnormal{-Inj}$ is closed under direct limits. Thus by exactness of direct limits we get that $\varinjlim X_{\alpha} \rightarrowtail \varinjlim A_{X_{\alpha}}$ is again an $\textrm{FP}_n$-injective coresolution of $\varinjlim X_{\alpha} $. Since $({}^{\perp_1}(\mathcal{FP}_n\textnormal{-Inj})_n ,\class{FP}_n\textnormal{-Inj})$ is hereditary, $\Ext^k_{\cat{G}}(F,A) = 0$ for all $k \geq 1$ and $\textrm{FP}_n$-injective $A$.  In other words, $\textrm{FP}_n$-injective objects are $\Hom_{\cat{G}}(F,-)$-acyclic, and it follows that we can compute $\Ext^k_{\cat{G}}(F,-)$ via $\textrm{FP}_n$-injective coresolutions; see, for example, \cite[Theorem XX.6.2]{lang}. So now we can compute:
\[
\varinjlim \Ext^k_{\cat{G}}(F,X_{\alpha}) \cong \varinjlim H^k[\Hom(F,A_{X_{\alpha}})] \cong  H^k[\Hom(F, \varinjlim A_{X_{\alpha}})]  \cong \Ext^k_{\cat{G}}(F,\varinjlim X_{\alpha}).
\]
This means that the canonical map $\xi_k \colon \varinjlim \Ext^k_{\cat{G}}(F,X_{\alpha}) \xrightarrow{} \Ext^k_{\cat{G}}(F,\varinjlim X_{\alpha})$ is an isomorphism and completes the proof that $F$ is of type ${\rm FP}_{\infty}$.

Note that (e) implies (c) by Proposition \ref{prop-thickness of FP-infinity}. We now show that show (f) and (g) are equivalent. But (g) implies (f) is trivial and a standard argument shows that (f) implies (g). Indeed given any $X \in {}^{\perp_1}(\mathcal{FP}_n\textnormal{-Inj})$ and $Y \in \class{FP}_n\textnormal{-Inj}$, let 
\[
0 \xrightarrow{} Y \xrightarrow{} I \xrightarrow{}  Y'  \xrightarrow{} 0
\]
be a short exact sequence with $I$ injective. Then $Y' \in \class{FP}_n\textnormal{-Inj}$ by assumption. Thus the exactness of 
\[
\Ext^1_{\cat{G}}(X,Y')  \xrightarrow{}   \Ext^2_{\cat{G}}(X,Y)  \xrightarrow{} \Ext^2_{\cat{G}}(X,I)
\] 
shows that $\Ext^2_{\cat{G}}(X,Y) =0$. Repeating this argument with induction we conclude also that $\Ext^i_{\cat{G}}(X,Y) = 0$ for all indices $i > 1$.

The implication (e) $\Longrightarrow$ (h) is clear for any $n \geq 0$, while (h) $\Longrightarrow$ (i) (also for $n \geq 0$) holds since $\mathcal{FP}_{n+1}\mbox{-}\textrm{Inj} \subseteq \mathcal{FP}_\infty\mbox{-}\textrm{Inj}$ and $\mathcal{FP}_\infty\mbox{-}\textrm{Inj} = \mathcal{FP}_n\mbox{-}\textrm{Inj}$ by (h).

For the rest of the proof, let us assume that $n \geq 1$. We show that (i) $\Longrightarrow$ (b). Condition (i) clearly implies that $\mathcal{FP}_n\mbox{-}\textrm{Inj} = \mathcal{FP}_{n+1}\mbox{-}\textrm{Inj}$. On the other hand, we have by Proposition \ref{FPn-in-terms-of-FPn-Inj} that $\mathcal{FP}_{n+1} = \mathcal{FP}_n \cap {}^{\perp_1}(\mathcal{FP}_{n+1}\mbox{-}\textrm{Inj})$. Using the equiality $\mathcal{FP}_n\mbox{-}\textrm{Inj} = \mathcal{FP}_{n+1}\mbox{-}\textrm{Inj}$, the previous implies $\mathcal{FP}_{n+1} = \mathcal{FP}_n$. This in turn clearly implies (b). 
\end{proof}

\begin{example} \
\begin{enumerate}
\item Recall that a ring $R$ is left \emph{$n$-coherent} if the containment $\mathcal{FP}_n \subseteq \mathcal{FP}_{n+1}$ holds in $R\mbox{-}\textrm{Mod}$ (see Costa's \cite[Definition 2.1]{costa}). We can note that $R\mbox{-}\textrm{Mod}$ is an $n$-coherent category if, and only if, $R$ is a left $n$-coherent ring. This equivalence can be extended to the category $\textrm{Ch}(R)$ of complexes of modules, as proved in \cite[Proposition 2.1.9]{ZhaoPerez}. 

\item The functor category $\textrm{Fun}(\mathcal{C}^{\rm op},\mathsf{Ab})$ is $1$-coherent if, and only if, $\mathcal{C}$ has pseudo-kernels. (See Appendix C). 

\item The category $\mathfrak{Qcoh}(X)$ of quasi-coherent sheaves over $X$ can be made into an $n$-coherent category if $X$ comes equipped with a finite affine cover $\{ U_i \}$ (that is, $U_i$ is isomorphic, as a locally ringed space, to ${\rm Spec}(A_i)$) such that each $A_i$ is a commutative $n$-coherent ring, with $n \geq 0$ fixed. (See Appendix B for details). 
\end{enumerate}
\end{example}

Note that condition (e) of the theorem gives us the next two corollaries.

\begin{corollary}
Any $n$-coherent category $\cat{G}$ is locally type ${\rm FP}_{\infty}$.
\end{corollary}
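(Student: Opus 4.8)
The statement is an immediate consequence of the characterization theorem, so the plan is simply to unwind the definitions and invoke the hard-won equivalence (a) $\Longleftrightarrow$ (e) of Theorem~\ref{them-locally n-coherent}. First I would recall that, by Definition~\ref{def-n-coherent category}, saying $\cat{G}$ is $n$-coherent means two things: (i) $\cat{G}$ is locally type \tFP{n}, hence there is a generating set $\mathcal{S} \subseteq \class{FP}_n$ of objects of type \tFP{n}; and (ii) every object of type \tFP{n} is $n$-coherent, which is exactly condition (a) of Theorem~\ref{them-locally n-coherent}.

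Next I would apply Theorem~\ref{them-locally n-coherent}, specifically the implication (a) $\Longrightarrow$ (e), to conclude that $\class{FP}_n = \class{FP}_{\infty}$ in $\cat{G}$. (For $n = \infty$ there is of course nothing to prove.) Combining this with point (i) above, the generating set $\mathcal{S}$ now consists of objects of type \tFP{\infty}. By Definition~\ref{def-locally FP-infinity}, having a generating set of objects of type \tFP{\infty} is precisely what it means for $\cat{G}$ to be locally type ${\rm FP}_{\infty}$, which completes the argument.

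There is no genuine obstacle here: all of the substantive work — in particular, showing that the heredity of the $\text{FP}_n$-injective cotorsion pair (or equivalently $n$-coherence of the objects of type \tFP{n}) forces $\Ext^k_{\cat{G}}(F,-)$ to commute with direct limits for \emph{all} $k$ when $F \in \class{FP}_n$ — has already been carried out inside the proof of Theorem~\ref{them-locally n-coherent}. The only thing to be careful about is the bookkeeping between the two notions of ``locally type'' at levels $n$ and $\infty$, and the trivial degenerate case $n = \infty$; neither requires any real effort.
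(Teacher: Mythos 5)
Your proof is correct and matches the paper's intent exactly: the paper states only that condition (e) of Theorem~\ref{them-locally n-coherent} yields the corollary, and your argument is precisely the intended unwinding of that remark — use (a) $\Longrightarrow$ (e) to get $\class{FP}_n = \class{FP}_{\infty}$, then observe that the generating set of objects of type \tFP{n} is therefore a generating set of objects of type \tFP{\infty}.
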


\begin{corollary}
We have the following containments among classes of Grothendieck categories, where $n\text{-Coh}$ represents the class of $n$-coherent categories:
\[
0\text{-Coh} \subseteq 1\text{-Coh} \subseteq \cdots \subseteq n\text{-Coh} \subseteq (n+1)\text{-Coh} \subseteq \cdots \subseteq \infty\text{-Coh}.
\]
\end{corollary}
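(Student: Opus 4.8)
The plan is to reduce everything to condition (e) of Theorem~\ref{them-locally n-coherent}, which says that a locally type \tFP{n} category $\cat{G}$ is $n$-coherent if and only if $\class{FP}_n = \class{FP}_\infty$.

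First I would establish the finite containments $n\text{-Coh} \subseteq (n+1)\text{-Coh}$. So suppose $\cat{G}$ is $n$-coherent. By condition (e) of Theorem~\ref{them-locally n-coherent} we have $\class{FP}_n = \class{FP}_\infty$. Combining this with the decreasing chain $\class{FP}_n \supseteq \class{FP}_{n+1} \supseteq \class{FP}_\infty$ recorded after Definition~\ref{def-finitely-n-presented}, we obtain $\class{FP}_{n+1} = \class{FP}_\infty$. It then remains to check that $\cat{G}$ is locally type \tFP{n+1}: a generating set of objects of type \tFP{n} is, by the equality $\class{FP}_n = \class{FP}_\infty \subseteq \class{FP}_{n+1}$, in particular a generating set of objects of type \tFP{n+1}. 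Hence $\cat{G}$ is locally type \tFP{n+1} and satisfies $\class{FP}_{n+1} = \class{FP}_\infty$, so condition (e) applied at level $n+1$ yields that $\cat{G}$ is $(n+1)$-coherent.

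Next I would treat the containment $n\text{-Coh} \subseteq \infty\text{-Coh}$. Here recall that the definition of an $\infty$-coherent category only requires $\cat{G}$ to be locally type ${\rm FP}_\infty$, since by the convention in Definition~\ref{def-finitely-n-coherent} every object of type ${\rm FP}_\infty$ is automatically $\infty$-coherent. But any $n$-coherent category is locally type ${\rm FP}_\infty$ by the previous corollary (equivalently, this is again condition (e) of Theorem~\ref{them-locally n-coherent}, which forces the chosen generating set of \tFP{n} objects to consist of objects of type ${\rm FP}_\infty$). Therefore $\cat{G}$ is $\infty$-coherent, completing the chain of inclusions.

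I do not anticipate a genuine obstacle: the argument is a formal consequence of characterization (e). The only point requiring a moment's care is verifying that the ``locally type \tFP{n+1}'' hypothesis of Theorem~\ref{them-locally n-coherent} is in place before invoking condition (e) at level $n+1$, and this is immediate from $\class{FP}_n = \class{FP}_\infty$.
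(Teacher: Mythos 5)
Your proof is correct and follows exactly the route the paper intends: the paper states that this corollary is an immediate consequence of condition (e) of Theorem~\ref{them-locally n-coherent}, which is precisely the characterization you invoke. Your only additional contribution is to spell out the (easy but necessary) check that the ``locally type \tFP{n+1}'' hypothesis holds before reapplying condition (e) at level $n+1$, which the paper leaves to the reader.
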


Finally, condition (d) of the theorem can be used in conjunction with condition (c) of Lemma \ref{lemma-extensions} to check the coherence of a particular category. In particular, we get the following corollary.

\begin{corollary}\label{cor-n-coherent rings}
Let $R$ be a ring. Then $R$ is (left) $n$-coherent if, and only if, every finitely generated free (left) $R$-module is $n$-coherent. 
\end{corollary}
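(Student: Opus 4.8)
The plan is to deduce this directly from Theorem~\ref{them-locally n-coherent}, specifically from the equivalence of condition (a) with condition (d), together with the equivalence (c)~$\Longleftrightarrow$~(a) of Lemma~\ref{lemma-extensions}. The key point is that $R\mbox{-}\textrm{Mod}$ is a locally type \tFP{n} category for every $n$ (indeed it is locally type ${\rm FP}_\infty$ by Example~\ref{Examples-cats} and Remark~\ref{rem:locally_type_FPinfty}), with $\mathcal{S} = \{R\}$ a generating set whose single object $R$ is of type ${\rm FP}_\infty$, hence of type \tFP{n}. Since $\textnormal{free}(\{R\})$ is precisely the class of finitely generated free $R$-modules, the hypothesis ``every finitely generated free $R$-module is $n$-coherent'' is exactly condition~(c) of Lemma~\ref{lemma-extensions} for this generating set.

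First I would recall that, by definition, $R$ is left $n$-coherent precisely when $\mathcal{FP}_n$ is closed under taking kernels of epimorphisms between its objects in $R\mbox{-}\textrm{Mod}$; this is \cite[Theorem 2.4]{bravo-perez} (cited in the Remark following Proposition~\ref{coro-properties of FP-n}), or alternatively it follows from Costa's definition via condition~(b) of Theorem~\ref{them-locally n-coherent}. So the statement to prove becomes: $\mathcal{FP}_n$ is closed under kernels of epimorphisms in $R\mbox{-}\textrm{Mod}$ if and only if every finitely generated free module is $n$-coherent. By Theorem~\ref{them-locally n-coherent}, the left-hand side is condition~(b), which is equivalent to condition~(a), that $R\mbox{-}\textrm{Mod}$ is an $n$-coherent category; and condition~(a) is in turn equivalent to condition~(d), namely that $R\mbox{-}\textrm{Mod}$ has a generating set of $n$-coherent objects satisfying one of the equivalent conditions of Lemma~\ref{lemma-extensions}.

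Next I would argue the two implications. If $R$ is $n$-coherent, then $R\mbox{-}\textrm{Mod}$ is an $n$-coherent category, so $\mathcal{C}_n = \mathcal{FP}_n$ is thick; in particular $\mathcal{C}_n$ is closed under finite direct sums, and since each finitely generated free module is a finite direct sum of copies of $R$ (which is $n$-coherent, being of type ${\rm FP}_\infty$ and trivially satisfying the subobject condition as $\mathcal{FP}_n = \mathcal{FP}_\infty$ in this case by condition~(e)), we get that every finitely generated free module is $n$-coherent. Conversely, if every finitely generated free module is $n$-coherent, then $R$ is a generator which is $n$-coherent (any finitely generated free module is $n$-coherent, in particular $R$ itself) and $\textnormal{free}(\{R\}) \subseteq \mathcal{C}_n$, so condition~(c) of Lemma~\ref{lemma-extensions} holds for $\mathcal{S} = \{R\}$; hence $R\mbox{-}\textrm{Mod}$ satisfies condition~(d) of Theorem~\ref{them-locally n-coherent}, so it is an $n$-coherent category, i.e.\ $R$ is an $n$-coherent ring.

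I do not anticipate a genuine obstacle here, since this corollary is essentially a translation of the abstract characterizations into the classical module setting; the only mild care needed is to verify that $R$ itself is $n$-coherent as an object whenever all finitely generated free modules are (which is immediate, as $R$ is such a module), so that $\mathcal{S} = \{R\}$ qualifies as a generating set of $n$-coherent objects for the application of condition~(d). The argument is otherwise a direct citation of Theorem~\ref{them-locally n-coherent}, Lemma~\ref{lemma-extensions}, and \cite[Theorem 2.4]{bravo-perez}.
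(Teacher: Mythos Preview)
Your proposal is correct and follows essentially the same route as the paper: the paper simply remarks that condition~(d) of Theorem~\ref{them-locally n-coherent} combined with condition~(c) of Lemma~\ref{lemma-extensions} yields the corollary, and you have spelled out precisely this argument with $\mathcal{S}=\{R\}$. Your treatment of the forward direction is slightly more elaborate than necessary (once $R\text{-Mod}$ is $n$-coherent, every object of type \tFP{n} is $n$-coherent by definition, and finitely generated free modules are of type \tFP{\infty}), but everything you wrote is correct.
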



\subsection*{{$\bm{{\rm FP}}_{\bm{n}}$-injective covers in $\bm{n}$-coherent categories}}

Corollary~\ref{coro-preenveloping} showed that the class $\class{FP}_n\textnormal{-Inj}$, of ${\rm FP}_n$-injective objects, is (special) preenveloping. We now consider the question of when it might also be a (pre)covering class. In \cite{CPT}, several conditions in finitely accessible categories are studied in order to produce preenvelopes and covers relative to a class of objects. Since any locally $n$-coherent category is finitely accessible, we can apply Crivei, Prest and Torrecillas' result to obtain ${\rm FP}_n$-injective covers in locally $n$-coherent categories.

\begin{proposition}[${\rm FP}_n$-injectives and purity]\label{prop:FPnInj_purity}
Let $\mathcal{G}$ be a Grothendieck category and $n \geq 1$. The following two conditions hold:
\begin{enumerate}
\item $\class{FP}_n\textnormal{-Inj}$ is closed under pure subobjects.

\item If $\mathcal{G}$ is  $n$-coherent, then $\class{FP}_n\textnormal{-Inj}$ is closed under pure quotients.
\end{enumerate}
\end{proposition}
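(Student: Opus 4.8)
The plan is to establish (1) by a short diagram chase with the long exact $\Ext$-sequence, and then obtain (2) as a quick consequence of (1) together with the characterization of $n$-coherent categories in Theorem~\ref{them-locally n-coherent}.

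For (1), I would start with a pure exact sequence $\mathbb{E} \colon 0 \to A \to B \xrightarrow{p} C \to 0$ with $B \in \class{FP}_n\textnormal{-Inj}$, fix an arbitrary $F \in \class{FP}_n$, and aim to show $\Ext^1_{\cat{G}}(F,A) = 0$. Two observations drive the argument: (i) since $n \geq 1$ we have $\class{FP}_n \subseteq \class{FP}_1$, so $F$ is finitely presented, and hence purity of $\mathbb{E}$ makes $\Hom_{\cat{G}}(F,p) \colon \Hom_{\cat{G}}(F,B) \to \Hom_{\cat{G}}(F,C)$ an epimorphism; and (ii) $\Ext^1_{\cat{G}}(F,B) = 0$ because $B$ is ${\rm FP}_n$-injective and $F$ is of type ${\rm FP}_n$. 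Applying $\Hom_{\cat{G}}(F,-)$ to $\mathbb{E}$ produces
\[
\Hom_{\cat{G}}(F,B) \xrightarrow{\ \Hom_{\cat{G}}(F,p)\ } \Hom_{\cat{G}}(F,C) \xrightarrow{\ \delta\ } \Ext^1_{\cat{G}}(F,A) \longrightarrow \Ext^1_{\cat{G}}(F,B) = 0,
\]
so $\delta$ is an epimorphism by (ii) and the zero map by (i); hence $\Ext^1_{\cat{G}}(F,A) = 0$. As $F$ was arbitrary in $\class{FP}_n$, this gives $A \in \class{FP}_n^{\perp_1} = \class{FP}_n\textnormal{-Inj}$.

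For (2), I would assume in addition that $\cat{G}$ is $n$-coherent and keep the pure exact sequence $0 \to A \to B \to C \to 0$ with $B \in \class{FP}_n\textnormal{-Inj}$. Part (1) applies verbatim to this sequence and yields $A \in \class{FP}_n\textnormal{-Inj}$. Now $C$ is the cokernel of the monomorphism $A \rightarrowtail B$ between two objects of $\class{FP}_n\textnormal{-Inj}$, and by the equivalence of conditions (a) and (f) in Theorem~\ref{them-locally n-coherent}, $n$-coherence of $\cat{G}$ is precisely the assertion that $\class{FP}_n\textnormal{-Inj}$ is closed under cokernels of monomorphisms between its objects; therefore $C \in \class{FP}_n\textnormal{-Inj}$. (An equivalent route: prolong the long exact sequence to $\Ext^1_{\cat{G}}(F,B) \to \Ext^1_{\cat{G}}(F,C) \to \Ext^2_{\cat{G}}(F,A)$ and kill $\Ext^2_{\cat{G}}(F,A)$ using $F \in \class{FP}_n \subseteq {}^{\perp_1}(\class{FP}_n\textnormal{-Inj})$, $A \in \class{FP}_n\textnormal{-Inj}$, and the heredity of the ${\rm FP}_n$-injective cotorsion pair, Theorem~\ref{them-locally n-coherent}(g).)

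I do not expect a genuine obstacle here. The only subtle points are that the purity hypothesis must be applied to objects for which it carries content---this is exactly the role of the assumption $n \geq 1$, which forces every $F \in \class{FP}_n$ to be finitely presented---and that part (2) really does need the $n$-coherence of $\cat{G}$: without it, a pure quotient of an ${\rm FP}_n$-injective object need not be ${\rm FP}_n$-injective, since $\class{FP}_n\textnormal{-Inj}$ need not be closed under cokernels of monomorphisms between its objects.
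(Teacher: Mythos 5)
Your proof is correct and follows essentially the same route as the paper: for (1), the same diagram chase through the long exact $\Ext$-sequence using purity (applicable because $n \geq 1$ forces $F$ finitely presented) together with $\Ext^1_{\cat{G}}(F,B) = 0$; for (2), the same appeal to Theorem~\ref{them-locally n-coherent}(f), with the paper leaving implicit the step---which you make explicit---that part (1) first gives $A \in \class{FP}_n\textnormal{-Inj}$ so that $C$ is a cokernel of a monomorphism between $\text{FP}_n$-injectives.
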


\begin{proof} 
Suppose that we are given a pure exact sequence 
\[
\mathbb{P} \colon 0 \to A \to B \to C \to 0
\]
in $\mathcal{G}$, that is, the induced sequence ${\rm Hom}_{\mathcal{G}}(F,\mathbb{P})$ of abelian groups is exact whenever $F$ is a finitely presented object in $\mathcal{G}$. Assume that $B \in \class{FP}_n\textnormal{-Inj}$ and let $F$ be an object of type $\text{FP}_n$. Let us first see (1), that $A \in \class{FP}_n\textnormal{-Inj}$. Since $B \in \class{FP}_n\textnormal{-Inj}$, we have that ${\rm Ext}^1_{\mathcal{G}}(F,B) = 0$. On the other hand, $F$ is in particular finitely presented, and so ${\rm Hom}_{\mathcal{G}}(F,\mathbb{P})$ is exact. Thus, we have an exact sequence
\[
0 \to {\rm Hom}_{\mathcal{G}}(F,A) \to {\rm Hom}_{\mathcal{G}}(F,B) \to {\rm Hom}_{\mathcal{G}}(F,C) \to {\rm Ext}^1_{\mathcal{G}}(F,A) \to 0
\]
where ${\rm Hom}_{\mathcal{G}}(F,B) \to {\rm Hom}_{\mathcal{G}}(F,C)$ is an epimorphism. It follows that ${\rm Ext}^1_{\mathcal{G}}(F,A) = 0$, and hence $A$ is ${\rm FP}_n$-injective. 
For (2), if we suppose that $\mathcal{G}$ is in addition  $n$-coherent, then by Theorem \ref{them-locally n-coherent} (f) we may also conclude $C$ is ${\rm FP}_n$-injective. 
\end{proof}

The precise conditions guaranteeing existence of covers are specified in \cite[Theorem 2.6]{CPT}. Namely, a class of objects $\mathcal{C}$ in a finitely accessible category $\mathcal{G}$ is covering provided it is closed under direct limits and pure quotients. Certainly if $\cat{G}$ is  $n$-coherent then the class $\class{FP}_n\textnormal{-Inj}$ of $\text{FP}_n$-injective objects is closed under pure quotients and direct limits; in fact, it is always closed under direct limits for $n > 1$. So their work gives the following generalization of a statement from~\cite[Corollary 3.5]{CPT}.

\begin{corollary}[completeness of the reversed $\class{FP}_n$-injective cotorsion pair]
Let $\mathcal{G}$ be a  $n$-coherent category. Then, the class $\class{FP}_n\textnormal{-Inj}$ of ${\rm FP}_n$-injective objects is covering. Moreover, if $\class{FP}_n\textnormal{-Inj}$ contains a generating set for  $\class{G}$, then $\class{FP}_n\textnormal{-Inj}$ is the left half of a perfect cotorsion pair $(\class{FP}_n\textnormal{-Inj} , (\class{FP}_n\textnormal{-Inj})^{\perp_1})$.  
\end{corollary}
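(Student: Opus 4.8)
The plan is to obtain the first assertion from the covering-class criterion of Crivei--Prest--Torrecillas recalled just above, and then to promote the resulting $\class{FP}_n\textnormal{-Inj}$-covers into a perfect cotorsion pair. For the first assertion I would simply verify the hypotheses of \cite[Theorem~2.6]{CPT}: a locally type \tFP{n} category is locally finitely presented, hence finitely accessible; in an $n$-coherent category $\class{FP}_n\textnormal{-Inj}$ is closed under pure quotients by Proposition~\ref{prop:FPnInj_purity}(2) (for $n=0$, a pure quotient of an injective object has absolutely pure ---  hence, over a locally noetherian category, injective --- kernel, so the sequence splits); and $\class{FP}_n\textnormal{-Inj}$ is closed under direct limits, which is immediate for $n\geq 2$, classical for $n=0$, and for $1\leq n<\infty$ follows from Theorem~\ref{them-locally n-coherent}(h) identifying $\class{FP}_n\textnormal{-Inj}$ with the direct-limit-closed class of absolutely clean objects. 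This already yields that $\class{FP}_n\textnormal{-Inj}$ is a covering class.

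For the second assertion, assume in addition that $\class{FP}_n\textnormal{-Inj}$ contains a generating set $\class{S}$. Since $\class{FP}_n\textnormal{-Inj}=\class{FP}_n^{\perp_1}$ is an orthogonal class, it is closed under extensions, direct summands and products, hence --- together with its closure under direct limits --- also under arbitrary coproducts; moreover it contains every injective object of $\cat{G}$. The key observation is that every $\class{FP}_n\textnormal{-Inj}$-precover is an epimorphism, because any object is an epimorphic image of a coproduct of members of $\class{S}\subseteq\class{FP}_n\textnormal{-Inj}$ and that epimorphism factors through the precover. Consequently $\class{FP}_n\textnormal{-Inj}$-covers are epic, and by Wakamatsu's Lemma (using closure under extensions) their kernels lie in $(\class{FP}_n\textnormal{-Inj})^{\perp_1}$; that is, these covers are \emph{special} precovers.

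From here the cotorsion-pair structure follows formally. The inclusion $\class{FP}_n\textnormal{-Inj}\subseteq{}^{\perp_1}((\class{FP}_n\textnormal{-Inj})^{\perp_1})$ is automatic, and conversely any object of ${}^{\perp_1}((\class{FP}_n\textnormal{-Inj})^{\perp_1})$ is a direct summand of its special $\class{FP}_n\textnormal{-Inj}$-cover, hence lies in $\class{FP}_n\textnormal{-Inj}$; so $(\class{FP}_n\textnormal{-Inj},(\class{FP}_n\textnormal{-Inj})^{\perp_1})$ is a cotorsion pair, complete on the projective side by the previous step. For enough injectives I would run the standard pullback argument: embed $X$ in an injective object $E$, take a special $\class{FP}_n\textnormal{-Inj}$-precover of $E/X$, and pull it back along $E\twoheadrightarrow E/X$ to obtain a short exact sequence $0\to X\to D\to A\to 0$ with $A\in\class{FP}_n\textnormal{-Inj}$ and with $D$ an extension of the injective object $E$ by an object of $(\class{FP}_n\textnormal{-Inj})^{\perp_1}$, hence $D\in(\class{FP}_n\textnormal{-Inj})^{\perp_1}$. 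Thus the cotorsion pair is complete, and since its left-hand class is closed under direct limits it is perfect, so $\class{FP}_n\textnormal{-Inj}$ is covering and $(\class{FP}_n\textnormal{-Inj})^{\perp_1}$ is enveloping (see \cite{enochs-jenda-book} and \cite[Corollary~3.5]{CPT}).

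I do not expect any step to be a serious obstacle given Sections~\ref{sec:injectivity} and~\ref{Sec-locally n-coherent cats}; the points demanding some care are the uniform handling of the boundary cases $n=0,1$ in the closure properties of $\class{FP}_n\textnormal{-Inj}$, and checking that the reversed cotorsion pair is genuinely complete (the enough-injectives step) rather than merely special precovering.
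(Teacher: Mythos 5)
Your argument follows the paper's own route almost exactly: invoke the Crivei--Prest--Torrecillas criterion for covering classes, observe that covers are epic because $\class{FP}_n\textnormal{-Inj}$ contains a generating set, apply Wakamatsu's Lemma to get special precovers, show ${}^{\perp_1}((\class{FP}_n\textnormal{-Inj})^{\perp_1})\subseteq\class{FP}_n\textnormal{-Inj}$ by a splitting/direct-summand argument, and complete the pair via a Salce pullback before concluding perfection from closure under direct limits. You merely spell out a few details the paper leaves to the reader (the $n=0,1$ boundary cases and the explicit pullback diagram), but the decomposition and the key lemmas are the same.
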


Recall that a cotorsion pair $(\mathcal{X,Y})$ in a Grothendieck category $\mathcal{G}$ is \emph{perfect} if the class $\mathcal{X}$ is covering and the class $\mathcal{Y}$ is enveloping. A well known result asserts that if $(\mathcal{X,Y})$ is complete and $\mathcal{X}$ is closed under direct limits, then $(\mathcal{X,Y})$ is perfect. For example, see \cite[Corollary 2.3.7]{gobel-trlifaj} or \cite[Section 2.2]{xu}.\footnote{Although the proofs given there are for $R$-modules, they carry over to Grothendieck categories.}

\begin{proof}
It is only left to prove the second statement. In this case, any ${\rm FP}_n$-injective cover must be an epimorphism, since $\class{FP}_n\textnormal{-Inj}$ contains a generating set for $\class{G}$. By Wakamutsu's Lemma\footnote{See \cite[Lemma 2.1.1]{xu} for a proof that works in any abelian category.}, any such cover must have a kernel in $(\class{FP}_n\textnormal{-Inj})^{\perp_1}$. With this fact, we show that ${}^{\perp_1}((\class{FP}_n\textnormal{-Inj})^{\perp_1})$ $\subseteq \class{FP}_n\textnormal{-Inj}$. For let $C \in {}^{\perp_1}((\class{FP}_n\textnormal{-Inj})^{\perp_1})$. We have a special $\class{FP}_n\textnormal{-Inj}$ cover for $C$, that is, a short exact sequence 
\[
0 \to A \to B \to C \to 0
\] 
with $B \in \class{FP}_n\textnormal{-Inj}$ and $A \in (\class{FP}_n\textnormal{-Inj})^{\perp_1}$. Since $C \in {}^{\perp_1}((\class{FP}_n\textnormal{-Inj})^{\perp_1})$, the sequence splits, and so $C$ is a direct summand of $B \in \class{FP}_n\textnormal{-Inj}$. The class $\class{FP}_n\textnormal{-Inj}$ is closed under direct summands, and hence we can conclude that $C \in \class{FP}_n\textnormal{-Inj}$. This proves $(\class{FP}_n\textnormal{-Inj},(\class{FP}_n\textnormal{-Inj})^{\perp_1})$ is a cotorsion pair in $\mathcal{G}$.

We already know every object in $\mathcal{G}$ has a special $\class{FP}_n\textnormal{-Inj}$ cover. Using this and the fact that $\mathcal{G}$ has enough injective objects, we can apply a Salce-like argument to show that every object in $\mathcal{G}$ also has a special $(\class{FP}_n\textnormal{-Inj})^{\perp_1}$-preenvelope. Hence, the cotorsion pair $(\class{FP}_n\textnormal{-Inj},(\class{FP}_n\textnormal{-Inj})^{\perp_1})$ is complete. Since $\class{FP}_n\textnormal{-Inj}$ is closed under direct limits, we have that $(\class{FP}_n\textnormal{-Inj},(\class{FP}_n\textnormal{-Inj})^{\perp_1})$ is perfect. 
\end{proof}

Note that we needed the  $n$-coherent hypothesis on $\cat{G}$ to show that the class $\class{FP}_n\textnormal{-Inj}$ of ${\rm FP}_n$-injective objects is covering. But for the category $R$-Mod, the statement holds for $n > 1$ even if $R$ is not assumed $n$-coherent. Indeed the ${\rm FP}_n$-injective modules are always closed under pure quotients (in addition to direct limits) in this case. As shown in~\cite[Proposition 3.10]{bravo-perez}, this follows by a Pontrjagin duality argument. The problem with the more general setting of Grothendieck categories $\mathcal{G}$ is that we do not have a suitable notion of Pontrjagin dual providing similar properties. This essentially stems from that fact that we lack of a tensor product on $\mathcal{G}$ to compare ${\rm FP}_n$-injective and ``${\rm FP}_n$-flat'' objects. So we are not aware if the  $n$-coherent hypothesis on $\cat{G}$ is absolutely necessary to show that $\class{FP}_n\textnormal{-Inj}$ is a covering class.


\section{The Gorenstein $\textrm{FP}_n$-injective model structures}\label{sec-Goren-FP_n}

Our goal now is to point out how a nice theory of Gorenstein $\text{FP}_n$-injective homological algebra exists in any $n$-coherent category $\mathcal{G}$. We define Gorenstein $\text{FP}_n$-injective objects similarly to the usual Gorenstein injective objects.

\begin{definition} 
We say an object $M \in \cat{G}$ is \textbf{Gorenstein $\textrm{FP}_{\bm{n}}$-injective} if $M = Z_{0}(\mathbb{I})$ for some exact complex $\mathbb{I}$ of injectives for which $\Hom_{\cat{G}}(J,\mathbb{I})$ remains exact for any $\textrm{FP}_n$-injective $J$.  We let $\class{GI}$ denote the class of all Gorenstein $\textrm{FP}_n$-injectives in $\cat{G}$ and set $\class{W} := {}^{\perp_1}\class{GI}$.
\end{definition}

Note that if $\cat{G}$ is  $n$-coherent, then by Theorem~\ref{them-locally n-coherent}, the Gorenstein $\textrm{FP}_n$-injectives coincide with the \emph{Gorenstein AC-injective objects} from \cite{gillespie-models-of-injectives}, inspired from~\cite{bravo-gillespie-hovey}. In particular, when $\cat{G}$ is locally noetherian they coincide with the usual notion of \emph{Gorenstein injective}, and when $\cat{G}$ is locally coherent they are the \emph{Ding injective} objects.


\subsection*{{Properties of Gorenstein $\text{FP}_{\bm{n}}$-injective objects}}

We begin our path towards a theory of Gorenstein $\textrm{FP}_n$-injective homological algebra by proving some characterizations and properties of the class $\mathcal{GI}$. For the rest of the present paper, let $\textrm{Inj}$ denote the class of injective objects in a Grothendieck category $\mathcal{G}$.

\begin{lemma}[characterizations of Gorenstein ${\rm FP}_n$-injectives]\label{lem:characterizationGI}
The following are equivalent for any object $C$ in a Grothendieck category $\cat{G}$.
\begin{itemize}
\item[(a)] $C$ is Gorenstein $\textrm{FP}_n$-injective.

\item[(b)] $C$ satisfies the following two conditions.
\begin{enumerate}
\item $C \in (\class{FP}_n\textnormal{-Inj})^{\perp}$. That is, $\Ext^i_{\mathcal{G}}(X,C) = 0$ for every $X \in \class{FP}_n\textnormal{-Inj}$ and every $i > 0$.

\item There exists an exact sequence 
\[
\mathbb{E} \colon \cdots \to E_1 \to E_0 \to C \to 0
\]
 with each $E_i \in  \textnormal{Inj}$ (that is, $\mathbb{E}$ is an injective resolution of $C$) such that $\Hom_{\mathcal{G}}(X,\mathbb{E})$ remains exact for every $X \in \class{FP}_n\textnormal{-Inj}$.
\end{enumerate}

\item[(c)] There exists a short exact sequence 
\[
0 \to C' \to E \to C \to 0
\] 
with $E \in  \textnormal{Inj}$ and $C' \in \mathcal{GI}$. 
\end{itemize}
\end{lemma}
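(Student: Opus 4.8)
The plan is to prove the cycle of implications (a) $\Longrightarrow$ (b) $\Longrightarrow$ (c) $\Longrightarrow$ (a). The implication (a) $\Longrightarrow$ (b) is the substantive one. Suppose $C$ is Gorenstein $\textrm{FP}_n$-injective, so $C = Z_0(\mathbb{I})$ for a totally acyclic (relative to $\class{FP}_n\textnormal{-Inj}$) complex $\mathbb{I}$ of injectives. Condition (b)(2) is immediate: the ``right half'' of $\mathbb{I}$, namely $\cdots \to \mathbb{I}_1 \to \mathbb{I}_0 \to C \to 0$, is an injective resolution of $C$ that stays $\Hom_{\cat{G}}(X,-)$-exact for every $X \in \class{FP}_n\textnormal{-Inj}$ by hypothesis. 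For (b)(1), I would use the ``left half'' of $\mathbb{I}$: writing $C = Z_0(\mathbb{I})$, each cycle $Z_k(\mathbb{I})$ for $k \geq 0$ is again Gorenstein $\textrm{FP}_n$-injective (shift the complex), so we get short exact sequences $0 \to Z_{k+1}(\mathbb{I}) \to \mathbb{I}_{k+1} \to Z_k(\mathbb{I}) \to 0$ with $\mathbb{I}_{k+1}$ injective. Given $X \in \class{FP}_n\textnormal{-Inj}$, dimension shifting along these sequences reduces $\Ext^i_{\cat{G}}(X,C)$ for $i \geq 1$ to a single $\Ext^1_{\cat{G}}(X, Z_k(\mathbb{I}))$, and this group is $0$ precisely because applying $\Hom_{\cat{G}}(X,-)$ to the relevant short exact sequence of cycles is exact (that exactness is what ``$\Hom_{\cat{G}}(X,\mathbb{I})$ remains exact'' encodes at each spot). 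So $C \in (\class{FP}_n\textnormal{-Inj})^\perp$.

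The implication (b) $\Longrightarrow$ (c) is routine: take the first step $0 \to C' \to E_0 \to C \to 0$ of the resolution $\mathbb{E}$ from (b)(2), so $E_0 \in \textnormal{Inj}$. One must check $C'$ satisfies (b) as well (hence lies in $\mathcal{GI}$ via the equivalence we are proving, applied to $C'$ — but to avoid circularity it is cleaner to just verify $C' \in \mathcal{GI}$ directly): the truncated complex $\cdots \to E_2 \to E_1 \to C' \to 0$ is an injective resolution of $C'$ that remains $\Hom_{\cat{G}}(X,-)$-exact for all $X \in \class{FP}_n\textnormal{-Inj}$, and $C' \in (\class{FP}_n\textnormal{-Inj})^\perp$ by dimension shifting from $C \in (\class{FP}_n\textnormal{-Inj})^\perp$ together with injectivity of $E_0$. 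To actually realize $C'$ as $Z_0$ of a \emph{two-sided} exact complex of injectives one needs a ``left resolution'' half; this is obtained by iterating (c)-type extensions, i.e.\ one should prove (c) $\Longrightarrow$ (a) first and then splice.

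The implication (c) $\Longrightarrow$ (a) is where one assembles the complex. Given $0 \to C' \to E \to C \to 0$ with $E \in \textnormal{Inj}$ and $C' \in \mathcal{GI}$: since $C' \in \mathcal{GI}$ it is $Z_0$ of a totally acyclic complex $\mathbb{I}'$ of injectives; glue $E$ on the right end of $\mathbb{I}'$ using the map $\mathbb{I}'_0 \twoheadrightarrow C' \rightarrowtail E$, obtaining an exact complex $\mathbb{I}$ of injectives with $Z_0(\mathbb{I}) = C$. It remains to check $\Hom_{\cat{G}}(X,\mathbb{I})$ is exact for every $X \in \class{FP}_n\textnormal{-Inj}$; exactness at the spots coming from $\mathbb{I}'$ is inherited, exactness at the new spots follows because $C' \in (\class{FP}_n\textnormal{-Inj})^\perp$ (which we already know from (a)$\Leftrightarrow$(b) applied to $C'$) kills the obstruction $\Ext^1_{\cat{G}}(X,C')$, and the injective resolution half of $C$ comes from an injective resolution of $C$ whose $\Hom_{\cat{G}}(X,-)$-exactness again follows from $C \in (\class{FP}_n\textnormal{-Inj})^\perp$ — to get the latter one first passes through (b). I expect the main obstacle to be bookkeeping the logical order so that no implication is invoked circularly: the clean route is to establish (a) $\Leftrightarrow$ (b) as a self-contained block (it never needs (c)), then prove (b) $\Rightarrow$ (c) and (c) $\Rightarrow$ (a) freely using (a) $\Leftrightarrow$ (b); the only genuinely delicate point within the block is verifying that the hypothesis ``$\Hom_{\cat{G}}(J,\mathbb{I})$ exact'' plus exactness of $\mathbb{I}$ really does force vanishing of \emph{all} higher $\Ext$ and not merely $\Ext^1$, which is exactly the dimension-shifting argument sketched above and uses crucially that each $Z_k(\mathbb{I})$ is again of the same type.
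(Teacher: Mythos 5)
Your overall route is the standard one and matches what the paper silently invokes (the paper merely says (a)\,$\Leftrightarrow$\,(b) is a straightforward exercise, (a)\,$\Rightarrow$\,(c) is clear, and cites a dual argument for (c)\,$\Rightarrow$\,(a)); you have filled in essentially the right details, and the ordering fix you propose at the end (prove (a)\,$\Leftrightarrow$\,(b) as a self-contained block, then deduce (b)\,$\Rightarrow$\,(c) and (c)\,$\Rightarrow$\,(a) using it) resolves the circularity you correctly flag. One correction is needed in (a)\,$\Rightarrow$\,(b)(1): the dimension shift you wrote goes the wrong way. From $0 \to Z_{k+1}(\mathbb{I}) \to \mathbb{I}_{k+1} \to Z_k(\mathbb{I}) \to 0$ with $k \geq 0$ and $\mathbb{I}_{k+1}$ injective you get $\Ext^i_{\mathcal{G}}(X,Z_k) \cong \Ext^{i+1}_{\mathcal{G}}(X,Z_{k+1})$, which \emph{raises} the degree and never terminates; you must instead shift against the coresolution half, $0 \to Z_k(\mathbb{I}) \to \mathbb{I}_k \to Z_{k-1}(\mathbb{I}) \to 0$ for $k \leq 0$, obtaining $\Ext^i_{\mathcal{G}}(X,C) \cong \Ext^1_{\mathcal{G}}(X,Z_{-(i-1)}(\mathbb{I}))$, and this last group vanishes precisely because $\Hom_{\mathcal{G}}(X,\mathbb{I})$ is exact at position $-(i-1)$ (equivalently, and more directly, $\Ext^i_{\mathcal{G}}(X,C)$ for $i \geq 1$ is literally a cohomology group of $\Hom_{\mathcal{G}}(X,\mathbb{I})$ computed on the injective coresolution $0 \to C \to \mathbb{I}_0 \to \mathbb{I}_{-1} \to \cdots$, so no shifting is even required). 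Likewise, in (c)\,$\Rightarrow$\,(a) you should say explicitly that you splice \emph{three} pieces — the left half of the totally acyclic complex for $C'$, then $E$, then an injective coresolution of $C$ — since gluing $E$ alone onto $\mathbb{I}'$ gives a complex whose $Z_0$ is still $C'$, not $C$; with that correction and the observation (already implicit in your write-up) that $C' \in (\mathcal{FP}_n\text{-Inj})^{\perp}$ kills $\Ext^1_{\mathcal{G}}(X,C')$ while $C \in (\mathcal{FP}_n\text{-Inj})^{\perp}$ (deduced by dimension shift across $0 \to C' \to E \to C \to 0$) handles the coresolution half, the argument goes through.
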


\begin{proof}
These are known characterizations of the usual Gorenstein injective $R$-modules and the proofs carry over to our generality. In particular, the equivalence of (a) and (b) is a straightforward exercise that we leave to the reader\footnote{The proof will use the fact that we always have enough injectives in a Grothendieck category.}. Now (a) implies (c) is clear, and the converse can be proved by imitating (the dual of) the argument from \cite[Lemma~2.5]{Ding-projective}.
\end{proof}

\begin{proposition}[closure properties of Gorenstein ${\rm FP}_n$-injectives]\label{prop:closure_properties}
Let $\mathcal{G}$ be a Grothendieck category. Then the class $\mathcal{GI}$ of Gorenstein $\text{FP}_n$-injective objects of $\mathcal{G}$ is closed under finite direct sums,  direct summands,  extensions and  cokernels of monomorphisms between its objects.
\end{proposition}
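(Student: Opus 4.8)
The plan is to deduce all four closure properties from the characterizations of $\class{GI}$ in Lemma~\ref{lem:characterizationGI}, adapting the classical arguments for Gorenstein injective modules. Write $\class{Y}:=\class{FP}_n\textnormal{-Inj}$ and keep in mind the two elementary facts used throughout: every injective object lies in $\class{Y}$ (since $\Ext^1_{\cat{G}}(F,E)=0$ for any $F$ and any injective $E$), and by Lemma~\ref{lem:characterizationGI}(b) every object of $\class{GI}$ lies in the total right orthogonal $\rightperp{\class{Y}}$. From the long exact sequence in $\Ext_{\cat{G}}(Y,-)$, $Y\in\class{Y}$, one sees at once that $\rightperp{\class{Y}}$ is closed under extensions, under cokernels of monomorphisms between its objects, and under direct summands; so in every case below condition (b)(1) of the lemma is automatic and all the work lies in condition (b)(2), the construction of a suitable left injective resolution. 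Closure under finite direct sums is then immediate: if $M=Z_0(\mathbb{I})$ and $N=Z_0(\mathbb{J})$ for suitable exact complexes of injectives, then $\mathbb{I}\oplus\mathbb{J}$ does the same for $M\oplus N$; equivalently this is the split case of closure under extensions, which I treat next.

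For extensions, let $\ses{A}{B}{C}$ be exact with $A,C\in\class{GI}$. Since $B\in\rightperp{\class{Y}}$ it suffices, by Lemma~\ref{lem:characterizationGI}(b), to build a left injective resolution $\mathbb{F}\colon\cdots\to F_1\to F_0\to B\to 0$ that stays exact after $\Hom_{\cat{G}}(Y,-)$ for every $Y\in\class{Y}$. Using Lemma~\ref{lem:characterizationGI}(c) repeatedly, fix left injective resolutions $\cdots\to E^A_0\xrightarrow{p_A}A\to 0$ and $\cdots\to E^C_0\xrightarrow{p_C}C\to 0$ all of whose syzygies again lie in $\class{GI}$; these are automatically $\Hom_{\cat{G}}(Y,-)$-exact, since each syzygy lies in $\class{GI}\subseteq\rightperp{\class{Y}}$. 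Crucially, each $E^C_i$ is injective, hence lies in $\class{Y}$, so the obstruction $\Ext^1_{\cat{G}}(E^C_i,A_i)$ to lifting $E^C_i$ along the relevant surjection vanishes, where $A_i\in\class{GI}\subseteq\rightperp{\class{Y}}$ denotes the corresponding syzygy of $A$. Setting $F_i:=E^A_i\oplus E^C_i$ and combining the inclusion of $A$ with these lifts, one obtains $\mathbb{F}$ together with a short exact sequence of complexes $0\to\mathbb{E}^A\to\mathbb{F}\to\mathbb{E}^C\to 0$ (indexing so that $A,B,C$ sit in degree $-1$) that is split in every non-negative degree and equals the original sequence in degree $-1$. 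Applying $\Hom_{\cat{G}}(Y,-)$ keeps this sequence of complexes exact (the degree $-1$ part stays exact since $\Ext^1_{\cat{G}}(Y,A)=0$), so the long exact homology sequence squeezes $\Hom_{\cat{G}}(Y,\mathbb{F})$ between the two acyclic complexes $\Hom_{\cat{G}}(Y,\mathbb{E}^A)$ and $\Hom_{\cat{G}}(Y,\mathbb{E}^C)$; hence $\Hom_{\cat{G}}(Y,\mathbb{F})$ is acyclic and $B\in\class{GI}$.

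Closure under cokernels of monomorphisms follows from the extension case together with Lemma~\ref{lem:characterizationGI}(c): given $\ses{A}{B}{C}$ with $A,B\in\class{GI}$, choose $\ses{B'}{E}{B}$ with $E$ injective and $B'\in\class{GI}$; composing $E\twoheadrightarrow B\twoheadrightarrow C$ gives an epimorphism $E\twoheadrightarrow C$ whose kernel $Q$ sits in a short exact sequence $\ses{B'}{Q}{A}$, so $Q\in\class{GI}$ by the extension case, and then $\ses{Q}{E}{C}$ exhibits $C$ as Gorenstein $\textrm{FP}_n$-injective.

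Finally, for direct summands suppose $C\oplus D\in\class{GI}$; again only condition (b)(2) is at stake. One clean observation helps: starting a left injective resolution with $E_0\twoheadrightarrow C\oplus D$, kernel $\Omega\in\class{GI}$, and letting $\Omega_C,\Omega_D$ be the kernels of the induced epimorphisms $E_0\twoheadrightarrow C$ and $E_0\twoheadrightarrow D$, one has $\Omega_C\cap\Omega_D=\Omega$ and $\Omega_C+\Omega_D=E_0$, so $\ses{\Omega}{\Omega_C\oplus\Omega_D}{E_0}$ splits (since $\Ext^1_{\cat{G}}(E_0,\Omega)=0$) and $\Omega_C\oplus\Omega_D\cong\Omega\oplus E_0\in\class{GI}$; in particular, when $D$ is injective this sequence already splits, so $\Omega_C\cong\Omega\oplus D\in\class{GI}$ and Lemma~\ref{lem:characterizationGI}(c) gives $C\in\class{GI}$, i.e.\ $\class{GI}$ is closed under ``summands by an injective''. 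The general case is the step I expect to be the main obstacle, because the decomposition $C\oplus D$ need not lift to the witnessing acyclic complex, and in a general Grothendieck category neither coproducts nor products of injectives are exact enough to run an Eilenberg-swindle reduction. I would close it along classical lines: the homotopy category of totally acyclic complexes of injectives is a thick subcategory of $K(\textnormal{Inj})$, which has all countable products (a product of complexes of injectives is a complex of injectives, as products of injectives are injective in any Grothendieck category) and is therefore idempotent complete; splitting the idempotent attached to $C\oplus D$ then produces a totally acyclic complex whose zeroth cycle agrees with $C$ up to a summand by an injective object, and the reduction above finishes the proof.
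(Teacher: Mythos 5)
Your handling of finite direct sums, extensions (via a horseshoe-style construction), and cokernels of monomorphisms (reduced to extensions through Lemma~\ref{lem:characterizationGI}(c)) is correct and matches the paper, which simply delegates those three cases to the techniques of \cite{Ding-projective}. The paper only spells out direct summands, and there your proposal has a genuine gap: you have in fact already found the paper's key observation --- that $0 \to \Omega \to \Omega_C \oplus \Omega_D \to E_0 \to 0$ splits (because $E_0 \in \class{FP}_n\textnormal{-Inj}$ and $\Omega \in \class{GI} \subseteq (\class{FP}_n\textnormal{-Inj})^\perp$), so $\Omega_C \oplus \Omega_D \cong \Omega \oplus E_0 \in \class{GI}$ --- but you stop there and declare the general case blocked, whereas this is precisely the inductive step. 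Since $\Omega_C$ and $\Omega_D$ are direct summands of an object of $\class{GI}$, they both lie in $(\class{FP}_n\textnormal{-Inj})^\perp$, and you are now in the same situation as the start with $(\Omega_C, \Omega_D)$ in place of $(C,D)$, having extended each partial left injective resolution by one term. Iterating produces left injective resolutions of $C$ and $D$ all of whose kernels lie in $(\class{FP}_n\textnormal{-Inj})^\perp$, hence $\Hom_{\cat{G}}(Y,-)$-acyclic for all $Y \in \class{FP}_n\textnormal{-Inj}$; combined with condition (b)(1), which you correctly noted is automatic, Lemma~\ref{lem:characterizationGI}(b) gives $C, D \in \class{GI}$.

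The fallback through idempotent completeness of $\mathsf{K}(\textnormal{Inj})$ is therefore unnecessary, and as written it is also unfinished: the relevant subcategory is not the classically totally acyclic complexes but the acyclic complexes of injectives which stay exact under $\Hom_{\cat{G}}(Y,-)$ for all $Y \in \class{FP}_n\textnormal{-Inj}$, and you do not justify that this subcategory is thick in $\mathsf{K}(\textnormal{Inj})$, nor that the cycle object of the split-off complex differs from $C$ only by an injective summand. The elementary iteration both closes the gap and avoids all of these extra claims.
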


\begin{proof}
We leave it to the reader to verify that closure under extensions and cokernels of monomorphisms can be proved using the same techniques as in~\cite{Ding-projective}. We shall include a direct proof that $\mathcal{GI}$ is closed under direct summands\footnote{Although the argument uses standard techniques, we do not believe it has appeared before in the literature.}.

So suppose we are given $A, B \in \mathcal{G}$ such that $A \oplus B \in \mathcal{GI}$. Note first that $A, B \in (\class{FP}_n\textnormal{-Inj})^\perp$. On the other hand, we have a short exact sequence
\[
\mathbb{E}^{A \oplus B}_0 \colon 0 \to K_0 \xrightarrow{\alpha} E_0 \xrightarrow{\beta} A \oplus B \to 0,
\]
where $E_0 \in  \textnormal{Inj}$ and $K_0 \in \mathcal{GI}$ by Lemma \ref{lem:characterizationGI}. Consider now the canonical projections 
\[
\pi_A = (\begin{array}{cc} {\rm id}_A & 0 \end{array}) \colon A \oplus B \to A \mbox{ \ and \ } \pi_B = (\begin{array}{cc} 0 & {\rm id}_B \end{array}) \colon A \oplus B \to B,
\] 
and form the morphisms $\beta_A := \pi_A \circ \beta$ and $\beta_B := \pi_B \circ \beta$, that is
\[
\beta = \left( \begin{array}{c} \beta_A \\ \beta_B \end{array} \right)
\]
using the matrix notation. We have short exact sequences
\[
\mathbb{E}^A_0 \colon 0 \to K^A_0 \xrightarrow{\alpha_A} E_0 \xrightarrow{\beta_A} A \to 0 \mbox{ \ and \ } \mathbb{E}^B_0 \colon 0 \to K^B_0 \xrightarrow{\alpha_B} E_0 \xrightarrow{\beta_B} B \to 0,
\]
where $K^A_0 := {\rm Ker}(\beta_A)$ and $K^B_0 := {\rm Ker}(\beta_B)$. Taking the direct sum of $\mathbb{E}^A_0$ and $\mathbb{E}^B_0$ gives the following short exact sequence:
\[{\scriptsize
\mathbb{E}^A_0 \oplus \mathbb{E}^B_0 \colon 0 \to K^A_0 \oplus K^B_0 \xrightarrow{\left( \begin{array}{cc} \alpha_A & 0 \\ 0 & \alpha_B \end{array} \right)} E_0 \oplus E_0 \xrightarrow{\left( \begin{array}{cc} \beta_A & 0 \\ 0 & \beta_B \end{array} \right)} A \oplus B \to 0.
}
\]
Moreover, we can get the following commutative diagram with exact rows and columns:
\[
\begin{tikzpicture}[description/.style={fill=white,inner sep=2pt}]
\matrix (m) [matrix of math nodes, row sep=3em, column sep=5em, text height=1.25ex, text depth=0.25ex]
{ 
0 & K_0 & E_0 & A \oplus B & 0 \\
0 & K^A_0 \oplus K^B_0 & E_0 \oplus E_0 & A \oplus B & 0 \\
};
\path[->]
(m-1-2) edge node[right] {\scriptsize$k$} (m-2-2)
(m-1-3) edge node[right] {\scriptsize$\Delta = \left( \begin{array}{c} {\rm id}_{E_0} \\ {\rm id}_{E_0} \end{array} \right)$} (m-2-3)
(m-1-1) edge (m-1-2) (m-1-2) edge node[above] {\scriptsize$\alpha$} (m-1-3) (m-1-3) edge node[above] {\scriptsize$\beta$} (m-1-4) (m-1-4) edge (m-1-5)
(m-2-1) edge (m-2-2) (m-2-2) edge node[below] {\scriptsize$\left( \begin{array}{cc} \alpha_A & 0 \\ 0 & \alpha_B \end{array} \right)$} (m-2-3) (m-2-3) edge node[below] {\scriptsize$\left( \begin{array}{cc} \beta_A & 0 \\ 0 & \beta_B \end{array} \right)$} (m-2-4) (m-2-4) edge (m-2-5)
;
\path[-,font=\scriptsize]
(m-1-4) edge [double, thick, double distance=2pt] (m-2-4)
;
\end{tikzpicture}
\]
where $\Delta$ is the diagonal map and $k$ is induced by the universal property of kernels. Using Snake's Lemma, we have ${\rm CoKer}(k) \simeq {\rm CoKer}(\Delta) \simeq E_0$. Thus, we have a split short exact sequence 
\[
0 \to K_0 \xrightarrow{k} K^A_0 \oplus K^B_0 \to E_0 \to 0
\]
and so $K^A_0 \oplus K^B_0 \simeq K_0 \oplus E_0 \in \mathcal{GI}$ since $\mathcal{GI}$ is closed under finite direct sums. Note also that $K^A_0, K^B_0 \in (\class{FP}_n\textnormal{-Inj})^\perp$.

Apply the previous argument to $K^A_0 \oplus K^B_0 \in \mathcal{GI}$, and repeat this procedure infinitely many times in oder to get injective resolutions of $A$ and $B$ which are $\Hom_{\mathcal{G}}(X,-)$-acyclic for every $X \in \class{FP}_n\textnormal{-Inj}$. The result follows by Lemma \ref{lem:characterizationGI}. 
\end{proof}

With these properties in hand, we are ready to construct approximations, cotorsion pairs and model category structures involving the class $\mathcal{GI}$.


\subsection*{{Abelian Gorenstein $\text{FP}_{\bm{n}}$-injective model structures}}

Any theory of relative homological algebra begins with the existence of a complete cotorsion pair, allowing one to construct relative derived functors. So we would like to have a complete cotorsion pair $(\class{W} ,\class{GI})$. Unfortunately it is not enough for $\cat{G}$ to just be a  $n$-coherent category to obtain this complete cotorsion pair. We need the left class $\class{W}$ to contain a generating set for $\cat{G}$. A standard hypothesis that will accomplish this is to assume that $\cat{G}$ has a generating set of objects of finite projective dimension. Recall that an object $A \in \cat{G}$ has \emph{finite projective dimension} if there exists a nonnegative integer $n$ such that for any object $B$ one has $\Ext^i_{\cat{G}}(A,B) = 0$ for all $i > n$. Following~\cite{gillespie-models-of-injectives}, we make the following definition.

\begin{definition}\label{def-generators}
We shall say a Grothendieck category $\cat{G}$ is \emph{locally finite dimensional} if it possesses a generating set $\{G_i\}$ for which each $G_i$ has finite projective dimension. If furthermore, each $G_i$ is of type ${\rm FP}_{\infty}$ we say $\cat{G}$ it is \emph{locally  finite dimensionally type ${\rm FP}_{\infty}$}. Finally, if $\cat{G}$ is also   $n$-coherent, that is, $\class{FP}_n = \class{FP}_{\infty}$, we say it is \emph{locally finite dimensionally $n$-coherent}.
\end{definition}

Examples of  locally  finite dimensionally type $\textrm{FP}_{\infty}$ categories are given in \cite[Section~5]{gillespie-models-of-injectives}. Certainly $R$-Mod and $\ch$ are locally finite dimensionally $n$-coherent whenever $R$ is a (left) $n$-coherent ring. The functor category $\textrm{Fun}(\mathcal{C}^{\rm op},\mathsf{Ab})$ is locally finite dimensionally $1$-coherent for any additive category $\mathcal{C}$ with pseudo-kernels.

\begin{corollary}[the abelian Gorenstein ${\rm FP}_n$-injective model structure]\label{cor-Gor-module}
Let $\cat{G}$ be a locally finite dimensionally $n$-coherent category. Then there is a cofibrantly generated abelian model structure on $\cat{G}$, the \emph{Gorenstein ${\rm FP}_n$-injective model structure}, in which every object is cofibrant and the fibrant objects are the Gorenstein ${\rm FP}_n$-injectives. In particular, $(\cat{W}, \cat{GI})$ is a complete cotorsion pair, cognerated by a set containing the  $\{G_i\}$, as in Definition \ref{def-generators}, and every object has a special Gorenstein ${\rm FP}_n$-injective preenvelope. 
\end{corollary}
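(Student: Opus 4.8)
The plan is to construct the Gorenstein $\textrm{FP}_n$-injective model structure as the "fibrant-trivialization" of the injective model structure combined with the $\textrm{FP}_n$-injective cotorsion pair, via Hovey's correspondence between abelian model structures and pairs of compatible cotorsion pairs. Concretely, I would set up the triple $(\textnormal{All}, \class{W}, \class{GI})$ where $\class{GI}$ is the class of Gorenstein $\textrm{FP}_n$-injectives and $\class{W} = {}^{\perp_1}\class{GI}$, and verify that this gives a hereditary abelian model structure with the stated cofibrant and fibrant classes. The real content is the completeness of the cotorsion pair $(\class{W}, \class{GI})$, together with showing $\class{W}$ is thick.

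First I would use Theorem~\ref{them-locally n-coherent} to identify, in the locally finite dimensionally $n$-coherent setting, the class $\class{FP}_n\textnormal{-Inj}$ with the absolutely clean objects $\class{FP}_\infty\textnormal{-Inj}$, so that $\class{GI}$ coincides with the Gorenstein AC-injectives of~\cite{gillespie-models-of-injectives}. This lets me appeal directly to the machinery developed there: once one has the hereditary complete cotorsion pair $({}^{\perp_1}(\class{FP}_n\textnormal{-Inj}),\class{FP}_n\textnormal{-Inj})$ cogenerated by a set (Theorem~\ref{theorem-FP_n-injective cotorsion pair} together with the hereditary conclusion of Theorem~\ref{them-locally n-coherent}(g)), and the closure properties of $\class{GI}$ from Proposition~\ref{prop:closure_properties}, the existence of the complete cotorsion pair $(\class{W},\class{GI})$ follows from a cogeneration-by-a-set argument exactly as in~\cite{gillespie-models-of-injectives}. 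The key step where the hypothesis "locally finite dimensional" is used: the generating set $\{G_i\}$ of objects of finite projective dimension lies in $\class{W}$, because finite projective dimension forces $\Ext^i_{\cat{G}}(G_i, C) = 0$ for $i$ large and, via dimension shifting against the complete and hereditary $\textrm{FP}_n$-injective cotorsion pair, in fact for all $i \geq 1$ when $C \in \class{GI}$ (using that $\class{GI}$ objects admit $\Hom_{\cat{G}}(X,-)$-acyclic injective coresolutions for every $\textrm{FP}_n$-injective $X$, hence in particular are right $\Ext$-acyclic against objects of finite projective dimension). So $\class{W}$ contains a generating set, and one can then cogenerate $(\class{W},\class{GI})$ by a set built from the $\{G_i\}$ and the set $\textrm{FP}_n(\cat{G})$ of representatives.

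Next I would check that $(\textnormal{All}\cap\class{W},\class{GI}) = (\class{W},\class{GI})$ and $(\textnormal{All},\class{W}\cap\class{GI}) = (\textnormal{All},\textnormal{Inj})$ are the two cotorsion pairs of the model structure. The second is the injective cotorsion pair (here one checks $\class{W}\cap\class{GI} = \textnormal{Inj}$: a Gorenstein $\textrm{FP}_n$-injective object in $\class{W}$ is $\Ext^1$-orthogonal to itself in the appropriate sense, and splicing its defining exact complex of injectives shows it is a direct summand of an injective, using Lemma~\ref{lem:characterizationGI}(c) and an Eilenberg-swindle-style argument). For thickness of $\class{W}$: $\class{W} = {}^{\perp_1}\class{GI}$ is automatically closed under extensions and, since $\class{GI}$ is closed under cokernels of monomorphisms (Proposition~\ref{prop:closure_properties}) so that the cotorsion pair is hereditary, $\class{W} = {}^{\perp}\class{GI}$ is closed under kernels of epimorphisms and direct summands; combined with closure under extensions this gives the $2$-out-of-$3$ property. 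Then Hovey's theorem (as cited in the "Abelian model structures" subsection of the preliminaries) produces the abelian model structure with cofibrant class $\textnormal{All}$, fibrant class $\class{GI}$, trivial class $\class{W}$. Cofibrant generation follows because both cotorsion pairs are cogenerated by sets, invoking~\cite[Corollary 6.8]{hovey} as in the preliminaries. Finally, the special Gorenstein $\textrm{FP}_n$-injective preenvelope is precisely the "enough injectives" half of the complete cotorsion pair $(\class{W},\class{GI})$.

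The main obstacle I anticipate is the completeness of $(\class{W},\class{GI})$ — specifically, verifying that it really is cogenerated by a set and not merely by a proper class. This requires the small object argument applied to a carefully chosen set, and the delicate point is that membership in $\class{GI}$ is not obviously detected by a set of test objects; the resolution is the standard one (as in~\cite{gillespie-models-of-injectives}): use that $\class{GI} = \class{S}^{\perp}$ for $\class{S}$ a set of representatives built from $\textrm{FP}_n(\cat{G})$-injective syzygies, which works precisely because the $\textrm{FP}_n$-injective cotorsion pair is hereditary and cogenerated by a set. Everything downstream of that — the Hovey correspondence, thickness, cofibrant generation — is then formal, so I would keep those verifications brief and cite~\cite{gillespie-models-of-injectives}, \cite{hovey}, and \cite{becker} as appropriate.
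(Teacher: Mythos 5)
Your proposal takes exactly the paper's route: use Theorem~\ref{them-locally n-coherent} to identify $\class{FP}_n\textnormal{-Inj}$ with the absolutely clean objects $\class{FP}_\infty\textnormal{-Inj}$, recognize $\class{GI}$ as the Gorenstein AC-injectives, and then import the cofibrantly generated abelian model structure directly from~\cite[Section~5.1, Theorem~7.5, Corollary~7.7]{gillespie-models-of-injectives}. The paper's proof consists only of this identification plus the citation, and you unpack what that citation actually delivers essentially correctly.

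One small slip worth flagging in your fleshed-out version: your thickness argument for $\class{W}$ --- closure under extensions together with closure under kernels of epimorphisms --- covers only two of the three cases of $2$-out-of-$3$. The remaining case, that $C \in \class{W}$ whenever $0 \to A \to B \to C \to 0$ is exact with $A,B \in \class{W}$, is not formal from heredity and needs its own short argument: given $G \in \class{GI}$, use Lemma~\ref{lem:characterizationGI}(c) to write $0 \to G' \to E \to G \to 0$ with $E$ injective and $G' \in \class{GI}$; then $\Ext^1_{\cat{G}}(A,G') = 0$ lets any $f \colon A \to G$ lift to $E$, injectivity of $E$ extends the lift over $B$, and composing back to $G$ shows $\Hom_{\cat{G}}(B,G) \to \Hom_{\cat{G}}(A,G)$ is surjective, whence $\Ext^1_{\cat{G}}(C,G) = 0$. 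Since you explicitly defer the machinery to~\cite{gillespie-models-of-injectives}, this does not invalidate your plan; it is simply a point where a self-contained version would need an extra step.
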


\begin{proof}
Since $\cat{G}$ is a locally  finite dimensionally type $\textrm{FP}_{\infty}$ category we get the model structure from \cite[Section 5.1]{gillespie-models-of-injectives}. (See in particular, Theorem 7.5 and Corollary 7.7.) The point is that the Gorenstein AC-injectives coincide with the Gorenstein ${\rm FP}_n$-injectives, since $\class{FP}_n = \class{FP}_{\infty}$.
\end{proof}

\begin{remark}\label{rem:derived}
The other results from \cite{gillespie-models-of-injectives} have special interpretations for locally $n$-coherent categories. For example, $\mathsf{D}(\mathcal{FP}_n\text{-Inj})$, the derived category of $\text{FP}_n$-injectives, is a compactly generated triangulated category and equivalent to a full subcategory of $\mathsf{K}( \textnormal{Inj})$. (See~\cite[Theorems 4.4 and 4.8]{gillespie-models-of-injectives}.) In the case $\cat{G}$ possesses a nice generating set as above in Definition \ref{def-generators}, this category sits in the middle of a recollement involving three compactly generated categories. (See \cite[Corollary 5.12]{gillespie-models-of-injectives}.)
\end{remark}


\subsection*{Exact Gorenstein $\text{FP}_{\bm{n}}$-injective model structures}

Corollary \ref{cor-Gor-module} is not the only way to obtain model structures from Gorenstein $\text{FP}_n$-injective objects. Our aim in this section is to construct exact model structures on certain subcategories of $\mathcal{G}$ for which $\mathcal{GI}$ is the class of fibrant objects, and without imposing any condition on $\mathcal{G}$. We shall achieve this by applying the theory of Frobenius pairs, presented in \cite{frobenius_pairs}. 

Two classes $\mathcal{Y}$ and $\nu$ of objects in an abelian category $\mathcal{G}$ form a (\emph{right})  \emph{Frobenius pair} $(\nu,\mathcal{Y})$ in $\mathcal{G}$ if the following conditions hold:
\begin{enumerate}
\item $\mathcal{Y}$ is closed under extensions, cokernels of monomorphisms between its objects, and under direct summands in $\mathcal{G}$.

\item $\nu \subseteq \mathcal{Y}$ and $\nu$ is closed under direct summands in $\mathcal{G}$.

\item $\nu$ is a relative generator in $\mathcal{Y}$, that is, for every $Y \in \mathcal{Y}$ there exists a short exact sequence 
\[
0 \to Y' \to V \to Y \to 0
\] 
with $Y' \in \mathcal{Y}$ and $V \in \nu$. 

\item $\nu$ is a $\mathcal{Y}$-projective, meaning that $\Ext^i_{\mathcal{G}}(V,Y) = 0$ for every $V \in \nu$, $Y \in \mathcal{Y}$ and $i \geq 1$.
\end{enumerate}
If in addition $(\nu,\mathcal{Y})$ satisfies the dual of conditions (3) and (4), that is, $\nu$ is a $\mathcal{Y}$-injective relative cogenerator in $\mathcal{Y}$, then the Frobenius pair $(\nu,\mathcal{Y})$ is called \emph{strong}. 

Frobenius pairs comprise several properties that allow us to construct left and right approximations by the classes $\mathcal{Y}$ and $\nu$, which in turn we can use to construct cotorsion pairs and exact model structures. These model structures are referred in \cite{frobenius_pairs} as \emph{injective Auslander-Buchweitz model structures}.

\begin{proposition}[the Gorenstein ${\rm FP}_n$-injective Frobenius pair]
Let $\mathcal{G}$ be a Grothendieck category. Then, $( \textnormal{Inj},\mathcal{GI})$ is a strong Frobenius pair in $\mathcal{G}$. 
\end{proposition}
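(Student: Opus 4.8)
The plan is to verify, one by one, the six defining conditions of a (strong) Frobenius pair for $\nu = \textnormal{Inj}$ and $\mathcal{Y} = \mathcal{GI}$; each of them will follow quickly from Proposition~\ref{prop:closure_properties} and Lemma~\ref{lem:characterizationGI}. Condition~(1) — that $\mathcal{GI}$ is closed under extensions, under cokernels of monomorphisms between its objects, and under direct summands — is exactly Proposition~\ref{prop:closure_properties}, so nothing new is required there. For condition~(2) I would check that $\textnormal{Inj} \subseteq \mathcal{GI}$ and that $\textnormal{Inj}$ is closed under direct summands; the latter is standard, and for the former, given $E \in \textnormal{Inj}$, the contractible complex $\cdots \to 0 \to E \xrightarrow{\mathrm{id}_E} E \to 0 \to \cdots$ concentrated in degrees $1$ and $0$ is an exact complex of injectives with $Z_0 = E$, and $\Hom_{\cat{G}}(J,-)$ applied to it remains exact (it stays contractible), so $E \in \mathcal{GI}$. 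Alternatively one invokes Lemma~\ref{lem:characterizationGI}(b), noting $E \in (\class{FP}_n\textnormal{-Inj})^{\perp}$ trivially.

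For the relative generator condition~(3), given $Y \in \mathcal{GI}$ I would use the short exact sequence $0 \to Y' \to E \to Y \to 0$ with $E \in \textnormal{Inj}$ and $Y' \in \mathcal{GI}$ provided by Lemma~\ref{lem:characterizationGI}(c); this is precisely what is asked for. For the $\mathcal{Y}$-projectivity condition~(4), the key observation is that every injective object is in particular $\text{FP}_n$-injective, since $\Ext^1_{\cat{G}}(F,E) = 0$ for all $F$ when $E$ is injective; hence $\textnormal{Inj} \subseteq \class{FP}_n\textnormal{-Inj}$, and combining this with the inclusion $\mathcal{GI} \subseteq (\class{FP}_n\textnormal{-Inj})^{\perp}$ from Lemma~\ref{lem:characterizationGI}(b)(1) gives $\Ext^i_{\cat{G}}(V,Y) = 0$ for all $V \in \textnormal{Inj}$, $Y \in \mathcal{GI}$ and $i \geq 1$.

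It then remains to verify the duals of~(3) and~(4), i.e.\ that $\textnormal{Inj}$ is a $\mathcal{Y}$-injective relative cogenerator in $\mathcal{GI}$. The dual of~(4) is automatic: $\Ext^i_{\cat{G}}(Y,V) = 0$ for every $Y$ and every injective $V$. For the relative cogenerator property, given $Y \in \mathcal{GI}$ I would embed $Y$ into an injective object $V$ (its injective envelope, say), obtaining $0 \to Y \to V \to Y' \to 0$; since $V \in \textnormal{Inj} \subseteq \mathcal{GI}$ by condition~(2) and $\mathcal{GI}$ is closed under cokernels of monomorphisms between its objects by Proposition~\ref{prop:closure_properties}, I conclude $Y' \in \mathcal{GI}$, as needed. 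I do not expect any real obstacle here: the substantive work has already been done in Proposition~\ref{prop:closure_properties} (closure under direct summands in particular) and Lemma~\ref{lem:characterizationGI}; the only points that require a moment's thought are that injective objects are themselves Gorenstein $\text{FP}_n$-injective and that they are $\text{FP}_n$-injective, and both are immediate.
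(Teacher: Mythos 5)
Your proof is correct and follows essentially the same route as the paper, which likewise reads the relative (co)generator property off the defining exact complex (via Lemma~\ref{lem:characterizationGI}), observes that $\Ext^i_{\cat{G}}(J,M)=0$ and $\Ext^i_{\cat{G}}(M,J)=0$ for $J$ injective and $M \in \mathcal{GI}$, and invokes Proposition~\ref{prop:closure_properties} for the closure conditions. The only difference is presentational: you spell out each of the six Frobenius-pair conditions explicitly, including $\textnormal{Inj}\subseteq\mathcal{GI}$ and the injective-envelope argument for the cogenerator, whereas the paper compresses these steps.
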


\begin{proof}
First, note from the definition of Gorenstein $\text{FP}_n$-injective objects that $ \textnormal{Inj}$ is a relative generator and cogenerator in $\mathcal{GI}$. Another consequence from the definition of $\mathcal{GI}$ is that ${\rm Ext}^i_{\mathcal{G}}(J,M)$ $= 0$ for every $J \in \textrm{Inj}$, $M \in \mathcal{GI}$ and $i \geq 1$. On the other hand, it is clear that ${\rm Ext}^i_{\mathcal{G}}(M,J) = 0$ for any $J$, $M$ and $i$ as before. The rest of the proof follows by Proposition \ref{prop:closure_properties}.
\end{proof}

Having a strong Frobenius pair $(\nu,\mathcal{Y})$ in $\mathcal{G}$ implies the existence of certain compatible complete cotorsion pairs. These are not cotorsion pairs in an abelian category, but in an exact category. Namely, the full subcategory $\mathcal{Y}^\vee$ formed by the objects in $\mathcal{G}$ which have a finite coresolution by objects in $\mathcal{Y}$, that is, objects $C \in \mathcal{G}$ such that there exist $m \geq 0$ and an exact sequence 
\[
0 \to C \to Y^0 \to Y^1 \to \cdots \to Y^{m-1} \to Y^m \to 0
\] 
where $Y^k \in \mathcal{Y}$ for every $0 \leq k \leq m$. According to \cite[Dual of Theorem 3.6]{frobenius_pairs}, if $(\nu,\mathcal{Y})$ is a Frobenius pair in $\mathcal{G}$, then $(\nu^\vee,\mathcal{Y})$ is a complete cotorsion pair in the exact category $\mathcal{Y}^\vee$. If in addition $(\nu,\mathcal{Y})$ is strong, then $(\mathcal{Y}^\vee,\nu)$ is also a complete cotorsion pair in $\mathcal{Y}^\vee$ by \cite[Dual of Theorem 3.7]{frobenius_pairs}. Thus, the following result holds.

\begin{proposition}[the exact Gorenstein ${\rm FP}_n$-injective cotorsion pair]\label{prop:ex_cot_pairs}
Let $\mathcal{G}$ be a Grothendieck category. Then, $( \textnormal{Inj}^\vee,\mathcal{GI})$ and $(\mathcal{GI}^\vee, \textnormal{Inj})$ are hereditary and complete cotorsion pairs in $\mathcal{GI}^\vee$. 
\end{proposition}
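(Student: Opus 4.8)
The plan is to deduce Proposition~\ref{prop:ex_cot_pairs} directly from the fact that $(\textnormal{Inj},\mathcal{GI})$ is a strong Frobenius pair in $\mathcal{G}$, which was established in the preceding proposition, by feeding that input into the machinery of \cite{frobenius_pairs}. Concretely, one first invokes \cite[Dual of Theorem 3.6]{frobenius_pairs}: for any right Frobenius pair $(\nu,\mathcal{Y})$ in an abelian category, the pair $(\nu^\vee,\mathcal{Y})$ is a complete cotorsion pair in the exact category $\mathcal{Y}^\vee$, where $\mathcal{Y}^\vee$ carries the exact structure inherited from $\mathcal{G}$ (the conflations are the short exact sequences of $\mathcal{G}$ with all three terms in $\mathcal{Y}^\vee$). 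Applying this with $\nu = \textnormal{Inj}$ and $\mathcal{Y} = \mathcal{GI}$ yields that $(\textnormal{Inj}^\vee,\mathcal{GI})$ is a complete cotorsion pair in $\mathcal{GI}^\vee$. Since $(\textnormal{Inj},\mathcal{GI})$ is \emph{strong}, the dual of conditions (3) and (4) holds as well, so \cite[Dual of Theorem 3.7]{frobenius_pairs} applies and gives that $(\mathcal{GI}^\vee,\textnormal{Inj})$ is also a complete cotorsion pair in $\mathcal{GI}^\vee$. This is precisely the content of the two displayed cotorsion pairs, modulo the heredity claim.

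For heredity, I would argue as follows. A complete cotorsion pair $(\mathcal{X},\mathcal{Z})$ in an exact category is hereditary as soon as $\mathcal{X}$ is closed under kernels of admissible epimorphisms (equivalently $\mathcal{Z}$ under cokernels of admissible monomorphisms), by the standard dimension-shifting argument (the exact analogue of the $R$-module statement, spelled out e.g.\ in \cite{gillespie-exact-model-structures} or \cite{saorin-stovicek}); this is the same device used in the proof of (f)$\Longrightarrow$(g) in Theorem~\ref{them-locally n-coherent}. For the pair $(\mathcal{GI}^\vee,\textnormal{Inj})$ this is immediate: the right-hand class $\textnormal{Inj}$ of injective objects is obviously closed under cokernels of (admissible) monomorphisms, so dimension shifting gives $\operatorname{Ext}^i(X,J)=0$ for all $X\in\mathcal{GI}^\vee$, $J\in\textnormal{Inj}$, $i\ge 1$. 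For the pair $(\textnormal{Inj}^\vee,\mathcal{GI})$ one checks that $\mathcal{GI}$ is closed under cokernels of monomorphisms between its objects within $\mathcal{GI}^\vee$ — but this is exactly Proposition~\ref{prop:closure_properties}, which states that $\mathcal{GI}$ is closed under cokernels of monomorphisms between its objects in $\mathcal{G}$ (and such a cokernel lands in $\mathcal{GI}\subseteq\mathcal{GI}^\vee$). Hence dimension shifting again upgrades $\operatorname{Ext}^1$-orthogonality to full $\operatorname{Ext}^{\ge 1}$-orthogonality, giving heredity.

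Alternatively — and this is probably the cleanest route — \cite{frobenius_pairs} already records that when $(\nu,\mathcal{Y})$ is a (strong) Frobenius pair, the associated cotorsion pairs $(\nu^\vee,\mathcal{Y})$ and $(\mathcal{Y}^\vee,\nu)$ in $\mathcal{Y}^\vee$ are themselves hereditary; in that case one simply cites \cite[Duals of Theorems 3.6 and 3.7]{frobenius_pairs} and it suffices to have verified the hypotheses of a strong Frobenius pair, which was done in the previous proposition. So the proof reduces to: (i) recall the strong Frobenius pair $(\textnormal{Inj},\mathcal{GI})$; (ii) quote the two dual theorems; (iii) observe $\nu^\vee=\textnormal{Inj}^\vee$ and $\mathcal{Y}^\vee=\mathcal{GI}^\vee$ by definition of the respective coresolution dimensions. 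I expect the only genuine subtlety — the "hard part" — to be bookkeeping about the exact structure on $\mathcal{GI}^\vee$: one must make sure that $\mathcal{GI}^\vee$ is closed under extensions and kernels of admissible epics in $\mathcal{G}$ so that it is a legitimate exact subcategory, and that "complete cotorsion pair" there refers to this exact structure; but all of this is packaged in the cited results of \cite{frobenius_pairs}, so no new work is required.
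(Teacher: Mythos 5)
Your proposal is correct and takes essentially the same route as the paper: recall the strong Frobenius pair $(\textnormal{Inj},\mathcal{GI})$ established in the preceding proposition, and invoke \cite[Duals of Theorems 3.6 and 3.7]{frobenius_pairs} to obtain the two complete cotorsion pairs $(\textnormal{Inj}^\vee,\mathcal{GI})$ and $(\mathcal{GI}^\vee,\textnormal{Inj})$ in the exact category $\mathcal{GI}^\vee$. The paper simply quotes those two results (which already carry the hereditary assertion), so your first two paragraphs, which directly verify heredity by dimension shifting, are not needed once the citations are in place — though they do confirm the hypotheses are being used correctly, in particular that $\mathcal{GI}$ is closed under cokernels of monomorphisms between its objects (Proposition~\ref{prop:closure_properties}) and that $\textnormal{Inj}$ is closed under cokernels of monomorphisms.
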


The compatibility between the pairs $( \textnormal{Inj}^\vee,\mathcal{GI})$ and $(\mathcal{GI}^\vee, \textnormal{Inj})$ will be a consequence of Proposition \ref{compapprox} below, which represents a summary of Auslander-Buchweitz approximation theory in the context of Gorenstein ${\rm FP}_n$-injective objects. The reader can see a revisit to AB theory (at least for the part needed for Frobenius pairs) in \cite[Section 2.2]{frobenius_pairs}. 

Define the \emph{Gorenstein ${\rm FP}_n$-injective dimension} of an object $C \in \mathcal{G}$, denoted ${\rm Gid}(C)$ as the smallest nonnegative integer $m \geq 0$ such that there is an exact sequence 
\[
0 \to C \to E^0 \to E^1 \to \cdots \to E^{m-1} \to E^m \to 0
\] 
with $E^k \in \mathcal{GI}$ for every $0 \leq k \leq m$. If such $m$ does not exist, we simply set ${\rm Gid}(C) = \infty$.

\begin{proposition}[compatibility conditions and approximations]\label{compapprox}
Let $\mathcal{G}$ be a Grothendieck category. Then, the following conditions hold true:
\begin{enumerate}
\item $\mathcal{GI}^\vee$ is the smallest thick subcategory of $\mathcal{G}$ containing $\mathcal{GI}$. 

\item $ \textnormal{Inj} = \{ X \in \mathcal{GI} \mbox{ {\rm :} } {\rm pd}_{\mathcal{GI}}(X) = 0 \} = \mathcal{GI} \cap  \textnormal{Inj}^\vee = \mathcal{GI} \cap {}^{\perp}\mathcal{GI}$.

\item $\mathcal{GI} \cap  \textnormal{Inj}^\wedge = \{ X \in \mathcal{GI} \mbox{ {\rm :} } {\rm pd}_{\mathcal{GI}}(X) < \infty \}$.

\item $ \textnormal{Inj}^\vee = {}^\perp\mathcal{GI} \cap \mathcal{GI}^\vee$.

\item $\mathcal{GI}^\vee \cap  \textnormal{Inj}^\perp = \mathcal{GI} = \mathcal{GI}^\vee \cap ( \textnormal{Inj}^\vee)^\perp$. 

\item For every $C \in \mathcal{G}$ with ${\rm Gid}(C) = m < \infty$, there exist short exact sequences
\[
0 \to C \to X \to W \to 0 \mbox{ \ and \ } 0 \to Y \to H \to C \to 0
\]
with $X, Y \in \mathcal{GI}$, ${\rm id}(W) = m-1$ and ${\rm id}(H) \leq m$.
\end{enumerate}
\end{proposition}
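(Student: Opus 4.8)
The strategy is to read off all six items from the strong Frobenius pair $(\textnormal{Inj},\mathcal{GI})$ obtained in the previous proposition, by specializing the (dualized) Auslander--Buchweitz results of \cite[Section~2.2, and the duals of Theorems~3.6 and~3.7]{frobenius_pairs} under the dictionary $\nu=\textnormal{Inj}$, $\mathcal{Y}=\mathcal{GI}$. With this dictionary, $\mathcal{Y}^\vee$ becomes $\mathcal{GI}^\vee=\{C\in\mathcal{G}\mbox{ {\rm :} }{\rm Gid}(C)<\infty\}$, the class $\nu^\vee$ becomes $\textnormal{Inj}^\vee=\{C\in\mathcal{G}\mbox{ {\rm :} }{\rm id}(C)<\infty\}$, and ${\rm pd}_{\mathcal{GI}}$ is the relative projective dimension attached to the exact structure on $\mathcal{GI}^\vee$ in which the $\mathcal{GI}$-epimorphisms are admissible. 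Two structural inputs make the machinery run here: Proposition~\ref{prop:closure_properties}, so that $\mathcal{GI}$ is closed under extensions, cokernels of monomorphisms and direct summands; and the vanishing $\Ext^i_{\mathcal{G}}(J,M)=0=\Ext^i_{\mathcal{G}}(M,J)$ for all $J\in\textnormal{Inj}$, $M\in\mathcal{GI}$ and $i\geq 1$ --- the first because $\textnormal{Inj}\subseteq\mathcal{FP}_n\textnormal{-Inj}$, so the defining $\Hom_{\mathcal{G}}(J,-)$-exactness of the complex $\mathbb{I}$ witnessing $M\in\mathcal{GI}$ forces it, and the second because the second argument is injective.

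Item (1) then follows since $\mathcal{GI}^\vee$ contains $\mathcal{GI}$, is closed under direct summands, and satisfies the $2$-out-of-$3$ property on short exact sequences (horseshoe constructions together with dimension shifting, using that $\mathcal{GI}$ is coresolving), hence is thick, and it is minimal among thick subcategories containing $\mathcal{GI}$ because any such must absorb every object with a finite $\mathcal{GI}$-coresolution. Items (2)--(5) are the standard orthogonality identifications; the one computation that really needs to be carried out by hand is $\textnormal{Inj}=\mathcal{GI}\cap{}^{\perp}\mathcal{GI}$: the inclusion $\subseteq$ is the Ext-vanishing above, while for $\supseteq$ one takes the short exact sequence $0\to C'\to E\to X\to 0$ of Lemma~\ref{lem:characterizationGI}(c), with $E$ injective and $C'\in\mathcal{GI}$, which splits because $\Ext^1_{\mathcal{G}}(X,C')=0$, so $X$ is a direct summand of $E$ and hence injective. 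Everything else in (2)--(5) is obtained from this together with short dimension-shifting arguments along injective (co)resolutions and along the totally acyclic complexes witnessing membership in $\mathcal{GI}$ (whose cycles again lie in $\mathcal{GI}$ by Lemma~\ref{lem:characterizationGI}(c)), or, where convenient, by quoting the hereditary complete cotorsion pairs of Proposition~\ref{prop:ex_cot_pairs} and the corresponding statements of \cite{frobenius_pairs}.

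Finally, item (6) is the Auslander--Buchweitz approximation theorem applied to $(\textnormal{Inj},\mathcal{GI})$: for $C$ with ${\rm Gid}(C)=m<\infty$ one builds, by the usual induction on $m$, splicing an injective coresolution of $C$ against a $\mathcal{GI}$-coresolution and forming the relevant pushouts and pullbacks, a short exact sequence $0\to C\to X\to W\to 0$ with $X\in\mathcal{GI}$ and ${\rm id}(W)=m-1$, and dually a short exact sequence $0\to Y\to H\to C\to 0$ with $Y\in\mathcal{GI}$ and ${\rm id}(H)\leq m$. I expect the main obstacle to be the bookkeeping in (6) --- producing the two approximation sequences with the sharp injective-dimension bounds ${\rm id}(W)=m-1$ and ${\rm id}(H)\leq m$ rather than merely finite ones --- together with pinning down that the conventions for $(-)^\vee$, $(-)^\wedge$, ${\rm id}$ and ${\rm pd}_{\mathcal{GI}}$ agree with those of \cite{frobenius_pairs} so that its theorems apply verbatim; once that is arranged, the whole statement reduces to the two structural inputs recorded in the first paragraph.
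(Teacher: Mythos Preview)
Your proposal is correct and matches the paper's approach exactly: the paper does not give a standalone proof of this proposition but presents it as ``a summary of Auslander--Buchweitz approximation theory in the context of Gorenstein $\mathrm{FP}_n$-injective objects,'' referring the reader to \cite[Section~2.2]{frobenius_pairs} for the general theory of Frobenius pairs, which is precisely the route you take via the dictionary $\nu=\textnormal{Inj}$, $\mathcal{Y}=\mathcal{GI}$. Your write-up is in fact more detailed than the paper's treatment, which simply invokes the reference.
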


From this result, the class $\mathcal{GI}^\vee$ of objects of $\mathcal{G}$ with finite Gorenstein $\text{FP}_n$-injective dimension is a thick subcategory of $\mathcal{G}$, and so it is exact with the usual exact structure of subcategories of an abelian category that are closed under extensions. One can also note easily that the exact category $\mathcal{GI}^\vee$ is \emph{weakly idempotent complete} (see \cite[Definition 2.2]{gillespie-exact-model-structures}). Thus, using the generalization of Hovey's correspondence in the context of exact categories, proved by the second author in \cite{gillespie-exact-model-structures}, we have the following model category structure on $\mathcal{GI}^\vee$.

\begin{theorem}[the exact Gorenstein ${\rm FP}_n$-injective model structure]
Let $\mathcal{G}$ be a Grothendieck category. Then, there exists a unique injective and hereditary exact model structure on $\mathcal{GI}^\vee$ such that $\mathcal{GI}$ is the class of fibrant objects and $ \textnormal{Inj}^\vee$ is the class of trivial objects. We denote this model structure by 
\[
\mathcal{M}^{\rm fp}_n(\mathcal{GI}^\vee) := (\mathcal{GI}^\vee, \textnormal{Inj}^\vee,\mathcal{GI}).
\]
\end{theorem}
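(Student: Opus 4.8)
The plan is to apply the exact‑category version of Hovey's correspondence, due to the second author in \cite{gillespie-exact-model-structures}, to the exact category $\mathcal{GI}^\vee$, with the Hovey triple $(\mathcal{Q},\mathcal{W},\mathcal{R}) := (\mathcal{GI}^\vee,\textnormal{Inj}^\vee,\mathcal{GI})$ (here $\mathcal{W}$ is a local name, not the class from Corollary~\ref{cor-Gor-module}). Recall that $\mathcal{GI}^\vee$ is a thick subcategory of $\mathcal{G}$ by Proposition~\ref{compapprox}(1), so it is an exact category with the usual exact structure inherited from $\mathcal{G}$ (conflations are precisely the short exact sequences of $\mathcal{G}$ with all three terms in $\mathcal{GI}^\vee$), and it is weakly idempotent complete as observed just before the statement. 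Taking $\mathcal{Q}$ to be all of $\mathcal{GI}^\vee$ is exactly what forces every object to be cofibrant, i.e.\ makes the resulting model structure \emph{injective}.

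First I would identify the two cotorsion pairs required by the correspondence. Since $\textnormal{Inj}\subseteq\mathcal{GI}$ we have $\textnormal{Inj}^\vee\subseteq\mathcal{GI}^\vee$, so $\mathcal{Q}\cap\mathcal{W}=\textnormal{Inj}^\vee$ and therefore $(\mathcal{Q}\cap\mathcal{W},\mathcal{R})=(\textnormal{Inj}^\vee,\mathcal{GI})$, which is a hereditary and complete cotorsion pair in $\mathcal{GI}^\vee$ by Proposition~\ref{prop:ex_cot_pairs}. On the other side, Proposition~\ref{compapprox}(2) gives $\mathcal{W}\cap\mathcal{R}=\textnormal{Inj}^\vee\cap\mathcal{GI}=\textnormal{Inj}$, so $(\mathcal{Q},\mathcal{W}\cap\mathcal{R})=(\mathcal{GI}^\vee,\textnormal{Inj})$, which is again a hereditary and complete cotorsion pair in $\mathcal{GI}^\vee$ by Proposition~\ref{prop:ex_cot_pairs}. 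Thus both cotorsion pairs are in place and both are hereditary, which will yield that the model structure is hereditary.

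Next I would verify that $\mathcal{W}=\textnormal{Inj}^\vee$ is \emph{thick} in $\mathcal{GI}^\vee$. Closure under direct summands is automatic: $\mathcal{GI}^\vee$ is weakly idempotent complete and $\textnormal{Inj}^\vee$ is the left‑hand class of a cotorsion pair. For the $2$‑out‑of‑$3$ property on conflations, note that $\textnormal{Inj}^\vee$ is exactly the class of objects of $\mathcal{G}$ of finite injective dimension; the standard dimension‑shifting estimates show this class is closed under direct summands, extensions, kernels of epimorphisms and cokernels of monomorphisms in $\mathcal{G}$, hence satisfies $2$‑out‑of‑$3$ on short exact sequences of $\mathcal{G}$, and a fortiori on conflations of $\mathcal{GI}^\vee$. (Alternatively one may invoke Proposition~\ref{compapprox}(4), $\textnormal{Inj}^\vee={}^\perp\mathcal{GI}\cap\mathcal{GI}^\vee$, together with heredity of the pairs above.) With $\mathcal{W}$ thick and the two complete cotorsion pairs established, the exact Hovey correspondence of \cite{gillespie-exact-model-structures} produces an exact model structure on $\mathcal{GI}^\vee$ whose cofibrant class is $\mathcal{Q}=\mathcal{GI}^\vee$, fibrant class is $\mathcal{R}=\mathcal{GI}$, and trivial class is $\mathcal{W}=\textnormal{Inj}^\vee$; heredity of both cotorsion pairs makes it hereditary, and since every object is cofibrant it is injective. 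For uniqueness: the correspondence is a bijection between exact model structures on $\mathcal{GI}^\vee$ and Hovey triples, and a Hovey triple is determined by its fibrant and trivial classes — from $\mathcal{R}=\mathcal{GI}$ and $\mathcal{W}=\textnormal{Inj}^\vee$ one recovers $\mathcal{W}\cap\mathcal{R}=\textnormal{Inj}$ and hence $\mathcal{Q}={}^{\perp_1}\textnormal{Inj}$ computed inside $\mathcal{GI}^\vee$, which equals $\mathcal{GI}^\vee$ by Proposition~\ref{prop:ex_cot_pairs}. I expect the only step beyond bookkeeping to be the verification that $\textnormal{Inj}^\vee$ is thick in the exact category $\mathcal{GI}^\vee$; all the rest is a direct reading of Propositions~\ref{prop:ex_cot_pairs} and~\ref{compapprox} through Gillespie's exact‑category correspondence, and indeed is an instance of the injective Auslander–Buchweitz model structure attached to the strong Frobenius pair $(\textnormal{Inj},\mathcal{GI})$ in the sense of \cite{frobenius_pairs}.
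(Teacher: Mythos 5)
Your proposal is correct and takes essentially the same route as the paper, which gives no written proof for this theorem and simply asserts it as a consequence of Propositions~\ref{prop:ex_cot_pairs} and~\ref{compapprox} together with the weak idempotent completeness of $\mathcal{GI}^\vee$, deferring all verifications to the general Auslander--Buchweitz machinery of \cite{frobenius_pairs}. What you do differently is make explicit the step that \cite{frobenius_pairs} handles internally: checking that $\mathcal{W}=\textnormal{Inj}^\vee$ is thick inside the exact category $\mathcal{GI}^\vee$ and identifying the intersections $\mathcal{Q}\cap\mathcal{W}=\textnormal{Inj}^\vee$ and $\mathcal{W}\cap\mathcal{R}=\textnormal{Inj}$ using Proposition~\ref{compapprox}(2). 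Your uniqueness argument via the bijectivity of Gillespie's exact Hovey correspondence is also the correct justification for the word ``unique'' in the statement, and again is implicit in the paper's reliance on \cite{frobenius_pairs}. The only place to tread carefully is your claim that $\textnormal{Inj}^\vee$ (the objects of finite injective dimension) is thick in $\mathcal{G}$; this is indeed true by the long exact sequence in $\operatorname{Ext}^{\bullet}_{\mathcal{G}}(X,-)$, and from it thickness in $\mathcal{GI}^\vee$ follows since $\textnormal{Inj}^\vee\subseteq\mathcal{GI}^\vee$ and conflations of $\mathcal{GI}^\vee$ are short exact sequences of $\mathcal{G}$ with all terms in $\mathcal{GI}^\vee$. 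So the two approaches differ only in the level of detail, not in substance.
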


From \cite{gillespie-exact-model-structures}, we know also how the homotopy relations are defined for $\mathcal{M}^{\rm fp}_n(\mathcal{GI}^\vee)$. Specifically, we have the following description for the homotopy category of the model structure $\mathcal{M}^{\rm fp}_n(\mathcal{GI}^\vee)$, denoted ${\rm Ho}(\mathcal{GI}^\vee)$.

\begin{theorem}
Let $\mathcal{G}$ be a Grothendieck category. Then, there exists a natural isomorphism 
\[
{\rm Hom}_{{\rm Ho}(\mathcal{GI}^\vee)}(X,Y) \cong {\rm Hom}_{\mathcal{GI}^\vee}(X,RY) / \sim
\]
for every $X, Y \in \mathcal{GI}^\vee$, where:
\begin{itemize}
\item $RY$ is the fibrant replacement of $Y$.

\item For every pair of morphisms $f, g \colon X \to RY$, $f \sim g$ if, and only if, $g - f$ factors through an injective object of $\mathcal{G}$. 
\end{itemize}
Moreover, ${\rm Ho}(\mathcal{GI}^\vee)$ is triangle equivalent to the stable category $\mathcal{GI} / \sim$. 
\end{theorem}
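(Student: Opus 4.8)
The plan is to derive this as a direct instance of the general homotopy calculus for exact model categories developed in \cite{gillespie-exact-model-structures}, applied to the injective and hereditary model structure $\mathcal{M}^{\rm fp}_n(\mathcal{GI}^\vee) = (\mathcal{GI}^\vee, \textnormal{Inj}^\vee, \mathcal{GI})$ obtained in the previous theorem. First I would recall the general principle: for an exact model structure on an exact category $\mathcal{E}$, every object is cofibrant when the class of cofibrant objects is all of $\mathcal{E}$ (which is our case, since the cofibrant class is $\mathcal{GI}^\vee$ itself), so a fibrant replacement $X \to RX$ is simultaneously a fibrant-cofibrant replacement. Hence for $X, Y \in \mathcal{GI}^\vee$ one has $\mathrm{Hom}_{\mathrm{Ho}(\mathcal{GI}^\vee)}(X,Y) \cong \mathrm{Hom}_{\mathcal{GI}^\vee}(RX, RY)/\simeq$, where $\simeq$ is the (left or right, they agree between bifibrant objects) homotopy relation. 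Since $X$ is already cofibrant, one can take $RX = X$ up to the homotopy needed; more carefully, the map $X \to RX$ is a trivial cofibration with cofibrant-fibrant target, and precomposition induces a bijection $\mathrm{Hom}_{\mathcal{GI}^\vee}(RX,RY)/\simeq \;\xrightarrow{\sim}\; \mathrm{Hom}_{\mathcal{GI}^\vee}(X,RY)/\simeq$ by the standard fact that trivial cofibrations into fibrant objects are sent to isomorphisms in the homotopy category and that $[X, RY]$ can be computed using any cofibrant replacement of $X$ — but $X$ is its own cofibrant replacement. This yields the displayed natural isomorphism $\mathrm{Hom}_{\mathrm{Ho}(\mathcal{GI}^\vee)}(X,Y) \cong \mathrm{Hom}_{\mathcal{GI}^\vee}(X, RY)/\sim$.

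Next I would identify the homotopy relation $\sim$ explicitly. In an injective exact model structure the path object / cylinder object constructions degenerate: because the model structure is injective (every object cofibrant), the left homotopy relation is the one to use, and between a cofibrant source and fibrant target, two maps $f, g \colon X \to RY$ are homotopic precisely when $g - f$ factors through a trivially fibrant object, i.e.\ an object of $\textnormal{Inj}^\vee \cap \mathcal{GI} = \textnormal{Inj}$ (using part (2) of Proposition~\ref{compapprox}). This is exactly the statement that $g-f$ factors through an injective object of $\mathcal{G}$ — here one should note that an object of $\textnormal{Inj}^\vee$ that is also in $\mathcal{GI}$ is genuinely injective in $\mathcal{G}$, by Proposition~\ref{compapprox}(2), so there is no discrepancy between "injective in the exact category $\mathcal{GI}^\vee$" and "injective in $\mathcal{G}$". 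I would cite the relevant homotopy-category description from \cite{gillespie-exact-model-structures} (the analogue of Hovey's description, e.g.\ the section on homotopy categories of exact model structures) for the precise form of this factorization criterion.

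Finally, for the triangle equivalence $\mathrm{Ho}(\mathcal{GI}^\vee) \simeq \mathcal{GI}/\sim$, I would argue that the fibrant replacement functor $R$ composed with the inclusion gives an equivalence onto the full subcategory of fibrant objects, which is exactly $\mathcal{GI}$; and the homotopy relation restricted to $\mathcal{GI}$ is the stable equivalence "factoring through an injective". Concretely: the inclusion $\mathcal{GI} \hookrightarrow \mathcal{GI}^\vee$ sends fibrant-cofibrant objects to themselves and induces a functor $\mathcal{GI}/\sim \;\to\; \mathrm{Ho}(\mathcal{GI}^\vee)$ which is essentially surjective (every object has a fibrant replacement in $\mathcal{GI}$) and fully faithful (by the Hom-description just established, with $X$ replaced by its fibrant replacement $RX \in \mathcal{GI}$ and using that $R$ is the identity on already-fibrant objects up to homotopy). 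That $\mathcal{GI}/\sim$ is triangulated and the equivalence is a triangle equivalence follows because $\mathcal{M}^{\rm fp}_n(\mathcal{GI}^\vee)$ is hereditary, so $\mathrm{Ho}(\mathcal{GI}^\vee)$ is triangulated in Hovey's strong sense (\cite[Corollary 1.1.15]{becker} in the abelian case, and the exact-category analogue), and the stable category of a Frobenius-type pair inherits this structure compatibly.

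The main obstacle I anticipate is bookkeeping rather than conceptual: one must be careful that the notions of "injective" and "trivially fibrant" computed inside the exact subcategory $\mathcal{GI}^\vee$ really coincide with the ambient notion in $\mathcal{G}$, and that the cylinder/path constructions needed to define and compare the homotopy relations behave well in the exact (non-abelian) setting — this is precisely where one leans on Proposition~\ref{compapprox} and on the foundational results of \cite{gillespie-exact-model-structures} and \cite{frobenius_pairs}. Once those identifications are in place, the proof is a formal unwinding of the general theory, so I would keep it short and cite \cite{gillespie-exact-model-structures, frobenius_pairs} for the details of the homotopy calculus.
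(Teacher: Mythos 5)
Your proposal is correct and follows essentially the same route the paper takes: the paper states this theorem without a written proof, presenting it as a direct consequence of the general homotopy calculus for (hereditary, injective) exact model structures developed in \cite{gillespie-exact-model-structures}, together with the identification $\textnormal{Inj} = \textnormal{Inj}^\vee \cap \mathcal{GI}$ from Proposition~\ref{compapprox}(2) — which is precisely the unwinding you carry out. Your only addition is to spell out the bookkeeping (bifibrant replacement, the core $\omega = \mathcal{GI}^\vee \cap \textnormal{Inj}^\vee \cap \mathcal{GI} = \textnormal{Inj}$, and the hereditary hypothesis via \cite[Corollary 1.1.15]{becker}) that the paper leaves implicit by reference.
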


\begin{remark}
As pointed out in \cite[Remark 4.11]{frobenius_pairs}, the meaning of ``triangulated category'' in the previous statement is the classical one (that is, is the sense of Verdier's \cite{Verdier96}), different from the approach to triangulated categories given in \cite[31, Chapter 7]{hovey-model-categories}, and mentioned in Remark \ref{rem:derived}. 
\end{remark}


\appendix

\section{Proof of the monomorphism property}


We devote this first appendix exclusively to prove Lemma \ref{lem:mono_condition}. This is a well known result for the category of modules over a ring $R$. For instance, one proof is due to R. Strebel \cite[Lemma 2.4]{strebel}. Strebel's arguments can be generalized to the category of complexes of modules, and actually to any Grothendieck category with a generating set of finitely generated projective objects. We know by Corollary \ref{resolutions_for_proj} that in any such category, finitely $n$-presented objects are objects with a truncated finitely generated projective resolution of length $n$ (an $n$-presentation), and then a dimension shifting argument can show the validity of Lemma \ref{lem:mono_condition} in this case. The general case, however, is more difficult to show, and requires the reader to have some knowledge on Yoneda $n$-fold extensions and their properties: especially, to keep in mind several descriptions of $n$-fold extensions when they are equivalent to zero. We recommend the reader to check \cite[Chapter VII]{mitchell} and \cite[Chapter 4]{Sieg} to recall the basic terminology and results that will be used in the sequel. We shall continue using the font $\mathbb{E}$ for short exact sequences and longer Yoneda $n$-fold extensions, and shall write $[\mathbb{E}]$ to denote the class of $\mathbb{E}$ under the usual equivalence relation defining the extension groups.

From now on, we fix $F \in \mathcal{G}$ an  object of type \tFP{n} and $\{ X_i \mbox{ : } i \in I \}$ a direct system of objects in $\mathcal{G}$ over a directed set $I$, whose direct limit we denote $X = \varinjlim_{I} X_i$. The canonical morphisms in this direct system will be denoted by $\phi_i \colon X_i \rightarrow X$, and the compatible morphisms by $f_{ij} \colon X_i \to X_j$ with $i \leq j$, that is: (1) $f_{ii} = {\rm id}_{X_i}$, (2) $f_{ik} = f_{jk} \circ f_{ij}$ for $i \leq j \leq k$, and (3) $\phi_j \circ f_{ij} = \phi_i$ for $i \leq j$.


\subsection*{Case $\bm{n = 0}$} 

If $F$ is finitely generated, then $\xi_0 \colon \varinjlim_I {\rm Hom}_{\mathcal{G}}(F,X_i) \to {\rm Hom}_{\mathcal{G}}(F,X)$ is a monomorphism. This case is easy and follows as in the first part of \cite[Proof of Proposition V.3.4]{stenstrom}.


\subsection*{Case $\bm{n = 1}$}

If $F$ is finitely presented, then $\xi_1 \colon \varinjlim_I {\rm Ext}^1_{\mathcal{G}}(F,X_i) \to {\rm Ext}^1_{\mathcal{G}}(F,X)$ is a monomorphism. This case is due to Stenstrom \cite[Proposition 2.1]{stenstrom2}. Although Stenstr\"om presents the result in the category of right $R$-modules, his proof works in any Grothendieck category (even without enough projectives). We have decided to include a more detailed version of this proof for further reference in the case $n > 1$. 

Consider a class $[\mathbb{E}_i] \in {\rm Ext}^1_{\mathcal{G}}(F,X_i)$ represented as 
\[
\mathbb{E}_i \colon 0 \to X_i \to Y_i \to F \to 0.
\] 
The morphisms $f_{ij}$ induce group homomorphisms $\overline{f}_{ij} \colon {\rm Ext}^1_{\mathcal{G}}(F,X_i) \to {\rm Ext}^1_{\mathcal{G}}(F,X_j)$ by the pushout of $X_j \xleftarrow[f_{ij}]{} X_i \to Y_i$ as follows:
\[
\begin{tikzpicture}[description/.style={fill=white,inner sep=2pt}]
\matrix (m) [matrix of math nodes, row sep=2.5em, column sep=2.5em, text height=1.25ex, text depth=0.25ex]
{ 
\mathbb{E}_i \colon 0 & X_i & Y_i & F & 0 \\
\mathbb{E}_j \colon 0 & X_j & Y_j & F & 0 \\
};
\path[->]
(m-1-2)-- node[pos=0.5] {\footnotesize$\mbox{\bf po}$} (m-2-3)
(m-1-2) [yshift=30pt] edge node[left] {\footnotesize$f_{ij}$} (m-2-2) 
(m-1-3) edge (m-2-3)
(m-1-1) edge (m-1-2) (m-1-4) edge (m-1-5)
(m-2-1) edge (m-2-2) (m-2-4) edge (m-2-5)
(m-1-2) edge (m-1-3) 
(m-2-2) edge (m-2-3)
(m-1-3) edge (m-1-4)
(m-2-3) edge (m-2-4)
;
\path[-,font=\scriptsize]
(m-1-4) edge [double, thick, double distance=2pt] (m-2-4)
;
\end{tikzpicture}
\]
Thus $\overline{f}_{ij}([\mathbb{E}_i]) := [\mathbb{E}_j]$. Similarly, the canonical morphisms $\phi_i$ define (via pushouts again) group homomorphisms $\overline{\phi}_i \colon {\rm Ext}^1_{\mathcal{G}}(F,X_i) \to {\rm Ext}^1_{\mathcal{G}}(F,X)$. We also have the following commutative diagram:

\[
\begin{tikzpicture}[description/.style={fill=white,inner sep=2pt}]
\matrix (m) [matrix of math nodes, row sep=2.5em, column sep=0.5em, text height=1.25ex, text depth=0.25ex]
{ 
{\rm Ext}^1_{\mathcal{G}}(F,X_i) & {} & {\rm Ext}^1_{\mathcal{G}}(F,X_j) \\
{} & {\rm Ext}^1_{\mathcal{G}}(F,X) & {} \\
};
\path[->]
(m-1-1) edge node[above] {\footnotesize$\overline{f}_{ij}$} (m-1-3) edge node[left] {\footnotesize$\overline{\phi}_i\mbox{ \ }$} (m-2-2)
(m-1-3) edge node[right] {\footnotesize$\mbox{ \ \ \ }\overline{\phi}_j$} (m-2-2)
;
\end{tikzpicture}
\]
We can note from the previous comments that $\{ {\rm Ext}^1_{\mathcal{G}}(F,X_i) \mbox{ : } i \in I \}$ is a direct system. The universal property of direct limits induces a canonical homomorphism $\xi_1 \colon \varinjlim_I {\rm Ext}^1_{\mathcal{G}}(F,X_i) \to {\rm Ext}^1_{\mathcal{G}}(F,X)$ as shown in the following diagram:
\begin{equation}\label{diag1} 
\parbox{3in}{
\begin{tikzpicture}[description/.style={fill=white,inner sep=2pt}]
\matrix (m) [matrix of math nodes, row sep=3em, ampersand replacement=\&, column sep=0.5em, text height=1.25ex, text depth=0.25ex]
{ 
{\rm Ext}^1_{\mathcal{G}}(F,X_i) \& {} \& {\rm Ext}^1_{\mathcal{G}}(F,X_j) \\
{} \& \displaystyle\operatorname*{\varinjlim}_I {\rm Ext}^1_{\mathcal{G}}(F,X_i) \\
{} \& {\rm Ext}^1_{\mathcal{G}}(F,X) \& {} \\
};
\path[->]
(m-1-1) edge node[above] {\footnotesize$\overline{f}_{ij}$} (m-1-3) edge [bend right=30] node[below,sloped] {\footnotesize$\overline{\phi}_i$} (m-3-2)
(m-1-3) edge [bend left=30] node[below,sloped] {\footnotesize$\overline{\phi}_j$} (m-3-2)
;
\path[right hook->]
(m-1-1) edge node[below,sloped] {\footnotesize$\lambda_i$} (m-2-2)
;
\path[left hook->]
(m-1-3) edge node[below,sloped] {\footnotesize$\lambda_j$} (m-2-2)
;
\path[dotted,->]
(m-2-2) edge node[description] {\footnotesize$\exists! \mbox{ } \xi_1$} (m-3-2)
;
\end{tikzpicture}
}
\end{equation} 
We now show that $\xi_1$ is a monomorphism. Consider an element $([\mathbb{E}_i])_{i \in I}$ in the direct limit group $\varinjlim_I {\rm Ext}^1_{\mathcal{G}}(F,X_i)$ that is mapped to $[0]$ via $\xi_1$. Let $\mathbb{E}$ be a representative of $\xi_1(([\mathbb{E}_i])_{i \in I})$. Using properties of direct limits, \cite[Lemma 2.6.14]{weibel},  we can find an index $i_0 \in I$ and a short exact sequence
\[
\mathbb{E}_{i_0} \colon 0 \to X_{i_0} \to Y_{i_0} \to F \to 0,
\] 
such that $\phi_{i_0} \mathbb{E}_{i_0} \sim \mathbb{E}$, that is, $\lambda_{i_0}([\mathbb{E}_{i_0}]) = ([\mathbb{E}_i])_{i \in I}$. Here, $\phi_{i_0} \mathbb{E}_{i_0}$ denotes the exact sequence obtained after taking the pushout of the morphisms $\varinjlim_I X_i \xleftarrow{\phi_{i_0}} X_{i_0} \to Y_{i_0}$. 

Now  consider the set $I_{i_0} = \{ j \in I : i_{0} \leq j \}$, and so for each $j \in I_{i_0}$ we have $\mathbb{E}_j$ obtained as the pushout of $X_j \xleftarrow{f_{i_0, j}} X_{i_0} \to Y_{i_0}$. Hence from the properties of the pushout, this last short exact sequence $\mathbb{E}_j$ also maps to $\mathbb{E}$. In terms of short exact sequences, we have the following diagram:
\[
\begin{tikzpicture}
\matrix (m) [matrix of math nodes, row sep=2em, column sep=0.75em, text height=1.5ex, text depth=0.25ex]
{
\mathbb{E}_{i_0} \colon 0 & {} & X_{i_0} & {} & Y_{i_0} & {} & F & {} & 0 \\
{} & \mathbb{E}_j \colon 0 & {} & X_j & {} & Y_j & {} & F & {} & 0 \\
\mathbb{E} \colon 0 & {} & \displaystyle\operatorname*{\varinjlim}_I X_i & {} & Y & {} & F & {} & 0 \\
};
\path[->]
(m-1-3) edge node[above,sloped] {\footnotesize$f_{i_0,j}$} (m-2-4)
(m-1-5) edge (m-2-6)
(m-2-6) edge (m-3-5)
(m-1-5) edge (m-3-5) (m-1-7) edge (m-3-7)
(m-1-3) edge (m-1-5)
(m-2-4) edge [-,line width=6pt,draw=white] (m-2-6)
(m-2-4) edge (m-2-6)
(m-3-3) edge (m-3-5)
(m-1-1) edge (m-1-3) (m-1-7) edge (m-1-9) (m-2-8) edge (m-2-10)
(m-3-1) edge (m-3-3) (m-3-7) edge (m-3-9)
(m-1-3) edge node[pos=0.25,left] {\footnotesize$\phi_{i_0}$} (m-3-3)
(m-2-4) edge node[right] {\footnotesize$\phi_j$} (m-3-3)
;
\path[->]
(m-2-2) edge [-,line width=6pt,draw=white] (m-2-4)
(m-2-2) edge (m-2-4)
(m-1-5) edge (m-1-7)
(m-2-6) edge [-,line width=6pt,draw=white] (m-2-8)
(m-2-6) edge (m-2-8)
(m-3-5) edge (m-3-7)
;
\path[-,font=\scriptsize]
(m-1-7) edge [double, thick, double distance=2pt] (m-2-8)
(m-3-7) edge [double, thick, double distance=2pt] (m-2-8)
;
\end{tikzpicture}
\]
It is important to note that the set $I_{i_0}$ is \emph{cofinal} in $I$. This means that for each $i \in I$, there is $j \in I_{i_0}$ such that $i \leq j$. Indeed, just consider the element $j$ given in the directed set $I$ with the property that $i \leq j$ and $i_0 \leq j$. Then by \cite[Exercise V. 5.22(i)]{rotman} we have $\varinjlim_{I} \mathbb{E}_i \simeq \varinjlim_{I_{i_0}} \mathbb{E}_j$. The properties of direct limits give us a new short exact sequence 
\[
\overline{\mathbb{E}} \colon 0 \to \varinjlim_{I_{i_o}} X_j \to \varinjlim_{I_{i_o}} Y_j \to F \to 0,
\] 
along with a unique homomorphism $\overline{\mathbb{E}} \to \mathbb{E}$, where the ends of these short exact sequences are isomorphic, and hence so the middle arrow $\varinjlim_{I_0} Y_{j} \to Y$ is an isomorphism. Thus, we have $[\mathbb{E}] = [\overline{\mathbb{E}}]$ in ${\rm Ext}^1_{\mathcal{G}}(F,\varinjlim_{I} X_i)$. On the other hand, $\mathbb{E} \sim 0$, and thus we have that $\overline{\mathbb{E}}$ splits, which gives us a morphism $h \colon F \to \varinjlim_{I_{i_0}} Y_j$. Since $F$ is finitely presented, $h$ can be factored through some $Y_l$ with $l \in I_{i_0}$, that is,
\[
(F \xrightarrow{h} \varinjlim_{I_{i_0}} Y_j) = (F \xrightarrow{h_l} Y_l \to \varinjlim_{I_{i_0}} Y_j)
\] 
Hence we have the following commutative diagram:  
\[
\begin{tikzpicture}[description/.style={fill=white,inner sep=2pt}]
\matrix (m) [matrix of math nodes, row sep=2.5em, column sep=2.5em, text height=1.25ex, text depth=0.25ex]
{ 
\mathbb{E}_l \colon 0 & X_l & Y_l & F & 0 \\
\overline{\mathbb{E}} \colon 0 & \displaystyle\operatorname*{\varinjlim}_{I_{i_0}} X_j & \displaystyle\operatorname*{\varinjlim}_{I_{i_0}} Y_j & F & 0 \\
};
\path[->]
(m-1-2)-- node[pos=0.5] {\footnotesize$\mbox{\bf po}$} (m-2-3)
(m-1-3) edge (m-2-3)
(m-2-4) edge [bend right=30] node[above,sloped] {\footnotesize$h$} (m-2-3)
(m-1-4) edge [bend right=30] node[above,sloped] {\footnotesize$h_l$} (m-1-3)
(m-1-2) edge node[left] {\footnotesize$\phi_l$} (m-2-2) 
(m-1-2) edge (m-1-3) 
(m-2-2) edge (m-2-3)
(m-1-3) edge (m-1-4)
(m-2-3) edge (m-2-4)
(m-1-1) edge (m-1-2) (m-1-4) edge (m-1-5)
(m-2-1) edge (m-2-2) (m-2-4) edge (m-2-5)
;
\path[-,font=\scriptsize]
(m-1-4) edge [double, thick, double distance=2pt] (m-2-4)
;
\end{tikzpicture}
\] 
We have $\mathbb{E}_l \sim 0$, and so $\overline{f}_{i_0,l}([\mathbb{E}_{i_0}]) = [\mathbb{E}_l] = [0]$. Using the diagram \eqref{diag1}, we have that $\lambda_{i_0}([\mathbb{E}_{i_0}]) = \lambda_l \circ \overline{f}_{i_0,l}([\mathbb{E}_{i_0}]) = [0]$, and so $([\mathbb{E}_i])_{i \in I} = [0]$ since $\lambda_{i_0}([\mathbb{E}_{i_0}]) = ([\mathbb{E}_i])_{i \in I}$. Therefore, every preimage of $[\mathbb{E}] = [0]$ under $\xi_1$ is $[0]$, that is, $\xi_1$ is a monomorphism.


\subsection*{Case $\bm{n > 1}$} 

This case will require strongly the assumption that $\mathcal{G}$ is locally finitely presented. Let us fix $n > 1$ and assume that the result holds for every integer $1 \leq m < n$. First, note that the group homomorphisms  $\overline{f}_{ij} \colon {\rm Ext}^n_{\mathcal{G}}(F,X_i) \to {\rm Ext}^n_{\mathcal{G}}(F,X_j)$, $\overline{\phi}_i \colon {\rm Ext}^n_{\mathcal{G}}(F,X_i) \to {\rm Ext}^n_{\mathcal{G}}(F,X)$ and $\xi_n \colon \varinjlim_I {\rm Ext}^n_{\mathcal{G}}(F,X_i) \to {\rm Ext}^n_{\mathcal{G}}(F,X)$ are constructed as in the case $n = 1$. 

Let $([\mathbb{E}_i])_{i \in I} \in \varinjlim_I {\rm Ext}^n_{\mathcal{G}}(F,X_i)$ such that $\xi_n(([\mathbb{E}_i])_{i \in I}) = [0]$, and let 
\[
\mathbb{E} = 0 \to \varinjlim_I X_i \to A^n \xrightarrow{f_n} A^{n-1} \xrightarrow{f_{n-1}} \cdots \to A^1 \xrightarrow{f_1} F \to 0
\] 
be a representative of $\xi_n(([\mathbb{E}_i])_{i \in I})$. Consider the following splicers of $\mathbb{E}$:
\begin{align*}
\mathbb{E}' \colon & 0 \to \varinjlim_I X_i \to A^n \to K \to 0, \\
\mathbb{E}'' \colon & 0 \to K \to A^{n-1} \to \cdots \to A^1 \to F \to 0,
\end{align*}
where $K = {\rm Ker}(f_{n-1})$, that is, $\mathbb{E}$ is obtained by ``gluing'' $\mathbb{E}'$ and $\mathbb{E}''$ at $K$. We denote this gluing operation as $\mathbb{E}' \mathbb{E}''$. Then, $\mathbb{E}' \mathbb{E}'' = \mathbb{E} \sim 0$. By \cite[Lemma VII.4.1]{mitchell}, there exists an $(n-1)$-fold exact sequence 
\[
\mathbb{H} \colon 0 \to W \to B^{n-1} \to \cdots \to B^1 \to F \to 0
\] 
and a morphism $\psi \colon W \to K$ such that $\mathbb{E}'' \sim \psi \mathbb{H}$ and $\mathbb{E}' \psi \sim 0$, that is, $\mathbb{E}' \psi$ splits. Here, $\mathbb{E}' \psi$ denotes the short exact sequence obtained after taking the pullback of $A^n \to K \leftarrow W$. Let us write
\begin{align*}
\mathbb{E}' \psi \colon & 0 \to \varinjlim_{I} X_i \to B^n \to W \to 0.
\end{align*} 
Thus, we can replace $\mathbb{E} = \mathbb{E'} \mathbb{E}'' \sim (\mathbb{E}' \psi) \mathbb{H}$, which amounts to say that we can choose $\mathbb{E}$ as a sequence with a $1$-fold splicer on the left which is split, as indicated in the following diagram: 

\[
\begin{tikzpicture}[description/.style={fill=white,inner sep=2pt}]
\matrix (m) [matrix of math nodes, row sep=2em, column sep=1em, text height=1.25ex, text depth=0.25ex]
{ 
0 \sim \mathbb{E}' \psi \colon 0 & \displaystyle\operatorname*{\varinjlim}_I X_i & B^n & W & 0 \\
{} & \mathbb{E} \colon 0 & \displaystyle\operatorname*{\varinjlim}_I X_i & B^n & B^{n-1} & \cdots & B^1 & F & 0 \\
{} & {} & {} & \mathbb{H} \colon 0 & W & B^{n-1} & \cdots & B^1 & F & 0 \\
};
\path[->]
(m-1-1) edge (m-1-2) (m-1-2) edge (m-1-3) (m-1-3) edge (m-1-4) (m-1-4) edge (m-1-5)
(m-2-2) edge (m-2-3) (m-2-3) edge (m-2-4) (m-2-4) edge (m-2-5) (m-2-5) edge (m-2-6) (m-2-6) edge (m-2-7) (m-2-7) edge (m-2-8) (m-2-8) edge (m-2-9)
(m-3-4) edge (m-3-5) (m-3-5) edge (m-3-6) (m-3-6) edge (m-3-7) (m-3-7) edge (m-3-8) (m-3-8) edge (m-3-9) (m-3-9) edge (m-3-10)
;
\path[>->]
(m-1-4) edge (m-2-5)
;
\path[->>]
(m-2-4) edge (m-3-5)
;
\path[-,font=\scriptsize]
(m-1-2) edge [double, thick, double distance=2pt] (m-2-3)
(m-1-3) edge [double, thick, double distance=2pt] (m-2-4)
(m-2-5) edge [double, thick, double distance=2pt] (m-3-6)
(m-2-7) edge [double, thick, double distance=2pt] (m-3-8)
(m-2-8) edge [double, thick, double distance=2pt] (m-3-9)
;
\end{tikzpicture}
\]
As we did in the case $n = 1$, we can assert the existence of some $i_0 \in I$ and an $n$-fold exact sequence 
\[
\mathbb{E}_{i_0} \colon 0 \to  X_{i_0} \to C^n \to B^{n-1} \to \cdots \to B^1 \to F \to 0,
\]
that is, $[\mathbb{E}_{i_0}] \in {\rm Ext}^n_{\mathcal{G}}(F,X_{i_0})$, such that, $\phi_{i_0} \mathbb{E}_{i_0} \sim \mathbb{E}$. We have the following diagram:
\begin{equation}\label{diagextra} 
\parbox{5in}{
\begin{tikzpicture}[description/.style={fill=white,inner sep=2pt}]
\matrix (m) [matrix of math nodes, ampersand replacement=\&, row sep=1.5em, column sep=1.5em, text height=1.25ex, text depth=0.25ex]
{ 
\mathbb{E}_{i_0} \colon 0 \& X_{i_0} \& C^n \& {} \& B^{n-1} \& \cdots \& B^1 \& F \& 0 \\
{} \& {} \& {} \& W \\
\mathbb{E} \colon 0 \& \displaystyle\operatorname*{\varinjlim}_I X_i \& B^n \& {} \& B^{n-1} \& \cdots \& B^1 \& F \& 0 \\
{} \& {} \& {} \& W \\
};
\path[->]
(m-1-2)-- node[pos=0.5] {\footnotesize$\mbox{\bf po}$} (m-3-3)
(m-1-1) edge (m-1-2) (m-1-2) edge (m-1-3) (m-1-3) edge (m-1-5) (m-1-5) edge (m-1-6) (m-1-6) edge (m-1-7) (m-1-7) edge (m-1-8) (m-1-8) edge (m-1-9)
(m-3-1) edge (m-3-2) (m-3-2) edge (m-3-3) (m-3-3) edge (m-3-5) (m-3-5) edge (m-3-6) (m-3-6) edge (m-3-7) (m-3-7) edge (m-3-8) (m-3-8) edge (m-3-9)
(m-1-3) edge (m-3-3)
(m-1-2) edge node[left] {\footnotesize$\phi_{i_0}$} (m-3-2) 
;
\path[>->]
(m-2-4) edge (m-1-5)
(m-4-4) edge (m-3-5)
;
\path[->>]
(m-1-3) edge (m-2-4)
(m-3-3) edge (m-4-4)
;
\path[-,font=\scriptsize]
(m-2-4) edge [-,line width=6pt,draw=white] (m-4-4)
(m-2-4) edge [double, thick, double distance=2pt] (m-4-4)
(m-1-5) edge [double, thick, double distance=2pt] (m-3-5)
(m-1-7) edge [double, thick, double distance=2pt] (m-3-7)
(m-1-8) edge [double, thick, double distance=2pt] (m-3-8)
;
\end{tikzpicture}
}
\end{equation}

We now use the assumption that $\mathcal{G}$ is locally finitely presented. This allows us to write $W \simeq \varinjlim_{T} W_t$ for some directed set $T$. For this new direct limit, we fix the notation $\sigma_{tt'} \colon W_t \to W_{t'}$ for the compatible morphisms with $t \leq t'$, and $\psi_t \colon W_t \to \varinjlim_{T} W_t$ for the canonical morphisms. Then, we have the $(n-1)$-fold exact sequence
\[
\mathbb{H} \colon 0 \to \varinjlim_{T} W_t \to B^{n-1} \to \cdots \to B^1 \to F \to 0
\]
and $[\mathbb{H}] \in {\rm Ext}^{n-1}_{\mathcal{G}}(F,\varinjlim_{T} W_t)$, which is a splicer of $\mathbb{E}_{i_0}$. By the induction hypothesis, we know that ${\rm Ext}^{n-1}_{\mathcal{G}}(F,\varinjlim_{T} W_t) \cong \varinjlim_{T} {\rm Ext}^{n-1}_{\mathcal{G}}(F,W_t)$, and so there exists $t_0 \in T$ and $[\mathbb{H}_{t_0}] \in {\rm Ext}^{n-1}_{\mathcal{G}}(F,W_{t_0})$ such that $\psi_{t_0} \mathbb{H}_{t_0} \sim \mathbb{H}$. Let us write the previous equality as the following commutative diagram:
\begin{equation}\label{diag2} 
\parbox{5.25in}{
\begin{tikzpicture}[description/.style={fill=white,inner sep=2pt}]
\matrix (m) [matrix of math nodes, ampersand replacement=\&, row sep=2.5em, column sep=2em, text height=1.25ex, text depth=0.25ex]
{ 
\mathbb{H}_{t_0} \colon 0 \& W_{t_0} \& D^{n-1}_{t_0} \& B^{n-2} \& \cdots \& B^1 \& F \& 0 \\
\mathbb{H} \colon 0 \& \varinjlim_{T} W_t \& B^{n-1} \& B^{n-2} \& \cdots \& B^1 \& F \& 0 \\
};
\path[->]
(m-1-2)-- node[pos=0.5] {\footnotesize$\mbox{\bf po}$} (m-2-3)
(m-1-1) edge (m-1-2) (m-1-2) edge (m-1-3) (m-1-3) edge (m-1-4) (m-1-4) edge (m-1-5) (m-1-5) edge (m-1-6) (m-1-6) edge (m-1-7) (m-1-7) edge (m-1-8)
(m-2-1) edge (m-2-2) (m-2-2) edge (m-2-3) (m-2-3) edge (m-2-4) (m-2-4) edge (m-2-5) (m-2-5) edge (m-2-6) (m-2-6) edge (m-2-7) (m-2-7) edge (m-2-8)
(m-1-3) edge (m-2-3)
(m-1-2) edge node[left] {\footnotesize$\psi_{t_0}$} (m-2-2)
;
\path[-,font=\scriptsize]
(m-1-4) edge [double, thick, double distance=2pt] (m-2-4)
(m-1-6) edge [double, thick, double distance=2pt] (m-2-6)
(m-1-7) edge [double, thick, double distance=2pt] (m-2-7)
;
\end{tikzpicture}
}
\end{equation}
After combining \eqref{diagextra} and \eqref{diag2}, and taking the pullback of $C^n \to \varinjlim_T W_t \leftarrow W_{t_0}$ we obtain the following commutative diagram:
\[
\footnotesize
\begin{tikzpicture}[description/.style={fill=white,inner sep=2pt}]
\matrix (m) [matrix of math nodes, row sep=2em, column sep=1.5em, text height=1.25ex, text depth=0.25ex]
{ 
\overline{\mathbb{E}}_{i_0} \colon 0 & X_{i_0} & D^n_{i_0} & {} & D^{n-1}_{t_0} & {} & B^{n-2} & \cdots & B^1 & F & 0 \\
{} & {} & {} & W_{t_0} & {} & \bullet & {} & {} & {} & {} \\
\mathbb{E}_{i_0} \colon 0 & X_{i_0} & C^n & {} & B^{n-1} & {} & B^{n-2} & \cdots & B^1 & F & 0 \\
{} & {} & {} & \displaystyle\operatorname*{\varinjlim}_T W_t & {} & \bullet & {} & {} & {} \\
\mathbb{E} \colon 0 & \displaystyle\operatorname*{\varinjlim}_I X_i & B^n & {} & B^{n-1} & {} & B^{n-2} & \cdots & B^1 & F & 0 \\
{} & {} & {} & \displaystyle\operatorname*{\varinjlim}_T W_t & {} & {} & {} & {} \\
};
\path[->]
(m-3-2)-- node[pos=0.5] {\footnotesize$\mbox{\bf po}$} (m-5-3)
(m-1-3)-- node[pos=0.45] {\footnotesize$\mbox{\bf pb}$} (m-4-4)
(m-3-1) edge (m-3-2) (m-3-2) edge (m-3-3) (m-3-3) edge (m-3-5) (m-3-5) edge (m-3-7) (m-3-7) edge (m-3-8) (m-3-8) edge (m-3-9) (m-3-9) edge (m-3-10)
(m-5-1) edge (m-5-2) (m-5-2) edge (m-5-3) (m-5-3) edge (m-5-5) (m-5-5) edge (m-5-7) (m-5-7) edge (m-5-8) (m-5-8) edge (m-5-9) (m-5-9) edge (m-5-10)
(m-3-3) edge (m-5-3)
(m-1-5) edge (m-1-7) (m-1-7) edge (m-1-8) (m-1-8) edge (m-1-9) (m-1-9) edge (m-1-10) (m-1-10) edge (m-1-11)
(m-3-10) edge (m-3-11)
(m-5-10) edge (m-5-11)
(m-1-5) edge (m-3-5)
;
\path[dashed,->]
(m-1-1) edge (m-1-2) (m-1-2) edge (m-1-3) (m-1-3) edge (m-1-5)
(m-1-3) edge (m-3-3)
;
\path[dashed,->>]
(m-1-3) edge (m-2-4)
;
\path[>->]
(m-2-4) edge (m-1-5)
(m-2-6) edge (m-1-7)
(m-4-4) edge (m-3-5)
(m-6-4) edge (m-5-5)
(m-4-6) edge (m-3-7)
;
\path[->>]
(m-3-5) edge (m-4-6)
(m-1-5) edge (m-2-6)
(m-3-3) edge (m-4-4)
(m-5-3) edge (m-6-4)
;
\path[-,font=\scriptsize]
(m-4-4) edge [-,line width=6pt,draw=white] (m-6-4)
(m-4-4) edge [double, thick, double distance=2pt] (m-6-4)
(m-3-5) edge [double, thick, double distance=2pt] (m-5-5)
(m-3-7) edge [double, thick, double distance=2pt] (m-5-7)
(m-3-9) edge [double, thick, double distance=2pt] (m-5-9)
(m-3-10) edge [double, thick, double distance=2pt] (m-5-10)
(m-2-6) edge [-,line width=6pt,draw=white] (m-4-6)
(m-2-6) edge [double, thick, double distance=2pt] (m-4-6)
(m-2-4) edge [-,line width=6pt,draw=white] (m-4-4)
(m-1-7) edge [double, thick, double distance=2pt] (m-3-7)
(m-1-9) edge [double, thick, double distance=2pt] (m-3-9)
(m-1-10) edge [double, thick, double distance=2pt] (m-3-10)
;
\path[dashed,font=\scriptsize]
(m-1-2) edge [double, thick, double distance=2pt] (m-3-2)
;
\path[->]
(m-3-2) edge node[left] {\footnotesize$\phi_{i_0}$} (m-5-2) 
(m-2-4) edge node[pos=0.25,right] {\footnotesize$\psi_{t_0}$} (m-4-4)
;
\end{tikzpicture}
\]
We note from the previous diagram that $\mathbb{E}_{i_0} \sim \overline{\mathbb{E}}_{i_0}$ and $[\mathbb{E}] = \xi_n([\overline{\mathbb{E}}_{i_0}])$. The rest of the proof focuses on showing that $\overline{\mathbb{E}}_{i_0} \sim 0$. 

From $\mathbb{E}$ consider the 1-fold splicer 
\[
0 \to \varinjlim_I X_i \to B^n \to \varinjlim_T W_t \to 0,
\] 
which was obtained as the pushout of $\varinjlim_{I} X_i \xleftarrow{\phi_{i_0}} X_{i_0} \to C^n$, where $X_{i_0} \to C^n$ is the morphism appearing in the $1$-fold splicer 
\[
\mathbb{E}^1_{i_0} \colon 0 \to X_{i_0} \to C^n \to \varinjlim_{T} W_t \to 0
\] 
of $\mathbb{E}_{i_0}$. Take the pullback of $B^n \to W \leftarrow W_{t_0}$ to get the following commutative diagram:
\[
\begin{tikzpicture}[description/.style={fill=white,inner sep=2pt}]
\matrix (m) [matrix of math nodes, row sep=2em, column sep=1em, text height=1.25ex, text depth=0.25ex]
{ 
{} & 0 & {} & \varinjlim_I X_i & {} & Z & {} & W_{t_0} & {} & 0 \\
\mathbb{E}^1_{i_0} \colon 0 & {} & X_{i_0} & {} & C^n & {} & \varinjlim_T W_t & {} & 0 \\
{} & 0 & {} & \varinjlim_I X_i & {} & B^n & {} & \varinjlim_T W_t & {} & 0 \\
};
\path[->]
(m-1-2) edge (m-1-4) (m-1-4) edge (m-1-6) (m-1-6) edge (m-1-8) (m-1-8) edge (m-1-10)
(m-2-1) edge (m-2-3) (m-2-3) edge (m-2-5) (m-2-5) edge (m-2-7) (m-2-7) edge (m-2-9)
(m-3-2) edge (m-3-4) (m-3-4) edge (m-3-6) (m-3-6) edge (m-3-8) (m-3-8) edge (m-3-10)
(m-2-5) edge (m-3-6)
(m-2-3) edge node[left] {\footnotesize$\phi_{i_0}$} (m-3-4) 
;
\path[-,font=\scriptsize]
(m-1-4) edge [-,line width=6pt,draw=white] (m-3-4)
(m-1-4) edge [double, thick, double distance=2pt] (m-3-4)
(m-2-7) edge [double, thick, double distance=2pt] (m-3-8)
;
\path[->]
(m-1-6) edge [-,line width=6pt,draw=white] (m-3-6)
(m-1-6) edge (m-3-6)
;
\path[->]
(m-1-8) edge [-,line width=6pt,draw=white] (m-3-8)
(m-1-8) edge node[pos=0.25,right] {\footnotesize$\psi_{t_0}$} (m-3-8)
;
\end{tikzpicture}
\]
Since $(\phi_{i_0} \mathbb{E}^1_{i_0}) \psi_{t_0} \sim \phi_{i_0} (\mathbb{E}^1_{i_0} \psi_{t_0})$, taking the pullback of $C^n \to \lim_T W_t \leftarrow W_{t_0}$ produces the following commutative diagram:
\begin{equation}\label{diag3} 
\parbox{5in}{
\footnotesize
\begin{tikzpicture}[description/.style={fill=white,inner sep=2pt}]
\matrix (m) [matrix of math nodes, ampersand replacement=\&, row sep=2.5em, column sep=0.5em, text height=1.25ex, text depth=0.25ex]
{ 
\mathbb{G}_{i_0} = \mathbb{E}^1_{i_0} \psi_{t_0} \colon 0 \& {} \& X_{i_0} \& {} \& D^n_{i_0} \& {} \& W_{t_0} \& {} \& 0  \\
{} \& \mathbb{G} \colon 0 \& {} \& \varinjlim_I X_i \& {} \& Z \& {} \& W_{t_0} \& {} \& 0 \\
\mathbb{E}^1_{i_0} \colon 0 \& {} \& X_{i_0} \& {} \& C^n \& {} \& \varinjlim_T W_t \& {} \& 0 \\
{} \& \phi_{i_0} \mathbb{E}^1_{i_0} \colon 0 \& {} \& \varinjlim_I X_i \& {} \& B^n \& {} \& \varinjlim_T W_t \& {} \& 0 \\
};
\path[->]
(m-1-1) edge (m-1-3) (m-1-3) edge (m-1-5) (m-1-5) edge (m-1-7) (m-1-7) edge (m-1-9)
(m-3-1) edge (m-3-3) (m-3-3) edge (m-3-5) (m-3-5) edge (m-3-7) (m-3-7) edge (m-3-9)
(m-4-2) edge (m-4-4) (m-4-4) edge (m-4-6) (m-4-6) edge (m-4-8) (m-4-8) edge (m-4-10)
(m-3-5) edge (m-4-6)
(m-1-5) edge (m-3-5) edge (m-2-6)
;
\path[->]
(m-1-3) edge node[right] {\footnotesize$\phi_{i_0}$} (m-2-4) 
(m-3-3) edge node[left] {\footnotesize$\phi_{i_0}$} (m-4-4) 
(m-1-7) edge node[pos=0.25,right] {\footnotesize$\psi_{t_0}$} (m-3-7)
;
\path[-,font=\scriptsize]
(m-1-3) edge [double, thick, double distance=2pt] (m-3-3)
(m-2-4) edge [-,line width=6pt,draw=white] (m-4-4)
(m-2-4) edge [double, thick, double distance=2pt] (m-4-4)
(m-1-7) edge [double, thick, double distance=2pt] (m-2-8)
(m-3-7) edge [double, thick, double distance=2pt] (m-4-8)
;
\path[->]
(m-2-6) edge [-,line width=6pt,draw=white] (m-4-6)
(m-2-6) edge (m-4-6)
(m-2-2) edge [-,line width=6pt,draw=white] (m-2-4)
(m-2-2) edge (m-2-4) 
(m-2-4) edge [-,line width=6pt,draw=white] (m-2-6)
(m-2-4) edge (m-2-6) 
(m-2-6) edge [-,line width=6pt,draw=white] (m-2-8)
(m-2-6) edge (m-2-8) 
(m-2-8) edge [-,line width=6pt,draw=white] (m-2-10)
(m-2-8) edge (m-2-10)
;
\path[->]
(m-2-8) edge [-,line width=6pt,draw=white] (m-4-8)
(m-2-8) edge node[pos=0.25,right] {\footnotesize$\psi_{t_0}$} (m-4-8)
;
\end{tikzpicture}
}
\end{equation}
where $\mathbb{G} = (\phi_{i_0} \mathbb{E}^1_{i_0}) \psi_{t_0} \sim \phi_{i_0} (\mathbb{E}^1_{i_0} \psi_{t_0})$. 

For each $j \geq i_0$ we can similarly form a short exact sequence
\[
\mathbb{G}_j \colon 0 \to X_j \to D^n_j \to W_{t_0} \to 0.
\]
The family of sequences $\{ \mathbb{G}_j \mbox{ : } j \geq i_0 \}$ is a direct system over the cofinal set $I_{i_0}$: recall that for $j \geq i_0$ we have morphisms 
\[
\gamma^1_j \colon {\rm Ext}^1_{\mathcal{G}}(W_{t_0},X_j) \to {\rm Ext}^1_{\mathcal{G}}\left(W_{t_0},\varinjlim_{I_0} X_i\right)
\] 
which are compatible with respect to the morphisms 
\[
\hat{f}^1_{ij} \colon {\rm Ext}^1_{\mathcal{G}}(W_{t_0},X_i) \to {\rm Ext}^1_{\mathcal{G}}(W_{t_0},X_j),
\] 
that is, $\gamma^1_i = \gamma^1_j \circ \hat{f}^1_{ij}$ for every $i_0 \leq i \leq j$. Thus, we can consider the direct limit $\varinjlim_{I_0} \mathbb{G}_j$, which is again a short exact sequence since $\mathcal{G}$ is a Grothendieck category. By the universal property of colimits, we have the following diagram, which is an isomorphism of short exact sequences:
\[
\begin{tikzpicture}[description/.style={fill=white,inner sep=2pt}]
\matrix (m) [matrix of math nodes, row sep=2.5em, column sep=2.5em, text height=1.25ex, text depth=0.25ex]
{ 
\displaystyle\operatorname*{\varinjlim}_{I_0} \mathbb{G}_j \colon 0 & \displaystyle\operatorname*{\varinjlim}_{I_0} X_j & \displaystyle\operatorname*{\varinjlim}_{I_0} D^n_j & W_{t_0} & 0 \\
\mathbb{G} \colon 0 & \displaystyle\operatorname*{\varinjlim}_{I} X_j & Z & W_{t_0} & 0 \\
};
\path[->]
(m-1-1) edge (m-1-2) (m-1-2) edge (m-1-3) (m-1-3) edge node[above] {\footnotesize$p$} (m-1-4) (m-1-4) edge (m-1-5)
(m-2-1) edge (m-2-2) (m-2-2) edge (m-2-3) (m-2-3) edge (m-2-4) (m-2-4) edge (m-2-5)
(m-1-3) edge (m-2-3)
;
\path[-,font=\scriptsize]
(m-1-2) edge [double, thick, double distance=2pt] (m-2-2)
(m-1-4) edge [double, thick, double distance=2pt] (m-2-4)
;
\end{tikzpicture}
\]
Note that $\phi_{i_0} \mathbb{E}^1_{i_0} \sim \mathbb{E}' \psi \sim 0$, and so the sequence $\phi_{i_0} \mathbb{E}^1_{i_0}$ splits. By the homotopy lemma, the sequence $\mathbb{G}$ also splits. Then, the upper face of the diagram \eqref{diag3} represents a situation similar to the case $n = 1$. This implies that we can find an arrow $q \colon W_{t_0} \to \varinjlim_{I_0} D^n_j$ that is a right inverse of $p$ ($p \circ q = {\rm id}_{W_{t_0}}$) and factors through some $D^n_{j'}$ with $j' \geq i_0$, since $W_{t_0}$ is finitely presented. Thus, we have the following diagram between split short exact sequences:
\[
\begin{tikzpicture}[description/.style={fill=white,inner sep=2pt}]
\matrix (m) [matrix of math nodes, row sep=2.5em, column sep=2.5em, text height=1.25ex, text depth=0.25ex]
{ 
\mathbb{G}_{j'} \colon 0 & X_{j'} & D^n_{j'} & W_{t_0} & 0 \\
\displaystyle\operatorname*{\varinjlim}_{I_0} \mathbb{G}_j \colon 0 & \displaystyle\operatorname*{\varinjlim}_{I_0} X_j & \displaystyle\operatorname*{\varinjlim}_{I_0} D^n_j & W_{t_0} & 0 \\
};
\path[->]
(m-1-2)-- node[pos=0.5] {\footnotesize$\mbox{\bf po}$} (m-2-3)
(m-1-1) edge (m-1-2) (m-1-2) edge (m-1-3) (m-1-3) edge (m-1-4) (m-1-4) edge (m-1-5)
(m-2-1) edge (m-2-2) (m-2-2) edge (m-2-3) (m-2-3) edge (m-2-4) (m-2-4) edge (m-2-5)
(m-1-3) edge (m-2-3) 
(m-1-2) edge node[left] {\footnotesize$\phi_{j'}$} (m-2-2)
;
\path[-,font=\scriptsize]
(m-1-4) edge [double, thick, double distance=2pt] (m-2-4)
;
\end{tikzpicture}
\]
We now splice the sequences $\mathbb{G}_{j'}$ and 
\[
\mathbb{G}^{n-1}_{t_0} \colon 0 \to W_{t_0} \to D^{n-1}_{t_0} \to B^{n-2} \to \cdots \to B^1 \to F \to 0.
\]
We have a commutative diagram
\[
\footnotesize
\begin{tikzpicture}[description/.style={fill=white,inner sep=2pt}]
\matrix (m) [matrix of math nodes, row sep=3em, column sep=1.25em, text height=1.5ex, text depth=0.5ex]
{ 
\mathbb{G}_{j'} \colon 0 & X_{j'} & D^n_{j'} & W_{t_0} & 0 \\
{} & \mathbb{G}_{j'} \mathbb{G}^{n-1}_{t_0} \colon 0 & X_{j'} & D^n_{j'} & D^{n-1}_{t_0} & B^{n-2} & \cdots & B^1 & F & 0 \\
{} & {} & {} & \mathbb{G}^{n-1}_{t_0} \colon 0 & W_{t_0} & D^{n-1}_{t_0} & B^{n-2} & \cdots & B^1 & F & 0 \\
};
\path[->]
(m-1-1) edge (m-1-2) (m-1-2) edge (m-1-3) (m-1-3) edge (m-1-4) (m-1-4) edge (m-1-5)
(m-2-2) edge (m-2-3) (m-2-3) edge (m-2-4) (m-2-4) edge (m-2-5) (m-2-5) edge (m-2-6) (m-2-6) edge (m-2-7) (m-2-7) edge (m-2-8) (m-2-8) edge (m-2-9) (m-2-9) edge (m-2-10)
(m-3-4) edge (m-3-5) (m-3-5) edge (m-3-6) (m-3-6) edge (m-3-7) (m-3-7) edge (m-3-8) (m-3-8) edge (m-3-9) (m-3-9) edge (m-3-10) (m-3-10) edge (m-3-11)
;
\path[>->]
(m-1-4) edge (m-2-5)
;
\path[->>]
(m-2-4) edge (m-3-5)
;
\path[-,font=\scriptsize]
(m-1-2) edge [double, thick, double distance=2pt] (m-2-3)
(m-1-3) edge [double, thick, double distance=2pt] (m-2-4)
(m-2-5) edge [double, thick, double distance=2pt] (m-3-6)
(m-2-8) edge [double, thick, double distance=2pt] (m-3-9)
(m-2-9) edge [double, thick, double distance=2pt] (m-3-10)
;
\end{tikzpicture}
\]
along with a morphism $\overline{\mathbb{E}}_{i_0} \to \mathbb{G}_{j'} \mathbb{G}^{n-1}_{t_0}$:
\[
\footnotesize
\begin{tikzpicture}[description/.style={fill=white,inner sep=2pt}]
\matrix (m) [matrix of math nodes, row sep=3.5em, column sep=2em, text height=1.25ex, text depth=0.25ex]
{ 
\overline{\mathbb{E}}_{i_0} \colon 0 & X_{i_0} & D^n_{i_0} & D^{n-1}_{t_0} & B^{n-2} & \cdots & B^1 & F & 0 \\
\mathbb{G}_{j'} \mathbb{G}^{n-1}_{t_0} \colon 0 & X_{j'} & D^n_{j'} & D^{n-1}_{t_0} & B^{n-2} & \cdots & B^1 & F & 0 \\
};
\path[->]
(m-1-2)-- node[pos=0.5] {\footnotesize$\mbox{\bf po}$} (m-2-3)
(m-1-1) edge (m-1-2) (m-1-2) edge (m-1-3) (m-1-3) edge (m-1-4) (m-1-4) edge (m-1-5) (m-1-5) edge (m-1-6) (m-1-6) edge (m-1-7) (m-1-7) edge (m-1-8) (m-1-8) edge (m-1-9)
(m-2-1) edge (m-2-2) (m-2-2) edge (m-2-3) (m-2-3) edge (m-2-4) (m-2-4) edge (m-2-5) (m-2-5) edge (m-2-6) (m-2-6) edge (m-2-7) (m-2-7) edge (m-2-8) (m-2-8) edge (m-2-9)
(m-1-2) edge (m-2-2) (m-1-3) edge (m-2-3)
;
\path[-,font=\scriptsize]
(m-1-4) edge [double, thick, double distance=2pt] (m-2-4)
(m-1-5) edge [double, thick, double distance=2pt] (m-2-5)
(m-1-7) edge [double, thick, double distance=2pt] (m-2-7)
(m-1-8) edge [double, thick, double distance=2pt] (m-2-8)
;
\end{tikzpicture}
\]
By \cite[Lemma 4.1]{mitchell}, $f_{i_0,j'} \overline{E}_{i_0} \sim \mathbb{G}_{j'} \mathbb{G}^{n-1}_{t_0} \sim 0$. On the other hand, $f_{i_0,j'} \overline{\mathbb{E}}_{i_0} \sim f_{i_0,j'} \mathbb{E}_{i_0}$ since $\overline{\mathbb{E}}_{i_0} \sim \mathbb{E}_{i_0}$. Thus, we have $\overline{f}_{i_0,j'}([\mathbb{E}_{i_0}]) = [0]$. Now consider the morphisms $\lambda_j \colon {\rm Ext}^n_{\mathcal{G}}(F,X_j) \to \varinjlim_{I_0} {\rm Ext}^n_{\mathcal{G}}(F,X_j)$ in the diagram \eqref{diag1}. We have $\lambda_{i_0}([\mathbb{E}_{i_0}]) = \lambda_{j'} \circ \overline{f}_{i_0,j'}([\mathbb{E}_{i_0}]) = [0]$, and thus we can assert that $\mathbb{E}_{i_0} \sim 0$. This concludes the result.


\section{Some finiteness conditions for quasi-coherent sheaves}

In this second appendix we complement the study of finiteness conditions in the category $\mathfrak{Qcoh}(X)$ of quasi-coherent sheaves over a scheme $X$. We also give examples of schemes $X$ such that $\mathfrak{Qcoh}(X)$ is an $n$-coherent category.


\subsection*{Finitely presented quasi-coherent sheaves in terms of finitely generated and $\text{FP}_1$-injective quasi-coherent sheaves}

We show that Proposition \ref{FPn-in-terms-of-FPn-Inj} holds in the case $n = 1$ for the category of quasi-coherent sheaves over certain schemes. Indeed, we have already mentioned that this result is true in the category of modules over a ring (see \cite[Theorem 2.1.10]{Glaz}) and also in the category of complexes of modules. The latter follows by using the characterization of complexes in $\mathcal{FP}_0$, $\mathcal{FP}_1$ and $\mathcal{FP}_1\mbox{-}\textrm{Inj}$. For, suppose we are given a finitely generated complex $F$ (that is, bounded and with finitely generated module entries \cite[Proposition 2.1.4]{ZhaoPerez}) such that ${\rm Ext}^1_{\textrm{Ch}}(F,X) = 0$ for every complex $X \in \mathcal{FP}_1\mbox{-}\textrm{Inj}$. Since $\mathcal{FP}_1\mbox{-}\textrm{Inj}$ is formed by exact complexes with $\text{FP}_1$-injective cycles by \cite[Theorem 2.3.3]{ZhaoPerez}, the previous holds for every complex of the form $D^{m+1}(M)$ with $m \in \mathbb{Z}$ and $M$ an $\text{FP}_1$-injective module. That is, 
\[
0 = {\rm Ext}^1_{\textrm{Ch}}(F,D^{m+1}(M)) \cong {\rm Ext}^1_R(F_m, M),
\] 
using the natural isomorphism described in \cite[Lemma 3.1]{gillespie}. Thus, $F_m$ is a finitely generated $R$-module which is also left Ext-orthogonal to every $\text{FP}_1$-injective module. Hence, $F_m$ is a finitely presented $R$-module by \cite[Theorem 2.1.10]{Glaz}. In other words, we have that $F$ is a finitely presented complex. 

Under certain assumptions on a scheme $X$, we are also able to extend the equality $\mathcal{FP}_1 = \mathcal{FP}_{0} \cap {}^{\perp_1}(\mathcal{FP}_1\mbox{-}\textrm{Inj})$ to the category $\mathfrak{Qcoh}(X)$ of quasi-coherent sheaves over $X$. Specifically, we need $X$ to be a semi-separated scheme, that is, $X$ has an open affine covering $\{ U_i \}_{i \in I}$ such that $U_i \cap U_j$ is also an open affine for every $i, j \in I$. Now for each open affine $U \subseteq X$, consider the inclusion $\iota^U \colon U \hookrightarrow X$ and the induced direct image functor $\iota^U_\ast \colon \mathfrak{Qcoh}(U) \longrightarrow \mathfrak{Qcoh}(X)$ (the direct image functor preserves quasi-coherency since $X$ is semi-separated). By \cite[Corollary 5.5]{hartshorne}, we have a natural isomorphism 
\begin{align}\label{eqn:nat_iso_hom}
\textrm{Hom}_{\mathcal{O}_X(U)}(\mathscr{H}(U),E) & \cong {\rm Hom}_{\mathfrak{Qcoh}(X)}(\mathscr{H},\iota^U_\ast(E))
\end{align}
for every $\mathscr{H} \in \mathfrak{Qcoh}(X)$ and $E \in \mathcal{O}_X(U)\mbox{-}\textrm{Mod}$. Using \eqref{eqn:nat_iso_hom}, we can note that $E$ is an injective module over $\mathcal{O}_X(U)$ if, and only if, $\iota^U_\ast(E)$ is an injective quasi-coherent sheaf over $X$. Thus, we can obtain the following natural isomorphism for every $k \geq 0$:
\begin{align}\label{eqn:nat_iso_ext}
\textrm{Ext}^k_{\mathcal{O}_X(U)}(\mathscr{H}(U),E) & \cong {\rm Ext}^k_{\mathfrak{Qcoh}(X)}(\mathscr{H},\iota^U_\ast(E)).
\end{align}

Let us prove that the equality $\mathcal{FP}_1 = \mathcal{FP}_0 \cap {}^{\perp_1}(\mathcal{FP}_1\mbox{-}\textrm{Inj})$ holds true in $\mathfrak{Qcoh}(X)$. We shall need the following result, which is a consequence of \cite[Lemma 3.6 and Proposition 3.7]{AbsPureSheaves}.

\begin{lemma}
Let $X$ be a semi-separated scheme. Then, $\mathscr{F}$ is a finitely presented quasi-coherent sheaf if, and only if, $\mathscr{F}(U)$ is a finitely presented $\mathcal{O}_X(U)$-module, for every open affine $U \subseteq X$. 
\end{lemma}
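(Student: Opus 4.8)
The plan is to prove the stated characterization of finitely presented quasi-coherent sheaves on a semi-separated scheme $X$ by reducing it to the affine/local description already available in the literature. Recall from Example~\ref{Example-fg-projectives}(5) (citing \cite[Proposition 3.7]{AbsPureSheaves}) that on a semi-separated (or concentrated) scheme, $\mathscr{F} \in \mathfrak{Qcoh}(X)$ is finitely presented if and only if $\mathscr{F}|_U$ is finitely presented in $\mathfrak{Qcoh}(U)$ for every open affine $U \subseteq X$. So the substance of the lemma is really the translation between ``$\mathscr{F}|_U$ finitely presented in $\mathfrak{Qcoh}(U)$'' and ``$\mathscr{F}(U)$ a finitely presented $\mathcal{O}_X(U)$-module.'' For $U = \operatorname{Spec}(A)$ affine, the global sections functor $\Gamma(U,-) \colon \mathfrak{Qcoh}(U) \to A\text{-Mod}$ is an exact equivalence of categories (the inverse being $\widetilde{(-)}$), and it sends $\mathscr{F}|_U$ to $\mathscr{F}(U)$. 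An equivalence of abelian categories preserves all categorical properties, in particular being finitely presented — since finite presentation is defined via the preservation of direct limits by $\operatorname{Hom}$, which is an intrinsic categorical notion (Definition~\ref{def-finitely-n-presented} and the surrounding discussion). Hence $\mathscr{F}|_U$ is finitely presented in $\mathfrak{Qcoh}(U)$ if and only if $\mathscr{F}(U)$ is a finitely presented $A$-module.

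Concretely, I would structure the argument in two short steps. First, quote \cite[Proposition 3.7]{AbsPureSheaves}: for $X$ semi-separated, $\mathscr{F}$ is finitely presented in $\mathfrak{Qcoh}(X)$ iff $\mathscr{F}|_U$ is finitely presented in $\mathfrak{Qcoh}(U)$ for every open affine $U \subseteq X$. Second, invoke the equivalence $\mathfrak{Qcoh}(\operatorname{Spec} A) \simeq A\text{-Mod}$ to rewrite the local condition as ``$\mathscr{F}(U)$ is a finitely presented $\mathcal{O}_X(U)$-module for every open affine $U$.'' One can also cite \cite[Lemma 3.6]{AbsPureSheaves}, which presumably already records exactly this affine translation, so that the lemma becomes an immediate conjunction of the cited statements — indeed the statement of the lemma announces it as ``a consequence of \cite[Lemma 3.6 and Proposition 3.7]{AbsPureSheaves}.''

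The only mild subtlety — and the one point I would be careful about — is making sure the ``finitely presented'' appearing in the two cited results of \cite{AbsPureSheaves} is literally the categorical notion used in this paper (i.e.\ type $\text{FP}_1$, $\operatorname{Hom}$ commuting with direct limits), rather than an a priori different notion such as ``locally $\mathcal{O}^{(m)} \to \mathcal{O}^{(n)} \to \mathscr{F} \to 0$ exact.'' For quasi-coherent sheaves on a scheme these coincide (this is part of what \cite[Proposition 3.7]{AbsPureSheaves} establishes), so there is no real gap, but the proof should note this so the reader is not left wondering. Since $\mathfrak{Qcoh}(X)$ is locally finitely presented when $X$ is semi-separated (it has a generating set of finitely presented objects, e.g.\ built from the affine cover), the general machinery of Section~\ref{Sec-locally finitely n-presented cats} applies and all the descriptions of finitely presented objects are consistent. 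I do not anticipate any genuine obstacle here; the proof is essentially a citation plus the remark that an exact equivalence of categories preserves finite presentation.

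\begin{proof}
Since $X$ is semi-separated, it is in particular concentrated, so by \cite[Proposition 3.7]{AbsPureSheaves} a quasi-coherent sheaf $\mathscr{F}$ is finitely presented in $\mathfrak{Qcoh}(X)$ if, and only if, $\mathscr{F}|_U$ is finitely presented in $\mathfrak{Qcoh}(U)$ for every open affine subset $U \subseteq X$. Now fix such a $U$ and write $U = \operatorname{Spec}(\mathcal{O}_X(U))$. The global sections functor $\Gamma(U,-) \colon \mathfrak{Qcoh}(U) \longrightarrow \mathcal{O}_X(U)\mbox{-}\textrm{Mod}$ is an exact equivalence of abelian categories, with quasi-inverse $\widetilde{(-)}$, and it sends $\mathscr{F}|_U$ to $\mathscr{F}(U)$. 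Being finitely presented is an intrinsic categorical property (Definition~\ref{def-finitely-n-presented}), hence preserved under equivalences of abelian categories; therefore $\mathscr{F}|_U$ is finitely presented in $\mathfrak{Qcoh}(U)$ if, and only if, $\mathscr{F}(U)$ is a finitely presented $\mathcal{O}_X(U)$-module. Combining these two equivalences, and using \cite[Lemma 3.6]{AbsPureSheaves} to identify the two notions of finite presentation for quasi-coherent sheaves, we conclude that $\mathscr{F}$ is finitely presented in $\mathfrak{Qcoh}(X)$ if, and only if, $\mathscr{F}(U)$ is a finitely presented $\mathcal{O}_X(U)$-module for every open affine $U \subseteq X$.
\end{proof}
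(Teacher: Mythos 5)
Your proof takes essentially the same route as the paper, which simply records the lemma as a consequence of \cite[Lemma 3.6 and Proposition 3.7]{AbsPureSheaves} without spelling out the translation between $\mathscr{F}|_U$ and $\mathscr{F}(U)$; you have filled in that translation via the equivalence $\mathfrak{Qcoh}({\rm Spec}(A)) \simeq A\mbox{-}{\rm Mod}$. One small correction: your opening claim that ``since $X$ is semi-separated, it is in particular concentrated'' is false in general, as semi-separatedness does not imply quasi-compactness. The slip is harmless because \cite[Proposition 3.7]{AbsPureSheaves}, as quoted in Example \ref{Example-fg-projectives}(5), applies to semi-separated schemes as well as to concentrated ones, so the (false) implication is not needed and you should simply invoke the result for semi-separated $X$ directly.
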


\begin{proposition}
Let $X$ be a semi-separated scheme. Then, $\mathscr{F} \in \mathfrak{Qcoh}(X)$ is finitely presented if, and only if, $\mathscr{F}$ is finitely generated and ${\rm Ext}^1_{\mathfrak{Qcoh}(X)}(\mathscr{F},\mathscr{E}) = 0$ for every $\text{FP}_1$-injective quasi-coherent sheaf $\mathscr{E}$ over $X$.
\end{proposition}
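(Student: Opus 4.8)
The plan is to reduce everything to the affine case via the section functors $(-)(U)$ and the natural isomorphism \eqref{eqn:nat_iso_ext}, and then invoke Glaz's theorem. The ``only if'' direction is immediate: a finitely presented quasi-coherent sheaf lies in $\mathcal{FP}_1$, hence is finitely generated, and $\mathrm{Ext}^1_{\mathfrak{Qcoh}(X)}(\mathscr{F},\mathscr{E}) = 0$ for every $\mathrm{FP}_1$-injective $\mathscr{E}$ directly from Definition~\ref{def:FPn-injective}.

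For the ``if'' direction, let $\mathscr{F}$ be finitely generated with $\mathrm{Ext}^1_{\mathfrak{Qcoh}(X)}(\mathscr{F},\mathscr{E}) = 0$ for all $\mathrm{FP}_1$-injective $\mathscr{E}$. By the preceding Lemma it suffices to show that $\mathscr{F}(U)$ is a finitely presented $\mathcal{O}_X(U)$-module for every open affine $U \subseteq X$; by Glaz \cite[Theorem 2.1.10]{Glaz} this follows once we check (i) $\mathscr{F}(U)$ is finitely generated over $\mathcal{O}_X(U)$, and (ii) $\mathrm{Ext}^1_{\mathcal{O}_X(U)}(\mathscr{F}(U),M) = 0$ for every $\mathrm{FP}_1$-injective (absolutely pure) $\mathcal{O}_X(U)$-module $M$. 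Point (i) is standard: a finitely generated quasi-coherent sheaf over a semi-separated scheme has finitely generated sections on each affine open (see \cite[Lemma 3.6]{AbsPureSheaves}).

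The crucial step is (ii). Fix an $\mathrm{FP}_1$-injective $\mathcal{O}_X(U)$-module $M$ and consider the quasi-coherent sheaf $\iota^U_\ast(M)$. I would first show that $\iota^U_\ast(M)$ is $\mathrm{FP}_1$-injective in $\mathfrak{Qcoh}(X)$: for any finitely presented $\mathscr{G} \in \mathfrak{Qcoh}(X)$, the isomorphism \eqref{eqn:nat_iso_ext} gives $\mathrm{Ext}^1_{\mathfrak{Qcoh}(X)}(\mathscr{G},\iota^U_\ast(M)) \cong \mathrm{Ext}^1_{\mathcal{O}_X(U)}(\mathscr{G}(U),M)$, and $\mathscr{G}(U)$ is finitely presented over $\mathcal{O}_X(U)$ by the Lemma, so this group vanishes since $M$ is absolutely pure. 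With this established, the hypothesis on $\mathscr{F}$ yields $\mathrm{Ext}^1_{\mathfrak{Qcoh}(X)}(\mathscr{F},\iota^U_\ast(M)) = 0$, and applying \eqref{eqn:nat_iso_ext} once more gives $\mathrm{Ext}^1_{\mathcal{O}_X(U)}(\mathscr{F}(U),M) \cong \mathrm{Ext}^1_{\mathfrak{Qcoh}(X)}(\mathscr{F},\iota^U_\ast(M)) = 0$, which is precisely (ii). Hence $\mathscr{F}(U)$ is finitely presented for every affine open $U$, and the Lemma completes the argument.

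The main obstacle is the intermediate claim that $\iota^U_\ast$ carries absolutely pure modules to $\mathrm{FP}_1$-injective sheaves; its verification relies on having \eqref{eqn:nat_iso_ext} for \emph{all} $\mathcal{O}_X(U)$-modules (not merely the injective ones) together with the local characterization of finitely presented quasi-coherent sheaves. One should in particular confirm that, for semi-separated $X$, the inclusion $\iota^U$ is an affine morphism, so that $\iota^U_\ast$ is exact on quasi-coherent sheaves and $\iota^U_\ast$ of an injective resolution remains an injective resolution — this is what justifies deducing \eqref{eqn:nat_iso_ext} from \eqref{eqn:nat_iso_hom}.
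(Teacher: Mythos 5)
Your proposal is correct and follows essentially the same route as the paper's proof: restrict to affine opens via the previous lemma, use the direct-image adjunction \eqref{eqn:nat_iso_hom}/\eqref{eqn:nat_iso_ext} to transfer the orthogonality hypothesis to $\mathcal{O}_X(U)\text{-Mod}$, and conclude by Glaz's theorem. The one small divergence is your step (i): you cite \cite[Lemma 3.6]{AbsPureSheaves} for the finite generation of $\mathscr{F}(U)$, whereas the paper proves this directly by noting that $\iota^U_\ast$ preserves direct unions (being a right adjoint that preserves direct limits when $X$ is semi-separated) and applying \eqref{eqn:nat_iso_hom}. Your closing caution about needing \eqref{eqn:nat_iso_ext} for arbitrary modules $E$ (not just injectives) is exactly the point the paper handles by observing that $\iota^U$ is affine, so $\iota^U_\ast$ is exact on quasi-coherent sheaves, and $\iota^U_\ast$ preserves injectives because its left adjoint $(-)(U)$ is exact.
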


\begin{proof}
The ``only if'' part is clear. Now suppose that $\mathscr{F}$ is a finitely generated quasi-coherent sheaf such that ${\rm Ext}^1_{\mathfrak{Qcoh}(X)}(\mathscr{F},\mathscr{E}) = 0$ for every $\mathscr{E} \in \mathfrak{Qcoh}(X)$ $\text{FP}_1$-injective. We show that $\mathscr{F}(U)$ is a finitely presented $\mathcal{O}_X(U)$-module for every open affine $U \subseteq X$. The result will follow by the previous lemma. 

First, notice that the direct image functor $\iota^U_\ast$ associated to $U$ preserves direct limits since $X$ is semi-separated. In particular, $\iota^U_\ast$ preserves direct unions, and so $\textrm{Hom}_{\mathfrak{Qcoh}(X)}(\mathscr{F},\iota^U_\ast(-))$ preserves direct unions since $\mathscr{F}$ is finitely generated. By \eqref{eqn:nat_iso_hom}, we have that $\textrm{Hom}_{\mathcal{O}_X(U)}(\mathscr{F}(U),-)$ preserves direct unions in $\mathcal{O}_X(U)\mbox{-}\textrm{Mod}$, that is, $\mathscr{F}(U)$ is finitely generated for every open affine $U \subseteq X$. 

Now consider an $\text{FP}_1$-injective $\mathcal{O}_X(U)$-module $E$. By the previous lemma and \eqref{eqn:nat_iso_ext}, we can note that $\iota^U_\ast(E)$ is an $\text{FP}_1$-injective quasi-coherent sheaf over $X$. By the assumption on $\mathscr{F}$, we have that ${\rm Ext}^1_{\mathfrak{Qcoh}(X)}(\mathscr{F},\iota^U_\ast(E)) = 0$. Using \eqref{eqn:nat_iso_ext} again, we have that $\textrm{Ext}^1_{\mathcal{O}_X(U)}(\mathscr{F}(U),E) = 0$ for every $\text{FP}_1$-injective $E$. That is, $\mathscr{F}(U) \in \mathcal{FP}_0 \cap {}^{\perp_1}(\mathcal{FP}_1\mbox{-}\textrm{Inj})$ in $\mathcal{O}_X(U)\mbox{-}\textrm{Mod}$ for every open affine $U \subseteq X$, and hence by \cite[Theorem 2.1.10]{Glaz} can conclude that $\mathscr{F}(U)$ is a finitely presented $\mathcal{O}_X(U)$-module. 
\end{proof}


\subsection*{$\bm{n}$-coherent categories}

We now study some conditions for certain schemes $X$ under which $\mathfrak{Qcoh}(X)$ is an $n$-coherent category. 

\begin{proposition}\label{prop:n-coherent_scheme}
Let $X$ be a semi-separated scheme with a semi-separated affine open cover 
\[
\{ U_1 \simeq {\rm Spec}(A_1), \dots, U_m \simeq {\rm Spec}(A_m) \}\footnote{Here, the notation $U_i \simeq {\rm Spec}(A_i)$ means that $U_i$ and ${\rm Spec}(A_i)$ are isomorphic as locally ringed spaces.} 
\]
such that each $A_i$ is a commutative $n$-coherent ring. Then, every quasi-coherent sheaf over $X$ of type $\text{FP}_n$ is of type $\text{FP}_\infty$. In particular, if $X$ is a \emph{coherent scheme} in the sense of \cite[Definition 6.8]{EstradaGillespie-scheme}, then $\mathfrak{Qcoh}(X)$ is a coherent category. 
\end{proposition}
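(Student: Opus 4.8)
The plan is to reduce the global statement about $\mathfrak{Qcoh}(X)$ to the local statement about the $A_i$-modules, using the fact that $X$ is covered by finitely many affine opens $U_i \simeq \operatorname{Spec}(A_i)$ and that semi-separatedness makes the direct image functors $\iota^{U_i}_\ast$ well-behaved. First I would invoke the characterization of quasi-coherent sheaves of type $\text{FP}_n$ recalled in Example~\ref{Example-fg-projectives}(5): since $X$ is quasi-compact and semi-separated, $\mathscr{F}$ is of type $\text{FP}_n$ in $\mathfrak{Qcoh}(X)$ if and only if $\mathscr{F}(U_i)$ is an $A_i$-module of type $\text{FP}_n$ for every $i$ (condition (e) there, applied to the finite affine cover). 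Then, since each $A_i$ is $n$-coherent, the module $\mathscr{F}(U_i)$ is automatically of type $\text{FP}_\infty$ over $A_i$ by Theorem~\ref{them-locally n-coherent}(e) applied to $A_i\text{-}\textrm{Mod}$ (equivalently, by the original result of Costa and \cite{bravo-perez}). Applying the same characterization again in the other direction — now for $\text{FP}_\infty$ — gives that $\mathscr{F}$ is of type $\text{FP}_\infty$ in $\mathfrak{Qcoh}(X)$.

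The key steps, in order, are: (1) check that $\mathfrak{Qcoh}(X)$ is locally type $\text{FP}_n$, so that the machinery of Section~\ref{Sec-locally finitely n-presented cats} applies — this follows from Example~\ref{Examples-cats} together with \cite[Corollary~2.5]{Estrada-Gillespie}, or more directly from the fact that a semi-separated quasi-compact scheme admits a generating set of finitely presented (indeed $\text{FP}_\infty$) quasi-coherent sheaves built from the affine cover; (2) invoke the local-affine characterization of $\text{FP}_n$ sheaves from \cite[Proposition~2.3]{Estrada-Gillespie} (reproduced in Example~\ref{Example-fg-projectives}(5)), which requires $X$ quasi-compact and semi-separated; (3) apply $n$-coherence of each $A_i$ to upgrade $\text{FP}_n$ to $\text{FP}_\infty$ on each affine piece; (4) re-apply the local characterization to conclude $\text{FP}_\infty$ globally; and finally (5) feed $\mathcal{FP}_n = \mathcal{FP}_\infty$ into condition (e) of Theorem~\ref{them-locally n-coherent} to conclude $\mathfrak{Qcoh}(X)$ is an $n$-coherent category. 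For the ``in particular'' clause, recall that a coherent scheme in the sense of \cite[Definition~6.8]{EstradaGillespie-scheme} is precisely one admitting such a cover with each $A_i$ coherent, i.e.\ $1$-coherent, so the case $n=1$ of the statement yields that $\mathfrak{Qcoh}(X)$ is a $1$-coherent (locally coherent) category.

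The main obstacle I expect is verifying the local-global characterization of $\text{FP}_n$ objects carefully enough to use it in both directions, and in particular making sure the semi-separated hypothesis is genuinely what is needed. The subtle point is that ``$\mathscr{F}(U)$ of type $\text{FP}_n$ over $\mathcal{O}_X(U)$ for all affine $U$'' must be equivalent to ``$\mathscr{F}$ of type $\text{FP}_n$ in $\mathfrak{Qcoh}(X)$''; the forward implication uses that restriction to an affine open is exact and sends $\text{FP}_n$ generators to $\text{FP}_n$ objects, while the reverse uses a Mayer--Vietoris / Čech argument over the finite semi-separated cover to patch the local $n$-presentations into a global one, relying on $U_i \cap U_j$ being affine and on $\iota^{U_i}_\ast$ preserving direct limits (the same ingredient exploited in the preceding subsection of the appendix via the isomorphisms \eqref{eqn:nat_iso_hom} and \eqref{eqn:nat_iso_ext}). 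Once that equivalence is in hand the rest is formal, since $n$-coherence of a commutative ring is exactly the statement $\mathcal{FP}_n = \mathcal{FP}_\infty$ in its module category. I would therefore structure the proof as: state the local characterization as a cited fact, do the two-directional application, and note that the ``coherent scheme'' case is the specialization $n=1$.
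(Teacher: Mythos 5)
Your overall strategy — reduce to the affine pieces, upgrade from $\text{FP}_n$ to $\text{FP}_\infty$ using $n$-coherence of each $A_i$, then pass back up — is the same as the paper's, and steps (1)--(3) line up with what the paper does. However, there is a genuine gap in step (4). The characterization you cite from Example~\ref{Example-fg-projectives}(5) (condition (e), \cite[Proposition~2.3]{Estrada-Gillespie}) states that $\mathscr{F}$ is of type $\text{FP}_k$ in $\mathfrak{Qcoh}(X)$ if and only if $\mathscr{F}(U)$ is an $\mathcal{O}_X(U)$-module of type $\text{FP}_k$ for \emph{every} affine open $U \subseteq X$ — not merely for the $U_i$ in some fixed cover. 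You have parenthetically rewritten (e) as ``applied to the finite affine cover'', but that is not what the condition says. After your step (3) you have only established $\mathscr{F}(U_i) \in \mathcal{FP}_\infty$ for the finitely many $U_i$ in the given cover; you cannot yet apply the implication (e) $\Rightarrow$ (a) with $k=\infty$, because you have not verified the local condition for any other affine open.

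The paper closes this gap explicitly and by a mechanism different from the one you sketch. After obtaining $\mathscr{F}(U_i)$ of type $\text{FP}_\infty$ for each $i$, it builds a semi-separated affine basis $\mathcal{B} = \{V_\alpha\}$ consisting of those affine opens contained in some $U_{i(\alpha)}$, and then \emph{propagates downward along inclusions}: using $A_{i(\alpha)}\text{-}\mathrm{Mod} \simeq \mathfrak{Qcoh}(U_{i(\alpha)})$, the affine inclusion $j\colon V_\alpha \hookrightarrow U_{i(\alpha)}$, and the isomorphism
\[
{\rm Ext}^k_{\mathcal{O}_X(V_\alpha)}(\mathscr{F}(V_\alpha),-) \cong {\rm Ext}^k_{\mathfrak{Qcoh}(U_{i(\alpha)})}(\mathscr{F}|_{U_{i(\alpha)}}, j_\ast(-)),
\]
it shows $\mathscr{F}(V_\alpha)$ is of type $\text{FP}_\infty$ for each basis element, and only then applies \cite[Lemma~2.2 and Proposition~2.3]{Estrada-Gillespie}. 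Your ``obstacle'' paragraph gestures toward a Mayer--Vietoris or \v{C}ech patching of $n$-presentations over the nerve of the cover, but the argument actually needed is of a different and simpler nature — a restriction step from a cover down to a basis, not a gluing step. As written, your proposal skips this, so the claimed application of the characterization in the reverse direction is not justified.
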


\begin{proof}
Let $\mathscr{F} \in \mathfrak{Qcoh}(X)$ be of type $\text{FP}_n$. We show that $\textrm{Ext}^k_{\mathfrak{Qcoh}(X)}(\mathscr{F},-)$ preserves direct limits for every $k \geq 0$. Let $1 \leq i \leq m$ and consider the inclusion $\iota^{U_i} \colon U_i \hookrightarrow X$. By \eqref{eqn:nat_iso_ext}, Definition \ref{def-finitely-n-presented} and the fact that $\iota^{U_i}_\ast$ preserves direct limits (since $X$ is semi-separated), we have that the functor $\textrm{Ext}^k_{A_i}(\mathscr{F}(U_i),-)$ preserves direct limits for every $0 \leq k \leq n-1$, that is, $\mathscr{F}(U_i)$ is an $A_i$-module of type $\text{FP}_n$. Since $A_i$ is an $n$-coherent ring, $A_i\mbox{-}\textrm{Mod}$ is an $n$-coherent category, and so $\mathscr{F}(U_i)$ is an $A_i$-module of type $\text{FP}_\infty$, meaning that $\textrm{Ext}^k_{A_i}(\mathscr{F}(U_i),-)$ preserves direct limits for every $k \geq 0$. 

We now use the previous to show that $\mathscr{F}$ is of type $\text{FP}_\infty$. From $\{ U_1, \dots, U_m \}$, it is possible to construct a semi-separated affine basis $\mathcal{B} = \{ V_\alpha \}_{\alpha \in \Lambda}$ formed by those open affine subsets $V_\alpha \subseteq X$ contained in some $U_i$. We show that each $\mathscr{F}(V_\alpha)$ is an $\mathcal{O}_X(V_\alpha)$-module of type $\text{FP}_\infty$. Suppose $V_\alpha$ is contained in some $U_{i(\alpha)}$, and consider the inclusion $j \colon V_\alpha \hookrightarrow U_{i(\alpha)}$. Since $V_\alpha$ is affine, we have a natural isomorphism
\[
{\rm Ext}^k_{\mathcal{O}_X(V_\alpha)}(\mathscr{F}(V_\alpha),-) \cong {\rm Ext}^k_{\mathfrak{Qcoh}(U_{i(\alpha)})}(\mathscr{F}|_{U_{i(\alpha)}},j_\ast(-))
\]
where $j_\ast \colon \mathfrak{Qcoh}(V_\alpha) \longrightarrow \mathfrak{Qcoh}(U_{i(\alpha)})$ preserves direct limits. Since $\mathscr{F}(U_{i(\alpha)})$ is an $\mathcal{O}_X(U_{i(\alpha)})$-module of type $\text{FP}_\infty$ and $\mathcal{O}_X(U_{i(\alpha)})\mbox{-}\textrm{Mod}$ is equivalent to $\mathfrak{Qcoh}(U_{i(\alpha)})$, we have that $\mathscr{F}|_{U_{i(\alpha)}}$ is a quasi-coherent sheaf over $U_{i(\alpha)}$ of type $\text{FP}_\infty$, that is, ${\rm Ext}^k_{\mathfrak{Qcoh}(U_{i(\alpha)})}(\mathscr{F}|_{U_{i(\alpha)}},j_\ast(-))$ preserves direct limits for every $k \geq 0$. It follows that ${\rm Ext}^k_{\mathcal{O}_X(V_\alpha)}(\mathscr{F}(V_\alpha),-)$ preserves direct limits for every $k \geq 0$, and thus $\mathscr{F}(V_\alpha)$ is of type $\text{FP}_\infty$ for every $V_\alpha$ in the semi-separated affine basis $\mathcal{B}$. Applying \cite[Lemma 2.2 and Proposition 2.3]{Estrada-Gillespie}, we have that $\mathscr{F} \in \mathfrak{Qcoh}(X)$ is of type $\text{FP}_\infty$. 

For the last assertion, recall that a scheme $X$ with structure sheaf $\mathcal{O}_X$ is coherent if it is quasi-compact and $\mathcal{O}_X(U)$ is a commutative coherent ring for every open affine $U \subseteq X$. This implies that there exists an open affine finite cover $\{ U_1, \dots, U_m \}$ of $X$ such that $\mathcal{O}_X(U_i)$ is a coherent ring, since being \emph{locally} coherent as a scheme is a Zariski-local notion due to Christensen et al. \cite[Proposition 3.7]{CEI-Zariski}.
\end{proof}

\begin{remark}
The previous proof could suggest a notion of ``locally $n$-coherent schemes'' for $n \geq 2$ (that is, a scheme $(X,\mathcal{O}_X)$ such that $\mathcal{O}_X(U)$ is a commutative $n$-coherent ring for every open affine $U \subseteq X$), and restate the assumption in Proposition \ref{prop:n-coherent_scheme} in terms of such schemes. The problem with this is that we are not aware if the property of being $n$-coherent in Zariski-local in the class of commutative rings. In this context, one could try to show this as a consequence of \cite[Lemma 3.6]{CEI-Zariski}. Among other things, we would need to check that the property of being $n$-coherent is \emph{compatible with finite products}, meaning that for all commutative rings $A_1$ and $A_2$, the product ring $A_1 \times A_2$ is $n$-coherent if, and only if, $A_1$ and $A_2$ are $n$-coherent. This is not even known to be true for the case $A_1 = A_2 = A$. In other words, we have the open question: 
\begin{align}\label{ref:open}
\text{$A$ is a commutative $n$-coherent ring} & \Longrightarrow \text{$A^2$ is $n$-coherent?}
\end{align}
This is in turn related to the characterization of $n$-coherent rings in terms their ideals, namely, the following conditions are known to be equivalent (see Dobbs et al. \cite[Remark 3.5]{DKM_n-coherent}):
\begin{enumerate}
\item[(a)] A commutative ring $A$ is $n$-coherent if, and only if, it is \emph{weak} $n$-coherent (that is, each ideal of $A$ of type $\text{FP}_{n-1}$ is of type $\text{FP}_n$).\footnote{$n$-Coherent and weak $n$-coherent rings are called ``strong $n$-coherent'' and ``$n$-coherent'' in \cite{DKM_n-coherent}, respectively.}

\item[(b)] $A$ is weak $n$-coherent $\Longrightarrow$ $A^m$ is weak $n$-coherent for every $m \geq 1$. 
\end{enumerate}
Sufficient conditions under which the equivalence in (a) holds are given in \cite[Proposition 3.3]{DKMS}. Summarizing, we are not even aware if the product ring $A^2$ in \eqref{ref:open} is $n$-coherent in the weak sense. Hence, we have preferred to state the hypothesis in Proposition \ref{prop:n-coherent_scheme} in terms of having a scheme with a semi-separated affine open cover $\{ U_1 \simeq {\rm Spec}(A_1), \dots, U_m \simeq {\rm Spec}(A_m) \}$ and with each ring $A_i$ $n$-coherent. Next in Corollary \ref{coro:projective_line_n-coherent} and Example \ref{ex:n-coherent_scheme} we exhibit some of such schemes. 
\end{remark}

\begin{corollary}\label{coro:projective_line_n-coherent}
Let $A$ be a commutative ring and $X = \mathbb{P}^1(A)$ be the projective line over $A$. If the polynomial ring $A[x]$ is $n$-coherent, then $\mathfrak{Qcoh}(X)$ is an $n$-coherent category.
\end{corollary}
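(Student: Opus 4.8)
The plan is to deduce the statement directly from Proposition~\ref{prop:n-coherent_scheme}, together with condition (e) of Theorem~\ref{them-locally n-coherent}. The first step is to recall the standard description of $X = \mathbb{P}^1(A)$: it is covered by the two principal open sets $U_0 = D_+(x_0)$ and $U_1 = D_+(x_1)$, with $U_0 \simeq {\rm Spec}(A[t])$ and $U_1 \simeq {\rm Spec}(A[t^{-1}])$ as locally ringed spaces, where $t = x_1/x_0$; in particular $A[t] \cong A[t^{-1}] \cong A[x]$ as $A$-algebras. The overlap $U_0 \cap U_1 \simeq {\rm Spec}(A[t,t^{-1}])$ is again affine, so $\{U_0, U_1\}$ is a semi-separated affine open cover of $X$ of exactly the type required in Proposition~\ref{prop:n-coherent_scheme}, and in particular $X$ is a semi-separated scheme.

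Next, I would observe that the hypothesis ``$A[x]$ is $n$-coherent'' says precisely that both rings $A_0 := A[t]$ and $A_1 := A[t^{-1}]$ occurring in this cover are commutative $n$-coherent rings. Hence Proposition~\ref{prop:n-coherent_scheme} applies and yields that every quasi-coherent sheaf over $X$ of type $\text{FP}_n$ is in fact of type $\text{FP}_\infty$; equivalently, $\mathcal{FP}_n = \mathcal{FP}_\infty$ in $\mathfrak{Qcoh}(X)$.

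Finally I would invoke the fact, recorded in Example~\ref{Examples-cats}, that $\mathfrak{Qcoh}(\mathbb{P}^n(A))$ is locally type $\text{FP}_\infty$, with the twisted sheaves $\{\mathcal{O}_{\mathbb{P}^1(A)}(m)\}_{m \in \mathbb{Z}}$ forming a generating set of objects of type $\text{FP}_\infty$; in particular $\mathfrak{Qcoh}(X)$ is locally type $\text{FP}_n$. Combining this with the equality $\mathcal{FP}_n = \mathcal{FP}_\infty$ just obtained, condition (e) of Theorem~\ref{them-locally n-coherent} is satisfied, so $\mathfrak{Qcoh}(X)$ is an $n$-coherent category, as claimed.

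There is essentially no hard step here, as all of the substantive work has already been carried out in Proposition~\ref{prop:n-coherent_scheme}. The only points demanding any care are bookkeeping ones: identifying the two standard charts of $\mathbb{P}^1(A)$ with copies of ${\rm Spec}(A[x])$, and checking that their intersection is affine so that $\{U_0,U_1\}$ is genuinely a semi-separated affine cover. Both are routine facts about projective space over a base ring, so I expect the proof to be short; if anything, the mild subtlety is simply making sure the cover meets the precise hypotheses of Proposition~\ref{prop:n-coherent_scheme} (namely that each chart is isomorphic, \emph{as a locally ringed space}, to the spectrum of an $n$-coherent ring).
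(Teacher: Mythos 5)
Your proof is correct and follows essentially the same route as the paper: identify the standard semi-separated affine cover of $\mathbb{P}^1(A)$ by two copies of ${\rm Spec}(A[x])$, apply Proposition~\ref{prop:n-coherent_scheme} to conclude $\mathcal{FP}_n = \mathcal{FP}_\infty$ in $\mathfrak{Qcoh}(X)$, then combine this with the locally type ${\rm FP}_\infty$ property (from the twisted sheaves) and condition (e) of Theorem~\ref{them-locally n-coherent}. If anything you spell out a step the paper compresses into ``the result then follows,'' namely explicitly verifying the locally type ${\rm FP}_n$ hypothesis of Theorem~\ref{them-locally n-coherent}.
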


\begin{proof}
By \cite[Corollary 2.5]{Estrada-Gillespie}, we know that $\mathfrak{Qcoh}(X)$ is a locally type $\text{FP}_\infty$ category with semi-separated cover $\{ U_0, U_1 \}$, where 
\begin{align*}
U_0 & := D_+(x_0) = {\rm Spec}(A[x_0 / x_1]) & & \text{and} & U_1 & := D_+(x_1) = {\rm Spec}(A[x_1 / x_0]) & \text{(see \cite[Section 3.6]{GW_algebraic_geometry_I}).}
\end{align*} 
By hypothesis, we know that both $A[x_0 / x_1]$ and $A[x_1 / x_0]$ are $n$-coherent rings. So it follows by the previous proposition that the equality $\mathcal{FP}_n = \mathcal{FP}_\infty$ holds in $\mathfrak{Qcoh}(X)$. The result then follows. 
\end{proof}

\begin{example}\label{ex:n-coherent_scheme} 
The case $n = 0$ in Corollary \ref{coro:projective_line_n-coherent} yields a reformulation of Hilbert's basis theorem in terms of locally noetherian categories. Namely, if $A$ is a noetherian commutative ring, then so is $A[x]$, and hence $\mathfrak{Qcoh}(\mathbb{P}^1(A))$ is a locally noetherian category. 

For the case $n \geq 1$, there is no guarantee that $A[x]$ turns out to be an $n$-coherent ring if $A$ is $n$-coherent. For instance, if we set $n = 1$ we have the notion of \emph{stably coherent rings}: those coherent rings $A$ such that every polynomial ring $A[x_1, \dots, x_m]$ is also coherent for $m \geq 1$ (see for instance Glaz's book \cite[Section 7.3]{Glaz}). Not every coherent ring is stably coherent. In \cite[Section 7.3.13]{Glaz} the author constructs a commutative coherent ring of weak dimension 2 which is not stably coherent, while in \cite[Section Theorem 7.3.14]{Glaz} it is proved that every commutative coherent ring of global dimension 2 is stably coherent. 

We bring from the literature another example of a ring satisfying the condition of Corollary \ref{coro:projective_line_n-coherent}. For, recall that a ring $R$ is called an \emph{$(n,d)$-ring} (where $n$ and $d$ are nonnegative integers) if every $R$-module of type $\text{FP}_n$ has projective dimension at most $d$. Consider the commutative ring $A$ presented in Vasconcelos' \cite[Example 1.3 (b)]{Vasconcelos}. This is a noncoherent ring with weak dimension 1, which is also $(2,1)$-coherent. By Costa's \cite[Theorem 2.2]{costa}, $A$ is $2$-coherent. On the other hand, in \cite[Example 8.16]{Vasconcelos} it is proven that $A[x]$ is a $(2,2)$-ring, and again by \cite[Theorem 2.2]{costa} we have that $A[x]$ is $2$-coherent. 
\end{example}


\section{Some finiteness conditions for functor categories}

In this last appendix we explain in more detail some of the already mentioned facts and examples in the context of categories of additive functors. Some of the results below are well known, and restated and reproved within the terminology developed in the previous sections, but we also provide slightly more general statements for them.  

First, recall that in Example \ref{ex:FPn-injectives} (3) that an additive functor $G \colon \mathcal{C}^{\rm op} \longrightarrow \mathsf{Ab}$ is $\text{FP}_1$-injective if, and only if, $G$ maps pseudo-kernels in $\mathcal{C}$ into pseudo-cokernels in $\mathsf{Ab}$, provided that $\mathcal{C}$ has pseudo-kernels. This is a consequence of the following result due to Auslander \cite[part b) of Theorem 2.2]{auslander-coherent-functors}.

\begin{proposition}\label{prop:functor_category_1-coherent}
Let $\mathcal{C}$ be a skeletally small additive category. The following two conditions hold for any $n \geq 1$.
\begin{enumerate}
\item If $\mathcal{C}$ has kernels, then every object of type $\text{FP}_n$ has projective dimension at most $2$.

\item If $\mathcal{C}$ has pseudo-kernels, then every object of type $\text{FP}_n$ is of type $\text{FP}_\infty$.  
\end{enumerate}
\end{proposition}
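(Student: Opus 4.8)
The plan is to prove Proposition~\ref{prop:functor_category_1-coherent} by reducing both parts to standard facts about representable functors in $\textrm{Fun}(\mathcal{C}^{\rm op},\mathsf{Ab})$, together with the characterization of objects of type $\text{FP}_n$ via $n$-presentations by representables (as recorded in Example~\ref{Example-fg-projectives}(4)). First I would recall that by Yoneda, $\textrm{Hom}_{\textrm{Fun}}(\textrm{Hom}_{\mathcal{C}}(-,X), G) \cong G(X)$, so a morphism of representables $\textrm{Hom}_{\mathcal{C}}(-,X_1) \to \textrm{Hom}_{\mathcal{C}}(-,X_0)$ is induced by a unique morphism $f_1 \colon X_1 \to X_0$ in $\mathcal{C}$, and its kernel (computed pointwise in $\mathsf{Ab}$) is the subfunctor sending $C$ to $\{g \colon C \to X_1 \mid f_1 g = 0\}$.

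For part (1), suppose $\mathcal{C}$ has kernels and let $F$ be of type $\text{FP}_n$, $n \geq 1$; in particular $F$ is finitely presented, so there is an exact sequence $\textrm{Hom}_{\mathcal{C}}(-,X_1) \xrightarrow{f_1} \textrm{Hom}_{\mathcal{C}}(-,X_0) \to F \to 0$. Let $k \colon K \to X_1$ be a kernel of $f_1$ in $\mathcal{C}$. The key claim is that the sequence of representables
\[
\textrm{Hom}_{\mathcal{C}}(-,K) \xrightarrow{k_*} \textrm{Hom}_{\mathcal{C}}(-,X_1) \xrightarrow{f_1{}_*} \textrm{Hom}_{\mathcal{C}}(-,X_0)
\]
need not be exact at the middle, but its image $\textrm{Im}(k_*)$ is exactly the kernel of $f_1{}_*$ when $\mathcal{C}$ has kernels --- this is precisely because the universal property of the kernel $K$ says every $g \colon C \to X_1$ with $f_1 g = 0$ factors (necessarily uniquely) through $k$. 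Hence $0 \to \textrm{Im}(k_*) \to \textrm{Hom}_{\mathcal{C}}(-,X_1) \to \textrm{Hom}_{\mathcal{C}}(-,X_0) \to F \to 0$ is a projective resolution of $F$ of length two, provided $\textrm{Im}(k_*)$ is projective. Now $\textrm{Im}(k_*)$ is a quotient of a representable, and one checks it is in fact a direct summand of $\textrm{Hom}_{\mathcal{C}}(-,K)$: since $k$ is a kernel, it is a monomorphism in $\mathcal{C}$ that is "split on the nose" at the functor level in the relevant sense --- more carefully, $k_* \colon \textrm{Hom}_{\mathcal{C}}(-,K) \to \textrm{Hom}_{\mathcal{C}}(-,X_1)$ is a monomorphism (as $k$ is monic) with image the kernel subfunctor, so $\textrm{Hom}_{\mathcal{C}}(-,K) \cong \textrm{Im}(k_*)$, which is therefore projective. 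Thus $\textrm{pd}(F) \leq 2$. I would present this via the exact sequence $0 \to \textrm{Hom}_{\mathcal{C}}(-,K) \to \textrm{Hom}_{\mathcal{C}}(-,X_1) \to \textrm{Hom}_{\mathcal{C}}(-,X_0) \to F \to 0$.

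For part (2), assume $\mathcal{C}$ has pseudo-kernels. Given $F$ of type $\text{FP}_n$ with $n \geq 1$, again start from a presentation $\textrm{Hom}_{\mathcal{C}}(-,X_1) \xrightarrow{f_1} \textrm{Hom}_{\mathcal{C}}(-,X_0) \to F \to 0$ and choose a pseudo-kernel $f_2 \colon X_2 \to X_1$ of $f_1$. The defining property of a pseudo-kernel says exactly that $\textrm{Hom}_{\mathcal{C}}(-,X_2) \xrightarrow{f_2{}_*} \textrm{Hom}_{\mathcal{C}}(-,X_1) \xrightarrow{f_1{}_*} \textrm{Hom}_{\mathcal{C}}(-,X_0)$ is exact at the middle term (every $g$ with $f_1 g = 0$ factors, not necessarily uniquely, through $f_2$). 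Iterating --- taking a pseudo-kernel of $f_2$, then of that, and so on --- produces an infinite resolution of $F$ by representable functors, i.e. an $\infty$-presentation by objects of $\textrm{add}(\mathcal{S})$ where $\mathcal{S}$ is the set of representatives of representables. By Corollary~\ref{resolutions_for_FP-infty} (applied with the generating set of finitely generated projective generators, namely the representables) this shows $F$ is of type $\text{FP}_\infty$. Equivalently, since every object of type $\text{FP}_1$ admits such an infinite resolution, $\mathcal{FP}_1 \subseteq \mathcal{FP}_\infty$, which by Theorem~\ref{them-locally n-coherent} also says $\textrm{Fun}(\mathcal{C}^{\rm op},\mathsf{Ab})$ is $1$-coherent. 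I expect the main obstacle to be the bookkeeping in part (1): verifying carefully that the kernel subfunctor of $f_1{}_*$ is representable (isomorphic to $\textrm{Hom}_{\mathcal{C}}(-,K)$ rather than merely a summand), and hence projective, which is where the existence of honest kernels in $\mathcal{C}$ is essential and where the distinction from the pseudo-kernel case (summand versus exact-but-not-split) must be handled cleanly.
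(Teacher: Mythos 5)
Your proposal is correct and, for part (2), matches the paper's argument almost exactly: start from a presentation of $F$ by representable functors, choose a pseudo-kernel of the leftmost morphism in $\mathcal{C}$ to extend the resolution by one representable (the pseudo-kernel property being precisely the statement that the induced sequence of representables is exact at the middle spot), and iterate. The only cosmetic difference is the starting point: you begin from a $1$-presentation of $F$ (which exists since $\mathcal{FP}_n \subseteq \mathcal{FP}_1$), whereas the paper extends the given $n$-presentation; both routes deliver an infinite resolution by representables, and hence $F \in \mathcal{FP}_\infty$ via Corollary~\ref{resolutions_for_FP-infty} or Corollary~\ref{resolutions_for_proj}.

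For part (1) the paper simply cites \cite[Chapter 1]{Dean}, so your direct argument is a useful addition: the kernel $k\colon K\to X_1$ of $f_1$ in $\mathcal{C}$ is a monomorphism, so $(-,k)$ is a pointwise monomorphism whose image is, by the universal property of the kernel, exactly $\mathrm{Ker}\bigl((-,f_1)\bigr)$; thus $(-,K) \cong \mathrm{Ker}\bigl((-,f_1)\bigr)$ is representable, hence projective, and $0 \to (-,K) \to (-,X_1) \to (-,X_0) \to F \to 0$ is a projective resolution of length $2$. One small wording issue: you write that the sequence $(-,K) \to (-,X_1) \to (-,X_0)$ ``need not be exact at the middle,'' but then immediately assert $\mathrm{Im}(k_*) = \mathrm{Ker}\bigl((-,f_1)\bigr)$, which is precisely exactness at the middle --- so that clause should be dropped. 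Similarly, calling $\mathrm{Im}(k_*)$ a ``direct summand'' of $(-,K)$ understates the point; it is isomorphic to $(-,K)$, as you correctly note by the end of the paragraph. These are phrasing slips, not gaps; the underlying argument is sound and is the standard one.
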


\begin{proof}
Part (1) is proved in \cite[Chapter 1]{Dean}. A similar argument can be applied to show (2). Namely, suppose we are given an object $F \in \textrm{Fun}(\mathcal{C}^{\rm op},\mathsf{Ab})$ of type $\text{FP}_n$ with $n \geq 1$. So by Example \ref{Example-fg-projectives} there is an exact sequence in $\text{Fun}(\mathcal{C}^{\rm op},\mathsf{Ab})$:
\begin{align}\label{eqn:Yoneda}
(-,X_n) \xrightarrow{\alpha_n} (-,X_{n-1}) \to \cdots \to (-,X_1) \xrightarrow{\alpha_1} (-,X_0) \to F \to 0.
\end{align}
By the Yoneda Lemma, the natural transformations $\alpha_i$, with $1 \leq i \leq n$, can be represented by a morphisms $f_i \colon X_i \to X_{i-1}$ in $\mathcal{C}$ in the sense that $\alpha_i = \textrm{Hom}_{\mathcal{C}}(-,f_i)$. Let $f_{n+1} \colon X_{n+1} \to X_n$ be a pseudo-kernel of $f_n$. Then, we can note that the induced sequence
\[
(-,X_{n+1}) \xrightarrow{(-,f_{n+1})} (-,X_n) \xrightarrow{(-,f_n)} (-,X_{n-1})
\]
is exact. Glueing this sequence to \eqref{eqn:Yoneda} yields a finite $(n+1)$-presentation of $F$:
\[
(-,X_{n+1}) \to (-,X_n) \to \cdots \to (-,X_1) \to (-,X_0) \to F \to 0
\]
that is, the containment $\mathcal{FP}_n \subseteq \mathcal{FP}_{n+1}$ holds in the functor category $\textrm{Fun}(\mathcal{C}^{\rm op},\mathsf{Ab})$. The same argument repeated infinitely many times shows that $F$ is of type $\text{FP}_\infty$.
\end{proof}

The previous result along with Theorem \ref{them-locally n-coherent} (e) and the fact mentioned in Remark \ref{rem:locally_type_FPinfty} that $\textrm{Fun}(\mathcal{C}^{\rm op},\mathsf{Ab})$ has a generating set formed by finitely generated projective objects, imply the following result (see Auslander's \cite[Theorem 2.2]{auslander-coherent-functors} and Herzog's \cite[Dual of Proposition 2.1]{herzog-ziegler-spectrum}).

\begin{corollary}
Let $\mathcal{C}$ be a skeletally small additive category with kernels or pseudo-kernels. Then, $\textrm{Fun}(\mathcal{C}^{\rm op},\mathsf{Ab})$ is a $1$-coherent category. 
\end{corollary}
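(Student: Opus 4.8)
The plan is to derive this corollary directly from Proposition~\ref{prop:functor_category_1-coherent} together with condition (e) of Theorem~\ref{them-locally n-coherent}, using the fact recalled in Remark~\ref{rem:locally_type_FPinfty} that $\textrm{Fun}(\mathcal{C}^{\rm op},\mathsf{Ab})$ possesses a generating set of finitely generated projective objects. So let $\mathcal{C}$ be a skeletally small additive category which has kernels or pseudo-kernels. First I would observe that since $\mathcal{C}$ has pseudo-kernels (note: if $\mathcal{C}$ has honest kernels then in particular it has pseudo-kernels, so this case is subsumed), Proposition~\ref{prop:functor_category_1-coherent}(2) applies with $n = 1$: every object of type $\text{FP}_1$ in $\textrm{Fun}(\mathcal{C}^{\rm op},\mathsf{Ab})$ is already of type $\text{FP}_\infty$. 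In other words, $\mathcal{FP}_1 = \mathcal{FP}_\infty$ in this functor category.

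Next I would note that $\textrm{Fun}(\mathcal{C}^{\rm op},\mathsf{Ab})$ is a Grothendieck category (this is the standard fact, e.g.\ Stenstr\"om \cite[Corollary IV.7.5]{stenstrom}, also invoked in Example~\ref{Examples-cats}), and that by Remark~\ref{rem:locally_type_FPinfty} it has a generating set consisting of finitely generated projective objects, namely a set of representatives of the representable functors $\{ \textrm{Hom}_{\mathcal{C}}(-,X) \}$. By Example~\ref{Example-fg-projectives}(1) each such object is of type $\text{FP}_\infty$, hence in particular of type $\text{FP}_1$. Therefore $\textrm{Fun}(\mathcal{C}^{\rm op},\mathsf{Ab})$ is a locally type $\text{FP}_1$ category, so Theorem~\ref{them-locally n-coherent} is applicable with $n = 1$.

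Finally I would invoke the equivalence of conditions (a) and (e) in Theorem~\ref{them-locally n-coherent}: a locally type $\text{FP}_n$ category is $n$-coherent if and only if $\class{FP}_n = \class{FP}_\infty$. Since we have just established $\mathcal{FP}_1 = \mathcal{FP}_\infty$ for $\textrm{Fun}(\mathcal{C}^{\rm op},\mathsf{Ab})$, condition (e) holds with $n = 1$, so the category is $1$-coherent, which by the last sentence of Theorem~\ref{them-locally n-coherent} is the same as being locally coherent in the usual sense. This completes the argument.

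There is essentially no genuine obstacle here, as all the real work has been done: Proposition~\ref{prop:functor_category_1-coherent}(2) supplies the key inclusion $\mathcal{FP}_1 \subseteq \mathcal{FP}_\infty$ (whose proof rests on Auslander's theorem and the Yoneda-lemma argument of splicing a representable pseudo-kernel onto a finite $1$-presentation), and Theorem~\ref{them-locally n-coherent}(e) converts this into the coherence statement. The only point requiring a moment's care is the bookkeeping that the representable functors are $\text{FP}_\infty$ (hence the category really is locally type $\text{FP}_1$, so the theorem applies), and that the ``kernels'' hypothesis is handled by the ``pseudo-kernels'' case since every kernel is in particular a pseudo-kernel.
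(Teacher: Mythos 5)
Your argument is exactly the paper's: Proposition~\ref{prop:functor_category_1-coherent}(2) gives $\class{FP}_1 = \class{FP}_\infty$ in $\textrm{Fun}(\mathcal{C}^{\rm op},\mathsf{Ab})$ (with the ``kernels'' case subsumed by ``pseudo-kernels''), Remark~\ref{rem:locally_type_FPinfty} supplies the finitely generated projective generating set showing the category is locally type $\text{FP}_1$, and Theorem~\ref{them-locally n-coherent}(e) then yields $1$-coherence. This matches the paper's one-sentence justification preceding the corollary, just spelled out in more detail.
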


For the rest of this section we focus on characterizing $\text{FP}_2$-injective objects in $\textrm{Fun}(\mathcal{C}^{\rm op},\mathsf{Ab})$, following the spirit of Auslander's \cite[Section 4]{auslander-coherent-functors}. 

\begin{proposition}
Let $\mathcal{C}$ be a skeletally small additive category and $G \colon \mathcal{C}^{\rm op} \longrightarrow \mathsf{Ab}$ be an additive functor. The following conditions are equivalent.
\begin{itemize}
\item[(a)] $G$ is $\text{FP}_2$-injective.

\item[(b)] For every sequence 
\[
X_2 \xrightarrow{f_2} X_1 \xrightarrow{f_1} X_0
\] 
in $\mathcal{C}$ such that $f_2$ is a pseudo-kernel of $f_1$, the sequence 
\[
G X_0 \xrightarrow{G f_1} G X_1 \xrightarrow{G f_2} G X_2
\] 
is exact in $\mathsf{Ab}$.
\end{itemize}
\end{proposition}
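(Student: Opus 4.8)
The plan is to prove this by reducing the characterization of $\text{FP}_2$-injectivity to a statement about vanishing of $\Ext^1$ against objects of type $\text{FP}_2$, and then to use the fact (from Corollary~\ref{resolutions_for_proj}, since $\textrm{Fun}(\mathcal{C}^{\rm op},\mathsf{Ab})$ has a generating set of finitely generated projectives, namely the representable functors) that objects of type $\text{FP}_2$ are exactly those admitting a $2$-presentation by finitely generated projectives, i.e. a presentation
\[
(-,X_2) \xrightarrow{(-,f_2)} (-,X_1) \xrightarrow{(-,f_1)} (-,X_0) \to F \to 0
\]
up to direct summands. By the Yoneda Lemma every map between representables is of the form $(-,f)$ for a unique morphism $f$ in $\mathcal{C}$, and exactness of such a sequence at $(-,X_1)$ amounts precisely to saying that $f_2$ is a \emph{weak kernel} of $f_1$ in $\mathcal{C}$ (i.e. $f_1 f_2 = 0$ and every $g\colon Y \to X_1$ with $f_1 g = 0$ factors, not necessarily uniquely, through $f_2$) — this is exactly the pseudo-kernel condition in the statement. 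So the ``test objects'' in condition (a) are governed by diagrams $X_2 \xrightarrow{f_2} X_1 \xrightarrow{f_1} X_0$ with $f_2$ a pseudo-kernel of $f_1$.

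Next I would translate ``$\Ext^1(F,G) = 0$'' into the exactness statement in (b). Given such a $2$-presentation, set $F_1 := \im((-,f_1))$ and $F_2 := \im((-,f_2))$ (images in $\textrm{Fun}(\mathcal{C}^{\rm op},\mathsf{Ab})$, which is abelian). Then we have short exact sequences $0 \to F_1 \to (-,X_0) \to F \to 0$ and $0 \to K \to (-,X_1) \to F_1 \to 0$ where $K = \ker((-,f_1))$, and the pseudo-kernel condition on $f_2$ says precisely that $(-,X_2) \to (-,X_1)$ surjects onto $K$, i.e. the sequence $(-,X_2) \to (-,X_1) \to (-,X_0)$ is exact at $(-,X_1)$. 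Applying $\Hom(-,G)$ and using that the $(-,X_i)$ are projective, a standard dimension-shift / snake-lemma computation gives
\[
\Ext^1(F,G) \cong \ker\bigl( \Hom(F_1,G) \to \Hom((-,X_1),G) \bigr)^{?},
\]
and more usefully, chasing two steps, $\Ext^1(F,G) = 0$ is equivalent to exactness of
\[
\Hom((-,X_0),G) \to \Hom((-,X_1),G) \to \Hom((-,X_2),G)
\]
\emph{provided} one already knows $\Ext^1$ against the finitely presented object $F_1$ behaves well; but in fact the cleanest route is: by Yoneda, $\Hom((-,X_i),G) \cong G X_i$ naturally, and under this identification the map $\Hom((-,X_i),G) \to \Hom((-,X_{i+1}),G)$ becomes $Gf_{i+1}\colon GX_i \to GX_{i+1}$. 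So exactness of the displayed complex is literally the statement $\im(Gf_1) = \ker(Gf_2)$ appearing in (b). Thus (a) $\Rightarrow$ (b) follows by specializing $\Ext^1(F,-) = 0$ (which holds for all $F \in \class{FP}_2$) to $F = \cok((-,f_1)\colon (-,X_1)\to(-,X_0))$ for an \emph{arbitrary} pseudo-kernel situation $X_2 \xrightarrow{f_2} X_1 \xrightarrow{f_1} X_0$, after observing that such an $F$ is of type $\text{FP}_2$ by the $2$-presentation it manifestly possesses.

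For the converse (b) $\Rightarrow$ (a), I would take an arbitrary $F \in \class{FP}_2$, choose a $2$-presentation by finitely generated projectives — which up to direct summands means by finite sums of representables $(-,X_i)$ — and run the same computation backwards: condition (b) applied to the morphisms $f_1, f_2$ underlying the presentation (noting that a finite direct sum $(-,X_i^{(1)})\oplus\cdots$ is again representable, $(-,X_i)$ with $X_i$ the corresponding biproduct in $\mathcal{C}$, and that pseudo-kernels are preserved by the passage between the presentation and the underlying $\mathcal{C}$-morphisms) gives exactness of $GX_0 \to GX_1 \to GX_2$, hence via Yoneda exactness of $\Hom((-,X_0),G)\to\Hom((-,X_1),G)\to\Hom((-,X_2),G)$, hence $\Ext^1(F,G) = 0$. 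A mild wrinkle is the passage through direct summands: if $F$ is only a summand of an object $F'$ with an honest $2$-presentation by representables, then $\Ext^1(F,G)$ is a summand of $\Ext^1(F',G)$, so it suffices to handle $F'$; this is harmless. The main obstacle I anticipate is being careful that the ``exactness at the middle term of a complex of representables is equivalent to the pseudo-kernel property'' dictionary is stated and used correctly in both directions, and that the dimension-shifting identification of $\Ext^1(F,G)$ with the cohomology of $G$ applied to the presentation is justified using projectivity of representables and the long exact sequence — a routine but bookkeeping-heavy step. No deep input beyond Yoneda, Corollary~\ref{resolutions_for_proj}, and the abelianness of $\textrm{Fun}(\mathcal{C}^{\rm op},\mathsf{Ab})$ is needed.
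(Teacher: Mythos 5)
Your proposal is correct and follows essentially the same approach as the paper: translate the pseudo-kernel condition into exactness of a complex of representables (a projective resolution of $F = \cok((-,f_1))$), note $F$ is then of type $\text{FP}_2$, and use the Yoneda identification $\Hom((-,X_i),G) \cong GX_i$ to turn vanishing of $\Ext^1(F,G)$ into exactness of $GX_0 \to GX_1 \to GX_2$, and conversely. The only cosmetic difference is that your digression through $F_1 = \im((-,f_1))$ and the worry about direct summands are unnecessary — since $\mathcal C$ is additive, the resolution terms produced by Proposition~\ref{resolutions_for_FPn} are already finite direct sums of representables, hence actual representables, so the paper works directly with a genuine $2$-presentation by representables with no summand bookkeeping.
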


\begin{proof}
Suppose first that $G$ is $\text{FP}_2$-injective. Let $f_1 \colon X_1 \to X_0$ be a morphism in $\mathcal{C}$ for which there exists a pseudo-kernel, let us say $f_2 \colon X_2 \to X_1$. Then, the sequence in $\textrm{Fun}(\mathcal{C}^{\rm op},\mathsf{Ab})$
\[
(-,X_2) \xrightarrow{(-,f_2)} (-,X_1) \xrightarrow{(-,f_1)} (-,X_0).
\]
is exact at $(-,X_1)$. If we let $F$ denote the cokernel object of $\textrm{Hom}_{\mathcal{C}}(-,f_1)$, we have that $F$ is of type $\text{FP}_2$. Since $G$ is $\text{FP}_2$-injective, that is $\textrm{Ext}^1_{\mathcal{\textrm{Fun}(\mathcal{C}^{\rm op},\mathsf{Ab})}}(F,G) = 0$, the resulting sequence
\begin{align}\label{eqn:G-hom}
((-,X_0), G) & \xrightarrow{((-,f_1),G)} ((-,X_1),G) \xrightarrow{((-,f_2),G)} ((-,X_2),G)
\end{align}
is exact at $((-,X_1),G)$. On the other hand, by the Yoneda Lemma we have that the previous sequence is naturally isomorphic to the sequence
\begin{align}\label{eqn:G-exact}
G X_0 & \xrightarrow{G f_1} G X_1 \xrightarrow{G f_2} G X_2,
\end{align}
which must be exact at $G X_1$. 

Assume now that condition (b) holds true. Let $F \in \textrm{Fun}(\mathcal{C}^{\rm op},\mathsf{Ab})$ be an object of type $\text{FP}_2$. Then, as in the proof of Proposition \ref{prop:functor_category_1-coherent}, there exists an exact sequence 
\[
(-,X_2) \xrightarrow{(-,f_2)} (-,X_1) \xrightarrow{(-,f_1)} (-,X_0) \to F \to 0.
\] 
Now consider the functor $G$ satisfying (b) and the corresponding induced sequence \eqref{eqn:G-hom}, which is isomorphic to \eqref{eqn:G-exact}. Since \eqref{eqn:G-exact} is exact at $G X_1$, \eqref{eqn:G-hom} is exact at $((-,X_1),G)$, and hence ${\rm Ext}^1_{\textrm{Fun}(\mathcal{C}^{\rm op},\mathsf{Ab})}(F,G) = 0$. Since $F$ is arbitrary, we can conclude that $G$ is $\text{FP}_2$-injective. 
\end{proof}

\begin{remark}
A similar characterization can be stated for $\text{FP}_n$-injective objects considering pseudo $n$-kernels, a concept motivated in \cite[Definition 2.2]{Jasso}. Let $f_1 \colon X_1 \to X_0$ be a morphism in an additive category $\mathcal{C}$. We say that a sequence 
\[
X_n \xrightarrow{f_n} X_{n-1} \xrightarrow{f_{n-2}} X_{n-2} \to \cdots \to X_2 \xrightarrow{f_2} X_1 
\]
is a \emph{pseudo $(n-1)$-kernel} of $f_1$ if for every $C \in \mathcal{C}$ the exact sequence of abelian groups 
\[
(C,X_n) \xrightarrow{(C,f_n)} (C,X_{n-1}) \xrightarrow{(C,f_{n-2})} (C,X_{n-2}) \to \cdots \to (C,X_2) \xrightarrow{(C,f_2)} (C,X_1) \xrightarrow{(C,f_1)} (C,X_0) 
\]
is exact.

In the case where $\mathcal{C}$ is a skeletally small additive category, and $G \colon \mathcal{C}^{\rm op} \longrightarrow \mathsf{Ab}$ is an additive functor, one can show that the following are equivalent for any $n \geq 2$.
\begin{itemize}
\item[(a)] $G$ is $\text{FP}_n$-injective.

\item[(b)] For every sequence 
\[
X_n \xrightarrow{f_n} X_{n-1} \to \cdots X_2 \xrightarrow{f_2} X_1 \xrightarrow{f_1} X_0
\] 
in $\mathcal{C}$ such that $(f_n,\dots,f_2)$ is a pseudo $n-1$-kernel of $f_1$, the sequence 
\[
G X_0 \xrightarrow{G f_1} G X_1 \xrightarrow{G f_2} G X_2 \to \cdots \to G X_{n-1} \xrightarrow{G f_n} G X_n
\] 
of abelian groups is exact at $G X_1$. 
\end{itemize}
\end{remark}


\section*{Acknowledgements}

Special thanks to Sinem Odaba\c si (Universidad Austral de Chile) whose ideas and explanations help us to prove the case $n > 1$ of Lemma \ref{lem:mono_condition}.


\section*{Funding}

The first author was partially funded by CONICYT/FONDECYT/REGULAR/1180888

The third author is supported by postdoctoral felloship from the Comisi\'on Acad\'emica de Posgrado - Universidad de la Rep\'ublica.


\bibliographystyle{plain}
\bibliography{biblio_n-coherent}

\begin{thebibliography}{10}

\bibitem{adamek-rosicky}
J.~Ad{\'a}mek and J.~Rosick{\'y}.
\newblock {\em Locally Presentable and Accessible Categories}, volume 189 of
  {\em London Mathematical Society Lecture Note Series}.
\newblock Cambridge University Press, Cambridge, 1994.

\bibitem{auslander-coherent-functors}
M.~Auslander.
\newblock Coherent functors.
\newblock In {\em Proc. {C}onf. {C}ategorical {A}lgebra ({L}a {J}olla,
  {C}alif., 1965)}, pages 189--231. Springer, New York, 1966.

\bibitem{Auslander1and2}
M.~Auslander.
\newblock Representation theory of {A}rtin algebras. {I}, {II}.
\newblock {\em Comm. Algebra}, 1:177--268; ibid. 1 (1974), 269--310, 1974.

\bibitem{frobenius_pairs}
V.~Becerril, O.~Mendoza, {M. A.} P\'{e}rez, and V.~Santiago.
\newblock Frobenius pairs in abelian categories. {C}orrespondences with
  cotorsion pairs, exact model categories, and {A}uslander-{B}uchweitz
  contexts.
\newblock {\em J. Homotopy Relat. Struct.}, 14(1):1--50, 2019.

\bibitem{becker}
H.~Becker.
\newblock Models for singularity categories.
\newblock {\em Adv. Math.}, 254:187--232, 2014.

\bibitem{Berest}
Y.~Berest.
\newblock {H}omological {A}lgebra.
\newblock {\em Course notes. Cornell University}, Available from
  \texttt{http://pi.math.cornell.edu/$\sim$web6330/notes\_Mar\_16\_14.pdf}.

\bibitem{Bourbaki}
N.~Bourbaki.
\newblock {\em Elements of Mathematics. Commutative algebra}.
\newblock Hermann, Paris; Addison-Wesley Publishing Co., Reading, Mass., 1972.
\newblock Translated from the French.

\bibitem{bravo-estrada-iacob}
D.~Bravo, S.~Estrada, and A.~Iacob.
\newblock {${\rm FP}_n$}-injective, {${\rm FP}_n$}-flat covers and
  preenvelopes, and {G}orenstein {AC}-flat covers.
\newblock {\em Algebra Colloq.}, 25(2):319--334, 2018.

\bibitem{bravo-gillespie-hovey}
D.~Bravo, J.~Gillespie, and M.~Hovey.
\newblock The stable module category of a general ring.
\newblock {\em Preprint}, 2014.

\bibitem{bravo-parra}
D.~Bravo and {C. E.} Parra.
\newblock Torsion pairs over n-hereditary rings.
\newblock {\em Communications in Algebra}, 47(5):1892--1907, 2019.

\bibitem{bravo-perez}
D.~Bravo and {M. A.} P{\'e}rez.
\newblock Finiteness conditions and cotorsion pairs.
\newblock {\em J. Pure Appl. Algebra}, 221(6):1249--1267, 2017.

\bibitem{buhler}
T.~B{\"u}hler.
\newblock Exact categories.
\newblock {\em Expo. Math.}, 28(1):1--69, 2010.

\bibitem{CEI-Zariski}
L.~W. Christensen, S.~Estrada, and A.~Iacob.
\newblock A {Z}ariski-local notion of {${\bf F}$}-total acyclicity for
  complexes of sheaves.
\newblock {\em Quaest. Math.}, 40(2):197--214, 2017.

\bibitem{costa}
D.~L. Costa.
\newblock Parameterizing families of non-{N}oetherian rings.
\newblock {\em Comm. Algebra}, 22(10):3997--4011, 1994.

\bibitem{CPT}
S.~Crivei, M.~Prest, and B.~Torrecillas.
\newblock Covers in finitely accessible categories.
\newblock {\em Proc. Amer. Math. Soc.}, 138(4):1213--1221, 2010.

\bibitem{Dean}
S.~Dean.
\newblock {\em Dualities and {F}initely {P}resented {F}unctors}.
\newblock PhD thesis, University of Manchester, Oxford Rd, Manchester M13 9PL,
  UK, 2017.

\bibitem{DKM_n-coherent}
D.~E. Dobbs, S.-E. Kabbaj, and N.~Mahdou.
\newblock {$n$}-coherent rings and modules.
\newblock In {\em Commutative ring theory ({F}\`es, 1995)}, volume 185 of {\em
  Lecture Notes in Pure and Appl. Math.}, pages 269--281. Dekker, New York,
  1997.

\bibitem{DKMS}
D.~E. Dobbs, S.-E. Kabbaj, N.~Mahdou, and M.~Sobrani.
\newblock When is {$D+M\ n$}-coherent and an {$(n,d)$}-domain?
\newblock In {\em Advances in commutative ring theory ({F}ez, 1997)}, volume
  205 of {\em Lecture Notes in Pure and Appl. Math.}, pages 257--270. Dekker,
  New York, 1999.

\bibitem{AbsPureSheaves}
E.~E. Enochs, S.~Estrada, and S.~Odaba\c{s}{\i}.
\newblock Pure injective and absolutely pure sheaves.
\newblock {\em Proc. Edinb. Math. Soc. (2)}, 59(3):623--640, 2016.

\bibitem{enochs-jenda-book}
E.~E. Enochs and O.~M.~G. Jenda.
\newblock {\em Relative Homological Algebra. {V}olume 1}, volume~30 of {\em De
  Gruyter Expositions in Mathematics}.
\newblock Walter de Gruyter GmbH \& Co. KG, Berlin, extended edition, 2011.

\bibitem{Estrada-Gillespie}
S.~Estrada and J.~Gillespie.
\newblock Notes on absolutely clean quasi-coherent sheaves.

\bibitem{EstradaGillespie-scheme}
S.~Estrada and J.~Gillespie.
\newblock The projective stable category of a coherent scheme.
\newblock {\em Proc. Roy. Soc. Edinburgh Sect. A}, 149(1):15--43, 2019.

\bibitem{gillespie}
J.~Gillespie.
\newblock The flat model structure on {${\rm Ch}(R)$}.
\newblock {\em Trans. Amer. Math. Soc.}, 356(8):3369--3390, 2004.

\bibitem{gillespie-quasi-coherent}
J.~Gillespie.
\newblock Kaplansky classes and derived categories.
\newblock {\em Math. Z.}, 257(4):811--843, 2007.

\bibitem{gillespie-exact-model-structures}
J.~Gillespie.
\newblock Model structures on exact categories.
\newblock {\em J. Pure Appl. Algebra}, 215(12):2892--2902, 2011.

\bibitem{gillespie-models-of-injectives}
J.~Gillespie.
\newblock Models for homotopy categories of injectives and {G}orenstein
  injectives.
\newblock {\em Comm. Algebra}, 45(6):2520--2545, 2017.

\bibitem{Glaz}
S.~Glaz.
\newblock {\em Commutative coherent rings}, volume 1371 of {\em Lecture Notes
  in Mathematics}.
\newblock Springer-Verlag, Berlin, 1989.

\bibitem{gobel-trlifaj}
R.~G\"obel and J.~Trlifaj.
\newblock {\em Approximations and Endomorphism Algebras of Modules}, volume~41
  of {\em De Gruyter Expositions in Mathematics}.
\newblock Walter de Gruyter GmbH \& Co. KG, Berlin, 2006.

\bibitem{GW_algebraic_geometry_I}
U.~G\"{o}rtz and T.~Wedhorn.
\newblock {\em Algebraic Geometry {I}. Schemes With Examples and Exercises}.
\newblock Advanced Lectures in Mathematics. Vieweg + Teubner, Wiesbaden, 2010.

\bibitem{hartshorne}
R.~Hartshorne.
\newblock {\em Algebraic Geometry}.
\newblock Springer-Verlag, New York-Heidelberg, 1977.
\newblock Graduate Texts in Mathematics, No. 52.

\bibitem{herzog-ziegler-spectrum}
I.~Herzog.
\newblock The {Z}iegler spectrum of a locally coherent {G}rothendieck category.
\newblock {\em Proc. London Math. Soc. (3)}, 74(3):503--558, 1997.

\bibitem{hovey-model-categories}
M.~Hovey.
\newblock {\em Model Categories}, volume~63 of {\em Mathematical Surveys and
  Monographs}.
\newblock American Mathematical Society, Providence, RI, 1999.

\bibitem{hovey}
M.~Hovey.
\newblock Cotorsion pairs, model category structures, and representation
  theory.
\newblock {\em Math. Z.}, 241(3):553--592, 2002.

\bibitem{Jasso}
G.~Jasso.
\newblock {$n$}-{A}belian and {$n$}-exact categories.
\newblock {\em Math. Z.}, 283(3-4):703--759, 2016.

\bibitem{lang}
S.~Lang.
\newblock {\em Algebra}, volume 211 of {\em Graduate Texts in Mathematics}.
\newblock Springer-Verlag, New York, third edition, 2002.

\bibitem{mitchell}
B.~Mitchell.
\newblock {\em Theory of {C}ategories}.
\newblock Pure and Applied Mathematics, Vol. XVII. Academic Press, New
  York-London, 1965.

\bibitem{rotman}
J.~J. Rotman.
\newblock {\em An {I}ntroduction to {H}omological {A}lgebra}.
\newblock Universitext. Springer, New York, second edition, 2009.

\bibitem{saorin-stovicek}
M.~Saor{\'\i}n and J.~{\v S}{\v t}ov{\'\i}{\v c}ek.
\newblock On exact categories and applications to triangulated adjoints and
  model structures.
\newblock {\em Adv. Math.}, 228(2):968--1007, 2011.

\bibitem{Sieg}
D.~Sieg.
\newblock {\em A {H}omological {A}pproach to the {S}plitting {T}heory of
  {PLS}-spaces}.
\newblock PhD thesis, Universit\"at Trier, Universit\"atsring 15, 54296 Trier,
  2010.

\bibitem{stenstrom2}
B.~Stenstr{\"o}m.
\newblock Coherent rings and {$F\,P$}-injective modules.
\newblock {\em J. London Math. Soc. (2)}, 2:323--329, 1970.

\bibitem{stenstrom}
B.~Stenstr{\"o}m.
\newblock {\em Rings of Quotients}.
\newblock Springer-Verlag, New York-Heidelberg, 1975.
\newblock Die Grundlehren der Mathematischen Wissenschaften, Band 217, An
  introduction to methods of ring theory.

\bibitem{stovicek-purity}
J.~{\v S}{\v t}ov{\'\i}{\v c}ek.
\newblock On purity and applications to coderived and singularity categories.
\newblock {\em Preprint}, 2014.

\bibitem{strebel}
R.~Strebel.
\newblock A homological finiteness criterion.
\newblock {\em Math. Z.}, 151(3):263--275, 1976.

\bibitem{Ueno-2}
K.~Ueno.
\newblock {\em Algebraic Geometry. 2}, volume 197 of {\em Translations of
  Mathematical Monographs}.
\newblock American Mathematical Society, Providence, RI, 2001.
\newblock Sheaves and cohomology, Translated from the 1997 Japanese original by
  Goro Kato, Iwanami Series in Modern Mathematics.

\bibitem{Vasconcelos}
W.~V. Vasconcelos.
\newblock {\em The rings of dimension two}.
\newblock Marcel Dekker, Inc., New York-Basel, 1976.
\newblock Lecture Notes in Pure and Applied Mathematics, Vol. 22.

\bibitem{Verdier96}
J.-L. Verdier.
\newblock Des cat\'{e}gories d\'{e}riv\'{e}es des cat\'{e}gories
  ab\'{e}liennes.
\newblock {\em Ast\'{e}risque}, (239):xii+253 pp. (1997), 1996.
\newblock With a preface by Luc Illusie, Edited and with a note by Georges
  Maltsiniotis.

\bibitem{weibel}
C.~A. Weibel.
\newblock {\em An introduction to homological algebra}, volume~38 of {\em
  Cambridge Studies in Advanced Mathematics}.
\newblock Cambridge University Press, Cambridge, 1994.

\bibitem{wisbauer}
R.~Wisbauer.
\newblock {\em {Foundations of {M}odule and {R}ing {T}heory. A handbook for
  study and research. Revised and updated Engl. ed.}}
\newblock Philadelphia etc.: Gordon and Breach Science Publishers, revised and
  updated engl. ed. edition, 1991.

\bibitem{xu}
J.~Xu.
\newblock {\em Flat covers of modules}, volume 1634 of {\em Lecture Notes in
  Mathematics}.
\newblock Springer-Verlag, Berlin, 1996.

\bibitem{Ding-projective}
G.~Yang, Z.~Liu, and L.~Liang.
\newblock Ding projective and {D}ing injective modules.
\newblock {\em Algebra Colloq.}, 20(4):601--612, 2013.

\bibitem{ZhaoPerez}
T.~Zhao and M.~A. P{\'e}rez.
\newblock Relative {FP}-injective and {FP}-flat complexes and their model
  structures.
\newblock {\em Communications in Algebra}, 47(4):1708--1730, 2019.

\end{thebibliography}

%
%
%
%
%
%
%
%
%
%
%
%
%
%
\end{document}